\documentclass[11pt]{article}
\usepackage{amsfonts}
\usepackage{amsfonts,latexsym}
\usepackage{amsmath}
\usepackage{amsthm}
\usepackage{amssymb,bbm}
\usepackage{mathrsfs}
\usepackage{color}
\usepackage[]{cite}
\usepackage{graphicx}
\usepackage{amsmath}
\usepackage{authblk}
\usepackage{indentfirst}
\usepackage{setspace}
\setlength{\parindent}{2em}

\usepackage[colorlinks,linkcolor=blue,citecolor=blue]{hyperref}
\usepackage{geometry}
\geometry{a4paper,left=2.5cm,right=2.5cm,top=2cm,bottom=2cm}
\allowdisplaybreaks
\newtheorem{theorem}{\indent Theorem}[section]
\newtheorem{proposition}[theorem]{\indent Proposition}

\newtheorem{condition}[theorem]{\indent Condition}
\newtheorem{lemma}[theorem]{\indent Lemma}

\begin{document}	
	\title{\bf Ergodicity for stochastic 2D Boussinesq equations with a highly degenerate pure jump L\'{e}vy noise}

	\author[a]{Jianhua Huang}
    \author[b]{Xuhui Peng}
    \author[a]{Xue Wang \thanks{Corresponding author: wangxuexue1997@163.com} }
    \author[c]{Jiangwei Zhang }
	\affil[a]{\small  \it College of Science, National University of Defense Technology, Changsha, Hunan 410073, P.R.China}
	\affil[b]{\small  \it MOE-LCSM, School of Mathematics and Statistics, Hunan Normal University, Changsha, Hunan 410081, P.R.China}
    \affil[c]{\small  \it Institute of Applied Physics and Computational Mathematics, Beijing 100088, P.R.China}

	\date{}
		
\maketitle

\begin{abstract}
This study aims to analyze the ergodicity for stochastic 2D Boussinesq equations and explore the impact of a highly degenerate pure jump L\'{e}vy noise acting only in the temperature equation, where this noise could appear on only a few Fourier modes.
By leveraging the equi-continuity of the semigroup established through Malliavin calculus and an analysis of stochastic calculus, together with the weak irreducibility of the solution process, we prove the existence and uniqueness of the invariant measure. Moreover, we overcome the main challenge of establishing time asymptotic smoothing properties of the Markovian dynamics corresponding to this system by conducting spectral analysis of the Malliavin covariance matrix.

\medskip

	\noindent \textbf{Keywords:} Ergodicity, Stochastic Boussinesq equation, Pure jump L\'{e}vy noise, Degenerate noise, Malliavin calculus.
\end{abstract}

{\hspace*{2mm} AMS Subject Classification: 60H15, 60G51, 76D06, 76M35.}

\linespread{1.2}
\section{Introduction}
\numberwithin{equation}{section}
In this paper, we consider the ergodicity of the following stochastic Boussinesq equations with a degenerate pure jump noise on a 2D torus $\mathbb{T}^2=[-\pi,\pi]^2=\mathbb{R}^2/(2\pi\mathbb{Z}^2)$:
\begin{eqnarray}\label{1.1}
\left\{
\begin{aligned}
&\mathrm{d}u+(u\cdot\nabla u)\mathrm{d}t=(-\nabla p+\nu_1\Delta u+\mathbf{g}\theta)\mathrm{d}t,\\
&\mathrm{d}\theta+(u\cdot\nabla\theta)\mathrm{d}t=\nu_2\Delta\theta \mathrm{d}t+\sigma_{\theta}\mathrm{d}W_{S_t},\\
&\nabla\cdot u=0,
\end{aligned}
\right.
\end{eqnarray}
where $u=(u_1,u_2)$ denotes the velocity field, $\theta$ represents temperature of a viscous incompressible fluid, and $p$ is the pressure; the parameters $\nu_1,\nu_2>0$ are respectively the kinematic viscosity and thermal diffusivity of the fluid and $\mathbf{g}=(0,g)^T$ with $g\neq0$ is the product of the gravitational constant and the thermal expansion coefficient. The spatial variable $x=(x_1,x_2)$ belongs to $\mathbb{T}^2$. That is, we impose periodic boundary conditions in space.
It is worth noting that the buoyancy term $\mathbf{g}\theta$ serves as the sole mechanism for transferring the influence of the stochastic forcing from the temperature equation \eqref{1.1}$_2$ to the momentum equations \eqref{1.1}$_1$. $W_{S_t}$ is a subordinated Brownian motion that will be introduced later.

From a physical perspective, the deterministic Boussinesq system (i.e. $\sigma_{\theta}=0$ in \eqref{1.1}) arises in simplified models of geophysics, including atmospheric fronts, oceanic currents, and Rayleigh-Bénard convection and so on. This emergence occurs when rotation and stratification are significant factors, highlighting its importance in the study of phenomena like cold fronts and the jet stream.
In particular, the relevant research on convective phenomena can be found in \cite{Constantin}.
In recent decades, significant progress has been achieved in the study of well-posedness, regularity and long-term behavior of solutions, particularly focusing on two-dimensional situations with fractional or fully dissipative characteristics, see e.g., \cite{Abidi-2007,Cao-2013,Foias-2001,Larios-2013,Pu-2011} and the references therein. However, there has been a growing interest in exploring the influence of stochastic effects on fluid models. This is primarily because random phenomena should be as closely aligned with real-life facts and situations as possible. Consequently, researchers have begun to gradually investigate the stochastic Boussinesq equations.

In fact, the stochastic (modified or fractional) Boussinesq equations driven by random external forces have received widespread attention in recent  years, including cases of additive noise, L\'{e}vy noise, nonlinear multiplicative noise,  as well as transport noise etc., see e.g., \cite{Duan-2009,Lin-2024,Luo-2021, Shen-2015,Shen-2014,Wu-2024,Zhang-2024}.
Specifically, Duan and Millet in \cite{Duan-2009} studied the large deviation
 principle of stochastic Boussinesq. Shen and Huang in \cite{Shen-2015,Shen-2014} researched the well-posedness of the stochastic modified Boussinesq equations driven by L\'{e}vy noise and fractional Brownian motion with $H\in(\frac{1}{2},1)$, respectively.  Recently, the first author of the paper and his collaborator in \cite{Wu-2024} investigated the well-posedness and limit behavior of the stochastic fractional Boussinesq equations (SFBE) driven by nonlinear noise, and the ergodicity under additive noise was also proved. In particular,
 Zhang and Huang
of \cite{Zhang-2024} explored the existence of weak solutions, quantitative convergence rates and central limit theorem for 3D stochastic modified fractional Boussinesq equations driven by transport noise on a 3D torus. Furthermore, the authors in \cite{Lin-2024} proved the global existence for the stochastic Boussinesq equations on $\mathbb{T}^3$ with transport noise and rough initial data, and the author in \cite{Luo-2021} considered the weak convergence of the solutions of 2D stochastic equations with transport noise to those of the deterministic equations.

However, when the noise is moderate or highly degenerate, the complex coupling relationship between the noise and physical quantities of the fluid itself (such as velocity, pressure, density, etc.) makes the study of fluid equations driven by degenerate noise extremely challenging, see e.g., \cite{Peng-Huang-2020,Peng-2020}. This leads to relatively limited research on the stochastic Boussinesq equation with a degenerate noise, see e.g., \cite{Foldes-2015}, especially regarding highly degenerate noise. Moreover, from the mathematical point of view, the study of ergodic properties for infinite dimensional systems is a field that has been intensely studied over the past two decades but is still in its infancy compared to the corresponding theory for finite-dimensional systems. And the uniqueness of invariant measure and the ergodicity of the randomly forced dissipative partial differential equations (PDEs) driven by degenerate noise have been problems of central concern for many years.  What is the motivation behind studying the ergodicity of the Boussinesq equation driven by degenerate L\'{e}vy noise?

First of all, for the stochastic partial differential equations (SPDEs) driven by highly degenerate Wiener type, its ergodicity and related problems have been intensively studied recently, see \cite{Foldes-2015,MH-2006,MH-2011,Odasso-2007,Odasso-2008}.
Hairer et al. \cite{MH-2006,MH-2008} considered stochastic 2D Navier-Stokes equations on a torus driven by degenerate additive noise. They established an exponential mixing property of the solution of the vorticity formulation for the 2D stochastic Navier-Stokes equations by using Malliavin calculus, although the noise is extremely degenerate. Subsequently, F\"{o}ldes et al. \cite{Foldes-2015} studied the Boussinesq equations in the presence of a degenerate stochastic forcing acting only in the temperature equation and established the ergodic and mixing properties. Peng et al. \cite{Peng-2020} proved the ergodicity and exponential mixing of the real Ginzburg-Landau equation with a degenerate stochastic forcing acting only in four directions, under the case of without using the vorticity transformation. Meanwhile, they \cite{Peng-Huang-2020} also established the existence, uniqueness and exponential attraction properties of an invariant measure for the Magneto-Hydrodynamic equations with degenerate stochastic forcing acting only in the magnetic equation. Recently, Huang et al. \cite{Huang-2022} studied the asymptotic properties of invariant measures for SFBE with degenerate random forcing.

Secondly, under certain conditions, pure Brownian motion perturbations have many drawbacks in capturing large motions and unpredictable events. The emergence of L\'{e}vy type perturbations is to reproduce the characteristics of these natural phenomena in some real-world models according to empirical researches, for instance, statistical physics (\cite{Novikov-1965}), climatology (\cite{Imkeller-2006}) and mathematics of finance (\cite{Kuchler-2013}), etc.
Therefore, SPDEs driven by L\'{e}vy processes are an important and meaningful aspect of mathematical analysis, with several significant works \cite{Brzezniak-2022, Dong-2011} emerging in this field. However, to our knowledge, there seem to be no results on ergodicity for highly degenerate L\'{e}vy noises results.
In \cite{Sun-2019}, Sun et al. proves that a family of SPDEs driven by highly degenerate pure jump L\'{e}vy noises are exponentially mixing using a coupling method, however, they required that the nonlinear term was globally bounded, which cannot be applied to certain concrete equations, such as stochastic Navier-Stokes equations and stochastic Boussinesq equations.
Fortunately, inspired by the techniques and method in the recent article \cite{Peng-2024}, which studied the ergodicity for stochastic 2D Navier-Stokes equations driven by a highly degenerate pure jump L\'{e}vy noise by using Malliavin calculus and anticipating stochastic calculus, we attempt to use this thought to establish the ergodicity of the Boussinesq equation.

We apply the a curl operator to the first equation of \eqref{1.1} to obtain an equivalent, simpler vorticity formulation. To close the system \eqref{1.1}, we calculate $u$ from $w$ by the Biot-Savart law, that is $u=\mathcal{K}*w$, where $\mathcal{K}$ is the Biot-Savart kernel, so that $\nabla^{\bot}\cdot u=\partial_{x}u_{2}-\partial_{y}u_{1}=:w$ and $\nabla\cdot u=0$.
Then, by a standard calculation, it follows that
\begin{equation}\label{2.1}
\begin{aligned}
\mathrm{d} w&+(B_1(u,w)-\nu_{1}\Delta w)\mathrm{d}t
=g\partial_{x}\theta \mathrm{d}t,\\
\mathrm{d}\theta&+(B_2(u,\theta)-\nu_{2}\Delta\theta)\mathrm{d}t\\
&=\alpha_1\cos(x_1)\mathrm{d}W_{S_t}^1+\alpha_{2}\sin(x_1)\mathrm{d}W_{S_t}^2
+\alpha_3\cos(x_2)\mathrm{d}W_{S_t}^3+\alpha_{4}\sin(x_2)\mathrm{d}W_{S_t}^4,
\end{aligned}
\end{equation}
where $B_1(u,w)=(u\cdot\nabla)w$ and $B_2(u,\theta)=(u\cdot\nabla)\theta$, $\{\alpha_k\}_{k\in\{1,2,3,4\}}$ is a sequence of non-zero constants, $x_1,x_2$ are the two components of spatial variable,
and $\{W_{S_t}^k\}_{k\in\{1,2,3,4\}}$ is a 4-dimensional pure jump L\'{e}vy process. 
The system \eqref{2.1} is posed on $\mathbb{T}^2\times(0,\infty)$.

Note that in \eqref{2.1} we consider a highly degenerate stochastic forcing $\sigma_\theta \mathrm{d}W_{S_t}=\alpha_1\cos(x_1)\mathrm{d}W_{S_t}^1+\alpha_{2}\sin(x_1)\mathrm{d}W_{S_t}^2
+\alpha_3\cos(x_2)\mathrm{d}W_{S_t}^3+\alpha_{4}\sin(x_2)\mathrm{d}W_{S_t}^4$, which acts only on a few Fourier modes and exclusively through the temperature equation.
This shall lead to some difficulties that are different from those in \cite{Foldes-2015} and \cite{Peng-2024}. To be more precise,

$(i)$ the interaction between the nonlinear and stochastic terms in \eqref{2.1} is delicate, and leads us to develop a novel infinite-dimensional form of the H\"{o}rmander bracket condition. However, due to the different noise levels and the introduction of a new generalized condition (Lemma \ref{lemma3.2}) for the L\'{e}vy process, our analysis of the H\"{o}rmander bracket condition is more concise;

$(ii)$ for proving the ergodicity of \eqref{2.1}, we need to
use Malliavin calculus and anticipating stochastic calculus to establish equi-continuity of semigroup, and provide the weak irreducibility for the solution process of \eqref{2.1};

$(iii)$ due to the strong intensity and the unboundedness of the jumps, we introduce a stopping time sequence $\eta_n$ (see \eqref{2.7} and \eqref{2.8}) and new preliminary estimates to establish the uniqueness of invariant measures, which differs from the proofs in \cite{MH-2006,MH-2011} that strongly depend on the Girsanov transformation and exponential martingale estimate of the Gaussian noise.

By introducing some new technical methods and estimates, we overcome the above difficulties and obtain the main result of this paper concerning the ergodicity of the stochastic 2D Boussinesq equation \eqref{2.1} driven by a highly degenerate pure jump L\'{e}vy noise.
\begin{theorem}[Ergodicity]\label{ergodicity}
Under Condition \ref{Condition-2.1} and Condition \ref{Condition-2.2}, the Boussinesq equation with a highly degenerate L\'{e}vy noise admits that there exists a unique invariant measure $\mu^*$ for the system \eqref{2.1}, i.e., $\mu^*$ is a unique probability measure on $H$ such that $P_t^*\mu^*=\mu^*$ for every $t\geq0$.
\end{theorem}

This work enhances our understanding of stochastic Boussinesq equations, emphasizing their significance in the context of highly degenerate pure jump L\'{e}vy noise that affects only the temperature equation. In particular, to prove Theorem \ref{ergodicity}, we establish the equi-continuity of the semigroup, apply the tools of Malliavin calculus to transform the gradient bound into a control problem involving the linearization of equations \eqref{2.1}, and prove some weak irreducibility for the solution process of \eqref{2.1}.
It is crucial to clarify that this control problem can be addressed by determining suitable spectral bounds for the Malliavin covariance matrix $\mathcal{M}$. Furthermore, we extend Malliavin's techniques to prove the asymptotic strong Feller property for certain infinite-dimensional stochastic systems.

The paper is organized as follows. In Section 2, we give functional setting, notations and the Markovian framework, as well as the control problem. In Section 3, we give some moment estimates of the solution
for the stochastic Boussinesq system with pure jump L\'{e}vy noise. Section 4 is devoted to the invertibility of the Malliavin matrix and a dissipative property of the Boussinesq system. We will state the main theorem in Section 5, i.e. the existence and uniqueness of the invariant measure for the system \eqref{2.1} (see Theorem \ref{ergodicity}), to achieve this goal, we need to achieve some weak irreducibility for the solution process of the system \eqref{2.1} (see Proposition \ref{irre}), and establish an equi-continuity of the semigroup, the so-called e-property (see Proposition \ref{propo1.4}) by using Malliavin calculus and anticipating stochastic calculus.

\section{Preliminaries}

\subsection{Notations}

We introduce a functional setting for the system \eqref{2.1}. The phase space is composed of mean zero, square integrable functions
\begin{equation}
H:=\left\{U:=(w,\theta)^{T}\in(L^2(\mathbb{T}^2))^2:
\int_{\mathbb{T}^2}w \mathrm{d}x=\int_{\mathbb{T}^2}\theta \mathrm{d}x=0\right\}.
\end{equation}
Fix the trigonometric basis:
\begin{equation}
\sigma_{k}^{0}(x):=\left(0,\cos(k\cdot x)\right)^{T},\quad\sigma_{k}^{1}(x):=\left(0,\sin(k\cdot x)\right)^{T},
\end{equation}
and
\begin{equation}\label{psi}
\psi_{k}^{0}(x):=\left(\cos(k\cdot x),0\right)^{T},\quad\psi_{k}^{1}(x):=\left(\sin(k\cdot x),0\right)^{T},
\end{equation}
where $k\in\mathbb{Z}^2,x\in \mathbb{T}^2$.
We denote
\begin{equation}
\mathbb{Z}_+^2:=\left\{j=(j_1,j_2)\in\mathbb{Z}_0^2:
j_1>0\mathrm{~or~}j_1=0,~j_2>0\right\}.
\end{equation}
For any $N \geq 1$, define
\begin{equation*}
H_N:=\mathrm{span}\left\{\sigma_k^m,\psi_k^m:|k|\leq N, m\in\{0,1\}\right\},
\end{equation*}
along with the associated projection operators
\begin{equation*}
P_N:H\to H_N \text{ the orthogonal projection onto }H_N,\quad Q_N:=I-P_N.
\end{equation*}
Note that $Q_N$ maps $H$ onto  $\  \mathrm{span}\left\{\sigma_k^m,\psi_k^m:|k|>N,m\in\{0,1\}\right\}$.

Now we provide the details for the subordinated Brownian motion $W_{S_t}$. Let $(\mathbb{W},\mathbb{H},\mathbb{P}^{\mu_{\mathbb{W}}})$ be the classical Wiener space, where $\mathbb{W}$ is the space of all continuous functions from $\mathbb{R}^+$ to $\mathbb{R}^d$, with vanishing values at starting point 0,  $\mathbb{H}\subseteq\mathbb{W}$ is the Cameron-Martin space composed of all absolutely continuous functions with
square integrable derivatives, and $\mathbb{P}^{\mu_{\mathbb{W}}}$ is the Wiener measure so that the coordinate
process $W_t(\text{w}):=\text{w}_t$ is a $d$-dimensional standard Brownian motion. Let $\mathbb{S}$ be the space of all c\`{a}dl\`{a}g increasing functions $\ell$ from $\mathbb{R}^+$ to $\mathbb{R}^d$ with $\ell_0= 0$. Suppose that $\mathbb{S}$ is endowed with the Skorohod metric and a probability measure $\mathbb{P}^{\mu_{\mathbb{S}}}$ so that the coordinate process $S_t(\ell):=\ell_t$ is a pure jump subordinator with L\'{e}vy
measure $\nu_S$ satisfying
\begin{equation*}
 \int_{0}^{\infty}(1\wedge u)\nu_S(\mathrm{d}u)<\infty.
\end{equation*}
Then we consider the following product probability space $(\Omega,\mathcal{F},\mathbb{P}):=(\mathbb{W}\times \mathbb{S},\mathcal{B}(\mathbb{W})\times \mathcal{B}(\mathbb{S}),\mathbb{P}^{\mu_{\mathbb{W}}}\times \mathbb{P}^{\mu_{\mathbb{S}}})$, and define for $\omega=(\text{w},\ell)\in \mathbb{W}\times\mathbb{S}$, $L_t(\omega):=\text{w}_{\ell_t}$. Then $(L_t=W_{S_t})_{t\geq 0}$ is a $d$-dimensional pure jump L\'{e}vy process with L\'{e}vy measure $\nu_L$ given by
\begin{equation}\label{1.8}
\nu_{L}(E)=\int_{0}^{\infty}\int_{E}(2\pi u)^{-\frac{d}{2}}e^{-\frac{|z|^{2}}{2u}}\mathrm{d}z\nu_{S}(\mathrm{d}u),\quad E\in\mathcal{B}(\mathbb{R}^{d}).
\end{equation}

Let $N_L(\mathrm{d}t,\mathrm{d}z)$ denote the Poisson random measure associated with the L\'{e}vy process $L_t=W_{S_t}$, i.e.,
\begin{equation*}
N_L((0,t]\times U)=\sum_{s\leq t}I_U(L_t-L_{t-}),U\in\mathcal{B}(\mathbb{R}^d\setminus\{0\}).
\end{equation*}
Let $\tilde{N}_L(\mathrm{d}t,\mathrm{d}z)$ be the compensated Poisson random measure associated with $N_L(\mathrm{d}t, \mathrm{d}z)$, i.e.,
\begin{equation*}
\tilde{N}_L(\mathrm{d}t,\mathrm{d}z)=N_L((0,t]\times U)-\mathrm{d}t\nu_L(\mathrm{d}z).
\end{equation*}
Similar notation can also be applied to $N_S(\mathrm{d}t,\mathrm{d}z)$ and $\tilde{N}_S(\mathrm{d}t, \mathrm{d}z)$. Given that the measure $\nu_L(\mathrm{d}z)$ is symmetric, the L\'{e}vy process $L_t$  can be represented as follows:
\begin{equation*}
L_t=\int_0^t\int_{\mathbb{R}^d\setminus \{0\}}
z\tilde{N}_L(\mathrm{d}s,\mathrm{d}z).
\end{equation*}
Let $F=F(\mathrm{w},\ell)$ be a random variable on the space $(\Omega,\mathcal{F}, \mathbb{P})$.
We denote the expectation of $F$ as $\mathbb{E}^{\mu_{\mathbb{W}}}F$ when the element $\ell$ is held fixed.
This notation signifies computing the expected value of $F$ with respect to the measure $\mu_{\mathbb{W}}$, while treating $\ell$ as a constant, i.e.,
\begin{equation*}
\mathbb{E}^{\mu_\mathbb{W}}F=\int_\mathbb{W}F(\mathrm{w},\ell)
\mathbb{P}^{\mu_\mathbb{W}}(\mathrm{dw}).
\end{equation*}
The notation $\mathbb{E}^{\mu_{\mathbb{S}}}F$ has the similar meaning. We use $\mathbb{E}[F]$ to denote the expectation of $F$ under the measure $\mathbb{P}=\mathbb{P}^{\mu_{\mathbb{W}}}\times\mathbb{P}^{\mu_{\mathbb{S}}}$.

The filtration used in this paper is
$$
\mathcal{F}_t:=\sigma(W_{S_t},S_s:s\leq t).
$$
For any fixed $\ell\in\mathbb{S}$ and positive number $a = a(\ell)$ which is independent of the Brownian motion $(W_t)_{t\geq 0}$, the filtration $\mathcal{F}_a^W$ is defined by
$$
\mathcal{F}_a^W:=\sigma(W_s:s\leq a).
$$
If $\tau:\Omega\rightarrow[0,\infty]$ is a stopping time with respect to the filtration $\mathcal{F}_t$, $\mathcal{F}_{\tau}$ denotes the past $\sigma$-field defined by
$$
\mathcal{F}_{\tau}=\{A\in\mathcal{F}:\forall t\geq0,A\cap\{\tau\leq t\}\in\mathcal{F}_t\}.
$$

Denote by $M_b(H)$ and $C_b(H)$ the spaces of bounded measurable and bounded continuous real valued functions on $H$, respectively.

Fix $U=U(t,U_0)=U(t,U_0,W_{S_t})$, and note that the solution defines a Markov process. Accordingly the transition function associated to \eqref{2.14} is given by
\begin{equation*}
  P_t(U_0,A)=\mathbb{P}(U(t,U_0)\in A)~\text{for all} ~U_0\in H,
\end{equation*}
for every Borel set $A\subseteq H$, and
\begin{equation*}
  P_t\Phi(U_0)=\int_H \Phi(U)P_t(U_0,\mathrm{d}U),~P^*_t \mu(A)=\int_H P_t(U_0,A)\mu (\mathrm{d}U_{0}),
\end{equation*}
for every $\Phi: H\rightarrow\mathbb{R}$ and probability measure $\mu$ on $H$.

\subsection{Abstract framework and conditions}

In order to rewrite \eqref{2.1} in a functional form, we introduce the following abstract operators associated to the various terms in the equation.

The higher order Sobolev spaces are denoted as
\begin{equation*}
H^s:=\left\{U:=(w,\theta)^T\in(W^{s,2}(\mathbb{T}^2))^2:\int_{\mathbb{T}^2}w \mathrm{d}x=\int_{\mathbb{T}^2}\theta \mathrm{d}x=0\right\}~\text{for any}~s\geq0,
\end{equation*}
where $W^{s,2}(\mathbb{T}^2)$  is classical Sobolev–Slobodeckii space, and $H^s$ is equipped with the norm
\begin{equation}\label{norm}
\|U\|_{s}^n:=\|U\|_{H^s}^n:=\left(\frac{\nu_1\nu_2}{g^2}\|w\|_{W^{s,2}}^{2n}
+\|\theta\|_{W^{s,2}}^{2n}\right)^{\frac{1}{2}},~n\geq 1,
\end{equation}
when $s=0$, we denote $\|\cdot\|_0:=\|\cdot\|$.

For $U=(w,\theta)$ and $\tilde{U}=(\tilde{w},\tilde{\theta})$, let $A:D(A)=H^2\subset H\rightarrow H$ be the linear, symmetric, positive definite operator defined by
\begin{equation*}
AU:=(-\nu_1\Delta w,-\nu_2\Delta\theta)^T,
\end{equation*}
for any $U\in H^2$. Note that $A$ is the infinitesimal generator of a semigroup $e^{-tA}:H\rightarrow H^2$.

For the inertial (non-linear) terms define $B:H^1\times H^1\rightarrow H$ by
\begin{equation}
B(U,\tilde{U})
:=((\mathcal{K}*w)\cdot\nabla\tilde{w},
(\mathcal{K}*w)
\cdot\nabla\tilde{\theta})^T,
\end{equation}
for $U,\tilde{U}\in H^1$. It is well known that $\|\mathcal{K}*w\|_{H^s}\leq C\|w\|_{H^{s-1}}$, and since $H^2\hookrightarrow L^{\infty}$, we indeed obtain that $B(U,\tilde{U})\in H$. Also set $B(U)=B(U,U)$. For any $(s_1,s_2,s_3)\in \mathbb{R}_{+}^3$ with $\sum_{i=1}^{3}s_i \geq 1$ and $(s_1,s_2,s_3)\neq (1,0,0),(0,1,0),(0,0,1)$, regarding some calculations, this article will frequently use the following relationships
\begin{equation}\label{2.2}
\begin{aligned}
\langle B_j(u,v),w\rangle & =-\langle B_j(u,w),v\rangle,\quad\mathrm{if~}\nabla\cdot u=0, \\
\left|\langle B_j(u,v),w\rangle\right|& \leq C\|u\|_{s_{1}}\|v\|_{1+s_{2}}\|w\|_{s_{3}},~j=1,2,\\
\|\mathcal{K}u\|_{\alpha}& =\|u\|_{\alpha-1}, \\
\|w\|_{1/2}^{2}& \leq\|w\|\|w\|_{1}.
\end{aligned}
\end{equation}

Finally, for the `buoyancy term' define $G :H^1 \rightarrow H$ by
\begin{equation}
GU:=(g\partial_x \theta,0)^T,
\end{equation}
for $U\in H^1$.

The equations \eqref{2.1} may be written as an
abstract stochastic evolution equation on $H$
\begin{equation}\label{2.14}
\mathrm{d}U=(-AU-B(U)+ GU)\mathrm{d}t+\sigma_\theta \mathrm{d}W_{S_t},\quad U(0)=U_0=(w_0,\theta_0)\in H,
\end{equation}
set $F(U):=-AU-B(U)+GU$, \eqref{2.14} can be rewritten as
\begin{equation}\label{2.15}
\mathrm{d}U=F(U)\mathrm{d}t+\sigma_\theta \mathrm{d}W_{S_t},\quad U(0)=U_0.
\end{equation}

We introduce a finite set $\mathcal{Z}\subset \mathbb{Z}_+^2$ which represents the forced directions in Fourier space. The driving noise process $W_{S_t}:=(W_{S_t}^{k,m})_{k\in\mathcal{Z},m\in\{0,1\}}$ is a $d:=2\cdot |\mathcal{Z}|-$dimensional subordinated Brownian motion.
Let
$\{e_k^m\}_{k\in \mathcal{Z},m\in\{0,1\}}$ be the standard basis of $\mathbb{R}^{2|\mathcal{Z}|}$ and $\{\alpha_k^m\}_{k\in \mathcal{Z},m\in\{0,1\}}$ be a sequence of non-zero numbers. We define a linear map  $\sigma_{\theta}:\mathbb{R}^{2|\mathcal{Z}|}\rightarrow H$ such that
\begin{equation*}
\sigma_{\theta}e_k^m:=\alpha_k^m\sigma_k^m.
\end{equation*}
Denote the Hilbert-Schmidt norm of $\sigma_\theta$ by
$$
\|\sigma_{\theta}\|^2:=\|\sigma_{\theta}^* \sigma_{\theta}\|
=\sum_{k\in \mathcal{Z},m\in\{0,1\}}(\alpha_k^m)^2.
$$
We consider stochastic forcing of the following form
\begin{equation}\label{sigma}
\sigma_{\theta}\mathrm{d}W_{S_t}:=\sum_{k\in \mathcal{Z},m\in\{0,1\}}\alpha_k^m\sigma_k^m\mathrm{d}W_{S_t}^{k,m}.
\end{equation}
Recall that the set $\mathcal{Z}\subset\mathbb{Z}^2_+$ represents the forced directions in Fourier space.
Note that there are many choices of finite $\mathcal{Z}$ that can accommodate our main result.
Therefore, let the set $\mathcal{Z}\subset\mathbb{Z}^2_*$ be a generator, if any element of $\mathbb{Z}^2_*$
is a finite linear combination of elements of $\mathcal{Z}$ with integer coefficients.
In the remainder of this article, we assume that the following two conditions have been satisfied.

\begin{condition}\label{Condition-2.1}
The set $\mathcal{Z}\subset \mathbb{Z}_+^2$ appeared in \eqref{sigma} is finite and contains the two coordinate vectors $(1,0)$ and $(0,1)$, i.e., $\{(1,0),(0,1)\}\subseteq\mathcal{Z}$; in particular, $\mathcal{Z}$ is a generator in the above sense.	
\end{condition}

The set
\begin{equation*}
  \mathcal{Z}=\{(1,0), (0,1)\}\subset\mathbb{Z}_*^2:=\mathbb{Z}^2\backslash\{(0,0)\}
\end{equation*}
is an example satisfying this condition.

\begin{condition}\label{Condition-2.2}
	Assume that $\nu_S$ satisfies
	\begin{equation*}
		\int_0^\infty(e^{\zeta u}-1)\nu_S(\mathrm du)<\infty~\text{for some}~\zeta>0
	\end{equation*}
	and
	\begin{equation*}
		\nu_S((0,\infty))=\infty.
	\end{equation*}
\end{condition}

\subsection{Elements in Malliavin calculus}

In this subsection, we explain how the estimate on $\nabla P_t\Phi$ can be translated to a control problem through the Malliavin integration by parts formula, which is closely related to the proof of Proposition \ref{propo1.4}.

Let $U=U(\cdot,U_0)$ be the solution of \eqref{2.14} and let $d:=2\cdot|\mathcal{Z}|$. Then for any $\Phi\in C_b^1(H)$, and $\xi\in H$ we have
\begin{equation}\label{P.1}
\nabla P_t\Phi(U_0)\cdot\xi
=\mathbb{E}(\nabla\Phi(U(t,U_0))\cdot J_{0,t}\xi),\quad t\geq0.
\end{equation}

For $\xi=(\xi^{(1)},\xi^{(2)})\in H,t\geq s\geq 0$, the Jacobian $J_{s,t}\xi$ is actually the unique solution of the linearised problem:
\begin{equation}\label{P.2}
\partial_tJ_{s,t}\xi=-AJ_{s,t}\xi-\nabla B(U)J_{s,t}\xi +GJ_{s,t}\xi,\quad J_{s,s}\xi=\xi,
\end{equation}
and $\nabla B(U)J_{s,t}\xi:=B(U,J_{s,t}\xi)+B(J_{s,t}\xi,U)$.

Let $J_{s,t}^{(2)}:H\rightarrow \mathcal{L}(H,\mathcal{L}(H))$ be the second derivative of $U$ with respect to an initial condition $U_0$. Observe that, for fixed $U_0$ in the directions of $\phi,\psi\in H$, 
\begin{equation}
\begin{aligned}
&\partial_t J_{s,t}^{(2)}(\phi,\psi)
=-AJ_{s,t}^{(2)}(\phi,\psi)-\nabla B(U)J_{s,t}^{(2)}(\phi,\psi)+\nabla B(J_{s,t}\phi)J_{s,t}\psi+GJ_{s,t}^{(2)}(\phi,\psi),\\
& J_{s,s}^{(2)}(\phi,\psi)=0.
\end{aligned}
\end{equation}
For any $0\leq t\leq T$ and $\xi\in H$, let $K_{t,T}$ be the adjoint of $J_{t,T}$, i.e., $\varrho_t:= K_{t,T}\xi=J^*_{t,T}\xi$ satisfies the following `backward' system
\begin{equation}\label{backward}
\partial_t\varrho_t=A\varrho_t+(\nabla B(U(t)))^*\varrho_t-G^*\varrho_t=-(\nabla F(U))^*\varrho_t,
\quad\varrho_T=\xi,
\end{equation}
where $\langle(\nabla B(U))^*\varrho,\psi \rangle=\langle\varrho,\nabla B(U)\psi\rangle$ and $\nabla B(U)\psi=B(U,\psi)+B(\psi,U)$, $G^*$ is the adjoint of the operator $G$.

The crucial step in estimating $\|\nabla P_t\Phi(U_0)\|$ is to ``approximately remove" the gradient from $\Phi$ in \eqref{P.1}.
As such we seek to (approximately) identify $J_{0,t}\xi$ with a Malliavin derivative of some suitable random process and integrate by parts, in the Malliavin sense.

For given $\ell\in\mathbb{S}$, $t>0$,
let $\Psi(t,W)$ be a $\mathcal{F}_{\ell_t}^W$-measurable random variable. For $v\in L^2([0,\ell_t];\mathbb{R}^d)$, the Malliavin derivative of $\Psi$ in the direction $v$ is defined by
\begin{equation}
\mathcal{D}^{v}\Psi(t,W)
=\lim\limits_{\varepsilon\to0}\frac{1}{\varepsilon}
\left(\Psi(t,U_0,W+\varepsilon\int_0^\cdot vds)-\Psi(t,U_0,W)\right),
\end{equation}
where the limit holds almost surely (e.g. see \cite{MH-2006,MH-2011} for Hilbert space case).
In the definition of Malliavin derivative, the element $\ell$ is taken as
fixed. Then, $\mathcal{D}^{v}U_t$ satisfies the following equation:
\begin{equation}
\mathrm{d}\mathcal{D}^v U_t
=-A\mathcal{D}^v U_t\mathrm{d}t
-\nabla B(U_t)\mathcal{D}^v U_t \mathrm{d}t
+G\mathcal{D}^v U_t \mathrm{d}t
+\sigma_{\theta}d\left(\int_0^{\ell_t}v_s \mathrm{d}s\right).
\end{equation}

Define
$$\gamma_{u}=\inf\{t\geq0, S_t(\ell)\geq u\}.
$$
By the formula of constant variations or Fubini's theorem, for any $v\in L^2([0,\ell_t];\mathbb{R}^d)$, we can deduce
\begin{equation}\label{P.4}
\begin{aligned}
\mathcal{D}^v U_t
&=\int_0^t J_{r,t}\sigma_{\theta} \mathrm{d}\big(\int_0^{\ell_r}v_s\mathrm{d}s\big)
=\sum_{r\le t}J_{r,t}\sigma_{\theta} \int_{\ell_{r-}}^{\ell_r}v_s \mathrm{d}s\\
&=\sum_{r\le t}\int_{\ell_{r-}}^{\ell_{r}}J_{\gamma_{u},t}\sigma_{\theta} v_{u}\mathrm{d}u
=\int_{0}^{\ell_{t}}J_{\gamma_{u},t}\sigma_{\theta}  v_{u}\mathrm{d}u,
\end{aligned}
\end{equation}
where the second equality is obvious, and we have used the fact that  $\gamma_u=r, u\in(\ell_{r-},\ell_{r})$ in the third equality.

Inspired by \eqref{P.4}, for any $s\leq t$ and $\ell\in\mathbb{S}$, we define the random operator $\mathcal{A}_{s,t}:L^2([\ell_{s},\ell_{t}];\mathbb{R}^d)\rightarrow H$ by
\begin{equation}\label{P.7}
\mathcal{A}_{s,t}v
=\int_{\ell_s}^{\ell_t}J_{\gamma_u,t}\sigma_{\theta}v_u \mathrm{d}u,
~v\in L^2([\ell_s,\ell_t];\mathbb{R}^d).
\end{equation}
For any $s<t$, let $\mathcal{A}_{s,t}^*:H\rightarrow L^2([\ell_{s},\ell_{t}];\mathbb{R}^d)$ be the adjoint of $\mathcal{A}_{s,t}$, defined by
\begin{equation*}\label{P.8}
\mathcal{A}_{s,t}^* \phi
=(\alpha_k^m\langle \phi,J_{\gamma_u,t}\sigma_k^m \rangle)_{k\in\mathcal{Z},m\in\{0,1\} },~~\text{for any}~\phi \in H,~u\in[\ell_{s},\ell_{t}].
\end{equation*}

Let $\rho(t):= J_{0,t}\xi-\mathcal{A}_{0,t}v$, then it satisfies
\begin{equation}\label{2.12}
\partial_t\rho=-A\rho-\nabla B(U)\rho+G\rho-\sigma_\theta v,\quad\rho(0)=\xi.
\end{equation}

The object $\mathcal{M}_{s,t}:=\mathcal{A}_{s,t}\mathcal{A}_{s,t}^*: H\rightarrow H$, referred to as the Malliavin covariance matrix, plays an
important role in the theory of stochastic analysis. By a simple calculation, we have (c.f. \cite[Lemma 2.2]{Zhang-2014})
\begin{equation}\label{Malliavin}
\langle\mathcal{M}_{s,t}\phi,\phi\rangle
=\sum_{k\in\mathcal{Z},m\in\{0,1\}}(\alpha_k^m)^{2}\int_{s}^{t}\langle K_{r,t}\phi,\sigma_k^m\rangle^{2}\mathrm{d}\ell_{r}.
\end{equation}

If the invertibility of $\mathcal{M}_{s,t}$ can be proven, then we can choose appropriate direction $v$ with suitable bounds to determine an exact control of $\rho_{t}$, indicating that the Markov semigroup is smoothing in finite time (i.e. it is strongly Feller).

We may select the control $v$ in \eqref{2.12} from the use of determining modes when a sufficient number of directions in Fourier space are forced, or if $\nu_1$, $\nu_2$ are big enough, see \cite{Foias-1967} or, more recently, \cite{Foias-2001}.
Consequently, for finite dimensional situations, the invertibility of the Malliavin matrix is well understood. However in infinite dimensions, this invertibility is considerably more difficult to determine and might not hold in general.
Therefore, following the insights in \cite{MH-2006}, we can use a Tikhonov regularization of the Malliavin matrix to construct a control $v$, and corresponding $\rho_t$, which will be provided in detail in Section 4.

\section{Moment estimates on $J_{s,t}\xi$, $J_{s,t}^{(2)}(\phi,\psi)$, $U_t$}
In this section, we will provide some estimates for $J_{s,t}$, $J_{s,t}^{(2)}$, $\mathcal{A}_{s,t}$, $\mathcal{A}_{s,t}^*$, $\mathcal{M}_{s,t}$ and their Malliavin derivatives, as well as for the solution $U_t$ of the system \eqref{2.1}. These estimates will be used in subsequent sections.

\begin{lemma}\label{lemma2.4}
  There exists a constant $C_0$ only depending on $\nu=\min\{\nu_1,\nu_2\}$ and $g$ such that for any $\xi,\phi,\psi\in H$, $a>0$ and $0\leq s\leq t\leq T$, $J_{s,t}$ and $J_{s,t}^{(2)}$ satisfy almost surely
\begin{equation}\label{2.16}
\sup_{t\in[s,T]}\|J_{s,t}\xi\|^{2}
\leq C_{0}\|\xi\|^{2}e^{C_{0}
\int_{s}^{T}(\|U_{r}\|_{1}^{4/3}+1)\mathrm{d}r},
\end{equation}
\begin{equation}\label{2.17}
\int_{s}^{t}\|J_{s,r}\xi\|_{1}^{2}\mathrm{d}r
\leq C_{0}\|\xi\|^{2}e^{C_{0}
\int_{s}^{t}(\|U_{r}\|_{1}^{4/3}+1)\mathrm{d}r},
\end{equation}
\begin{equation}\label{4.12}
\begin{aligned}
&\nu\int_s^t e^{-mr}\|J_{s,r}\xi\|_1^2\mathrm{d}r
\leq C_0\|\xi\|^2e^{\int_s^te^{-mr}[C_0(\|U_r\|_1^{4/3}+1)-m]\mathrm{d}r},
\end{aligned}
\end{equation}
\begin{equation}\label{2.18}
\sup\limits_{t\in[s,T]}\|J_{s,t}^{(2)}(\phi,\psi)\|^2
\leq C_{0}\|\phi\|^2\|\psi\|^2e^{C_{0}\int_{s}^{T}
(\|U_{r}\|_{1}^{4/3}+1)\mathrm{d}r}.
\end{equation}
Moreover, for each $0\leq s\leq T$ and $\kappa$, we have the following estimate
\begin{equation}\label{2.19}
\begin{aligned}
\|J_{s,T}\xi\|^{2}
\leq& C_{0}\exp\Big\{\frac{\nu\kappa}{120}\int_{s}^{T}
(\|U_{r}\|_{1}^{2}+1)e^{-\nu(T-r)+8\mathfrak{B}_{0}\kappa(\ell_{T}-\ell_{r})}\mathrm{d}r\\
&+C_{\kappa}\int_{s}^{T}e^{2\nu(T-r)-16\mathfrak{B}_{0}\kappa(\ell_{T}-\ell_{r})}
\mathrm{d}r\Big\}\|\xi\|^{2},
\end{aligned}
\end{equation}
where $C_0$ is taken from \eqref{2.16}-\eqref{2.18}, and $C_\kappa$ is a positive constant depending on $\kappa,\nu,g$.
\end{lemma}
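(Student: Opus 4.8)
The plan is to obtain all five bounds from energy estimates for the linearized flow \eqref{P.2} and its second variation, carried out in the inner product $\langle U,\tilde U\rangle_\ast:=\frac{\nu_1\nu_2}{g^2}\langle w,\tilde w\rangle+\langle\theta,\tilde\theta\rangle$ attached to the weighted norm \eqref{norm} with $n=1$. Writing $J:=J_{s,t}\xi=(J^w,J^\theta)$ and pairing \eqref{P.2} with $J$ in $\langle\cdot,\cdot\rangle_\ast$, I would split the right-hand side into three contributions. First, the dissipation $-\langle AJ,J\rangle_\ast=-\frac{\nu_1\nu_2}{g^2}\nu_1\|\nabla J^w\|^2-\nu_2\|\nabla J^\theta\|^2$ is coercive and, by the Poincaré inequality on mean-zero fields, dominates $c\nu\|J\|_1^2$. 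Second, the buoyancy term $\langle GJ,J\rangle_\ast=\frac{\nu_1\nu_2}{g}\langle\partial_x J^\theta,J^w\rangle$ is exactly where the weight $\nu_1\nu_2/g^2$ pays off: a single Young inequality absorbs it into a small fraction of the dissipation plus a constant multiple of $\|J\|^2$. Third, in the nonlinear term $\langle\nabla B(U)J,J\rangle_\ast=\langle B(U,J),J\rangle_\ast+\langle B(J,U),J\rangle_\ast$ the transport part $\langle B(U,J),J\rangle_\ast$ vanishes by the skew-symmetry in the first line of \eqref{2.2}, so only $\langle B(J,U),J\rangle_\ast$ survives.

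The decisive point is the estimate of $\langle B(J,U),J\rangle_\ast$. Applying the bilinear bound in the second line of \eqref{2.2} with the exponents $(s_1,s_2,s_3)=(1,0,\tfrac12)$, together with $\|\mathcal K\ast J^w\|_1=\|J^w\|$ (third line of \eqref{2.2}) and the interpolation $\|\cdot\|_{1/2}\le\|\cdot\|^{1/2}\|\cdot\|_1^{1/2}$ (fourth line of \eqref{2.2}), gives a bound of the form $C\|J\|^{3/2}\|J\|_1^{1/2}\|U\|_1$; a Young inequality with conjugate exponents $(4,\tfrac43)$ then produces $\langle B(J,U),J\rangle_\ast\le\varepsilon\|J\|_1^2+C_\varepsilon(\|U\|_1^{4/3}+1)\|J\|^2$, which is precisely how the subcritical power $4/3$ enters. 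Collecting the three contributions yields the master differential inequality $\frac{d}{dr}\|J_{s,r}\xi\|^2+c\nu\|J_{s,r}\xi\|_1^2\le C_0(\|U_r\|_1^{4/3}+1)\|J_{s,r}\xi\|^2$, understood pathwise. Grönwall's lemma gives \eqref{2.16}; integrating the retained coercive term against this pointwise bound gives \eqref{2.17}; and a weighted Grönwall argument, in which the master inequality is propagated against the factor $e^{-mr}$ (the $-m$ in the exponent of \eqref{4.12} being generated when this weight is differentiated), yields \eqref{4.12}.

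For the second variation \eqref{2.18} I would run the same energy estimate on the equation for $J^{(2)}_{s,t}(\phi,\psi)$; the operator part is handled exactly as above, so the only genuinely new object is the inhomogeneity $\nabla B(J_{s,t}\phi)J_{s,t}\psi$. Pairing it with $J^{(2)}$ and using \eqref{2.2} with a balanced choice of Sobolev indices, followed by Young's inequality against $\varepsilon\|J^{(2)}\|_1^2$, leaves a forcing term controlled by products such as $\sup_r\|J_{s,r}\phi\|^2\int_s^t\|J_{s,r}\psi\|_1^2\,dr$. Both factors are already estimated by \eqref{2.16} and \eqref{2.17}, which is exactly what supplies the prefactor $\|\phi\|^2\|\psi\|^2$ and the exponential weight in \eqref{2.18}; a final Grönwall step closes the bound.

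The genuinely delicate estimate is \eqref{2.19}, and I expect it to be the main obstacle. Here one cannot afford to discard the critical norm $\|U_r\|_1^2$ through the $4/3$-Young inequality, since the later spectral analysis of $\mathcal M$ requires the sharp quadratic dependence; the price is that the dissipation alone no longer controls the nonlinearity. The plan is to test the linearized equation against $J$ weighted by the time-dependent factor $\exp(-\nu(T-r)+8\mathfrak B_0\kappa(\ell_T-\ell_r))$, coupling the deterministic exponential decay of $e^{-tA}$ with the random subordinated clock $\ell$ through the constant $\mathfrak B_0$ measuring the strength of the bilinear interaction. With this weight the nonlinear term is split so that its critical part is absorbed into the weighted dissipation, the quadratic remainder is gathered into the first integral in the exponent with the explicit coefficient $\nu\kappa/120$, and the leftover, no longer involving $U$, is bundled into the second integral $C_\kappa\int_s^T e^{2\nu(T-r)-16\mathfrak B_0\kappa(\ell_T-\ell_r)}\,dr$. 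The hard part is choosing the weight and the Young parameters so that the competing exponentials $e^{-\nu(T-r)}$ and $e^{8\mathfrak B_0\kappa(\ell_T-\ell_r)}$ balance and the constants propagate to the precise numerical coefficients stated; a càdlàg Grönwall inequality adapted to the jumps of $\ell$ then delivers \eqref{2.19}.
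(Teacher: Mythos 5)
Your energy-estimate scheme for \eqref{2.16}, \eqref{2.17}, \eqref{4.12} and \eqref{2.18} is essentially the paper's own proof: the same weighted inner product, the same cancellation of $\langle B(U,J),J\rangle$ by skew-symmetry, the same trilinear bound $C\|U\|_1\|J\|_{1/2}\|J\|$ followed by interpolation and Young with exponents $(4,\tfrac43)$ to reach the master inequality $\frac{d}{dr}\|J_{s,r}\xi\|^2+\nu\|J_{s,r}\xi\|_1^2\le C_0(\|U_r\|_1^{4/3}+1)\|J_{s,r}\xi\|^2$, and the same Gr\"onwall and weighted-Gr\"onwall steps; the treatment of the second variation via \eqref{2.16}--\eqref{2.17} also matches.

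Where you diverge is \eqref{2.19}, and there your plan is both more complicated than necessary and not actually carried out: you name ``choosing the weight and the Young parameters'' as the hard part and leave it unresolved. In the paper, \eqref{2.19} is not a new energy estimate at all; it follows from \eqref{2.16} by a pointwise Young inequality applied \emph{inside the exponent}. Writing $w(r)=e^{-\nu(T-r)+8\mathfrak{B}_0\kappa(\ell_T-\ell_r)}$, one has
\[
C_0\|U_r\|_1^{4/3}=C_0\bigl(\|U_r\|_1^2\,w(r)\bigr)^{2/3}w(r)^{-2/3}\le\frac{\nu\kappa}{120}\,\|U_r\|_1^2\,w(r)+C_\kappa\,w(r)^{-2}
\]
by Young with exponents $(3/2,3)$, together with $C_0\le\frac{\nu\kappa}{120}\,w(r)+C_\kappa\,w(r)^{-2}$, and $w(r)^{-2}$ is exactly $e^{2\nu(T-r)-16\mathfrak{B}_0\kappa(\ell_T-\ell_r)}$; integrating over $[s,T]$ and substituting into the exponent of \eqref{2.16} yields \eqref{2.19} verbatim. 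So there is no need to re-test the linearized equation against an anticipating weight or to invoke a c\`adl\`ag Gr\"onwall lemma adapted to the jumps of $\ell$. Your premise that the $4/3$-Young step must be avoided for \eqref{2.19} is mistaken: the quadratic dependence on $\|U_r\|_1$ is reintroduced after the fact, at the level of the exponent, precisely in the weighted form that Lemma \ref{moment} is designed to control.
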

\begin{proof} By \eqref{2.2}, we have
 \begin{equation}\label{2.20}
\langle B(J_{s,t}\xi,U_{t}),J_{s,t}\xi\rangle
\leq C\|U_{t}\|_{1}\|J_{s,t}\xi\|_{\frac{1}{2}}\|J_{s,t}\xi\| \leq \frac{\nu}{4}\|J_{s,t}\xi\|_{1}^{2}
+C\|U_{t}\|_{1}^{4/3}\|J_{s,t}\xi\|^{2},
\end{equation}
and
 \begin{equation}\label{2.21}
\langle G(J_{s,t}\xi),J_{s,t}\xi\rangle
\leq |g|\|J_{s,t}\xi^{(2)}\|_{1}\|J_{s,t}\xi\|\leq \frac{\nu}{4}\|J_{s,t}\xi\|_{1}^{2}+C\|J_{s,t}\xi\|^{2},
\end{equation}
where $J_{s,t}\xi=(J_{s,t}\xi^{(1)},J_{s,t}\xi^{(2)})^T$, and obviously, there holds $\|J_{s,t}\xi^{(2)}\|^2_1 \leq \|J_{s,t}\xi\|_1^2$. Applying the chain rule to $\|J_{s,t}\xi\|^2$, we obtain
\begin{equation}\label{2.22}
\mathrm{d}\|J_{s,t}\xi\|^2
\leq -\nu\|J_{s,t}\xi\|_1^2 \mathrm{d}t+C_0(\|U_t\|_1^{4/3}+1)\|J_{s,t}\xi\|^2 \mathrm{d}t,
\end{equation}
which implies
\begin{equation*}
\|J_{s,t}\xi\|^2
\leq C\|\xi\|^2e^{C_0\int_{s}^{t}(\|U_{r}\|_{1}^{4/3}+1)\mathrm{d}r},
\end{equation*}
and
\begin{equation*}
\begin{aligned}
\nu\int_{s}^t\|J_{s,r}\xi\|_1^2\mathrm{d}r
&\leq\|\xi\|^2+C\int_{s}^{t}(\|U_{r}\|_{1}^{4/3}+1)\|J_{s,r}\xi\|^2\mathrm{d}r\\
&\leq \|\xi\|^2+C\|\xi\|^2\int_s^t(\|U_{r}\|_{1}^{4/3}+1)\mathrm{d}r e^{C\int_{s}^{t}(\|U_{r}\|_{1}^{4/3}+1)\mathrm{d}r}\\
&\leq C\|\xi\|^2e^{C\int_{s}^{t}(\|U_{r}\|_{1}^{4/3}+1)\mathrm{d}r}.
\end{aligned}
\end{equation*}
From \eqref{2.22}, for the constant $m>0$, we can deduce that
\begin{align*}
&\frac{\mathrm{d}e^{-mt}\|J_{s,t}\xi\|^2}{\mathrm{d}t}
=-me^{-mt}\|J_{s,t}\xi\|^2+e^{-mt}\frac{\mathrm{d}\|J_{s,t}\xi\|^2}{\mathrm{d}t}\\
&\leq -me^{-mt}\|J_{s,t}\xi\|^2-\nu e^{-mt}\|J_{s,t}\xi\|_1^2
+e^{-mt}C_0(\|U_t\|_1^{4/3}+1)\|J_{s,t}\xi\|^2\\
&=e^{-mt}\left(C_0(\|U_t\|_1^{4/3}+1)-m\right)\|J_{s,t}\xi\|^2
-\nu e^{-mt}\|J_{s,t}\xi\|^2_1.
\end{align*}
By Gronwall's inequality, we derive
\begin{equation*}
\begin{aligned}
&e^{-mt}\|J_{s,t}\xi\|^2+\nu\int_s^t e^{-mr}\|J_{s,r}\xi\|_1^2\mathrm{d}r\\
&\leq C_0\|\xi\|^2e^{\int_s^te^{-mr}[C_0(\|U_r\|_1^{4/3}+1)-m]\mathrm{d}r}.
\end{aligned}
\end{equation*}
Then we complete \eqref{2.16}-\eqref{4.12}.

Turn to prove \eqref{2.18}, by using \eqref{2.2} we obtain
\begin{equation*}
\langle B(J_{s,t}^{(2)}(\phi,\psi),U_{t}),J_{s,t}^{(2)}(\phi,\psi)\rangle
\leq\frac{\nu}{8}\|J_{s,t}^{(2)}(\phi,\psi)\|_{1}^{2}
+C\|U_{t}\|_{1}^{4/3}\|J_{s,t}^{(2)}(\phi,\psi)\|^{2},
\end{equation*}
and
\begin{equation*}
\begin{aligned}
\langle B(J_{s,t}\phi,J_{s,t}\psi),J_{s,t}^{(2)}(\phi,\psi)\rangle
&\leq C\|J_{s,t}^{(2)}(\phi,\psi)\|_{1}\|J_{s,t}\phi\|_{1/2}\|J_{s,t}\psi\|\\
&\leq\frac{\nu}{8}\|J_{s,t}^{(2)}(\phi,\psi)\|_{1}^{2}
+C\|J_{s,t}\phi\|_{1/2}^{2}\|J_{s,t}\psi\|^{2}.
\end{aligned}
\end{equation*}
Similarly,
\begin{equation*}
\langle B(J_{s,t}\psi,J_{s,t}\phi),J_{s,t}^{(2)}(\phi,\psi)\rangle
\leq\frac{\nu}{8}\|J_{s,t}^{(2)}(\phi,\psi)\|_{1}^{2}
+C\|J_{s,t}\psi\|_{1/2}^{2}\|J_{s,t}\phi\|^{2},
\end{equation*}
 \begin{equation*}
\langle G(J_{s,t}^{(2)}(\phi,\psi)),J_{s,t}^{(2)}(\phi,\psi)\rangle
\leq \frac{\nu}{8}\|J_{s,t}^{(2)}(\phi,\psi)\|_{1}^{2}
+C\|J_{s,t}^{(2)}(\phi,\psi)\|^{2}.
\end{equation*}
Applying the chain rule to $\|J_{s,t}^{(2)}(\phi,\psi)\|^2$, with the help of \eqref{2.16} and \eqref{2.17}, one arrives at
\begin{equation*}
\begin{aligned}
\|J_{s,t}^{(2)}(\phi,\psi)\|^{2}&\leq Ce^{C\int_{s}^{t}(\|U_{r}\|_{1}^{4/3}+1)\mathrm{d}r}
\int_{s}^{t}\big[\|J_{s,r}\phi\|_{1/2}^{2}\|J_{s,r}\psi\|^{2}
+\|J_{s,r}\psi\|_{1/2}^{2}\|J_{s,r}\phi\|^{2}\big]\mathrm{d}r \\
&\leq Ce^{C\int_{s}^{t}(\|U_{r}\|_{1}^{4/3}+1)\mathrm{d}r}
\|\phi\|^{2}\|\psi\|^{2},
\end{aligned}
\end{equation*}
which shows \eqref{2.18}. Furthermore, by Young ’s inequality and \eqref{2.16} we can obtain
\eqref{2.19}. The proof is completed.
\end{proof}

Recall that $P_N$ is the orthogonal projection from $H$ into $H_N=\mathrm{span}\{\sigma_k^m,\psi_k^m:|k|\leq N,m\in\{0,1\}\}$ and $Q_N=I-P_N$. For any $N\in\mathbb{N}$, $t\geq 0$ and $\xi\in H$, denote $\xi_t^Q=Q_N J_{0,t}\xi$, $\xi_t^P=P_N J_{0,t}\xi$ and $\xi_t=J_{0,t}\xi$.

\begin{lemma}\label{lemma2.6}
  For any $t\geq 0$ and $\xi\in H$, one has
\begin{equation}
\|\xi_t^Q\|^2\leq \exp\{-\nu N^2 t\}\|\xi\|^2+\frac{C\|\xi\|^2}{\sqrt{N}}
\exp\left\{C\int_0^t(\|U_s\|_1^{4/3}+1)\mathrm{d}s\right\}\sup_{s\in[0,t]}\|U_s\|,
\end{equation}
where C is a constant depending on $\nu=\min\{\nu_1,\nu_2\}, g$.
\end{lemma}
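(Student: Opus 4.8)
The plan is to derive an evolution inequality for $\|\xi_t^Q\|^2=\|Q_N J_{0,t}\xi\|^2$ by applying $Q_N$ to the linearised equation \eqref{P.2} and testing against $\xi_t^Q$ itself. Since $Q_N$ commutes with the linear operator $A$ and projects onto high Fourier modes $|k|>N$, the dissipative term supplies a spectral gap: for any $v\in Q_N H$ we have $\langle A v,v\rangle\geq \nu N^2\|v\|^2$, where $\nu=\min\{\nu_1,\nu_2\}$. This is the source of the fast decaying factor $\exp\{-\nu N^2 t\}$. First I would write, using $\partial_t\xi_t=-A\xi_t-\nabla B(U)\xi_t+G\xi_t$ and applying $Q_N$,
\begin{equation*}
\tfrac12\tfrac{\mathrm d}{\mathrm dt}\|\xi_t^Q\|^2
=-\langle A\xi_t^Q,\xi_t^Q\rangle
-\langle Q_N\nabla B(U)\xi_t,\xi_t^Q\rangle
+\langle Q_N G\xi_t,\xi_t^Q\rangle,
\end{equation*}
and then bound the last two terms.

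The key step is to estimate the nonlinear and buoyancy contributions so that they produce the small factor $N^{-1/2}$ rather than being absorbed by the dissipation. For the buoyancy term $G\xi_t=(g\partial_x\xi_t^{(2)},0)^T$, one gains a factor from the high-frequency projection: $\langle Q_N G\xi_t,\xi_t^Q\rangle\le |g|\,\|G\xi_t\|\,\|\xi_t^Q\|$, and since $\xi_t^Q$ lives in modes $|k|>N$ one can trade one derivative against the frequency, yielding a bound like $C N^{-1}\|\xi_t\|_1\|\xi_t^Q\|_1$ or similar. The genuinely delicate term is the nonlinear one. The plan is to use the bilinear estimates from \eqref{2.2} together with the orthogonality of $Q_N$: writing $\nabla B(U)\xi_t=B(U,\xi_t)+B(\xi_t,U)$, I would estimate $|\langle Q_N\nabla B(U)\xi_t,\xi_t^Q\rangle|$ by exploiting that $\|Q_N\phi\|\le N^{-s}\|\phi\|_s$ to extract a negative power of $N$, at the cost of Sobolev norms of $U$ and $\xi_t$ that are controlled by $\sup_{s\in[0,t]}\|U_s\|$ and by \eqref{2.16}--\eqref{2.17}.

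Collecting these bounds I would arrive at a differential inequality of the form
\begin{equation*}
\tfrac{\mathrm d}{\mathrm dt}\|\xi_t^Q\|^2
\le -\nu N^2\|\xi_t^Q\|^2
+\frac{C}{\sqrt N}\,\big(\|U_t\|_1^{4/3}+1\big)\,\|U_t\|\,\|\xi_t\|^2,
\end{equation*}
where the precise power $4/3$ and the factor $N^{-1/2}$ arise from interpolation (using $\|w\|_{1/2}^2\le\|w\|\|w\|_1$ from \eqref{2.2}) and Young's inequality tuned so the top-order $\|\cdot\|_1^2$ pieces are absorbed into the dissipation. Applying Gronwall's inequality and then substituting the full Jacobian bound \eqref{2.16}, namely $\sup_{s}\|\xi_s\|^2\le C\|\xi\|^2\exp\{C\int_0^t(\|U_r\|_1^{4/3}+1)\mathrm dr\}$, converts the integrated forcing term into the stated factor $C\|\xi\|^2 N^{-1/2}\exp\{C\int_0^t(\|U_s\|_1^{4/3}+1)\mathrm ds\}\sup_{s\in[0,t]}\|U_s\|$, while the homogeneous part contributes $\exp\{-\nu N^2 t\}\|\xi\|^2$. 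The main obstacle I anticipate is the bookkeeping in the nonlinear estimate: one must route the derivatives so that exactly one power of $N$ is saved from the projection $Q_N$ (giving $N^{-1/2}$ after the interpolation), while keeping the remaining Sobolev norms at orders that are controllable by \eqref{2.16}--\eqref{2.17} and by $\sup_s\|U_s\|$ rather than by higher norms of $U$ that are not available here.
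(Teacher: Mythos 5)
Your overall skeleton --- apply $Q_N$ to the variational equation, use the spectral gap $\|\xi_t^Q\|_1^2\ge N^2\|\xi_t^Q\|^2$ to get the $e^{-\nu N^2 t}$ decay, run Gronwall, and substitute the Jacobian bound \eqref{2.16} --- is the same as the paper's. The gap is in the mechanism you propose for producing the factor $N^{-1/2}$. You want to extract it \emph{pointwise in time}, inside the differential inequality, by applying $\|Q_N\phi\|_s\le N^{s-s'}\|\phi\|_{s'}$ within the bilinear estimate for $\langle \nabla B(U)\xi_t,\xi_t^Q\rangle$. But to gain $N^{-1/2}$ from $\xi_t^Q$ you must lower its norm from $H^1$ to $H^{1/2}$ (e.g. $\|\xi_t^Q\|_{1/2}\le N^{-1/2}\|\xi_t^Q\|_1$), and then the bilinear bound \eqref{2.2} forces a full $\|\xi_t\|_1$ (or $\|U_t\|_1$ beyond interpolation reach) into the remaining factor; after Young's inequality the leftover is of the type $N^{-1}\|U_t\|_{1/2}^2\|\xi_t\|_1^2$, which is not controlled by $\sup_s\|U_s\|$ together with \eqref{2.16}--\eqref{2.17} (one would need $\sup_s\|U_s\|_1$ or $\int_0^t\|U_s\|_1\|\xi_s\|_1^2\,\mathrm ds$, neither of which is available). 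Indeed your target inequality $\tfrac{\mathrm d}{\mathrm dt}\|\xi_t^Q\|^2\le-\nu N^2\|\xi_t^Q\|^2+\tfrac{C}{\sqrt N}(\|U_t\|_1^{4/3}+1)\|U_t\|\|\xi_t\|^2$ cannot hold pointwise with $C$ independent of $N$: it would make the forcing vanish as $N\to\infty$ for fixed smooth $U$, which the bilinear term does not do at the level of a pointwise-in-time estimate.

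The paper extracts no power of $N$ in the differential inequality at all. It simply bounds $\langle B(\xi_t,U_t)+B(U_t,\xi_t),\xi_t^Q\rangle\le\frac{\nu}{4}\|\xi_t^Q\|_1^2+C\|U_t\|_{1/2}^2\|\xi_t\|^2$ and $\langle G\xi_t,\xi_t^Q\rangle\le\frac{\nu}{4}\|\xi_t^Q\|_1^2+C\|\xi_t\|^2$, absorbs the $H^1$ pieces into the dissipation, and writes the Duhamel/Gronwall form with kernel $e^{-\nu N^2(t-s)}$. The factor $N^{-1/2}$ and the exponent $4/3$ then both come from a single application of H\"older's inequality \emph{in the time integral}: after using $\|U_s\|_{1/2}^2\le\|U_s\|\,\|U_s\|_1$ and \eqref{2.16} to pull out $\sup_s\|U_s\|$ and the exponential factor, one is left with
\begin{equation*}
\int_0^t e^{-\nu N^2(t-s)}\|U_s\|_1\,\mathrm ds
\le\Big(\int_0^t e^{-4\nu N^2(t-s)}\,\mathrm ds\Big)^{1/4}
\Big(\int_0^t\|U_s\|_1^{4/3}\,\mathrm ds\Big)^{3/4}
\le\frac{C}{\sqrt N}\Big(\int_0^t\|U_s\|_1^{4/3}\,\mathrm ds\Big)^{3/4},
\end{equation*}
and the last factor is absorbed into $\exp\{C\int_0^t(\|U_s\|_1^{4/3}+1)\,\mathrm ds\}$. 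You should replace your ``key step'' by this integrated-in-time argument; as written, the central estimate of your proposal would fail.
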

\begin{proof}
Similar to \eqref{2.20} and \eqref{2.21}, there hold
\begin{equation*}
\langle B(
\xi_{t},U_{t}),\xi_{t}^{Q}\rangle+\langle B(U_{t},\xi_{t}),\xi_{t}^{Q}\rangle
\leq C\|\xi_{t}^{Q}\|_{1}\|U_{t}\|_{1/2}\|\xi_{t}\|\leq\frac{\nu}{4}\|\xi_t^Q\|_1^2+C\|U_t\|_{1/2}^2\|\xi_t\|^2,
\end{equation*}
and
 \begin{equation*}
\begin{aligned}
\langle G(\xi_t),\xi_t^Q\rangle
&\leq \frac{\nu}{4}\|\xi_t^Q\|_1^{2}+C\|\xi_t\|^{2}.
\end{aligned}
\end{equation*}
 Then
applying the chain rule to $\|\xi_t^Q\|^2$, note that $\|\xi_t^{Q}\|_1^2\geq N^2\|\xi_t^{Q}\|^2$, we find that
\begin{align*}
\|\xi_{t}^{Q}\|^{2}&\leq\exp\{-\nu N^{2}t\}\|\xi\|^{2}+C\int_{0}^{t}\exp\{-\nu N^{2}(t-s)\}(\|U_{s}\|_{1/2}^{2}\|\xi_{s}\|^{2}+\|\xi_{s}\|^{2})\mathrm{d}s \\
&\leq\exp\{-\nu N^{2}t\}\|\xi\|^{2} \\
&~~~+C\exp\{C\int_0^t(\|U_s\|_1^{4/3}+1)\mathrm{d}s\}\|\xi\|^2
\sup_{s\in[0,t]}\|U_s\|\int_0^t\exp\{-\nu N^2(t-s)\}\|U_s\|_1\mathrm{d}s \\
&\leq\exp\{-\nu N^2t\}\|\xi\|^2
+C\exp\{C\int_0^t(\|U_s\|_1^{4/3}+1)\mathrm{d}s\}\|\xi\|^2\sup_{s\in[0,t]}\|U_s\| \\
&~~~\times\big(\int_{0}^{t}\exp\{-4\nu N^{2}(t-s)\}\mathrm{d}s\big)^{1/4}\big(\int_{0}^{t}
\|U_{s}\|_{1}^{4/3}\mathrm{d}s\big)^{3/4},
\end{align*}
where we used \eqref{2.16} and \eqref{2.17} in the second inequality. The above inequality implies the desired result.
\end{proof}

\begin{lemma}\label{lemma 2.7}
Assume that $\xi_0^P=0$, then for any $t\geq 0$,
\begin{equation*}
\|\xi_t^P\|^2\leq C\|\xi\|^2\exp\left\{C\int_0^t\|U_s\|_1^{4/3}\mathrm{d}s\right\}
\sup\limits_{s\in[0,t]}(\|U_s\|^{5/2}+1)\frac{1+t}{N^{1/4}},
\end{equation*}
where $C>0$ is a constant depending on $\nu=\min\{\nu_1,\nu_2\}$ and $g$. Furthermore, combining the above inequality with Lemma \ref{lemma2.6}, for any $\xi\in H$ and $t\geq 0$, we have
\begin{equation*}
\begin{aligned}
&\|J_{0,t}Q_{N}\xi\|^{2} \\
&\leq C\big(e^{-\nu N^{2}t}+\frac{1+t}{N^{1/4}}\big)\exp
\left\{C\int_{0}^{t}\|U_{s}\|_{1}^{4/3}\mathrm{d}s\right\} \sup_{s\in[0,t]}(\|U_{s}\|^{5/2}+1)\|\xi\|^{2}.
\end{aligned}
\end{equation*}
\end{lemma}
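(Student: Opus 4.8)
The plan is to project the linearised equation \eqref{P.2} onto the low modes and run an energy estimate on $\xi_t^P=P_NJ_{0,t}\xi$, treating the high-to-low nonlinear transfer as a source term whose smallness in $N$ is inherited from the decay of $\xi_t^Q$ already established in Lemma \ref{lemma2.6}. Since the operators $A$ and $G$ leave each Fourier shell invariant, they commute with $P_N$; applying $P_N$ to \eqref{P.2} therefore yields
\begin{equation*}
\partial_t\xi_t^P=-A\xi_t^P-P_N\nabla B(U_t)\xi_t+G\xi_t^P,\qquad \xi_0^P=P_N\xi=0 .
\end{equation*}
Computing $\tfrac{d}{dt}\|\xi_t^P\|^2$ and using $\langle A\xi_t^P,\xi_t^P\rangle\gtrsim\nu\|\xi_t^P\|_1^2$ together with a buoyancy bound of the type \eqref{2.21}, the only nontrivial contribution is the nonlinear pairing $\langle\nabla B(U_t)\xi_t,\xi_t^P\rangle$ (the projection is dropped, since $\xi_t^P\in H_N$ is self-adjointly absorbed).

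Next I would split $\xi_t=\xi_t^P+\xi_t^Q$ inside $\nabla B(U_t)\xi_t=B(U_t,\xi_t)+B(\xi_t,U_t)$ and sort the resulting terms into three groups. The divergence-free cancellation in the first line of \eqref{2.2} kills $\langle B(U_t,\xi_t^P),\xi_t^P\rangle=0$. The ``diagonal'' term $\langle B(\xi_t^P,U_t),\xi_t^P\rangle$ is absorbed exactly as in \eqref{2.20}, contributing $\tfrac{\nu}{8}\|\xi_t^P\|_1^2+C\|U_t\|_1^{4/3}\|\xi_t^P\|^2$; the second piece is precisely the drift that will generate the factor $\exp\{C\int_0^t\|U_s\|_1^{4/3}ds\}$ via Gronwall. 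Everything else is a genuine high-to-low \emph{transfer} contribution, namely $\langle B(U_t,\xi_t^Q),\xi_t^P\rangle$ and $\langle B(\xi_t^Q,U_t),\xi_t^P\rangle$, which together form the source.

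The heart of the proof is estimating the transfer terms so that (i) $U_t$ enters only through $\|U_t\|$ and $\|U_t\|_1$, since no higher regularity of the solution is available; (ii) the band-limited factor $\xi_t^P$ is differentiated at most to order one, so that the inverse (Bernstein) inequalities $\|\xi_t^P\|_{1+s}\le N^{s}\|\xi_t^P\|_1$ do not inject positive powers of $N$; and (iii) $\xi_t^Q$ appears through its $L^2$-norm $\|\xi_t^Q\|$, which is small by Lemma \ref{lemma2.6}. Using the antisymmetry in the first line of \eqref{2.2} and the product estimate in its second line with a suitably balanced triple $(s_1,s_2,s_3)$, followed by interpolation of the $\xi_t^Q$-factor and Young's inequality, I would peel off a dissipative part $\le\tfrac{\nu}{8}\|\xi_t^P\|_1^2$ (absorbed) and leave a source bounded by $\|\xi_s^Q\|$ to a positive power times powers of $\|U_s\|$ and $\|U_s\|_1$. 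A Gronwall argument starting from $\xi_0^P=0$ then gives
\begin{equation*}
\|\xi_t^P\|^2\le Ce^{C\int_0^t\|U_s\|_1^{4/3}ds}\int_0^t(\mathrm{source}_s)\,ds .
\end{equation*}
Finally, integrating Lemma \ref{lemma2.6} in time gives $\int_0^t\|\xi_s^Q\|^2ds\lesssim\big(N^{-2}+(1+t)N^{-1/2}\sup_{s\le t}\|U_s\|\big)\|\xi\|^2 e^{C\int_0^t(\|U_s\|_1^{4/3}+1)ds}$: the $e^{-\nu N^2 s}$ transient integrates to $O(N^{-2})$ while the remainder supplies the power $N^{-1/2}$ and the linear growth $1+t$. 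Substituting this into the source integral (with Hölder in time for the $U$-factors) produces the claimed $N^{-1/4}$, the weight $\sup_{s\le t}(\|U_s\|^{5/2}+1)$ and the prefactor $1+t$.

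The main obstacle is precisely the balancing in (i)--(iii): the excluded cases in \eqref{2.2} (more than one derivative on $U$, or a full derivative forced onto the low modes) must all be avoided simultaneously while still extracting a negative power of $N$ from $\xi_t^Q$; it is this interpolation/Young optimisation, rather than any single estimate, that fixes the exponents $5/2$ and $1/4$. Once the bound on $\|\xi_t^P\|^2$ is in hand, the combined estimate is immediate: taking the initial datum to be $Q_N\xi$ forces $\xi_0^P=0$, whence $\|J_{0,t}Q_N\xi\|^2\le 2\|\xi_t^P\|^2+2\|\xi_t^Q\|^2$, and adding the new bound to Lemma \ref{lemma2.6}—using $N^{-1/2}\le N^{-1/4}$, $\sup_{s\le t}\|U_s\|\le\sup_{s\le t}(\|U_s\|^{5/2}+1)$, and absorbing the $e^{Ct}$ coming from the ``$+1$'' in the exponent into the factor $1+t$—yields exactly the stated inequality.
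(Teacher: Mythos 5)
Your proposal follows essentially the same route as the paper: an energy estimate on $\xi_t^P$ with the same three-way splitting of the nonlinearity (antisymmetric cancellation of $\langle B(U_t,\xi_t^P),\xi_t^P\rangle$, absorption of the diagonal term into $\frac{\nu}{6}\|\xi_t^P\|_1^2+C\|U_t\|_1^{4/3}\|\xi_t^P\|^2$, and the high-to-low transfer bounded by $C\|\xi_t^P\|_1\|U_t\|\|\xi_t^Q\|_{1/2}$), followed by Gronwall, the interpolation $\|\xi^Q\|_{1/2}^2\le\|\xi^Q\|\,\|\xi^Q\|_1$, Cauchy--Schwarz in time, and the time-integrated form of Lemma \ref{lemma2.6} together with \eqref{2.17} to extract the $N^{-1/4}$, the $1+t$, and the weight $\sup_s(\|U_s\|^{5/2}+1)$. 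The argument is correct and matches the paper's proof in all essential respects.
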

\begin{proof}
 Similar to \eqref{2.20} and \eqref{2.21}, we have
\begin{equation*}
\langle B(\xi_t^P,U_t),\xi_t^P\rangle
\leq\frac{\nu}{6}\|\xi_t^P\|_1^2+C\|U_t\|_1^{4/3}\|\xi_t^P\|^2,
\end{equation*}
and
\begin{equation*}
\langle B(U_t,\xi_t^Q),\xi_t^P\rangle+\langle B(\xi_t^Q,U_t),\xi_t^P\rangle \leq C\|\xi_t^P\|_1\|U_t\|\|\xi_t^Q\|_{1/2}
\leq\frac{\nu}{6}\|\xi_t^P\|_1^2
+C\|U_t\|^2\|\xi_t^Q\|_{1/2}^2.
\end{equation*}
Moreover, one also has
 \begin{equation*}
\begin{aligned}
&\langle G(\xi_t),\xi_t^P \rangle
\leq \frac{\nu}{6}\|\xi_{t}^{P}\|_{1}^{2}+C\|\xi_{t}^P\|_1^{2}.
\end{aligned}
\end{equation*}
Applying the chain rule to $\|\xi_t^P\|^2$ and using Lemma \ref{lemma2.6} as well as \eqref{2.17} yields
\begin{align}
&\|\xi_{t}^{P}\|^{2}\leq C\exp\{C\int_{0}^{t}\|U_{s}\|_{1}^{4/3}\mathrm{d}s\}
\int_{0}^{t}\|U_{s}\|^{2}\|\xi_{s}^Q\|_{1/2}^{2}\mathrm{d}s \notag\\
&\leq C\exp\{C\int_{0}^{t}\|U_{s}\|_{1}^{4/3}\mathrm{d}s\}\sup_{s\in[0,t]}\|U_{s}\|^{2}
\int_{0}^{t}\|\xi_{s}^{Q}\|\|\xi_{s}^Q\|_{1}\mathrm{d}s \notag\\
&\leq
C\exp\{C\int_0^t\|U_s\|_1^{4/3}\mathrm{d}s\} \sup\limits_{s\in[0,t]}\|U_s\|^2\Big(\int_0^t\|\xi_s^Q\|^2\mathrm{d}s\Big)^{1/2}
\Big(\int_0^t\|\xi_s^Q\|_1^2\mathrm{d}s\Big)^{1/2} \notag\\
&\leq
C\|\xi\|\exp\{C\int_0^t\|U_s\|_1^{4/3}\mathrm{d}s\} \sup_{s\in[0,t]}\|U_s\|^2\Big(\int_0^t\|\xi_s^Q\|^2\mathrm{d}s\Big)^{1/2} \notag\\
&\leq
 C\|\xi\|\exp\{C\int_0^t\|U_s\|_1^{4/3}\mathrm{d}s\}\sup_{s\in[0,t]}\|U_s\|^2 \notag\\
&~~~\times\Big(\int_{0}^{t}\Big[\exp\{-\nu N^{2}s\}\|\xi\|^{2}
+\frac{C\|\xi\|^{2}}{\sqrt{N}}\exp\{C\int_{0}^{s}\|U_{r}\|_{1}^{4/3}\mathrm{d}r\}
\sup_{r\in[0,s]}\|U_{r}\|\Big]\mathrm{d}s\Big)^{1/2} \notag\\
&\leq
 C\|\xi\|^2\exp\{C\int_0^t\|U_s\|_1^{4/3}\mathrm{d}s\} \sup_{s\in[0,t]}\|U_s\|^2\Big(\frac{1}{N}
+\frac{\sup_{s\in[0,t]}\|U_r\|^{1/2}\sqrt{t}}{N^{1/4}}\Big) \notag\\
&\leq
 C\|\xi\|^{2}\exp\{C\int_{0}^{t}\|U_{s}\|_{1}^{4/3}\mathrm{d}s\} \sup_{s\in[0,t]}(\|U_{s}\|^{5/2}+1)\frac{1+t}{N^{1/4}}.    \notag
\end{align}
This completes the proof.
\end{proof}

Next, we present some estimates on the operators $\mathcal{A}_{s,t}$, $ J_{s,t}$, and the inverse of the regularized Malliavin matrix. Further details can be found in \cite{MH-2006}.
\begin{lemma}\label{lemma2.9}
For $0<s<t$ and $\beta>0$, there exists a constant $C=C(\{\alpha_k^m\}_{k\in\mathcal{Z},m\in\{0,1\}}, d)>0$ such that
\begin{equation*}
\|\mathcal{A}_{s,t}\|_{\mathcal{L}(L^2([\ell_s,\ell_t];\mathbb{R}^d),H)}\leq C\left(\int_{\ell_s}^{\ell_t}\| J_{\gamma_r,t}\|_{\mathcal{L}(H,H)}^2\mathrm{d}r\right)^{1/2}.
\end{equation*}
Moreover,
\begin{equation*}
\begin{aligned}
&\|\mathcal{A}_{s,t}^{*}(\mathcal{M}_{s,t}+\beta I)^{-1/2}\|
_{\mathcal{L}(H,L^{2}([\ell_s,\ell_t];\mathbb{R}^{d}))}\leq1,
\\
&\|(\mathcal{M}_{s,t}+\beta I)^{-1/2}\mathcal{A}_{s,t}\|
_{\mathcal{L}(L^{2}([\ell_s,\ell_t];\mathbb{R}^{d}),H)}\leq1,
\\
&\|(\mathcal{M}_{s,t}+\beta I)^{-1/2}\|_{\mathcal{L}(H,H)}\leq\beta^{-1/2}.
\end{aligned}
\end{equation*}
Here $\mathcal{L}(X,Y)$ denotes the operator norm of the linear map between the given Hilbert spaces X and Y.
\end{lemma}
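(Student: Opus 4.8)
The plan is to separate the single bound on $\mathcal{A}_{s,t}$, which rests on the explicit integral representation \eqref{P.7}, from the three operator estimates, which are purely spectral consequences of the identity $\mathcal{M}_{s,t}=\mathcal{A}_{s,t}\mathcal{A}_{s,t}^*$ together with the positivity of $\mathcal{M}_{s,t}$.

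For the first inequality, I would estimate $\mathcal{A}_{s,t}v$ in the Bochner sense. For $v\in L^2([\ell_s,\ell_t];\mathbb{R}^d)$, pulling the norm inside the integral in \eqref{P.7} gives
\[
\|\mathcal{A}_{s,t}v\|\le\int_{\ell_s}^{\ell_t}\|J_{\gamma_u,t}\|_{\mathcal{L}(H,H)}\,\|\sigma_\theta v_u\|\,\mathrm{d}u.
\]
Since $\sigma_\theta e_k^m=\alpha_k^m\sigma_k^m$ and the $\{\sigma_k^m\}_{k\in\mathcal{Z},m\in\{0,1\}}$ are mutually orthogonal of fixed norm, one has $\|\sigma_\theta v_u\|\le C(\{\alpha_k^m\},d)\,|v_u|$, where $|\cdot|$ is the Euclidean norm on $\mathbb{R}^d$. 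Applying Cauchy--Schwarz in the variable $u$ then yields
\[
\|\mathcal{A}_{s,t}v\|\le C\left(\int_{\ell_s}^{\ell_t}\|J_{\gamma_u,t}\|_{\mathcal{L}(H,H)}^2\,\mathrm{d}u\right)^{1/2}\|v\|_{L^2([\ell_s,\ell_t];\mathbb{R}^d)},
\]
which is exactly the claimed bound after taking the supremum over $\|v\|_{L^2}\le1$.

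For the three remaining estimates, the key structural fact is that $\mathcal{M}:=\mathcal{M}_{s,t}=\mathcal{A}_{s,t}\mathcal{A}_{s,t}^*$ is bounded (by the first inequality, since $J_{\gamma_u,t}$ is almost surely bounded), self-adjoint, and positive semidefinite, because $\langle\mathcal{M}\phi,\phi\rangle=\|\mathcal{A}_{s,t}^*\phi\|_{L^2}^2\ge0$; compare also \eqref{Malliavin}. Hence for each $\beta>0$ the operators $\mathcal{M}$ and $(\mathcal{M}+\beta I)^{-1/2}$ are commuting, self-adjoint functions of $\mathcal{M}$, and the functional calculus is available. To prove the first ``moreover'' bound I set $\psi=(\mathcal{M}+\beta I)^{-1/2}\phi$ and compute
\[
\|\mathcal{A}_{s,t}^*(\mathcal{M}+\beta I)^{-1/2}\phi\|_{L^2}^2=\langle\mathcal{A}_{s,t}\mathcal{A}_{s,t}^*\psi,\psi\rangle_H=\langle\mathcal{M}(\mathcal{M}+\beta I)^{-1}\phi,\phi\rangle_H\le\|\phi\|^2,
\]
the last inequality coming from the scalar estimate $\lambda/(\lambda+\beta)\le1$ for $\lambda\ge0$ read off through the spectral calculus; this gives the bound $\le1$. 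The second ``moreover'' bound is then immediate: because $(\mathcal{M}+\beta I)^{-1/2}$ is self-adjoint, $(\mathcal{M}+\beta I)^{-1/2}\mathcal{A}_{s,t}$ is the Hilbert-space adjoint of $\mathcal{A}_{s,t}^*(\mathcal{M}+\beta I)^{-1/2}$, and an operator and its adjoint have equal norm. Finally, the third bound is a direct spectral estimate: the spectrum of $\mathcal{M}$ lies in $[0,\infty)$, so $(\lambda+\beta)^{-1/2}\le\beta^{-1/2}$ there, whence $\|(\mathcal{M}+\beta I)^{-1/2}\|_{\mathcal{L}(H,H)}\le\beta^{-1/2}$.

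I expect the only delicate point to be the first estimate, where the operator norm $\|J_{\gamma_u,t}\|_{\mathcal{L}(H,H)}$ must be pulled outside the Bochner integral and the noise coefficients absorbed into the constant $C(\{\alpha_k^m\},d)$; this is the step where the concrete structure of $\sigma_\theta$ and the time change $\gamma_u$ genuinely enter. The three spectral bounds, by contrast, hold for any positive semidefinite operator written as $\mathcal{A}\mathcal{A}^*$ and are precisely the regularised-Malliavin-matrix estimates of the Hairer--Mattingly framework \cite{MH-2006}, so they require no further input.
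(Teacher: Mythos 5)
Your proof is correct and is exactly the standard argument: the paper itself omits the proof of this lemma and defers to \cite{MH-2006}, where the same two steps appear — the Bochner/Cauchy--Schwarz estimate on the representation \eqref{P.7} for the first bound, and the spectral (functional-calculus) identities for the self-adjoint nonnegative operator $\mathcal{M}_{s,t}=\mathcal{A}_{s,t}\mathcal{A}_{s,t}^*$ for the remaining three. No gaps; your remark that boundedness of $\mathcal{M}_{s,t}$ follows pathwise from the almost sure bounds on $J_{\gamma_u,t}$ in Lemma \ref{lemma2.4} is the right way to justify the use of the spectral calculus.
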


By the Riesz representation theorem, and \eqref{P.4}, one has
\begin{equation}\label{riesz}
\mathcal{D}_{u}^{j}U_t
=J_{\gamma_u,t}\sigma_{\theta}e_j,~~\text{for any}~ j=1,\ldots,d,~u\in[0,\ell_t],
\end{equation}
where $\mathcal{D}_u^jF:=(\mathcal{D}F)^j(u)$ is the $j$th component of $\mathcal{D}F$ evaluated at time $u$, and $\{e_j\}_{j=1,\ldots,d}$ is the standard basis of $\mathbb{R}^d$.

Recall that $\mathcal{D}$ is the Malliavin derivative. As in \cite{MH-2006}, if $A:\mathcal{H}_1\rightarrow\mathcal{H}_2$ is a random linear map between two Hilbert spaces, we denote by $\mathcal{D}_{s}^j A:\mathcal{H}_1\rightarrow\mathcal{H}_2$ the random linear map defined by
\begin{equation*}
(\mathcal{D}_{s}^j A)h:=\langle\mathcal{D}_s(Ah),e_j\rangle.
\end{equation*}
Then observe that for any $0\leq s\leq t$, $j=\{1,\cdots,d\}$ and $u\in [0,\ell_t]$, we have
\begin{equation*}
\mathcal{D}_{u}^{j}J_{s,t}\xi
=\begin{cases}
J_{\gamma_u,t}^{(2)}(\sigma_{\theta}e_{j},J_{s,\gamma_u}\xi),~u\in [\ell_s,\ell_t],
\\[2ex]
J_{s,t}^{(2)}(J_{\gamma_u,s}\sigma_{\theta}e_{j},\xi),~\mathrm{if}~u<\ell_s.
\end{cases}
\end{equation*}

\begin{lemma}\label{lemma2.10}
For any $0\leq s<t$, the random operators $ J_{s,t}, \mathcal{A}_{s,t}, \mathcal{A}_{s,t}^{*}$ are differentiable in the Malliavin sense. Moreover, for any $\hbar>0$ and $p\geq1$, there is a constant $C=C(\hbar,p)$, we have the bounds
\begin{equation*}
\begin{aligned}
&\mathbb{E}\|\mathcal{D}_{u}^{j} J_{s,t}\|^{p}
\leq C\exp(\hbar\|U_{0}\|^{2}),
\\
&\mathbb{E}\|\mathcal{D}_{u}^{j}\mathcal{A}_{s,t}\|
_{\mathcal{L}(L^{2}([\ell_s,\ell_t],\mathbb{R}^{d}),H)}^{p}
\leq C\exp(\hbar\|U_{0}\|^{2}),
\\
&\mathbb{E}\|\mathcal{D}_{u}^{j}\mathcal{A}_{s,t}^{*}\|
_{\mathcal{L}(H,L^{2}([\ell_s,\ell_t],\mathbb{R}^{d}))}^{p}
\leq C\exp(\hbar\|U_{0}\|^{2}).
\end{aligned}
\end{equation*}
\end{lemma}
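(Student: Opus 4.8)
The plan is to reduce all three bounds to a single pathwise estimate on the Malliavin derivative of the Jacobian, and from there to an exponential moment of the enstrophy integral $\int_s^t(\|U_r\|_1^{4/3}+1)\,\mathrm dr$. First, the Malliavin differentiability of $J_{s,t}$, $\mathcal{A}_{s,t}$ and $\mathcal{A}_{s,t}^*$ follows from the smooth (in fact linear and bilinear) dependence of the linearised flow \eqref{P.2} and its second variation on the driving noise; the explicit representation of $\mathcal{D}_u^j J_{s,t}\xi$ recorded just above the statement already identifies the derivative with the second variation $J^{(2)}$ evaluated at $\sigma_\theta e_j$ and $J\xi$. Taking $u\in[\ell_s,\ell_t]$ (the case $u<\ell_s$ being identical), I would write $\mathcal{D}_u^j J_{s,t}\xi=J^{(2)}_{\gamma_u,t}(\sigma_\theta e_j,J_{s,\gamma_u}\xi)$, bound the operator norm by a supremum over $\|\xi\|=1$, and then apply \eqref{2.18} to the $J^{(2)}$ factor followed by \eqref{2.16} to $J_{s,\gamma_u}\xi$. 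Since $\|\sigma_\theta e_j\|=|\alpha_k^m|\le C$ and all exponential factors live on subintervals of $[s,t]$, they collapse into the almost sure bound
\[
\|\mathcal{D}_u^j J_{s,t}\|\le C\exp\Big(C\int_s^t(\|U_r\|_1^{4/3}+1)\,\mathrm dr\Big).
\]

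For $\mathcal{A}_{s,t}$, I would differentiate the defining integral \eqref{P.7}, obtaining $(\mathcal{D}_u^j\mathcal{A}_{s,t})v=\int_{\ell_s}^{\ell_t}(\mathcal{D}_u^j J_{\gamma_r,t})\sigma_\theta v_r\,\mathrm dr$, and a Cauchy--Schwarz estimate in $r$ then gives
\[
\|\mathcal{D}_u^j\mathcal{A}_{s,t}\|_{\mathcal L(L^2,H)}\le C\Big(\int_{\ell_s}^{\ell_t}\|\mathcal{D}_u^j J_{\gamma_r,t}\|^2\,\mathrm dr\Big)^{1/2}.
\]
Differentiating the inner-product representation of $\mathcal{A}_{s,t}^*$ produces the same kind of bound. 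Inserting the pathwise Jacobian estimate from the first step, every integrand is dominated by the same enstrophy exponential, so that, up to the factor $(\ell_t-\ell_s)^{1/2}$, the derivatives of $\mathcal{A}_{s,t}$ and $\mathcal{A}_{s,t}^*$ obey the same pathwise bound as $\mathcal{D}_u^j J_{s,t}$.

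It remains to take $p$-th moments. Raising the pathwise bounds to the power $p$ produces a constant times $\mathbb E\exp\big(Cp\int_s^t(\|U_r\|_1^{4/3}+1)\,\mathrm dr\big)$, with the extra factors $(\ell_t-\ell_s)^{p/2}$ for $\mathcal{A}_{s,t},\mathcal{A}_{s,t}^*$ split off by Hölder's inequality, their exponential moments being finite by Condition 2.2. I would then use Young's inequality to replace $Cp\|U_r\|_1^{4/3}$ by $\delta\|U_r\|_1^2+C_{\delta,p}$, the term $C_{\delta,p}(t-s)$ contributing only a deterministic constant. The decisive input is the exponential integrability of the enstrophy supplied by the solution moment estimates of this section: for $\delta$ small enough one has $\mathbb E\exp\big(\delta\int_s^t\|U_r\|_1^2\,\mathrm dr\big)\le C\exp(C\delta\|U_0\|^2)$, and choosing $\delta$ so small that $C\delta\le\hbar$ delivers the claimed bound uniformly in $u$ and $j$.

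The main obstacle is precisely this last exponential moment, with an \emph{arbitrarily small} coefficient $\hbar$ in front of $\|U_0\|^2$. In the pure-jump Lévy regime one cannot invoke the Gaussian exponential-martingale or Girsanov estimates; instead the control of the noise contribution to the energy balance rests on the exponential moment hypothesis $\int_0^\infty(e^{\zeta u}-1)\,\nu_S(\mathrm du)<\infty$ of Condition 2.2, which propagates through subordination to yield finite exponential moments of $\ell_t$ and of the relevant stochastic integrals. Balancing the small coefficient $\delta$ (hence small $\hbar$) against the fixed threshold below which these exponential moments remain finite is the delicate point; once it is secured, the three stated bounds follow at once.
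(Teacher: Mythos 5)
Your argument is essentially the one the paper relies on: the lemma is stated without proof, deferring to \cite{MH-2006}, but the representation of $\mathcal{D}_u^j J_{s,t}\xi$ via the second variation $J^{(2)}$ displayed immediately before the statement, combined with the pathwise bounds \eqref{2.16}--\eqref{2.18} and the exponential moment estimates of Lemma \ref{moment}, is exactly the intended route, and your reduction of the $\mathcal{A}_{s,t}$ and $\mathcal{A}_{s,t}^*$ bounds to the Jacobian bound by Cauchy--Schwarz is likewise standard. The one point where your appeal to ``the solution moment estimates of this section'' is slightly loose is that Lemma \ref{moment} furnishes exponential integrability of $\int\|U_r\|_1^2\,\mathrm dr$ only over the stopping-time intervals $[\eta_{k-1},\eta_k]$, so to cover a deterministic interval $[s,t]$ one must chain the conditional estimates \eqref{3.11} over the successive $\eta_k$; this works because each increment satisfies $\eta_k-\eta_{k-1}\ge 1/\nu$, so the number of intervals meeting $[s,t]$ is deterministically bounded, and taking $\kappa$ small then yields the arbitrarily small coefficient $\hbar$ in front of $\|U_0\|^2$.
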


In what follows, we provide details for the moment bounds throughout the manuscript. Denote
$$
\zeta^*:=\frac{\nu_1\nu_2}{g^2},
$$
then, from \eqref{norm}, we find that for $n\geq1$,
$$
\|U\|^n=\left(\zeta^*\|w\|^{2n}+\|\theta\|^{2n}\right)^{\frac{1}{2}},
~~~\|U\|_1^{n}=\left(\zeta^*\|w\|_1^{2n}+\|\theta\|_1^{2n}\right)^{\frac{1}{2}}.
$$

\begin{lemma}\label{lemma2.1}
Let $U_t$ be the solution to equation \eqref{2.1} with initial value $U_0$. Then there exist positive constants $C_1, C_2$, depending on the parameters $\{\alpha_k^m\}_{k\in \mathcal{Z}, m\in\{0,1\}}$, $\nu_S$, $d$, $g$, such that for all $\nu=\min\{\nu_1,\nu_2\}>0$, there hold
\begin{equation}\label{2.3}
\mathbb{E}\left[\|U_{t}\|^{2}\right]\leq e^{-\nu t}\|U_{0}\|^{2}+C_{1},\quad \forall t\geq0,
\end{equation}
\begin{equation}\label{2.4}
\nu \mathbb{E}\left[\int_{0}^{t}\|U_{s}\|_{1}^{2}\mathrm{d}s\right]
\leq\|U_{0}\|^{2}+C_{2}t,\quad\forall t\geq0.
\end{equation}
\end{lemma}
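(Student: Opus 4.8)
The plan is to apply the Itô formula to the square of the weighted norm $\|U_t\|^2 = \zeta^*\|w_t\|^2 + \|\theta_t\|^2$ for the abstract equation \eqref{2.14} and to exploit three structural features: the antisymmetry of the nonlinearity, the exact compatibility between the buoyancy coupling and the dissipation that is built into the weight $\zeta^* = \nu_1\nu_2/g^2$, and the finiteness of the second moment of the driving Lévy measure. Writing $\langle\cdot,\cdot\rangle_*$ for the inner product associated with $\|\cdot\|$, the first identity in \eqref{2.2} (valid since $\nabla\cdot u = 0$) gives, componentwise, $\langle B(U_t), U_t\rangle_* = 0$, so the inertial term drops out completely. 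For the remaining drift I would combine the dissipation $-\langle AU_t, U_t\rangle_* = -\zeta^*\nu_1\|w_t\|_1^2 - \nu_2\|\theta_t\|_1^2$ with the buoyancy contribution $\langle GU_t, U_t\rangle_* = \zeta^* g\langle\partial_x\theta_t, w_t\rangle$. Applying Cauchy--Schwarz, Young's inequality with the sharp parameter $\lambda = \nu_1/g$ (this is precisely where $\zeta^* = \nu_1\nu_2/g^2$ is used), and the Poincaré inequality $\|w_t\|^2 \le \|w_t\|_1^2$, one finds $\zeta^* g\langle\partial_x\theta_t, w_t\rangle \le \tfrac{\zeta^*\nu_1}{2}\|w_t\|_1^2 + \tfrac{\nu_2}{2}\|\theta_t\|_1^2$, whence $2\langle F(U_t), U_t\rangle_* \le -\nu\|U_t\|_1^2$ with $\nu = \min\{\nu_1,\nu_2\}$.

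Next I would treat the jump part. Since $L_t = W_{S_t}$ is a pure jump Lévy process with symmetric measure $\nu_L$, the Itô formula for $\|U_t\|^2$ produces a compensated Poisson stochastic integral (a local martingale) together with the compensator $\int_0^t\!\!\int_{\mathbb{R}^d}\big(\|U_{s-}+\sigma_\theta z\|^2 - \|U_{s-}\|^2 - 2\langle U_{s-},\sigma_\theta z\rangle_*\big)\nu_L(\mathrm{d}z)\mathrm{d}s$, whose integrand collapses to $\|\sigma_\theta z\|^2$ and therefore equals $t\cdot\mathrm{Tr}$ with $\mathrm{Tr} := \int_{\mathbb{R}^d}\|\sigma_\theta z\|^2\nu_L(\mathrm{d}z)$. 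It then remains to verify $\mathrm{Tr}<\infty$. Using the subordination formula \eqref{1.8} and the identity $\int_{\mathbb{R}^d}|z|^2 (2\pi u)^{-d/2}e^{-|z|^2/2u}\,\mathrm{d}z = d\,u$, one obtains $\mathrm{Tr} \le C(\{\alpha_k^m\}, d)\int_0^\infty u\,\nu_S(\mathrm{d}u)$, and the right-hand side is finite because Condition 2.2, together with $e^{\zeta u}-1 \ge \zeta u$, forces $\int_0^\infty u\,\nu_S(\mathrm{d}u) < \infty$.

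Combining the two steps and taking expectations (the martingale term vanishing after localization) gives the differential inequality $\tfrac{\mathrm{d}}{\mathrm{d}t}\mathbb{E}\|U_t\|^2 \le -\nu\,\mathbb{E}\|U_t\|_1^2 + \mathrm{Tr}$. Estimate \eqref{2.4} follows at once by integrating in time and discarding the nonnegative term $\mathbb{E}\|U_t\|^2$, with $C_2 = \mathrm{Tr}$. For \eqref{2.3} I would again invoke $\|U_t\|^2 \le \|U_t\|_1^2$ (valid because every Fourier mode in $H$ has $|k|\ge 1$) to get $\tfrac{\mathrm{d}}{\mathrm{d}t}\mathbb{E}\|U_t\|^2 \le -\nu\,\mathbb{E}\|U_t\|^2 + \mathrm{Tr}$, and then Gronwall's inequality yields $\mathbb{E}\|U_t\|^2 \le e^{-\nu t}\|U_0\|^2 + \mathrm{Tr}/\nu$, i.e. $C_1 = \mathrm{Tr}/\nu$.

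The main obstacle is not the energy algebra but the rigorous justification of the Itô formula for the infinite-dimensional, jump-driven equation \eqref{2.14}: the operator $A$ is unbounded, so the formula cannot be applied directly to $\|U_t\|^2$. I would handle this via a Galerkin (finite-dimensional spectral) approximation, deriving the above a priori bounds uniformly in the approximation parameter and passing to the limit, combined with a stopping-time/localization argument to control the compensated Poisson integral, to show its expectation vanishes, and to justify the interchange of expectation and time integration.
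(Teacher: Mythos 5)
Your proposal is correct and follows essentially the same route as the paper: Itô's formula applied to the weighted energy $\zeta^*\|w_t\|^2+\|\theta_t\|^2$, cancellation of the nonlinearity by antisymmetry, absorption of the buoyancy term via Young and Poincar\'e using the specific weight $\zeta^*=\nu_1\nu_2/g^2$, and a bound on the jump compensator by $\int\|\sigma_\theta z\|^2\nu_L(\mathrm{d}z)$. The only differences are cosmetic: you verify the finiteness of that integral directly from the subordination formula and Condition 2.2 where the paper cites a lemma from the reference on the Navier--Stokes case, and you make explicit the final Poincar\'e--Gronwall step for \eqref{2.3} that the paper leaves implicit.
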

\begin{proof}
Let $\nu_L$ be the intensity measure defined in \eqref{1.8}. By \cite[Lemma 2.1]{Peng-2024}, we know that
\begin{equation}\label{2.5}
\int_{|z|\leq1}|z|^2\nu_L(\mathrm{d}z)
+\int_{|z|\geq1}|z|^n\nu_L(\mathrm{d}z)
<\infty,\quad\forall n\geq 2.
\end{equation}
By applying It\^{o}'s formula to $\zeta^*\|w_t\|^2$ and $\|\theta_t\|^2$, we derive
\begin{equation}\label{2.6}
\begin{aligned}
  &\mathrm{d}\zeta^*\|w\|^2+2\zeta^*\nu_1\|\nabla w\|^2\mathrm{d}t =2\zeta^*g\langle\partial_x\theta,w\rangle \mathrm{d}t,\\
     &\mathrm{d}\|\theta\|^2+2\nu_2\|\nabla \theta\|^2\mathrm{d}t=2\int_{z\in \mathbb{R}^{d}\setminus \{0\}}\langle \theta,\sigma_\theta z\rangle \tilde{N}_L(\mathrm{d}t,\mathrm{d}z)
     +\int_{z\in\mathbb{R}^d\setminus\{0\}}
     \|\sigma_\theta z\|^2N_L(\mathrm{d}t,\mathrm{d}z).
  \end{aligned}
\end{equation}
By Cauchy-Schwarz inequality and Poincar\'{e}'s inequality, we obtain
\begin{equation}
2\zeta^*g|\langle \partial_x\theta,w\rangle|\leq \nu_{1}\zeta^*\| w\|^2+\nu_2\|\nabla\theta\|^2\leq \nu_1\zeta^*\|\nabla w\|^2+\nu_2\|\nabla\theta\|^2.
\end{equation}
Set $C=\int_{z\in\mathbb{R}^d\setminus\{0\}}\|\sigma_\theta z\|^2\nu_L(\mathrm{d}z)$, which is a constant depending on $\{\alpha_k^m\}_{k\in \mathcal{Z},m\in\{0,1\}},\nu_S,d$. Then we can get
\begin{equation*}
\mathbb{E}\left[\|U_t\|^2\right]+\nu\mathbb{E}\left[\int_0^t\|U_s\|_{1}^2
\mathrm{d}s\right]
\leq\|U_0\|^2+Ct,
\end{equation*}
the above inequality implies the derivation of \eqref{2.3} and \eqref{2.4}.
\end{proof}

Next, we introduce some stopping times to build new preliminary estimates.

Let $\eta_0=0$ and $\mathfrak{B}_0=\sum\limits_{k\in\mathcal{Z},m\in\{0,1\}}(\alpha_k^m)^2$. For any $\kappa>0$, $n\in \mathbb{N}$, $\ell\in \mathbb{S}$ and $\nu=\min\{\nu_1,\nu_2\}$, we define
\begin{equation}\label{2.7}
\eta=\eta(\ell)=\eta_1(\ell)=\inf\{t\ge0:\nu t-8\mathfrak{B}_0\kappa\ell_t>1
\},
\end{equation}
and
\begin{equation}\label{2.8}
\eta_n=\eta_n(\ell)=\inf\{t\geq\eta_{n-1}, \nu(t-\eta_{n-1})-8\mathfrak{B}_0\kappa(\ell_t-\ell_{\eta_{n-1}})>1\}.
\end{equation}

For the solution of equation \eqref{2.1} and the stopping times $\eta_n $(with respect to $\mathcal{F}_t$), we obtain the following moment estimates. Given the numerous constants in the remaining part of this section, we will use the following convention: the letter $C$ is a positive constant depending on $g$, $\nu$, $\{\alpha_k^m\}_{k\in\mathcal{Z},m\in\{0,1\}},\nu_S$ and $d=2\cdot|\mathcal{Z}|$, the letter $C_{\kappa}$ is a positive constant depending on $\kappa$ and $g$, $\nu$, $\{\alpha_k^m\}_{k\in\mathcal{Z},m\in\{0,1\}},\nu_S$, $d$, and $C_{n,\kappa}$ is a positive constant depending on $n,\kappa$ and $g$, $\nu$, $\{\alpha_k^m\}_{k\in\mathcal{Z},m\in\{0,1\}},\nu_S$, $d$.

\begin{lemma}\label{moment}
There exists a constant $\kappa_0\in(0,\nu]$ only depending on $\nu$, $\{\alpha_k^m\}_{k\in\mathcal{Z},m\in\{0,1\}},\nu_S$ and $d=2\cdot|\mathcal{Z}|$ such that the following statements hold:

$(1)$ For any $\kappa\in(0,\kappa_0]$ and the stopping time $\eta$ defined in \eqref{2.7}, we have
\begin{equation}\label{2.9}
\mathbb{E}^{\mu_{\mathbb{S}}}[\exp\{10\nu\eta\}]\leq C_\kappa,
\end{equation}
hence,
\begin{equation}\label{2.10}
\mathbb{E}[\exp\{10\nu\eta\}]\leq C_\kappa.
\end{equation}

$(2)$ For any $\kappa \in (0,\kappa_{0}]$, almost all $\ell\in \mathbb{S}$(under the measure $\mathbb{P}^{\mu_{\mathbb{S}}}$) and the stopping times $\eta_{k}$ defined in \eqref{2.7} and \eqref{2.8}, we have
\begin{equation}\label{3.11}
\begin{aligned}
\mathbb{E}^{\mu_{\mathbb{W}}}\Big[\exp&\Big\{
\kappa\|U_{\eta_k}\|^2-\kappa\|U_{\eta_{k-1}}\|^2e^{-1} + \nu\kappa\int_{\eta_{k-1}}^{\eta_{k}}
e^{-\nu(\eta_{k}-s)+8\mathfrak{B}_{0}\kappa(\ell_{\eta_{k}}-\ell_{s})}
\|U_{s}\|_{1}^{2}\mathrm{d}s \\
&-\kappa\mathfrak{B}_{0}(\ell_{\eta_{k}}-\ell_{\eta_{k-1}})\Big\}
\Big|\mathcal{F}_{\ell_{\eta_{k-1}}}^{W}\Big]\leq C.
\end{aligned}
\end{equation}
Moreover, the following estimate hold:
\begin{equation}
\begin{aligned}
\mathbb{E}\bigg[\exp&\bigg\{\kappa\|U_{\eta_{k}}\|^{2}-\kappa\|U_{\eta_{k-1}}\|^{2}
e^{-1}+\nu\kappa\int_{\eta_{k-1}}^{\eta_{k}}
e^{-\nu(\eta_{k}-s)+8\mathcal{B}_{0}\kappa(\ell_{\eta_{k}}-\ell_{s})}
\|U_{s}\|_{1}^{2}\mathrm{d}s\\
&-\kappa\mathfrak{B}_{0}(\ell_{\eta_{k}}-\ell_{\eta_{k-1}})\Big\}
\Big|\mathcal{F}_{\eta_{k-1}}\Big]\leq C,
\end{aligned}
\end{equation}
where $C>0$ is the constant appearing in \eqref{3.11}.

$(3)$ For any $\kappa\in (0,\kappa_0]$ and $k\in \mathbb{N}$, one has
\begin{equation}
\mathbb{E}\Big[\exp\big\{\kappa\|U_{\eta_{k+1}}\|^{2}\big\}
\Big|\mathcal{F}_{\eta_{k}}\Big]\leq C_{\kappa}\exp\big\{\kappa e^{-1}\|U_{\eta_{k}}\|^{2}\big\}.
\end{equation}

$(4)$ For any $\kappa\in(0,\kappa_{0}]$, there exists a constant $C_{\kappa}>0$ 
such that for any $n\in\mathbb{N}$ and $U_{0}\in H$, one has
\begin{equation}\label{moment.4}
\mathbb{E}_{U_0}\left[\exp\{\kappa\sum_{i=1}^n\|U_{\eta_i}\|^2-C_\kappa n\}\right]
\leq e^{a\kappa\|U_0\|^2},
\end{equation}
where $a=\frac{1}{1-e^{-1}}$. In this paper, we use the notation $\mathbb{E}_{U_0}$ for expectations under the measure $\mathbb{P}$ of the solutions to equation \eqref{2.1} with initial condition $U_0$.

$(5)$ For any $\kappa\in(0,\kappa_0]$, $U_0\in H$ and $n\in\mathbb{N}$, one has
\begin{equation}\label{moment.5}
\mathbb{E}_{U_0}\left[\sup_{s\in[0,\eta]}\|U_s\|^{2n}\right]\leq C_{n,\kappa}(1+\|U_0\|^{2n}).
\end{equation}
\end{lemma}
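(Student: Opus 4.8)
\noindent The plan is to establish the five assertions in the order stated, since part (1) fixes the constant $\kappa_0$ and each later part relies on the earlier ones.

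For part (1), note first that $\eta$ is a functional of the subordinator $\ell$ alone, so $\mathbb{E}[\exp\{10\nu\eta\}]=\mathbb{E}^{\mu_{\mathbb{S}}}[\exp\{10\nu\eta\}]$ and \eqref{2.10} is immediate from \eqref{2.9}. Since the only increasing-continuous part of $t\mapsto\nu t-8\mathfrak{B}_0\kappa\ell_t$ is the drift $\nu t$ (the subordinator contributes downward jumps to this quantity), the threshold $1$ is crossed continuously, so $\nu\eta=1+8\mathfrak{B}_0\kappa\ell_\eta$ and $\{\eta>t\}\subseteq\{\ell_t\ge(\nu t-1)/(8\mathfrak{B}_0\kappa)\}$. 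A Chernoff bound with the subordinator Laplace transform $\mathbb{E}^{\mu_{\mathbb{S}}}[e^{\lambda\ell_t}]=\exp\{t\int_0^\infty(e^{\lambda u}-1)\nu_S(\mathrm du)\}$, finite for $\lambda\le\zeta$ by Condition 2.2, then gives $\mathbb{P}(\eta>t)\le C\exp\{-t(\lambda\nu/(8\mathfrak{B}_0\kappa)-\psi(\lambda))\}$ with $\psi(\lambda)=\int_0^\infty(e^{\lambda u}-1)\nu_S(\mathrm du)$. Fixing $\lambda=\zeta$ and choosing $\kappa_0\in(0,\nu]$ small enough that $\zeta\nu/(8\mathfrak{B}_0\kappa)-\psi(\zeta)>10\nu$ for all $\kappa\le\kappa_0$, I would integrate $10\nu e^{10\nu t}\mathbb{P}(\eta>t)$ over $t$ to obtain \eqref{2.9}. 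This is the step pinning down $\kappa_0$.

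Part (2) is the core. For fixed $\ell$, the process $t\mapsto W_{\ell_t}$ is a Gaussian martingale with bracket $\ell_t$ whose jumps (Gaussian, conditional variance $\Delta\ell_t$) sit at the jump times of $\ell$, while between them $U$ solves the deterministic system and dissipates energy through \eqref{2.6}. With $\beta_t:=\exp\{-\nu(\eta_k-t)+8\mathfrak{B}_0\kappa(\ell_{\eta_k}-\ell_t)\}$ --- so that $\beta_{\eta_{k-1}}=e^{-1}$ and $\beta_{\eta_k}=1$ by the defining relation of \eqref{2.8} --- I would set
$$\Phi_t:=\exp\Big\{\kappa\beta_t\|U_t\|^2+\nu\kappa\int_{\eta_{k-1}}^t\beta_s\|U_s\|_1^2\,\mathrm ds-\kappa\mathfrak{B}_0(\ell_t-\ell_{\eta_{k-1}})\Big\},$$
whose increment $\Phi_{\eta_k}/\Phi_{\eta_{k-1}}$ is exactly the exponential in \eqref{3.11}; thus it suffices to prove $\mathbb{E}^{\mu_{\mathbb{W}}}[\Phi_{\eta_k}\mid\mathcal{F}^W_{\ell_{\eta_{k-1}}}]\le C\,\Phi_{\eta_{k-1}}$. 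Applying the It\^{o} formula to $\Phi_t$ in this fixed-$\ell$ setting, the continuous part combines $\mathrm d\|U_t\|^2\le-\nu\|U_t\|_1^2\,\mathrm dt$ with $\dot\beta_t=\nu\beta_t$ and the time integral into a drift $\le\kappa\nu\beta_t\|U_t\|^2\,\mathrm dt$; at each jump, integrating the Gaussian increment produces a factor whose logarithm is $\kappa\beta_{t-}(e^{-8\mathfrak{B}_0\kappa\Delta\ell_t}-1)\|U_{t-}\|^2-\kappa\mathfrak{B}_0\Delta\ell_t$ together with Gaussian moment-generating corrections computed modewise from $\mathbb{E}[e^{b\xi+c\xi^2}]=(1-2cu)^{-1/2}\exp\{b^2u/(2(1-2cu))\}$. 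Here the coefficient $8\mathfrak{B}_0\kappa$ does the decisive work: the damping $e^{-8\mathfrak{B}_0\kappa\Delta\ell_t}$ keeps $2cu\le 1/(4e)$ for arbitrarily large jumps (via $u\,e^{-8\mathfrak{B}_0\kappa u}\le 1/(8e\mathfrak{B}_0\kappa)$ and $\|\sigma_k^m\|=1$, so $\mathfrak{B}_0=\|\sigma_\theta\|^2$), and it makes the per-jump coefficient of $\|U\|^2$ strictly negative since $1-e^{-8\mathfrak{B}_0\kappa u}$ beats the quadratic correction by a factor close to $4$; meanwhile $-\kappa\mathfrak{B}_0(\ell_t-\ell_{\eta_{k-1}})$ exactly cancels the mean injection $\mathbb{E}\|\sigma_\theta\xi\|^2=\mathfrak{B}_0\Delta\ell_t$. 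Balancing the positive continuous drift against this negative jump drift through the identity $\int_{\eta_{k-1}}^{\eta_k}(\nu\,\mathrm dt-8\mathfrak{B}_0\kappa\,\mathrm d\ell_t)=1$ built into \eqref{2.8}, optional stopping at $\eta_k$ yields \eqref{3.11}, and integrating over $\ell$ gives the unconditional version. \textbf{The main obstacle} is precisely this sign analysis: one must show, uniformly over the infinitely many jump sizes, that the Gaussian moment-generating corrections are dominated by the damping in $\beta_t$ and that the residual positive drift is absorbed by the dissipation recorded in $\nu\kappa\int\beta\|U\|_1^2$ and by the $e^{-1}$ contraction at $\eta_{k-1}$ --- which is what forces both the constant $8$ and the smallness of $\kappa_0$. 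The infinite-activity hypothesis $\nu_S((0,\infty))=\infty$ is indispensable here, as it guarantees there are no intervals on which the positive continuous drift runs uncompensated.

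Parts (3)--(5) are then consequences. For (3), since the time integral in \eqref{3.11} is non-negative I would bound $\exp\{\kappa\|U_{\eta_{k+1}}\|^2\}$ by the \eqref{3.11}-exponent times $\exp\{\kappa\mathfrak{B}_0(\ell_{\eta_{k+1}}-\ell_{\eta_k})\}$, then apply Cauchy--Schwarz: the first factor is controlled by (2) run with $2\kappa$ in place of $\kappa$ (admissible after shrinking $\kappa_0$), producing the contraction $\exp\{\kappa e^{-1}\|U_{\eta_k}\|^2\}$, and the second by the exponential moment of $\ell_{\eta_{k+1}}-\ell_{\eta_k}$ from Condition 2.2, exactly as in (1). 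Part (4) is a backward induction on (3): iterating the factor $e^{-1}$ sums the geometric series $\sum_{j\ge0}e^{-j}=a$, so $\mathbb{E}_{U_0}[\exp\{\kappa\sum_{i=1}^n\|U_{\eta_i}\|^2\}]\le e^{C_\kappa n}e^{a\kappa\|U_0\|^2}$, which is \eqref{moment.4}. Finally, for (5) I would apply It\^{o} to $(\|U_s\|^2)^n$, estimate the compensated jump martingale by the Burkholder--Davis--Gundy inequality together with the moment bounds \eqref{2.5} on $\nu_L$, and use that $\eta$, hence $\ell_\eta$ and the jump activity before $\eta$, have all exponential moments by (1); a Gronwall argument on the random interval $[0,\eta]$ then delivers \eqref{moment.5} with its linear dependence on $\|U_0\|^{2n}$.
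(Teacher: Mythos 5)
Your outline is workable and reaches all five assertions, but for the central estimate (2) you take a genuinely different route from the paper. The paper never manipulates the Gaussian jumps directly: it mollifies the subordinator path into the continuous, strictly increasing $\ell^{\varepsilon}_t=\frac1\varepsilon\int_t^{t+\varepsilon}\ell_s\,\mathrm ds+\varepsilon t$, performs the deterministic time change $t\mapsto\gamma^{\varepsilon}_t$ so that the forcing becomes a genuine continuous Brownian motion, applies It\^{o}'s formula and the classical exponential-martingale estimate (the bound \eqref{A.5}, imported from \cite{Peng-2024}) to the time-changed energy identity \eqref{A.7}, and only then passes to the limit $\varepsilon\to0$ through the convergence machinery of Lemmas \ref{lemma A0}--\ref{lemma A4} and \eqref{A.3}--\eqref{A.4}; part (5) is likewise proved on the time-changed process. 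You instead work with the purely discontinuous martingale $W_{\ell_t}$ itself and evaluate the quadratic-exponential Gaussian moment at each jump, using the damping $e^{-8\mathfrak{B}_0\kappa\Delta\ell_t}$ both to keep $2cu<1$ uniformly in the jump size and to make the per-jump coefficient of $\|U\|^2$ negative; your sign analysis is essentially the mechanism that the paper's route hides inside the exponential-martingale constant, and it checks out. What your version buys is the elimination of the whole $\varepsilon$-approximation layer; what it costs is two points your sketch passes over: (i) since $\nu_S((0,\infty))=\infty$ the jump times of $\ell$ are dense, so there is no literal jump-by-jump induction --- you must truncate the small jumps and pass to a limit, or invoke the semimartingale It\^{o} formula with a Dol\'{e}ans-Dade-type supermartingale argument, which is precisely the technical burden the mollification is designed to avoid; and (ii) $\|\sigma_k^m\|_{L^2(\mathbb{T}^2)}^2=2\pi^2$ rather than $1$, so the identification $\mathfrak{B}_0=\|\sigma_\theta\|^2$ and your constant $1/(4e)$ need re-tracking (harmless, but it is where the constant $8$ lives). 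Also, your claim that the infinite-activity hypothesis is indispensable for part (2) is misplaced: that hypothesis serves the density-of-jump-times argument for the Malliavin matrix (Lemma \ref{lemma3.3}), while the moment estimate needs only the exponential moment condition on $\nu_S$; the positive continuous drift is absorbed by the Poincar\'{e} dissipation, not by the jumps. Parts (1) and (3)--(4) proceed as in the paper (Chernoff bound on $\ell_t$, Cauchy--Schwarz with the exponential moment of $\ell_{\eta_{k+1}}-\ell_{\eta_k}$, iteration of the $e^{-1}$ contraction summing to $a=\tfrac{1}{1-e^{-1}}$).
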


The remaining part of this section is entirely dedicated to proving the above lemma. We first provide some preparatory work and knowledge.

For $\kappa>0$, $\varepsilon\in(0,1]$ and $\ell\in \mathbb{S}$, we set
\begin{equation*}
\ell_t^\varepsilon=\frac1\varepsilon\int_t^{t+\varepsilon}\ell_s\mathrm{d}s
+\varepsilon t,
\end{equation*}
and
\begin{equation*}
\eta^{\varepsilon}=\eta^{\varepsilon}(\ell):=\inf \{t\geq0:\nu t-8\mathfrak{B}_{0}\kappa\ell_{t}^{\varepsilon}>1\}.
\end{equation*}
Recall that $\ell$ is a c\`{a}dl\`{a}g increasing function from $\mathbb{R}^+$ to $\mathbb{R}^+$ with $\ell_0=0$, it is easy to see that the following lemma is valid.

\begin{lemma}\label{lemma A0}
For $\ell \in \mathbb{S}$,

$(i)$ $\ell_{\cdot }^{\varepsilon }: [ 0, \infty ) \to [ 0, \infty )$ is continuous and strictly increasing;

$(ii)$ for any $t\geq0$,  $\ell_{t}^{\varepsilon}$ strictly decreases to $\ell_t$ as $\varepsilon$ decreases to $0$.
\end{lemma}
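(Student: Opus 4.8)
The plan is to establish Lemma \ref{lemma A0} by reducing each assertion to elementary properties of the averaging operation defining $\ell_t^\varepsilon$, since $\ell$ itself is merely c\`adl\`ag and increasing (hence possibly discontinuous), whereas the smoothing built into $\ell_t^\varepsilon=\frac{1}{\varepsilon}\int_t^{t+\varepsilon}\ell_s\,\mathrm{d}s+\varepsilon t$ should regularize it. I would first observe that since $\ell$ is increasing and locally bounded, $s\mapsto\ell_s$ is Riemann (indeed Lebesgue) integrable on every bounded interval, so the map $t\mapsto\int_t^{t+\varepsilon}\ell_s\,\mathrm{d}s$ is continuous by the continuity of the Lebesgue integral with respect to its endpoints; adding the continuous linear term $\varepsilon t$ preserves continuity. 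This handles the continuity half of (i).

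For strict monotonicity in (i), the key computation is to differentiate (or compare increments of) $\ell_t^\varepsilon$. For $t_1<t_2$ I would write
\begin{equation*}
\ell_{t_2}^\varepsilon-\ell_{t_1}^\varepsilon
=\frac{1}{\varepsilon}\left(\int_{t_2}^{t_2+\varepsilon}\ell_s\,\mathrm{d}s-\int_{t_1}^{t_1+\varepsilon}\ell_s\,\mathrm{d}s\right)+\varepsilon(t_2-t_1).
\end{equation*}
Because $\ell$ is increasing, the difference of the two integrals is nonnegative (each point of the later interval dominates the corresponding point of the earlier one after the obvious change of variables), so the averaging part is monotone but perhaps not strictly so; the strict increase is then forced by the term $\varepsilon(t_2-t_1)>0$. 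This is precisely why the author added the $\varepsilon t$ summand — to guarantee strict monotonicity even on intervals where $\ell$ is constant.

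For part (ii), I would fix $t\ge 0$ and analyze the limit as $\varepsilon\downarrow 0$. Since $\ell$ is right-continuous at $t$ with $\ell_t=\lim_{s\downarrow t}\ell_s$, the averaged value $\frac{1}{\varepsilon}\int_t^{t+\varepsilon}\ell_s\,\mathrm{d}s$ converges to $\ell_t$ as $\varepsilon\downarrow0$, and the term $\varepsilon t$ vanishes, giving the pointwise limit $\ell_t$. To get the claimed \emph{strict decrease}, I would use that $\ell$ is increasing, so $\ell_s\ge\ell_t$ for all $s\in[t,t+\varepsilon]$, whence $\frac{1}{\varepsilon}\int_t^{t+\varepsilon}\ell_s\,\mathrm{d}s\ge\ell_t$, and the extra $\varepsilon t\ge 0$ only enlarges this; the monotone dependence on $\varepsilon$ I would verify by differentiating in $\varepsilon$ or by a direct comparison of averages over nested scales, showing the average is itself nondecreasing in $\varepsilon$ while the linear term is strictly increasing in $\varepsilon$, so the whole expression strictly decreases as $\varepsilon$ decreases.

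The routine parts are the continuity and the limit computation, which follow from standard properties of Lebesgue integrals and right-continuity. The main (though still modest) obstacle is verifying the monotonicity of the averaged term $\frac{1}{\varepsilon}\int_t^{t+\varepsilon}\ell_s\,\mathrm{d}s$ in $\varepsilon$ cleanly: one must be careful that it is genuinely nondecreasing in $\varepsilon$ for an arbitrary increasing c\`adl\`ag $\ell$, which I would argue via the representation of the average as an expectation of $\ell$ against the uniform measure and a stochastic-dominance comparison, or equivalently by a direct estimate $\frac{\mathrm{d}}{\mathrm{d}\varepsilon}\big[\frac{1}{\varepsilon}\int_t^{t+\varepsilon}\ell_s\,\mathrm{d}s\big]=\frac{1}{\varepsilon}\big(\ell_{t+\varepsilon}-\frac{1}{\varepsilon}\int_t^{t+\varepsilon}\ell_s\,\mathrm{d}s\big)\ge 0$, the inequality holding because $\ell_{t+\varepsilon}$ dominates the average of $\ell$ over $[t,t+\varepsilon]$. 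Combined with the strictly increasing linear term, this delivers the strict decrease in (ii), completing the proof.
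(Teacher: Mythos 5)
The paper offers no proof of this lemma (it is dismissed as immediate), and your argument supplies exactly the elementary verification the authors have in mind: continuity of $t\mapsto\int_t^{t+\varepsilon}\ell_s\,\mathrm{d}s$, strict monotonicity in $t$ forced by the $\varepsilon t$ summand, convergence to $\ell_t$ by right-continuity, and monotonicity of the average in $\varepsilon$, which is seen most cleanly from the substitution $\ell_t^{\varepsilon}-\varepsilon t=\int_0^1\ell_{t+\varepsilon u}\,\mathrm{d}u$ together with the monotonicity of $\ell$. The one caveat is at $t=0$: there $\varepsilon t\equiv 0$, so your claim that the linear summand is strictly increasing in $\varepsilon$ fails, and if $\ell$ were constant on a right neighbourhood of $0$ the \emph{strict} decrease in (ii) would fail as well; this is harmless for the paper's purposes, since under Condition 2.2 the subordinator has infinite activity and its paths are almost surely strictly increasing, which restores strictness directly from the representation $\int_0^1\ell_{t+\varepsilon u}\,\mathrm{d}u$.
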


Referring to \cite[Lemma A.2]{Peng-2024}, the following moment estimates hold for the stopping times $\eta^{\varepsilon}$ and $\eta$.

\begin{lemma}\label{lemma A1}
  There exists a constant $\tilde{\kappa}_0>0$ such that for any $\kappa\in(0,\tilde{\kappa}_0]$,
\begin{equation}
\sup_{\varepsilon\in(0,1]}\mathbb{E}^{\mu_{\mathbb{S}}}
\left[\exp\{10\nu\eta^{\varepsilon}\}\right]\leq C_\kappa,
\end{equation}
and
\begin{equation}
\mathbb{E}^{\mu_{\mathbb{S}}}\left[\exp\{10\nu\eta\}\right]\leq C_\kappa.
\end{equation}
\end{lemma}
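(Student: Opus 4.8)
The plan is to bound the tails of $\eta$ and $\eta^{\varepsilon}$ directly by a Chernoff-type estimate resting on the single exponential moment of the subordinator granted by Condition 2.2. First I record that, under $\mathbb{P}^{\mu_{\mathbb{S}}}$, the coordinate process $\ell_t$ is a pure-jump subordinator with Lévy measure $\nu_S$, so Condition 2.2 gives, for every $\lambda\in(0,\zeta]$, a finite Laplace exponent $\psi(\lambda):=\int_0^\infty(e^{\lambda u}-1)\nu_S(\mathrm{d}u)<\infty$ together with the identity $\mathbb{E}^{\mu_{\mathbb{S}}}[e^{\lambda\ell_t}]=e^{t\psi(\lambda)}$. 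The inequality $e^{\lambda u}-1\ge\lambda u$ also forces $\int_0^\infty u\,\nu_S(\mathrm{d}u)<\infty$, which is the heuristic reason $\ell_t$ grows only linearly and why small $\kappa$ can overcome it; however, the quantitative argument below will only need $\psi(\lambda)<\infty$ at one fixed $\lambda$.

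I would treat $\eta$ first. Evaluating the defining quantity $\nu s-8\mathfrak{B}_0\kappa\ell_s$ from \eqref{2.7} at the endpoint $s=t$ gives the containment $\{\eta>t\}\subseteq\{\nu t-8\mathfrak{B}_0\kappa\ell_t\le1\}=\{\ell_t\ge(\nu t-1)/(8\mathfrak{B}_0\kappa)\}$. Applying the exponential Markov inequality with parameter $\lambda\in(0,\zeta]$ then yields
\[
\mathbb{P}^{\mu_{\mathbb{S}}}(\eta>t)\le e^{\lambda/(8\mathfrak{B}_0\kappa)}\exp\Big\{-t\big(\tfrac{\lambda\nu}{8\mathfrak{B}_0\kappa}-\psi(\lambda)\big)\Big\}.
\]
Fixing $\lambda=\lambda_0\in(0,\zeta]$ and shrinking $\kappa$ makes the decay rate $\delta:=\tfrac{\lambda_0\nu}{8\mathfrak{B}_0\kappa}-\psi(\lambda_0)$ arbitrarily large, so one can choose $\tilde\kappa_0>0$ so that $\delta\ge11\nu$ for all $\kappa\in(0,\tilde\kappa_0]$; in particular this tail bound shows $\eta<\infty$ a.s. Substituting it into $\mathbb{E}^{\mu_{\mathbb{S}}}[e^{10\nu\eta}]=1+10\nu\int_0^\infty e^{10\nu t}\,\mathbb{P}^{\mu_{\mathbb{S}}}(\eta>t)\,\mathrm{d}t$ makes the integral converge (since $\delta>10\nu$) and produces a finite $C_\kappa$.

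For $\eta^{\varepsilon}$ the extra ingredient is to transfer the moment bound from $\ell_s$ to the smoothed path $\ell_t^{\varepsilon}=\tfrac1\varepsilon\int_t^{t+\varepsilon}\ell_s\,\mathrm{d}s+\varepsilon t$ uniformly in $\varepsilon\in(0,1]$. Here I would apply Jensen's inequality to the average $\tfrac1\varepsilon\int_t^{t+\varepsilon}(\cdot)\,\mathrm{d}s$, obtaining
\[
\mathbb{E}^{\mu_{\mathbb{S}}}[e^{\lambda\ell_t^{\varepsilon}}]\le e^{\lambda\varepsilon t}\,\tfrac1\varepsilon\int_t^{t+\varepsilon}e^{s\psi(\lambda)}\,\mathrm{d}s\le C_\lambda\,e^{t(\lambda+\psi(\lambda))},
\]
where $C_\lambda=\tfrac{e^{\psi(\lambda)}-1}{\psi(\lambda)}$ bounds the bracketed ratio for all $\varepsilon\in(0,1]$ and the crude estimate $e^{\lambda\varepsilon t}\le e^{\lambda t}$ absorbs the auxiliary drift $\varepsilon t$ into the exponent. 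The same containment $\{\eta^{\varepsilon}>t\}\subseteq\{\ell_t^{\varepsilon}\ge(\nu t-1)/(8\mathfrak{B}_0\kappa)\}$ with exponential Markov then gives, uniformly in $\varepsilon$, tail decay at rate $\tfrac{\lambda_0\nu}{8\mathfrak{B}_0\kappa}-\lambda_0-\psi(\lambda_0)$; choosing $\tilde\kappa_0$ so that even this (slightly smaller) rate exceeds $10\nu$ yields the $\varepsilon$-uniform bound, and integrating as above closes the estimate. Alternatively, the bound for $\eta$ would also follow from the $\eta^{\varepsilon}$ bound by Fatou, since Lemma \ref{lemma A0} gives $\ell_t^{\varepsilon}\downarrow\ell_t$ and hence $\eta^{\varepsilon}\downarrow\eta$ as $\varepsilon\downarrow0$.

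The main obstacle is not any single estimate but the simultaneous calibration of $\lambda$ and $\kappa$: the decay rate must beat the prefactor $10\nu$ \emph{and} do so uniformly in $\varepsilon$, which forces one to absorb the drift $\varepsilon t$ without degrading the rate and to keep the constant $C_\kappa$ free of $\varepsilon$ throughout (in particular $C_\lambda$ and the Markov prefactor $e^{\lambda/(8\mathfrak{B}_0\kappa)}$ must be $\varepsilon$-independent). This careful bookkeeping is exactly the point at which the exponential integrability in Condition 2.2 is indispensable, as it is what guarantees $\psi(\lambda_0)<\infty$ and thereby a strictly positive, controllably large decay exponent.
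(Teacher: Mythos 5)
Your proof is correct: the containments $\{\eta>t\}\subseteq\{\ell_t\ge(\nu t-1)/(8\mathfrak{B}_0\kappa)\}$ and its $\varepsilon$-smoothed analogue, combined with the exponential Markov inequality, the Laplace exponent $\psi(\lambda)$ finite for $\lambda\in(0,\zeta]$ by Condition 2.2, and the Jensen step that makes the bound uniform in $\varepsilon\in(0,1]$, give exactly the claimed tail decay and hence the exponential moments. The paper does not prove this lemma itself but defers to \cite[Lemma A.2]{Peng-2024}, and your Chernoff-type argument is essentially the standard proof underlying that citation, so this is the same approach made self-contained.
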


For any $\kappa\in(0,\tilde{\kappa}_0]$, let
\begin{equation}\label{S1}
  \mathbb{S}_1=\{\ell\in\mathbb{S}:\eta^1(\ell)<\infty\}.
\end{equation}
We have the following lemma.

\begin{lemma}\label{lemma A2}
We claim that ${\mathbb{P}^{\mu_{\mathbb{S}}}(\mathbb{S}_1)=1}$. And for any $\ell\in {\color{blue}{\mathbb{S}_1}} $, the following statements hold.

$(1)$ For any $\varepsilon\in(0,1)$, $\eta^{\varepsilon}<\eta^1<\infty$ and $\nu\eta^\varepsilon-8\mathfrak{B}_0\kappa
\ell_{\eta^\varepsilon}^\varepsilon=1$;

$(2)$ $\eta^{\varepsilon}$ strictly decreases to $\eta$ as $\varepsilon$ decreases to $0$;

$(3)$ $\ell_{\eta^\varepsilon}^\varepsilon$ strictly decreases to $\ell_{\eta}$ as $\varepsilon$ decreases to $0$;

$(4)$ $\nu\eta-8\mathfrak{B}_0\kappa\ell_{\eta}=1$;

$(5)$ $
\lim\sup\limits_{\varepsilon\to0}\int_{0}^{\eta^{\varepsilon}}
\exp\{-\nu(\eta^{\varepsilon}-s)
+8\mathfrak{B}_{0}\kappa(\ell_{\eta^{\varepsilon}}^{\varepsilon}
-\ell_{s}^{\varepsilon})\}\mathrm{d}\ell_{s}^{\varepsilon}\leq\ell_{\eta}.
$
\end{lemma}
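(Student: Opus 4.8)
The plan is to treat $\ell\in\mathbb{S}$ as a fixed path and to reduce every assertion to the elementary crossing behaviour of the two auxiliary functions $g(t):=\nu t-8\mathfrak{B}_0\kappa\ell_t$ and $g_\varepsilon(t):=\nu t-8\mathfrak{B}_0\kappa\ell_t^\varepsilon$, observing that $\eta=\inf\{t:g(t)>1\}$ and $\eta^\varepsilon=\inf\{t:g_\varepsilon(t)>1\}$. The full-measure claim $\mathbb{P}^{\mu_{\mathbb{S}}}(\mathbb{S}_1)=1$ is immediate from Lemma \ref{lemma A1} with $\varepsilon=1$: since $\mathbb{E}^{\mu_{\mathbb{S}}}[\exp\{10\nu\eta^1\}]\leq C_\kappa<\infty$ we have $\eta^1<\infty$ almost surely. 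Fix $\ell\in\mathbb{S}_1$. For (1) I would use that $g_\varepsilon$ is continuous (Lemma \ref{lemma A0}(i)) with $g_\varepsilon(0)=-8\mathfrak{B}_0\kappa\ell_0^\varepsilon\leq0<1$, so that the first passage of a continuous path over level $1$ attains the level exactly, giving $g_\varepsilon(\eta^\varepsilon)=1$. The strict inequality $\eta^\varepsilon<\eta^1$ then follows by evaluating $g_\varepsilon$ at $\eta^1$: by Lemma \ref{lemma A0}(ii) one has $\ell^\varepsilon_{\eta^1}<\ell^1_{\eta^1}$ for $\varepsilon<1$, whence $g_\varepsilon(\eta^1)>g_1(\eta^1)=1$, and continuity pushes the crossing strictly before $\eta^1$.

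The heart of the matter is (4). Here $g$ is merely càdlàg, but since $\ell$ is increasing every jump of $g$ is downward, $\Delta g(t)=-8\mathfrak{B}_0\kappa\,\Delta\ell(t)\leq0$. Because $g(0)=0<1$ and $\eta\leq\eta^1<\infty$ (the pointwise bound $g_\varepsilon\leq g$ gives $\eta^\varepsilon\geq\eta$, so in particular $\eta\le\eta^1$), I would prove $g(\eta)=1$ by excluding the two alternatives: for $t<\eta$ one has $g(t)\leq1$, hence $g(\eta^-)\leq1$; an overshoot $g(\eta)>1$ would force an upward jump $\Delta g(\eta)>0$, contradicting downward jumps, whereas an undershoot $g(\eta)<1$ would, by right-continuity, keep $g<1$ on a right neighbourhood of $\eta$, contradicting $\eta=\inf\{g>1\}$. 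For (2) the same comparison argument as in (1), now between two parameters $\varepsilon_1<\varepsilon_2$, shows $\eta^\varepsilon$ strictly decreases as $\varepsilon\downarrow0$; set $\eta^*:=\lim_{\varepsilon\downarrow0}\eta^\varepsilon\geq\eta$. To upgrade this to $\eta^*=\eta$, I would use $g(\eta)=1$ to choose $t_n\downarrow\eta$ with $g(t_n)>1$; since $g_\varepsilon(t_n)\uparrow g(t_n)$ as $\varepsilon\downarrow0$ by Lemma \ref{lemma A0}(ii), for each $n$ and all small $\varepsilon$ we get $g_\varepsilon(t_n)>1$, hence $\eta^\varepsilon\leq t_n$ and therefore $\eta^*\leq t_n\to\eta$.

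Statements (3) and (5) are then cheap. Solving the identity in (1) gives $\ell^\varepsilon_{\eta^\varepsilon}=(\nu\eta^\varepsilon-1)/(8\mathfrak{B}_0\kappa)$, which by (2) strictly decreases to $(\nu\eta-1)/(8\mathfrak{B}_0\kappa)=\ell_\eta$, the last equality being precisely (4); this is (3). For (5) the key observation is that the exponent equals $-(g_\varepsilon(\eta^\varepsilon)-g_\varepsilon(s))=g_\varepsilon(s)-1\leq0$ for $s\leq\eta^\varepsilon$ (using $g_\varepsilon(\eta^\varepsilon)=1$ and $g_\varepsilon(s)\le1$), so the integrand is bounded by $1$ and
\[
\int_0^{\eta^\varepsilon}\exp\{-\nu(\eta^\varepsilon-s)+8\mathfrak{B}_0\kappa(\ell^\varepsilon_{\eta^\varepsilon}-\ell^\varepsilon_s)\}\,\mathrm{d}\ell^\varepsilon_s\leq\ell^\varepsilon_{\eta^\varepsilon}-\ell^\varepsilon_0\leq\ell^\varepsilon_{\eta^\varepsilon}.
\]
Taking $\limsup_{\varepsilon\to0}$ and invoking (3) yields the bound $\ell_\eta$.

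The main obstacle is (4) together with the limit identification in (2): these are the only places where the càdlàg nature of $\ell$ genuinely bites, and they hinge on the structural fact that $g$ has only downward jumps, so that the continuous crossing seen in the regularized picture survives passage to the non-smooth limit without being destroyed by a jump. The delicate technical point is to control the two simultaneous limits—$\varepsilon\downarrow0$ and the moving argument $\eta^\varepsilon\to\eta$—which I handle through the monotonicity of Lemma \ref{lemma A0}(ii) and the right-continuity of $\ell$ at $\eta$.
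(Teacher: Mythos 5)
Your proof is correct, and it is essentially the intended argument: the paper itself gives no details here, deferring entirely to the analogous lemma in \cite{Peng-2024}, and your deterministic path-by-path analysis of the crossing functions $g(t)=\nu t-8\mathfrak{B}_0\kappa\ell_t$ and $g_\varepsilon(t)=\nu t-8\mathfrak{B}_0\kappa\ell_t^\varepsilon$ is precisely the route that reference takes. The two genuinely delicate points --- that $g$ has only downward jumps so the level $1$ is attained exactly at $\eta$ (statement (4)), and the identification $\lim_{\varepsilon\downarrow0}\eta^\varepsilon=\eta$ via test points $t_n\downarrow\eta$ with $g(t_n)>1$ --- are both handled correctly, and the reductions of (3) and (5) to (1), (2), (4) are clean.
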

\begin{proof}
  The proof follows similarly to \cite{Peng-2024}.
\end{proof}

Let $\mathcal{H}_0=\text{span}\{\sigma_k^m:k\in\mathcal{Z},m\in\{0,1\}\}$ and $D([0,\infty);\mathcal{H}_0)$ be the space of all c\`{a}dl\`{a}g functions taking values in $\mathcal{H}_0$. Keeping in mind that $d=2\cdot|\mathcal{Z}|<\infty$, it is well-known that, for any $U_0\in H$ and $f\in D([0,\infty);\mathcal{H}_0)$, there exists a unique solution $\Psi(U_{0},f)\in C([0,\infty);H)\cap L_{loc}^{2}([0,\infty);V)$ to the following PDE:
\begin{equation*}
\begin{aligned}
\Psi(U_{0},f)(t)
=&U_{0}-\int_{0}^{t}A(\Psi(U_{0},f)(s)+f_{s})\mathrm{d}s\\
&-\int_{0}^{t}B(\Psi(U_{0},f)(s)
+f_{s})\mathrm{d}s+\int_{0}^{t}G(\Psi(U_{0},f)(s)+f_{s})\mathrm{d}s.
\end{aligned}
\end{equation*}
Here $V=\{h\in H:\|h\|_1<\infty\}$.

Define $\Psi(U_0,f)
:=(\tilde{w},\tilde{\theta})^T
=(w,\theta)^T-(0,f)^T$, where $\tilde{w}$ and $\tilde{\theta}$ satisfy
\begin{equation}\label{A.2}
\begin{aligned}
&\mathrm{d}\tilde{w}+(B_1(\tilde{w},\tilde{w})-\nu_{1}\Delta \tilde{w})\mathrm{d}t
=g\partial_{x}(\tilde{\theta}+f) \mathrm{d}t,\\
&\mathrm{d}\tilde{\theta}+(B_2(\tilde{w},\tilde{\theta}+f)
-\nu_{2}\Delta\tilde{\theta}
-\nu_{2}\Delta f)\mathrm{d}t=0.
\end{aligned}
\end{equation}

We denote $b_t^{\varepsilon}=\sigma_{\theta}(W_{\ell_{t}^{\varepsilon}} - W_{\ell_{0}^{\varepsilon}}),b_{t}= \sigma_{\theta}W_{\ell_{t}},V_{t}^{\varepsilon} = \Psi(U_{0},b^{\varepsilon})(t)$ and $V_{t} = \Psi(U_{0},b)(t)$. It is easy to see that $V_t+b_t$ is the unique solution $U_t$ to \eqref{2.1}, i.e., $U_t=V_t+b_t$, and for any $\ell\in \mathbb{S}$ and $\varepsilon\in(0,1]$,  $U_t^{\varepsilon}=V_t^{\varepsilon}+b_t^{\varepsilon}$ is the solution of the following PDE:
\begin{equation*}
 U_t^{\varepsilon}=U_0-\int_0^t[A(U_s^{\varepsilon})
+B(U_s^{\varepsilon})-G(U_s^{\varepsilon})]\mathrm{d}s
+\sigma_{\theta}(W_{\ell_t^{\varepsilon}}-W_{\ell_0^{\varepsilon}})
\end{equation*}

Recall $\mathbb{S}_1$ introduced in \eqref{S1}. We have

\begin{lemma}
  For any $\ell\in \mathbb{S}_1$, the following statements hold:
\begin{equation}\label{A.3}
\begin{gathered}
\operatorname*{lim}_{\varepsilon\to0}
\int_{0}^{\eta^{\varepsilon}}e^{-\nu(\eta^{\varepsilon}-s)
+8\mathfrak{B}_{0}\kappa(\ell_{\eta^{\varepsilon}}^{\varepsilon}
-\ell_{s}^{\varepsilon})}\|U_{s}^{\varepsilon}\|_{1}^{2}\mathrm{d}s =\int_{0}^{\eta}e^{-\nu(\eta-s)
+8\mathfrak{B}_{0}\kappa(\ell_{\eta}-\ell_{s})}
\|U_{s}\|_{1}^{2}\mathrm{d}s,
\end{gathered}
\end{equation}
and
\begin{equation}\label{A.4}
\begin{aligned}
\lim_{\varepsilon\to0}\|U_{\eta^\varepsilon}^\varepsilon
-U_\eta\|^2=0.
\end{aligned}
\end{equation}
\end{lemma}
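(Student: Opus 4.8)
The plan is to reduce both limits to the convergence of the approximate solutions $U^{\varepsilon}$ to $U$ on the compact interval $[0,\eta^1]$, combined with the deterministic convergences already recorded in Lemma~\ref{lemma A2}. Throughout I fix $\ell\in\mathbb{S}_1$ and a continuous realization of $W$; since $\eta^{\varepsilon}\leq\eta^1<\infty$ for such $\ell$, all quantities live on $[0,\eta^1]$. By Lemma~\ref{lemma A2} one has $\eta^{\varepsilon}\downarrow\eta$, $\ell^{\varepsilon}_{\eta^{\varepsilon}}\to\ell_{\eta}$, and by Lemma~\ref{lemma A0} $\ell^{\varepsilon}_s\to\ell_s$ at every continuity point of $\ell$, hence for a.e.\ $s$. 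The heart of the matter is therefore to prove
\[
\sup_{t\in[0,\eta^1]}\|U^{\varepsilon}_t-U_t\|^2+\int_0^{\eta^1}\|U^{\varepsilon}_s-U_s\|_1^2\,\mathrm ds\longrightarrow 0\qquad(\varepsilon\to0).
\]
Strong $L^2_tH^1$ convergence (not merely weak) is indispensable here, since \eqref{A.3} integrates the \emph{squared} enstrophy $\|U^{\varepsilon}_s\|_1^2$.

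First I would treat the driving noise. Since $b^{\varepsilon}_t=\sigma_{\theta}(W_{\ell^{\varepsilon}_t}-W_{\ell^{\varepsilon}_0})$ and $b_t=\sigma_{\theta}W_{\ell_t}$ take values in the finite–dimensional space $\mathcal{H}_0$, on which $\|\cdot\|$ and $\|\cdot\|_1$ are equivalent, the pointwise convergence $\ell^{\varepsilon}_t\to\ell_t$ (a.e.\ $t$), continuity of $W$, and $\ell^{\varepsilon}_0\to\ell_0=0$ give $b^{\varepsilon}_t\to b_t$ for a.e.\ $t$; as $\ell^{\varepsilon}_t\le\ell^1_t\le\ell^1_{\eta^1}$ for $\varepsilon\le1$, the range of $W$ is bounded, so dominated convergence yields $b^{\varepsilon}\to b$ in $L^2([0,\eta^1];H^1)$ together with $\sup_{\varepsilon\le1}\sup_{t\le\eta^1}\|b^{\varepsilon}_t\|_1<\infty$. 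Feeding this bound into the chain rule for $\|U^{\varepsilon}_t\|^2$ (the pathwise analogue of Lemma~\ref{lemma2.1}) produces the uniform a priori estimate $\sup_{\varepsilon\le1}\big(\sup_{t\le\eta^1}\|U^{\varepsilon}_t\|^2+\nu\int_0^{\eta^1}\|U^{\varepsilon}_s\|_1^2\,\mathrm ds\big)<\infty$.

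Next, setting $R^{\varepsilon}:=V^{\varepsilon}-V$ and $g^{\varepsilon}:=b^{\varepsilon}-b$ (so $U^{\varepsilon}-U=R^{\varepsilon}+g^{\varepsilon}$), subtracting the two equations for $\Psi(U_0,\cdot)$ gives $\partial_tR^{\varepsilon}=-A(R^{\varepsilon}+g^{\varepsilon})-(B(U^{\varepsilon})-B(U))+G(R^{\varepsilon}+g^{\varepsilon})$. Testing against $R^{\varepsilon}$, writing $B(U^{\varepsilon})-B(U)=B(R^{\varepsilon}+g^{\varepsilon},U^{\varepsilon})+B(U,R^{\varepsilon}+g^{\varepsilon})$, and using the trilinear estimates \eqref{2.2} to absorb $\|R^{\varepsilon}\|_1^2$ into the dissipation, I obtain $\frac{\mathrm d}{\mathrm dt}\|R^{\varepsilon}\|^2+\nu\|R^{\varepsilon}\|_1^2\le C(\|U^{\varepsilon}\|_1^{4/3}+1)\|R^{\varepsilon}\|^2+\Theta^{\varepsilon}(t)$, where the forcing $\Theta^{\varepsilon}$ collects all terms carrying a factor of $g^{\varepsilon}$ and satisfies $\int_0^{\eta^1}\Theta^{\varepsilon}(s)\,\mathrm ds\to0$ by Cauchy--Schwarz, Young's inequality, the convergence $g^{\varepsilon}\to0$ in $L^2_tH^1$, and the uniform $L^2_t$ bounds on $\|U^{\varepsilon}\|_1,\|U\|_1$. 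Since the Gronwall coefficient $\int_0^{\eta^1}C(\|U^{\varepsilon}_s\|_1^{4/3}+1)\,\mathrm ds$ is bounded uniformly in $\varepsilon$ and $R^{\varepsilon}_0=0$, Gronwall's lemma delivers the displayed strong convergence, hence $U^{\varepsilon}\to U$ in $C([0,\eta^1];H)\cap L^2([0,\eta^1];H^1)$.

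Finally I pass to the limit. For \eqref{A.4}, $U^{\varepsilon}_{\eta^{\varepsilon}}=V^{\varepsilon}_{\eta^{\varepsilon}}+b^{\varepsilon}_{\eta^{\varepsilon}}$; uniform convergence $V^{\varepsilon}\to V$ with $V$ continuous and $\eta^{\varepsilon}\to\eta$ give $V^{\varepsilon}_{\eta^{\varepsilon}}\to V_{\eta}$, while $\ell^{\varepsilon}_{\eta^{\varepsilon}}\to\ell_{\eta}$ and continuity of $W$ give $b^{\varepsilon}_{\eta^{\varepsilon}}\to b_{\eta}$, whence $U^{\varepsilon}_{\eta^{\varepsilon}}\to U_{\eta}$. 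For \eqref{A.3}, the weight $\Lambda^{\varepsilon}(s):=\exp\{-\nu(\eta^{\varepsilon}-s)+8\mathfrak{B}_0\kappa(\ell^{\varepsilon}_{\eta^{\varepsilon}}-\ell^{\varepsilon}_s)\}$ is uniformly bounded (since $-\nu(\eta^{\varepsilon}-s)\le0$ and $0\le 8\mathfrak{B}_0\kappa(\ell^{\varepsilon}_{\eta^{\varepsilon}}-\ell^{\varepsilon}_s)\le 8\mathfrak{B}_0\kappa\ell^1_{\eta^1}$) and converges a.e.\ to the limiting weight; meanwhile $\|U^{\varepsilon}_s\|_1^2\to\|U_s\|_1^2$ in $L^1([0,\eta^1])$ because $\int_0^{\eta^1}\big|\|U^{\varepsilon}_s\|_1^2-\|U_s\|_1^2\big|\,\mathrm ds\le\|U^{\varepsilon}-U\|_{L^2_tH^1}\,\|U^{\varepsilon}+U\|_{L^2_tH^1}\to0$. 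A bounded sequence converging a.e.\ times an $L^1$–convergent sequence converges in $L^1$, and the change of upper limit contributes $\int_{\eta}^{\eta^{\varepsilon}}\Lambda^{\varepsilon}\|U^{\varepsilon}_s\|_1^2\,\mathrm ds\to0$ since $\eta^{\varepsilon}-\eta\to0$ and the family $\{\|U^{\varepsilon}_s\|_1^2\}$ is uniformly integrable (being $L^1$–convergent); combining these yields \eqref{A.3}. The main obstacle is precisely the strong $L^2_tH^1$ convergence of the third step: it forces one to control the nonlinearity in the difference equation uniformly in $\varepsilon$ with only $L^2_t$-in-time enstrophy bounds, and to verify that the $g^{\varepsilon}$–forcing vanishes even though $b^{\varepsilon}$ fails to converge uniformly at the jump times of $\ell$—which is exactly why the argument must be carried out in $L^2$ in time rather than pointwise.
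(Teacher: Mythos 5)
Your proposal is correct and follows essentially the same route as the paper: the same a priori bound on $\Psi(U_0,f)$, the same Gronwall stability estimate in the forcing $f$, the $L^2$-in-time convergence $b^{\varepsilon}\to b$, and the same triangle-inequality decomposition $\|U^{\varepsilon}_{\eta^{\varepsilon}}-U_{\eta}\|\le\|V^{\varepsilon}_{\eta^{\varepsilon}}-V_{\eta^{\varepsilon}}\|+\|V_{\eta^{\varepsilon}}-V_{\eta}\|+\|b^{\varepsilon}_{\eta^{\varepsilon}}-b_{\eta}\|$ for \eqref{A.4}. The only (harmless) imprecision is the intermediate claim that $U^{\varepsilon}\to U$ in $C([0,\eta^{1}];H)$ — only $V^{\varepsilon}\to V$ converges uniformly, since $b^{\varepsilon}-b$ converges merely pointwise and in $L^2$ near the jumps of $\ell$ — but your actual limit passage never uses the uniform statement for $U^{\varepsilon}$ itself, so the argument stands.
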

\begin{proof}
  To prove this lemma, we first need some a priori estimates for $\Psi$.

By the chain rule, there exists a constant $C=C(\nu_1,\nu_2,g)>0$ such that, for any $U_0\in H, f\in D([0,\infty);\mathcal{H}_0)$ and $t\geq 0$,
\begin{equation}
\begin{aligned}
\frac{\zeta^*}{2}\frac{\mathrm{d}}{\mathrm{d}t}\|\tilde{w}\|^2+\zeta^*\nu_{1}\|\tilde{w}\|_1^2
&\leq \zeta^*g(\|\tilde{\theta}\|_1+\|f\|_1)\|\tilde{w}\|\\
&\leq \frac{\nu_2}{4}\|\tilde{\theta}\|_1^2+C\|\tilde{w}\|^2
+C\|f\|_1^2+\frac{\zeta^*\nu_1}{2}\|\tilde{w}\|_1^2  ,   \\
\frac{1}{2}\frac{\mathrm{d}}{\mathrm{d}t}\|\tilde{\theta}\|^2+\nu_{2}\|\tilde{\theta}\|_1^2
&\leq\|\tilde{w}\|\|f\|_1\|\tilde{\theta}\|_1
-\nu_{2}(\Delta f,\tilde{\theta})\\
&\leq \frac{\nu_2}{4}\|\tilde{\theta}\|_1^2+C\|f\|_1^2\|\tilde{w}\|^2
+C\|f\|_2^2+C\|\tilde{\theta}\|^2.
\end{aligned}
\end{equation}
Note that for any $\alpha>0$, there exists a constant $C_{\alpha}$ such that $\|h\|_{\alpha}\leq C_{\alpha}\|h\|,\forall h\in\mathcal{H}_0$.
Then applying the Gronwall lemma, for any $T > 0$, there holds
\begin{equation}\label{A.10}
\begin{aligned}
&\sup_{t\in[0,T]}\|\Psi(U_{0},f)(t)\|^{2}
+\nu\int_{0}^{T}\|\Psi(U_{0},f)(s)\|_{1}^{2}\mathrm{d}s\\
\leq& C\Big(\|U_{0}\|^{2}+\int_{0}^{T}(1+\|f_{s}\|^{2})\mathrm{d}s\Big)
e^{C\int_{0}^{T}(1+\|f_{s}\|^{2})\mathrm{d}s}.
\end{aligned}
\end{equation}

For any $f^1,f^2 \in D([0,\infty);\mathcal{H}_0)$, put $\Psi^1(t)=\Psi(U_0,f^1)(t)$ and $\Psi^2(t)=\Psi(U_0,f^2)(t)$, simplifying the notation. For the convenience of reading, in the following proof of this lemma, we use $(w,\theta)$ instead of $(\tilde{w},\tilde{\theta})$, then applying the chain rule for $\|w^1-w^2\|^2$ and $\|\theta^1-\theta^2\|^2$ yields
\begin{equation}
\begin{aligned}
\frac{\zeta^*}{2}\frac{\mathrm{d}}{\mathrm{d}t} \|w^1-w^2\|^2&=-\zeta^*\nu_1 \|\nabla(w^1-w^2)\|^2-\zeta^*\langle B_1(w^1,w^1)-B_1(w^2,w^2),w^1-w^2\rangle\\
&~~~+\zeta^*g\langle\partial_{x}(\theta^1-\theta^2+f^1-f^2),w^1-w^2\rangle ,\\
\frac{1}{2}\frac{\mathrm{d}}{\mathrm{d}t}\|\theta^1-\theta^2\|^2
&=-\nu_2\|\nabla(\theta^1-\theta^2)\|^2-\langle B_2(w^1,\theta^1+f^1)-B_2(w^2,\theta^2+f^2),\theta^1-\theta^2\rangle\\
&~~~+\nu_{2}(\Delta f^1-\Delta f^2,\theta^1-\theta^2),
\end{aligned}
\end{equation}
where $(w^i,\theta^i), (i=1,2)$ are the solutions of \eqref{A.2}.
By \eqref{2.2} and Young's inequality, we have
\begin{align*}
&\zeta^*\langle B_1(w^1,w^1)-B_1(w^2,w^2),w^1-w^2\rangle
=\zeta^*\langle B_1(w^1-w^2,w^1),w^1-w^2\rangle\\
&\leq C\zeta^*\| w^1-w^2\|_1\|w^1\|_1\|w^1-w^2\|
\leq\frac{\zeta^*\nu_1}{4}\| w^1-w^2\|_1^2+C\|w^1\|_1^2\|w^1-w^2\|^2,
\end{align*}
and
\begin{equation*}\begin{aligned}
&\langle B_2(w^1,\theta^1+f^1)-B_2(w^2,\theta^2+f^2),\theta^1-\theta^2\rangle\\
=&\langle B_2(w^1-w^2,\theta^1+f^1)+B_2(w^2,f^1-f^2),\theta^1-\theta^2\rangle
\\
\leq&C\|w^1-w^2\|_1 \|\theta^1+f^1\|_1 \|\theta^1-\theta^2\|
+C\|w^2\|_1\|f^1-f^2\|\|\theta^1-\theta^2\|_1
\\
\leq&\frac{\zeta^*\nu_1}{4}\| w^1-w^2\|_1^2+C\|\theta^1+f^1\|_1^2 \|\theta^1-\theta^2\|^2+\frac{\nu_2}{6}\| \theta^1-\theta^2\|_1^2+C\|w^2\|_1^2\|f^1-f^2\|^2.
\end{aligned}
\end{equation*}
Using Young's inequality again, we get
\begin{align*}
\zeta^*g\langle\partial_{x}(\theta^1-\theta^2+f^1-f^2),
w^1-w^2\rangle
&\leq \frac{\nu_2}{6}\|\theta^1-\theta^2\|_1^2+C\|f^1-f^2\|_1^2+C\|w^1-w^2\|^2,
\\
\nu_{2}(\Delta f^1-\Delta f^2,\theta^1-\theta^2)
&\leq \frac{\nu_2}{6}\|\theta^1-\theta^2\|_1^2+C\|f^1-f^2\|_1^2.
\end{align*}
Combining the above estimates, we have
\begin{align}
&\zeta^*\|w^1_t-w^2_t\|^2+\|\theta_t^1-\theta_t^2\|^2
+\nu\int_0^t (\zeta^*\|w^1_s-w^2_s\|_1^2+\|\theta_s^1-\theta_s^2\|_1^2)\mathrm{d}s\\ \notag
&\leq \int_0^t \left[(C\|w^1_s\|_1^2+C)\|w^1_s-w^2_s\|^2
+C\|\theta_s^1+f^1_s\|_1^2\|\theta^1_s-\theta^2_s\|^2\right]\mathrm{d}s\\\notag
&+\int_0^t( C\|w^2_s\|_1^2\|f^1_s-f^2_s\|^2+C\|f^1_s-f^2_s\|_1^2)\mathrm{d}s \\\notag
&\leq C(1+\sup_{s\in[0,t]}\|w^1_s\|_1^2+\|\theta_s^1+f^1_s\|_1^2)\int_0^t [\|w^1_s-w^2_s\|^2
+\|\theta^1_s-\theta^2_s\|^2]\mathrm{d}s\\\notag
&+C(1+\sup_{s\in[0,t]}\|w^2_s\|_1^2)\int_0^t( \|f^1_s-f^2_s\|^2+\|f^1_s-f^2_s\|_1^2)\mathrm{d}s.
\end{align}
Using Gronwall's inequality, we obtain that for any $T\geq0$
\begin{align}\label{A.11}
&\sup_{t\in[0,T]}\|\Psi^{1}(t)-\Psi^{2}(t)\|^{2}
+\nu\int_{0}^{T}\|\Psi^{1}(s)-\Psi^{2}(s)\|_{1}^{2}\mathrm{d}s \notag\\
&\leq C(1+\sup_{s\in[0,T]}\|w^2_s\|_1^2)\int_0^T( \|f^1_s-f^2_s\|^2+\|f^1_s-f^2_s\|_1^2)\mathrm{d}s \\
&~~~\times\exp\Big\{C\int_0^T\big(1+\sup_{s\in[0,t]}\|w^1_s\|_1^2+\|\theta_s^1+f^1_s\|_1^2\big)\mathrm{d}s\Big\}.\notag
\end{align}

For any $(\mathrm{w},\ell)\in \mathbb{W}\times\mathbb{S}$, from the definitions of $b_t^{\varepsilon}$ and $b_t$, we derive that for any $T\geq0$,
\begin{equation}\label{A.15}
\sup\limits_{\varepsilon\in(0,1]}
\sup\limits_{t\in[0,T]}
\left(\|b_t^\varepsilon(\text{w},\ell)\|
+\|b_t(\text{w},\ell)\|\right)
\leq C\sup\limits_{t\in[0,\ell_{T+1}+T]}\|\text{w}_t\|
<\infty,
\end{equation}
and
\begin{equation}\label{A.16}
\lim_{\varepsilon\to0}
\int_0^T\|b_t^\varepsilon(\mathrm{w},\ell)
-b_t(\mathrm{w},\ell)\|^2\mathrm{d}t=0.
\end{equation}
Combining \eqref{A.15}-\eqref{A.16}, \eqref{A.10} and \eqref{A.11}, there exists a constant $C>0$ depending on $U_0$, $T$, and $\sup\limits_{t\in[0,\ell_{T+1}+T]}\|w_t\|$, such that the following estimates hold:
\begin{equation}\label{A.12}
\begin{aligned}
&\sup_{\varepsilon\in(0,1]}\Big(\sup_{t\in[0,T]}
\|U_{t}^{\varepsilon}\|^{2}
+\int_{0}^{T}\|U_{t}^{\varepsilon}\|_{1}^{2}\mathrm{d}t\Big)(\mathrm{w},\ell)\\
&~~~~+\Big(\sup_{t\in[0,T]}\|U_{t}\|^{2}
+\int_{0}^{T}\|U_{t}\|_{1}^{2}\mathrm{d}t\Big)(\mathrm{w},\ell)
\leq  C,
\end{aligned}
\end{equation}
and
\begin{equation}\label{A.13}
\lim\limits_{\varepsilon\to0}\Big(\sup\limits_{t\in[0,T]}
\|V_t^\varepsilon-V_t\|^2
+\int_0^T\|U_t^\varepsilon-U_t\|_1^2\mathrm{d}t\Big)(\mathrm{w},\ell)
=0.
\end{equation}
Notice that for any $\ell\in \mathbb{S}_1$ and $\text{w}\in \mathbb{W}$,
\begin{equation}\label{A.14}
\begin{aligned}
\|U_{\eta^{\varepsilon}}^{\varepsilon}-U_{\eta}\|
&\leq\|V_{\eta^{\varepsilon}}^{\varepsilon}-V_{\eta}\|
+\|b_{\eta^{\varepsilon}}^{\varepsilon}-b_{\eta}\| \\
&\leq\|V_{\eta^{\varepsilon}}^{\varepsilon}-V_{\eta^{\varepsilon}}\|
+\|V_{\eta^{\varepsilon}}-V_{\eta}\|
+\|b_{\eta^{\varepsilon}}^{\varepsilon}-b_{\eta}\| \\
&\leq\sup_{t\in[0,\eta^{1}]}\|V_{t}^{\varepsilon}-V_{t}\|
+\|V_{\eta^{\varepsilon}}-V_{\eta}\|
+\|\sigma_{\theta}(W_{\ell_{\eta^{\varepsilon}}^{\varepsilon}}
-W_{\ell_{0}^{\varepsilon}})-\sigma_{\theta}W_{\ell_{\eta}}\|.
\end{aligned}
\end{equation}
Applying Lemma \ref{lemma A0}, Lemma \ref{lemma A1}, and \eqref{A.12}-\eqref{A.14}, together with the fact that $V_t$ is continuous in $H$, we can deduce \eqref{A.3} and \eqref{A.4}. This completes the proof.
\end{proof}

We also have the following estimate on $U_t^{\varepsilon}$.

\begin{lemma}\label{lemma A4}
There exists a positive constant $C$,
such that, for any $\kappa\in(0,\tilde{\kappa}_0],\varepsilon\in(0,1]$, $\ell\in\mathbb{S}_1$, there holds
\begin{equation}
\begin{aligned}
&\mathbb{E}^{\mu_{\mathbb{W}}}\left[\exp\left\{
\kappa\|U_{\eta^{\varepsilon}}^{\varepsilon}\|^2
-\kappa\|U_0\|^2 e^{-\nu\eta^{\varepsilon}
+8\mathfrak{B}_{0}\kappa \ell_{\eta^{\varepsilon}}^{\varepsilon}}\right.\right.\\
&+\kappa^*\int_{0}^{\eta^{\varepsilon}}
e^{-\nu(\eta^{\varepsilon}-s)
+8\mathfrak{B}_{0}\kappa(\ell_{\eta^{\varepsilon}}^{\varepsilon}
-\ell_{s}^{\varepsilon})}\|U_{s}^{\varepsilon}\|_{1}^{2}{\mathrm{d}}s\\
&\left.\left.-\kappa\mathfrak{B}_{0}\int_{0}^{\eta^{\varepsilon}}
e^{-\nu(\eta^{\varepsilon}-s)
+8\mathfrak{B}_{0}\kappa(\ell_{\eta^{\varepsilon}}^{\varepsilon}
-\ell_{s}^{\varepsilon})}\mathrm{d}\ell_{s}^{\varepsilon}\right\}\right]
\leq C,
\end{aligned}
\end{equation}
where $\kappa^*$ satisfies formula \eqref{A.17} below.
\end{lemma}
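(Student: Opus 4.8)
\emph{Sketch of the intended argument.} The plan is to exploit that, once $\ell\in\mathbb{S}_1$ is fixed, the smoothed clock $\ell^{\varepsilon}_{\cdot}$ is deterministic, continuous and strictly increasing (Lemma \ref{lemma A0}), so that $\eta^{\varepsilon}=\eta^{\varepsilon}(\ell)$ is a \emph{deterministic} time and $t\mapsto W_{\ell^{\varepsilon}_t}$ is a continuous (time-changed) martingale under $\mathbb{P}^{\mu_{\mathbb{W}}}$ with $\mathrm{d}[W^{k,m}_{\ell^{\varepsilon}}]_t=\mathrm{d}\ell^{\varepsilon}_t$. Consequently $U^{\varepsilon}_t=V^{\varepsilon}_t+b^{\varepsilon}_t$ is a continuous $H$-valued semimartingale and the classical It\^o calculus applies; this is precisely the reason for passing to the $\varepsilon$-regularisation before letting $\varepsilon\to0$. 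First I would apply It\^o's formula to $\|U^{\varepsilon}_t\|^2=\zeta^{*}\|w^{\varepsilon}_t\|^2+\|\theta^{\varepsilon}_t\|^2$, obtaining
\begin{equation*}
\mathrm{d}\|U^{\varepsilon}_t\|^2=\big(-2\langle AU^{\varepsilon}_t,U^{\varepsilon}_t\rangle+2\langle GU^{\varepsilon}_t,U^{\varepsilon}_t\rangle\big)\mathrm{d}t+\mathfrak{B}_0\,\mathrm{d}\ell^{\varepsilon}_t+\mathrm{d}M_t,
\end{equation*}
where the inertial term drops out since $\langle B(U^{\varepsilon}),U^{\varepsilon}\rangle=0$ by \eqref{2.2}, the It\^o correction of the noise contributes $\mathfrak{B}_0\,\mathrm{d}\ell^{\varepsilon}_t$ (cf. \eqref{2.6}), and $\mathrm{d}M_t=2\sum_{k,m}\alpha^m_k\langle\theta^{\varepsilon}_t,\sigma^m_k\rangle\,\mathrm{d}W^{k,m}_{\ell^{\varepsilon}_t}$ has quadratic variation $\mathrm{d}[M]_t\le 4\mathfrak{B}_0\|U^{\varepsilon}_t\|^2\,\mathrm{d}\ell^{\varepsilon}_t$.

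Next I would introduce the deterministic integrating factor $\phi_t:=e^{-\nu t+8\mathfrak{B}_0\kappa\ell^{\varepsilon}_t}$ and set
\begin{equation*}
H_t:=\kappa\frac{\|U^{\varepsilon}_t\|^2}{\phi_t}-\kappa\|U_0\|^2+\kappa^{*}\int_0^t\frac{\|U^{\varepsilon}_s\|_1^2}{\phi_s}\,\mathrm{d}s-\kappa\mathfrak{B}_0\int_0^t\frac{\mathrm{d}\ell^{\varepsilon}_s}{\phi_s}.
\end{equation*}
By Lemma \ref{lemma A2}(1) one has $\nu\eta^{\varepsilon}-8\mathfrak{B}_0\kappa\ell^{\varepsilon}_{\eta^{\varepsilon}}=1$, hence $\phi_{\eta^{\varepsilon}}=e^{-1}$, and a direct rearrangement shows that the exponent in the statement equals $\phi_{\eta^{\varepsilon}}H_{\eta^{\varepsilon}}=e^{-1}H_{\eta^{\varepsilon}}$. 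Thus it suffices to bound $\mathbb{E}^{\mu_{\mathbb{W}}}[\exp\{e^{-1}H_{\eta^{\varepsilon}}\}]$, and by Jensen's inequality (concavity of $x\mapsto x^{e^{-1}}$ on $[0,\infty)$) it is enough to prove $\mathbb{E}^{\mu_{\mathbb{W}}}[\exp\{H_{\eta^{\varepsilon}}\}]\le 1$.

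To this end I would show that $e^{H_t}$ is a supermartingale. Writing $\mathrm{d}e^{H_t}=e^{H_t}(\mathrm{d}H_t+\tfrac12\mathrm{d}[H]_t)$ and using $\mathrm{d}(1/\phi_t)=\phi_t^{-1}(\nu\,\mathrm{d}t-8\mathfrak{B}_0\kappa\,\mathrm{d}\ell^{\varepsilon}_t)$, the noise It\^o-correction $\mathfrak{B}_0\,\mathrm{d}\ell^{\varepsilon}$ cancels the compensator $-\kappa\mathfrak{B}_0\,\mathrm{d}\ell^{\varepsilon}/\phi$, leaving the finite-variation part
\begin{equation*}
\frac{1}{\phi_t}\Big[-\kappa\big(2\langle AU^{\varepsilon}_t,U^{\varepsilon}_t\rangle-2\langle GU^{\varepsilon}_t,U^{\varepsilon}_t\rangle\big)+\kappa\nu\|U^{\varepsilon}_t\|^2+\kappa^{*}\|U^{\varepsilon}_t\|_1^2\Big]\mathrm{d}t+\frac{\kappa^2\mathfrak{B}_0\|U^{\varepsilon}_t\|^2}{\phi_t}\Big(-8+\frac{2}{\phi_t}\Big)\mathrm{d}\ell^{\varepsilon}_t.
\end{equation*}
On $[0,\eta^{\varepsilon}]$ we have $\nu t-8\mathfrak{B}_0\kappa\ell^{\varepsilon}_t\le1$, so $\phi_t\ge e^{-1}$ and $2/\phi_t\le 2e<8$; this makes the $\mathrm{d}\ell^{\varepsilon}_t$-term non-positive and is exactly the role of the constant $8$ in the definition of $\eta^{\varepsilon}$. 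It remains to check that the $\mathrm{d}t$-term is $\le0$, i.e. the dissipativity inequality $2\langle AU,U\rangle-2\langle GU,U\rangle\ge \nu\|U\|^2+\tfrac{\kappa^{*}}{\kappa}\|U\|_1^2$. Here I would \emph{not} estimate the buoyancy $2\langle GU,U\rangle=2\zeta^{*}g\langle\partial_x\theta,w\rangle$ separately but keep it inside a quadratic form in $(\|\nabla w\|,\|\nabla\theta\|)$; the choice of $\kappa^{*}$ in \eqref{A.17} is precisely what renders this form negative semi-definite, using $\zeta^{*}g^2=\nu_1\nu_2$ together with the Poincar\'e inequality. Granting this, $e^{H_t}$ is a nonnegative local supermartingale, hence a supermartingale, and evaluating at the deterministic time $\eta^{\varepsilon}$ gives $\mathbb{E}^{\mu_{\mathbb{W}}}[\exp\{H_{\eta^{\varepsilon}}\}]\le \exp\{H_0\}\le1$, since $H_0=\kappa\|U_0\|^2(\phi_0^{-1}-1)\le0$. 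Combined with the Jensen reduction this yields the assertion.

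The a priori bounds \eqref{A.10}, \eqref{A.12} together with Lemma \ref{lemma A1} supply the integrability required to upgrade the local supermartingale to a genuine supermartingale. The main obstacle is the $\mathrm{d}t$-drift estimate: unlike the coarse energy balance behind Lemma \ref{lemma2.1}, where the full dissipation is spent \emph{either} on exponential decay \emph{or} on the $\|U\|_1^2$-integral, here both must be extracted at once, so the sign of the drift hinges on the sharp interplay of $\nu_1,\nu_2,g$ through $\zeta^{*}$ and on the precise tuning of $\kappa^{*}$ in \eqref{A.17}. This is the delicate quantitative heart of the proof, the $\mathrm{d}\ell^{\varepsilon}$-term and the supermartingale bookkeeping being comparatively routine.
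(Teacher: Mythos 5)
Your argument is correct and follows essentially the same route as the paper's proof: the same integrating factor $e^{-\nu(\cdot)+8\mathfrak{B}_0\kappa\ell^{\varepsilon}}$, the same Poincar\'e-based absorption of the buoyancy cross term using $\zeta^*g^2=\nu_1\nu_2$ (the paper's \eqref{A.1}), the same role for $\kappa^*$ from \eqref{A.17}, and the same margin $2/\phi_t\le 2e<8$ making the $\mathrm{d}\ell^{\varepsilon}$-contribution non-positive. The only differences are bookkeeping: the paper first passes to the $\ell^{\varepsilon}$-clock via $\gamma^{\varepsilon}$ and reduces the exponent pathwise to the term $\tilde{M}$, whose exponential moment it bounds by citing \cite[Lemma A.5]{Peng-2024}, whereas you stay in the original clock, run the exponential-supermartingale argument for $e^{H_t}$ directly (which is precisely what that cited lemma encapsulates, and even yields $C=1$), and insert a harmless Jensen step to pass from $H_{\eta^{\varepsilon}}$ to $e^{-1}H_{\eta^{\varepsilon}}$.
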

\begin{proof}
  Now, we fix $\kappa\in (0,\tilde{\kappa}], \varepsilon\in (0,1]$ and $\ell\in \mathbb{S}_1$.
Let $\gamma^{\varepsilon}$ be the inverse function of $\ell^{\varepsilon}$. By a change of variable, for $t\geq\ell_0^{\varepsilon}$, we set $(X_t^\varepsilon,Y_t^\varepsilon)
:=(w_{\gamma_t^{\varepsilon}}^{\varepsilon}
,\theta_{\gamma_t^{\varepsilon}}^{\varepsilon})$, and they satisfy the following stochastic equations
\begin{equation*}
\begin{aligned}
X_t^\varepsilon&=w_0
+\int_{\ell_0^\varepsilon}^t\left[\nu_1 \Delta X_s^\varepsilon
-B(X_s^\varepsilon,X_s^\varepsilon)
+g\partial_x \Delta Y_s^\varepsilon\right]
\dot{\gamma}_s^\varepsilon \mathrm{d}s,
\\
Y_t^\varepsilon&=\theta_0
+\int_{\ell_0^\varepsilon}^t\left[\nu_2 \Delta Y_s^\varepsilon
-B(X_s^\varepsilon,Y_s^\varepsilon)
\right]
\dot{\gamma}_s^\varepsilon \mathrm{d}s
+\sigma_{\theta}(W_t-W_{\ell_0^\varepsilon}).
\end{aligned}
\end{equation*}
Applying It\^{o}'s formula, we have
\begin{equation*}
\begin{aligned}
\mathrm{d}(\zeta^*\|X_t^\varepsilon\|^2)&=
-2\zeta^*\nu_1 \|X_t^\varepsilon\|_1^2\dot{\gamma}_t^\varepsilon \mathrm{d}t
+2\zeta^*\langle g\partial_x Y_t^\varepsilon, X_t^\varepsilon\rangle \dot{\gamma}_t^\varepsilon \mathrm{d}t,
\\
\mathrm{d}\|Y_t^\varepsilon\|^2
&=-2\nu_2\|Y_t^\varepsilon\|_1^2 \dot{\gamma}_t^\varepsilon \mathrm{d}t
+2\langle Y_t^\varepsilon,\sigma_{\theta} \mathrm{d}W_t\rangle+\mathfrak B_0 \mathrm{d}t.
\end{aligned}
\end{equation*}
By using Poincar\'{e}'s inequality, we get
\begin{equation}\label{A.1}
2\kappa\zeta^*\langle g\partial_x Y_t^\varepsilon,X_t^\varepsilon\rangle\leq \kappa\nu_1\zeta^*\|X_t^\varepsilon\|_1^2
+\kappa\nu_2\|Y_t^\varepsilon\|_1^2,
\end{equation}
and
\begin{equation*}
\nu(\zeta^*\|X_{t}^{\varepsilon}\|^{2}
+\|Y_{t}^{\varepsilon}\|^{2})
\leq \nu_{1}\zeta^* \|X_{t}^{\varepsilon}\|_1^{2}
+\nu_2\|Y_{t}^{\varepsilon}\|_1^{2}.
\end{equation*}
Thus, from the above estimates, we derive that
\begin{equation*}
\begin{aligned}
&\mathrm{d}\left[\kappa(\zeta^*\|X_{t}^{\varepsilon}\|^{2}
+\|Y_{t}^{\varepsilon}\|^{2})
e^{\nu\gamma_{t}^{\varepsilon}-8\mathfrak{B}_{0}\kappa t} \right] \\
=& e^{\nu\gamma_{t}^{\varepsilon}-8\mathfrak{B}_{0}\kappa t}
\big[-2\kappa(\nu_1\zeta^*\|X_{t}^{\varepsilon}\|_1^{2}
+\nu_2\|Y_{t}^{\varepsilon}\|_{1}^{2})
\dot{\gamma}_{t}^{\varepsilon}\mathrm{d}t
+2\kappa\zeta^*\langle g\partial_x Y_t^\varepsilon,X_t^\varepsilon\rangle
\dot{\gamma}_{t}^{\varepsilon}\mathrm{d}t
\\
&+2\kappa\langle Y_{t}^{\varepsilon},\sigma_{\theta} \mathrm{d}W_{t}\rangle+\kappa\mathfrak{B}_{0}\mathrm{d}t\big]
+\kappa(\zeta^*\|X_{t}^{\varepsilon}\|^{2}
+\|Y_{t}^{\varepsilon}\|^{2})e^{\nu\gamma_{t}^{\varepsilon}-8\mathfrak{B}_{0}\kappa t}\big(\nu\dot{\gamma}_{t}^{\varepsilon}-8\mathfrak{B}_{0}\kappa\big)\mathrm{d}t \\
\leq& \kappa e^{\nu\gamma_{t}^{\varepsilon}-8\mathfrak{B}_{0}\kappa t}(-\nu_1\zeta^*\|X_{t}^{\varepsilon}\|_1^2
-\nu_2\|Y_{t}^{\varepsilon}\|_{1}^{2}
+\nu(\zeta^*\|X_{t}^{\varepsilon}\|^{2}
+\|Y_{t}^{\varepsilon}\|^{2}))\dot{\gamma}_{t}^{\varepsilon}dt
+\kappa\mathfrak{B}_{0}e^{\nu\gamma_{t}^{\varepsilon}-8\mathfrak{B}_{0}\kappa t}\mathrm{d}t\\
&+2\kappa e^{\nu\gamma_{t}^{\varepsilon}-8\mathfrak{B}_{0}\kappa t}\langle Y_{t}^{\varepsilon},\sigma_{\theta} \mathrm{d}W_{t}\rangle
-8\mathfrak{B}_{0}\kappa^{2}(\|X_{t}^{\varepsilon}\|^{2}
+\|Y_{t}^{\varepsilon}\|^{2})e^{\nu\gamma_{t}^{\varepsilon}-8\mathfrak{B}_{0}\kappa t}\mathrm{d}t.
\end{aligned}
\end{equation*}
The above inequality can be rewritten as
\begin{equation}\label{A.7}
\begin{aligned}
&\kappa(\zeta^*\|X_{t}^{\varepsilon}\|^{2}+\|Y_{t}^{\varepsilon}\|^{2})
+\kappa^* \int_{\ell_{0}^{\varepsilon}}^{t} e^{-\nu(\gamma_{t}^{\varepsilon}-\gamma_{s}^{\varepsilon})
+8\mathfrak{B}_{0}\kappa(t-s)}
(\zeta^*\|X_{s}^{\varepsilon}\|_1^{2}+\|Y_{s}^{\varepsilon}\|_1^{2})
\dot{\gamma}_{s}^{\varepsilon}\mathrm{d}s\\
\leq& \kappa(\zeta^*\|w_0\|^2+\|\theta_0\|^2)e^{-\nu\gamma_{t}^{\varepsilon}
+8\mathfrak{B}_{0}\kappa t}
+\kappa\mathfrak{B}_{0}\int_{\ell_{0}^{\varepsilon}}^{t}
e^{-\nu(\gamma_{t}^{\varepsilon}-\gamma_{s}^{\varepsilon})
+8\mathfrak{B}_{0}\kappa(t-s)}\mathrm{d}s+\tilde{M}_{t},
\end{aligned}
\end{equation}
where
\begin{equation*}
\begin{aligned}\tilde{M}_{t}
=\tilde{M}_{t}^{\kappa,\varepsilon}
&=2\kappa\int_{\ell_{0}^{\varepsilon}}^{t}e^{-\nu(\gamma_{t}^{\varepsilon}-\gamma_{s}^{\varepsilon})+8\mathfrak{B}_{0}\kappa(t-s)}\langle Y_{s}^{\varepsilon},\sigma_{\theta} \mathrm{d}W_{s}\rangle
\\
&-8\mathfrak{B}_{0}\kappa^{2}
\int_{\ell_{0}^{\varepsilon}}^{t}\|Y_{s}^{\varepsilon}\|^{2}
e^{-\nu(\gamma_{t}^{\varepsilon}-\gamma_{s}^{\varepsilon})
+8\mathfrak{B}_{0}\kappa(t-s)}\mathrm{d}s,
\end{aligned}
\end{equation*}
and
\begin{equation}\label{A.17}
\begin{aligned}
\kappa^*(\zeta^*\|X_{t}^{\varepsilon}\|_1^{2}
+\|Y_{t}^{\varepsilon}\|_1^{2})
:=\kappa\nu_1\zeta^*\|X_{t}^{\varepsilon}\|_1^2
+\kappa\nu_2\|Y_{t}^{\varepsilon}\|_{1}^{2}
-\kappa\nu(\zeta^*\|X_{t}^{\varepsilon}\|^{2}
+\|Y_{t}^{\varepsilon}\|^{2})
\geq0.
\end{aligned}
\end{equation}

From \cite[Lemma A.5]{Peng-2024}, we know that
\begin{equation}\label{A.5}
\mathbb{E}^{\mu_{\mathbb{W}}}\left[
\exp\{\tilde{M}_{\ell_{\eta^{\varepsilon}}^{\varepsilon}}\}\right]
\leq C.
\end{equation}
Replacing $t$ in \eqref{A.7} by $\ell_{\eta^{\varepsilon}}^{\varepsilon}$, it follows that
\begin{align}\label{A.6}
&\mathbb{E}^{\mu_{\mathbb{W}}}
\bigg[\exp\bigg\{\kappa(\zeta^*
\|X_{\ell_{\eta^{\varepsilon}}^\varepsilon}
^{\varepsilon}\|^{2}
+\|Y_{\ell_{\eta^{\varepsilon}}^\varepsilon}
^{\varepsilon}\|^{2})+\kappa^*\int_{\ell_{0}^{\varepsilon}}
^{\ell_{\eta^{\varepsilon}}^{\varepsilon}}e^{-\nu(\eta^{\varepsilon}
-\gamma_{s}^{\varepsilon})
+8\mathfrak{B}_{0}\kappa(\ell_{\eta^{\varepsilon}}^{\varepsilon}-s)}
(\zeta^*\|X_{s}^{\varepsilon}\|_{1}^{2}
+\|Y_{s}^{\varepsilon}\|_{1}^{2})\dot{\gamma}_{s}^{\varepsilon}\mathrm{d}s \notag\\
&~~~-\kappa(\zeta^*\|w_0\|^2+\|\theta_0\|^2)e^{-\nu\eta^{\varepsilon}
+8\mathfrak{B}_{0}\kappa\ell_{\eta^{\varepsilon}}^{\varepsilon}}
-\kappa\mathfrak{B}_{0}\int_{\ell_{0}^{\varepsilon}}^{\ell_{\eta^{\varepsilon}}^{\varepsilon}}
e^{-\nu(\eta^{\varepsilon}-\gamma_{s}^{\varepsilon})
+8\mathfrak{B}_{0}\kappa(\ell_{\eta^{\varepsilon}}^{\varepsilon}-s)}\mathrm{d}s\Big\} \bigg] \\
&\leq\mathbb{E}^{\mu_{\mathbb{W}}}
\bigg[\exp\left\{\tilde{M}_{\ell_{\eta^{\varepsilon}}^{\varepsilon}}\right\}\bigg].\notag
\end{align}
Using the fact that $X_{\ell_{\eta^{\varepsilon}}^{\varepsilon}}^{\varepsilon}
=w_{\eta^{\varepsilon}}^{\varepsilon},
Y_{\ell_{\eta^{\varepsilon}}^{\varepsilon}}^{\varepsilon}
=\theta_{\eta^{\varepsilon}}^{\varepsilon}$, and
$\gamma_{s}^{\varepsilon}|_{s=\ell_{r}^{\varepsilon}}=r$, we obtain
\begin{equation*}
\begin{aligned}
&\int_{\ell_{0}^{\varepsilon}}^{\ell_{\eta^{\varepsilon}}^{\varepsilon}}
e^{-\nu(\eta^{\varepsilon}-\gamma_{s}^{\varepsilon})
+8\mathfrak{B}_{0}\kappa(\ell_{\eta^{\varepsilon}}^{\varepsilon}-s)}
(\zeta^*\|X_{s}^{\varepsilon}\|_{1}^{2}
+\|Y_{s}^{\varepsilon}\|_{1}^{2})\dot{\gamma}_{s}^{\varepsilon}\mathrm{d}s
=\int_{0}^{\eta^{\varepsilon}}e^{-\nu(\eta^{\varepsilon}-r)
+8\mathfrak{B}_{0}\kappa(\ell_{\eta^{\varepsilon}}^{\varepsilon}-\ell_{r}^{\varepsilon})}
\||U_{r}^{\varepsilon}|\|_{1}^{2}\mathrm{d}r,
\\
&\int_{\ell_{0}^{\varepsilon}}^{\ell_{\eta^{\varepsilon}}^{\varepsilon}}
e^{-\nu(\eta^{\varepsilon}-\gamma_{s}^{\varepsilon})
+8\mathfrak{B}_{0}\kappa(\ell_{\eta^{\varepsilon}}^{\varepsilon}-s)}{d}s
=\int_{0}^{\eta^{\varepsilon}}e^{-\nu(\eta^{\varepsilon}-r)
+8\mathfrak{B}_{0}\kappa(\ell_{\eta^{\varepsilon}}^{\varepsilon}
-\ell_{r}^{\varepsilon})}\mathrm{d}\ell_{r}^{\varepsilon}.
\end{aligned}\end{equation*}
Together with the above formulas and \eqref{A.5}-\eqref{A.6}, we can derive the desired result. This completes the proof.
\end{proof}

With the groundwork laid by the previous lemmas, we can derive \eqref{2.9}-\eqref{moment.4} following the arguments in the proof of \cite[Appendix A]{Peng-2024}.
Finally, we turn to prove \eqref{moment.5}.

\begin{proof}[Proof of \eqref{moment.5}] Applying It\^{o}'s formula to $\zeta^*\|X_{s}^{\varepsilon}\|^{2n}+\|Y_{s}^{\varepsilon}\|^{2n}$ yields
\begin{align}\label{A.8}
&~~\zeta^*\|X_{s}^{\varepsilon}\|^{2n}+\|Y_{s}^{\varepsilon}\|^{2n}+2\nu_1\zeta^* n \int^s_{\ell_{0}^{\varepsilon}}
\|X_{u}^{\varepsilon}\|^{2n-2}\|X_{u}^{\varepsilon}\|_{1}^{2}
\dot{\gamma}_{u}^{\varepsilon}\mathrm{d}u
+2\nu_2 n \int^s_{\ell_{0}^{\varepsilon}}
\|Y_{u}^{\varepsilon}\|^{2n-2}\|Y_{u}^{\varepsilon}\|_{1}^{2}
\dot{\gamma}_{u}^{\varepsilon}\mathrm{d}u \notag\\
&\leq \zeta^*\|w_{0}\|^{2n}+\|\theta_{0}\|^{2n}+2\zeta^* n\int_{\ell_{0}^{\varepsilon}}^{s}
\|X_{u}^{\varepsilon}\|^{2n-2}\langle g\partial_xY_{u}^{\varepsilon},
X_{u}^{\varepsilon}\rangle\dot{\gamma}_{u}^{\varepsilon}\mathrm{d}u
+2 n\int_{\ell_{0}^{\varepsilon}}^{s}
\|Y_{u}^{\varepsilon}\|^{2n-2}\langle Y_{u}^{\varepsilon},
\sigma_\theta \mathrm{d}W_{u}\rangle \notag\\
&~~~+n\int_{\ell_{0}^{\varepsilon}}^{s}\|Y_{u}^{\varepsilon}\|^{2n-2}\mathfrak{B}_{0}\mathrm{d}u
+2n(n-1)\int_{\ell_{0}^{\varepsilon}}^{s}\|Y_{u}^{\varepsilon}\|^{2n-4}
\sum_{j\in\mathcal{Z}}\langle Y_{u}^{\varepsilon},\sigma_j^m\rangle^2 (\alpha_j^m)^2 \mathrm{d}u,
\end{align}
we use a similar estimate as in \eqref{A.1}, and then take expectations with respect to $\mathbb{P}^{\mu_{\mathbb{W}}}$ to derive
\begin{equation*}
\begin{aligned}
&~~\mathbb{E}^{\mu_{\mathbb{W}}}\left[\zeta^*\|X_{s}^{\varepsilon}\|^{2n}
+\|Y_{s}^{\varepsilon}\|^{2n}\right]\\
&\leq \zeta^*\|w_{0}\|^{2n}+\|\theta_{0}\|^{2n}
+C_n\int_{\ell_{0}^{\varepsilon}}^{s}\mathbb{E}^{\mu_{\mathbb{W}}}
\left[\|Y_{u}^{\varepsilon}\|^{2n-2}\right]\mathrm{d}u
+C_n\int_{\ell_{0}^{\varepsilon}}^{s}\mathbb{E}^{\mu_{\mathbb{W}}}
\left[\|X_{u}^{\varepsilon}\|^{2n}\right]\mathrm{d}u\\
&\leq \zeta^*\|w_{0}\|^{2n}+\|\theta_{0}\|^{2n}
+\frac{1}{2}\sup\limits_{u\in[\ell_0^\varepsilon,s]}
\mathbb{E}^{\mu_{\mathbb{W}}}
\left[\|Y_{u}^{\varepsilon}\|^{2n}\right]+C_n t^n.
\end{aligned}
\end{equation*}
Then we can get that for any $t\geq \ell_0^{\varepsilon}$,
\begin{equation*}
\begin{aligned}
&\sup\limits
_{s\in[\ell_0^\varepsilon,t]}
\mathbb{E}^{\mu_{\mathbb{W}}}\left[\zeta^*\|X_{s}^{\varepsilon}\|^{2n}
+\|Y_{s}^{\varepsilon}\|^{2n}\right]
\leq2\| U_{0}\|^{2n}+C_n t^{n}.
\end{aligned}
\end{equation*}
Using \eqref{A.8} and the Burkholder-Davis-Gundy inequality, we obtain
\begin{align*}
&\mathbb{E}^{\mu_{\mathbb{W}}}\left[\sup\limits
_{s\in[\ell_0^\varepsilon,t]}\left(\zeta^*\|X_{s}^{\varepsilon}\|^{2n}
+\|Y_{s}^{\varepsilon}\|^{2n}\right)\right]\\
&\leq \|U_0\|^{2n}
+2 n\mathbb{E}^{\mu_{\mathbb{W}}}\left[\sup\limits
_{s\in[\ell_0^\varepsilon,t]}\left|\int_{\ell_{0}^{\varepsilon}}^{s}
\|Y_{u}^{\varepsilon}\|^{2n-2}\langle Y_{u}^{\varepsilon},
\sigma_\theta \mathrm{d}W_{u}\rangle\right|\right]
+C_n \mathbb{E}^{\mu_{\mathbb{W}}}\left[
\int_{\ell_{0}^{\varepsilon}}^{s}\|Y_{u}^{\varepsilon}\|^{2n-2}\mathrm{d}u\right]\\
&\leq \|U_0\|^{2n}
+C_n \left(\mathbb{E}^{\mu_{\mathbb{W}}}\left[\sup\limits
_{s\in[\ell_0^\varepsilon,t]}\left|\int_{\ell_{0}^{\varepsilon}}^{s}
\|Y_{u}^{\varepsilon}\|^{2n-2}\langle Y_{u}^{\varepsilon},
\sigma_\theta \mathrm{d}W_{u}\rangle\right|^2\right]\right)^{1/2}+C_n \mathbb{E}^{\mu_{\mathbb{W}}}\left[
\int_{\ell_{0}^{\varepsilon}}^{s}\|Y_{u}^{\varepsilon}\|^{2n-2}\mathrm{d}u\right]\\
&\leq \|U_0\|^{2n}
+C_n \left(\mathbb{E}^{\mu_{\mathbb{W}}}\left[\int_{\ell_{0}^{\varepsilon}}^{t}
\|Y_{u}^{\varepsilon}\|^{4n-2}\mathrm{d}u\right]\right)^{1/2}
+C_n \mathbb{E}^{\mu_{\mathbb{W}}}\left[
\int_{\ell_{0}^{\varepsilon}}^{s}\|Y_{u}^{\varepsilon}\|^{2n-2}\mathrm{d}u\right]\\
&\leq \|U_0\|^{2n}
+C_n \left[\| U_{0}\|^{4n-2}t+t^{2n}\right]^{1/2}
+C_n \left[ \| U_{0}\|^{2n-2}t+t^n\right]\\
&\leq C_n(1+t)\|U_0\|^{2n}+C_n(1+t^n).
\end{align*}
Following the argument in \cite[Appendix A.3]{Peng-2024}, we obtain the desired result. This completes the proof.
\end{proof}

\section{The invertibility of the Malliavin matrix $\mathcal{M}_{0,t}$}
For any $\alpha>0$, $N\in\mathbb{N}$, we define
$$
\mathcal{S}_{\alpha,N}=\{\phi\in H:\|P_{N}\phi \| \geq \alpha , \| \phi \| = 1\}.
$$

This section aims to establish a significant proposition that elucidates the probability of eigenvectors with sizable projections in unstable directions having small eigenvalues.  Broadly speaking,
this result ensures the invertibility of the Malliavin matrix within the space spanned by these unstable directions. Given the finite-dimensional nature of this space in the current context, we can formulate a control problem using the Malliavin integration by parts formula to derive gradient estimates on the Markov semigroup, which are instrumental in demonstrating ergodicity.

\begin{proposition}\label{propo3.4} For any $\alpha \in ( 0, 1] , N\in \mathbb{N}$ and $U_0\in H$, one has
\begin{equation}\label{3.4}
\mathbb{P}\Big(\inf_{\phi\in\mathcal{S}_{\alpha,N}}
\langle\mathcal{M}_{0,\eta}\phi,\phi\rangle=0\Big)=0.
\end{equation}
\end{proposition}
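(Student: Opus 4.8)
The plan is to show that, almost surely, no $\phi\in\mathcal{S}_{\alpha,N}$ lies in the kernel of $\mathcal{M}_{0,\eta}$, and then to upgrade this to the statement about the infimum. First I would exploit the explicit representation \eqref{Malliavin}. Since $\langle\mathcal{M}_{0,\eta}\phi,\phi\rangle$ is a sum of non-negative terms with all $\alpha_k^m\neq0$, the identity $\langle\mathcal{M}_{0,\eta}\phi,\phi\rangle=0$ is equivalent to
\[
\int_0^\eta\langle K_{r,\eta}\phi,\sigma_k^m\rangle^2\,\mathrm{d}\ell_r=0,\qquad\forall\,k\in\mathcal{Z},\ m\in\{0,1\}.
\]
Condition 2.2 gives $\nu_S((0,\infty))=\infty$, so $\ell$ is a strictly increasing pure-jump path whose jump times are dense; hence the Stieltjes measure $\mathrm{d}\ell_r$ has full support on $[0,\eta]$ (note $\eta\geq 1/\nu>0$ deterministically, since $\nu t-8\mathfrak{B}_0\kappa\ell_t\leq\nu t$). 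Because $r\mapsto\varrho_r:=K_{r,\eta}\phi$ is continuous in $H$ — it solves the backward system \eqref{backward}, whose right-hand side is integrable in $r$ — the vanishing of these integrals against a full-support measure forces the pointwise identities $\langle\varrho_r,\sigma_k^m\rangle=0$ for all $r\in[0,\eta]$ and all forced modes.

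Next I would run a Hörmander-type bracket iteration on these identities. Writing \eqref{backward} in integral form and using that $U$ is c\`adl\`ag with only countably many jumps (so the relevant integrands are right-continuous, and a.e.\ vanishing upgrades to everywhere vanishing), I would repeatedly differentiate $\langle\varrho_r,v\rangle=0$ in $r$. The term $A\sigma_k^m=\nu_2|k|^2\sigma_k^m$ is parallel to $\sigma_k^m$ and yields no new information, but the buoyancy term produces $G\sigma_k^m\in\{\psi_k^0,\psi_k^1\}$ (up to the factor $gk_1$), transferring the constraint into the vorticity component, after which the bilinear term $B$ generates new wavenumbers through the symmetric combinations $B(v_1,v_2)+B(v_2,v_1)$. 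This is the step where I would invoke the algebraic closure built on Condition 2.1 — the symmetric generator containing two non-parallel vectors of distinct length — i.e.\ the generalized Hörmander bracket condition, to conclude that the family of directions so generated spans $H_N$.

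Spanning $H_N$ then gives $\langle\phi,v\rangle=0$ for every $v\in H_N$, i.e.\ $P_N\phi=0$, contradicting $\|P_N\phi\|\geq\alpha>0$; hence a.s.\ $\{\phi\in\mathcal{S}_{\alpha,N}:\langle\mathcal{M}_{0,\eta}\phi,\phi\rangle=0\}=\varnothing$. To pass from this to the infimum, I would use that $\mathcal{M}_{0,\eta}=\mathcal{A}_{0,\eta}\mathcal{A}_{0,\eta}^*$ is a compact, self-adjoint, positive operator: if $\langle\mathcal{M}_{0,\eta}\phi_n,\phi_n\rangle\to0$ with $\phi_n\in\mathcal{S}_{\alpha,N}$, then along a weakly convergent subsequence $\phi_n\rightharpoonup\phi_*$ one has $P_N\phi_n\to P_N\phi_*$ strongly (finite rank), so $\|P_N\phi_*\|\geq\alpha$ and $\phi_*\neq0$, while compactness gives $\langle\mathcal{M}_{0,\eta}\phi_n,\phi_n\rangle\to\langle\mathcal{M}_{0,\eta}\phi_*,\phi_*\rangle=0$; after normalization this produces an element of $\mathcal{S}_{\alpha,N}$ in $\ker\mathcal{M}_{0,\eta}$, which we have just excluded. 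Therefore $\mathbb{P}(\inf_{\phi\in\mathcal{S}_{\alpha,N}}\langle\mathcal{M}_{0,\eta}\phi,\phi\rangle=0)=0$.

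I expect the bracket iteration of the second paragraph to be the main obstacle. Because the pure-jump L\'evy forcing makes $U$ jump on a dense set of times, $\varrho$ is continuous but $\partial_r\varrho$ is only piecewise continuous, so the classical ``differentiate in time'' mechanism of the Wiener setting must be replaced by a weak/integral version that still propagates the constraints correctly across jumps; this is precisely where the c\`adl\`ag structure (countable jumps, right-continuity) and the fact that the jumps live only in the finite-dimensional forced temperature subspace $\mathcal{H}_0$ become essential. The second delicate point is the purely algebraic bookkeeping: verifying that the $G$- and $B$-generated directions genuinely close up to exhaust $H_N$ under Condition 2.1 (in particular reaching modes with vanishing first component, where $G$ acts trivially), which is the combinatorial heart of the argument.
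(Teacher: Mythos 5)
Your overall architecture coincides with the paper's: use \eqref{Malliavin} together with the density of the jump times of $\ell$ (Lemma \ref{lemma3.3}) to turn $\langle\mathcal{M}_{0,\eta}\phi,\phi\rangle=0$ into the pointwise identities $\langle K_{r,\eta}\phi,\sigma_k^m\rangle\equiv 0$ on the forced modes, then run a H\"ormander-type bracket iteration to span enough directions that letting $t\to\eta$ contradicts $\|P_N\phi\|\geq\alpha$. Your compactness argument for passing from ``almost surely no $\phi\in\mathcal{S}_{\alpha,N}$ lies in the kernel'' to the statement about the infimum is actually more careful than the paper's own write-up, which tacitly selects a minimizer.

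The genuine gap is that the bracket iteration --- which you yourself identify as ``the main obstacle'' --- is never carried out, and the one idea that makes it work in the pure-jump setting is absent from your sketch. You say the classical differentiate-in-time mechanism ``must be replaced by a weak/integral version'' without saying what that version is. The paper's resolution is concrete and two-fold. First, the quantities to be differentiated are expressed through $\bar U_t=U_t-\sigma_\theta W_{S_t}$, which satisfies $\partial_t\bar U=F(U)$ and is therefore absolutely continuous; so $t\mapsto\langle K_{t,\eta}\phi,Y_j^m(\bar U_t)\rangle$ is genuinely differentiable and its derivative produces the bracket with $F$ (Lemmas \ref{lemmaB}--\ref{lemmaC}). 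Second, to recover the individual directions $[Z_j^m(U),\sigma_k^l]$ from an identity of the form $\langle K_{t,\eta}\phi,Z_j^m(\bar U)\rangle-\sum_{k,l}W_{S_t}^{k,l}\alpha_k^l\langle K_{t,\eta}\phi,[Z_j^m(U),\sigma_k^l]\rangle\equiv 0$, the paper invokes Lemma \ref{lemma3.2}: if $g_0(r)+\sum_{k,m}g_{k,m}(r)W_{S_r}^{k,m}\equiv0$ with all coefficients continuous, then every coefficient vanishes identically. Without this separation lemma the iteration cannot isolate the new directions, and the subsequent algebra (Proposition \ref{proper5.2}, the induction over $\mathcal{I}_N$ in Lemma \ref{lemmaE}, and the double brackets $[Z_{j+e_1}^m(U),Y_{e_1}^l(U)]$ of Lemma \ref{5.25} needed to reach the $\psi_j^m$ with $j_1=0$, where $G$ degenerates) never gets off the ground. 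Since you flag both of these points as open difficulties rather than resolving them, what you have is a correct plan with the two central steps missing, not a proof.
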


Before starting to prove this proposition, we need to make some necessary preparations regarding L\'{e}vy processes and perform some technical work.

Throughout this section, we denote $f(s)-f(s-)$ by $\Delta f(s)$.
Recall that the probability space $(\Omega,\mathcal{F},\mathbb{P})$ is as mentioned in Section 2, and $L_t=(W_{S_t}^{k,m})_{k\in \mathcal Z,m\in \{0,1\}}$ is a $d$-dimensional L\'{e}vy process with a $\sigma$-finite intensity measure $\nu_L$.

The next lemma can be seen as the pure jump version of \cite[Theorem 7.1]{MH-2011}, which deals with the Wiener case.
\begin{lemma}[\cite{Peng-2024}]\label{lemma3.2}
There exists a set $\Omega_0\subseteq \Omega $ depending only on the
$d$-dimensional  L\'{e}vy process $W_{S_t}$
such that   $\mathbb  P(\Omega_0)=1$
 and for any   $\omega\in \Omega _0$, if  the following three conditions are satisfied:

$(1)$ $a( \omega) , b( \omega) \in [ 0, \infty )$ and $a( \omega) < b( \omega)$;

$(2)$ $g_0, g_{k,m}( \omega, \cdot ) : [ a( \omega) , b( \omega ) ] \rightarrow \mathbb{R} ,k\in \mathcal Z,m\in \{0,1\} $, are continuous functions;

$(3)$
\begin{equation}\label{3.1}
g_0(\omega,r)+\sum_{k \in \mathcal Z,m\in\{0,1
\}}g_{k,m}(\omega,r)W_{S_t}^{k,m}=0,\quad\forall r\in[a(\omega),b(\omega)].
\end{equation}
Then
\begin{equation*}
g_0(\omega,r)=0~\text{and}~g_{k,m}(\omega,r)=0,\quad\forall r\in[a(\omega),b(\omega)],\quad k\in\mathcal{Z},m\in\{0,1\}.
\end{equation*}
\end{lemma}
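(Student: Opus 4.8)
The plan is to prove the statement pathwise on a single full-measure event $\Omega_0$ manufactured purely from the trajectory of $L_t=W_{S_t}$, fixed \emph{before} the (possibly $\omega$-dependent) data $a,b,g_0,g_{k,m}$ are introduced. Write $g(\omega,r):=(g_{k,m}(\omega,r))_{k\in\mathcal Z,m\in\{0,1\}}\in\mathbb R^d$ for the coefficient vector and $\Delta L_\tau:=L_\tau-L_{\tau-}$, $\hat\xi_\tau:=\Delta L_\tau/|\Delta L_\tau|$ for the jump and its direction. Two path properties will be recorded into $\Omega_0$: first, that $L$ has infinitely many jumps in every nonempty open interval; second, that in every such interval the set of normalized jump directions $\{\hat\xi_\tau\}$ is dense in the unit sphere $S^{d-1}:=\{x\in\mathbb R^d:|x|=1\}$. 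For the first, Condition 2.2 gives $\nu_S((0,\infty))=\infty$, and the subordination formula \eqref{1.8} yields $\nu_L(\mathbb R^d)=\int_0^\infty\nu_S(\mathrm du)=\infty$, so $L$ has infinite activity. For the second I would use that, conditionally on the subordinator $S$, each jump satisfies $\Delta L_{\tau_i}=\sqrt{u_i}\,Z_i$ with $u_i=\Delta S_{\tau_i}$ and $Z_i$ i.i.d.\ standard Gaussian in $\mathbb R^d$ (disjoint Brownian increments), so the directions $Z_i/|Z_i|$ are i.i.d.\ uniform on $S^{d-1}$ and independent of the jump configuration; an infinite i.i.d.\ uniform sample is a.s.\ dense, and intersecting over the countable family of rational-endpoint intervals transfers density to every interval.

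The second step turns the standing relation into a pointwise constraint at each jump. Fix $\omega\in\Omega_0$ and assume (1)--(3). The function $\phi(r):=g_0(\omega,r)+\sum_{k,m}g_{k,m}(\omega,r)L_r^{k,m}$ is identically zero on $[a,b]$, hence continuous there; on the other hand, since the $g$'s are continuous and $L$ is c\`{a}dl\`{a}g, the jump of $\phi$ at any jump time $\tau\in(a,b)$ equals $\Delta\phi(\tau)=\sum_{k,m}g_{k,m}(\omega,\tau)\,\Delta L_\tau^{k,m}=\langle g(\omega,\tau),\Delta L_\tau\rangle$. Continuity of $\phi$ forces $\langle g(\omega,\tau),\Delta L_\tau\rangle=0$ for \emph{every} jump time $\tau\in(a,b)$.

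The third step extracts the contradiction. Suppose $g(\omega,r_0)\neq0$ at some interior $r_0\in(a,b)$ and set $v:=g(\omega,r_0)/|g(\omega,r_0)|\in S^{d-1}$. For $\tau$ in a small neighborhood $(r_0,r_0+\delta)\subset(a,b)$, writing $\langle g(\omega,r_0),\Delta L_\tau\rangle=\langle g(\omega,r_0)-g(\omega,\tau),\Delta L_\tau\rangle$ and using $\langle g(\omega,\tau),\Delta L_\tau\rangle=0$ gives $|\langle v,\hat\xi_\tau\rangle|\le\omega_g(\delta)/|g(\omega,r_0)|$, where $\omega_g(\delta):=\sup_{|\tau-r_0|\le\delta}|g(\omega,\tau)-g(\omega,r_0)|\to0$ by continuity. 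Thus for $\delta$ small enough all jump directions in $(r_0,r_0+\delta)$ lie in the band $\{x\in S^{d-1}:|\langle v,x\rangle|<1/2\}$, contradicting density of those directions on $S^{d-1}$ (which produces $\hat\xi_\tau$ with $\langle v,\hat\xi_\tau\rangle$ arbitrarily close to $1$). Hence $g(\omega,\cdot)\equiv0$ on $(a,b)$, and by continuity on $[a,b]$; substituting back into \eqref{3.1} gives $g_0(\omega,\cdot)\equiv0$ as well.

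The main obstacle — and the reason the ordering of quantifiers is essential — is precisely that $g$ is allowed to depend on $\omega$ (in the application the $g_{k,m}$ are quantities like $\langle K_{r,t}\phi,\sigma_k^m\rangle$ built from the whole trajectory). A fixed-hyperplane argument therefore does not suffice; one genuinely needs the \emph{uniform} density of jump directions over all intervals, established a priori on $\Omega_0$, so that it is available for whatever direction $v$ the data happen to select. The remaining work is bookkeeping: verifying carefully that the conditional Gaussian law of the jumps, read off from \eqref{1.8}, yields i.i.d.\ uniform directions independent of the jump times, and that the reduction to rational intervals indeed propagates density to every subinterval.
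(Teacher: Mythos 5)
Your proposal is correct, and it is worth noting that the paper itself does not reprove this lemma: it is imported verbatim from \cite{Peng-2024}, where the argument rests on the same two ingredients you isolate, namely the a.s.\ density of the jump times of the subordinator (Condition 2.2 with $\nu_S((0,\infty))=\infty$; this is exactly what the paper records separately as Lemma \ref{lemma3.3}) and the conditional Gaussianity of the jumps $\Delta L_\tau = \mathrm{w}_{\ell_\tau}-\mathrm{w}_{\ell_{\tau-}}$ given the subordinator path, which makes the normalized jump directions i.i.d.\ uniform on $S^{d-1}$ and hence a.s.\ dense in every rational-endpoint interval. Your pathwise packaging --- fix the full-measure event $\Omega_0$ (dense jump times, dense jump directions in every interval) \emph{before} the random data $a,b,g_0,g_{k,m}$ appear, read off $\langle g(\omega,\tau),\Delta L_\tau\rangle=0$ at every jump time from the continuity of $\phi\equiv 0$, then contradict density of directions using the modulus of continuity of $g$ near a point where $g(\omega,r_0)\neq 0$ --- is exactly the right way to handle the quantifier order, and your observation that a mere spanning property of the jump vectors would \emph{not} suffice is apt: since the orthogonality constraint involves the moving vector $g(\omega,\tau)$, one needs jumps whose directions are quantitatively non-orthogonal to $v$ uniformly as the window shrinks, which density delivers and spanning does not. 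Two trivial tightenings: the jump identity holds for $\tau\in(a,b]$ (one-sided limits from the left), which is all you use; and since jumps of $L$ occur at jump times of $\ell$ with $\Delta L_\tau$ a nondegenerate Gaussian given $\ell$, the countably many jumps are a.s.\ all nonzero, so normalizing $\hat\xi_\tau=\Delta L_\tau/|\Delta L_\tau|$ is legitimate on a further full-measure set that you may as well fold into $\Omega_0$.
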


Recall that the assumption in Condition \ref{Condition-2.2}: $\nu_S((0,\infty))=\infty$. For the process $S_t,t\geq0$, by \cite[Lemma 3.1]{Peng-2024},
we have the following lemma.
\begin{lemma}\label{lemma3.3}
Let  $\tilde{\mathbb S}_0= \big\{\ell:  \{r:\Delta\ell_r:=\ell_r-\ell_{r-}>0\} \text{ is dense in }   [0,\infty)\big\} $
and  denote  $\tilde \Omega_0 =\mathbb W \times  \tilde{\mathbb S}_0.$
Then
\begin{equation*}
\mathbb{P}^{\mu_{\mathbb{S}}}\left(  \tilde{\mathbb S}_0    \right)=1 \text{ and } \mathbb P(\tilde \Omega_0) =1.
\end{equation*}
\end{lemma}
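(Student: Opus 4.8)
The plan is to show that the jump times of the subordinator $S_t$ are almost surely dense in $[0,\infty)$, which is exactly the statement $\mathbb{P}^{\mu_{\mathbb{S}}}(\tilde{\mathbb{S}}_0)=1$; the identity $\mathbb{P}(\tilde{\Omega}_0)=1$ will then follow immediately from the product structure of $\mathbb{P}$. The only essential input is the hypothesis $\nu_S((0,\infty))=\infty$ from Condition 2.2. I would reduce the density statement to a countable family of events: it suffices to prove that for every pair of rationals $0\le a<b$ the event $A_{a,b}:=\{\ell\in\mathbb{S}:\ell\text{ has at least one jump in }(a,b]\}$ has full $\mathbb{P}^{\mu_{\mathbb{S}}}$-measure, since then $\bigcap_{a<b,\,a,b\in\mathbb{Q}}A_{a,b}$ also has full measure, and on this event every nonempty open subinterval $(c,d)\subseteq[0,\infty)$ contains a rational subinterval $(a,b]\subseteq(c,d)$ and hence a jump.

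For a fixed rational interval $(a,b]$ I would estimate the probability of no jump through the Poisson random measure $N_S(\mathrm{d}t,\mathrm{d}z)$ associated with $S_t$, whose intensity is $\mathrm{d}t\,\nu_S(\mathrm{d}z)$. For each $\varepsilon>0$ the integrability $\int_0^\infty(1\wedge u)\nu_S(\mathrm{d}u)<\infty$ gives $\nu_S((\varepsilon,\infty))<\infty$, so $N_S((a,b]\times(\varepsilon,\infty))$ is a genuine (finite-mean) Poisson random variable with parameter $(b-a)\nu_S((\varepsilon,\infty))$, whence
\begin{equation*}
\mathbb{P}^{\mu_{\mathbb{S}}}\big(\ell\text{ has no jump of size}>\varepsilon\text{ in }(a,b]\big)=\exp\{-(b-a)\nu_S((\varepsilon,\infty))\}.
\end{equation*}
The event $A_{a,b}^c$ of having no jump at all in $(a,b]$ is contained in each of these, so
\begin{equation*}
\mathbb{P}^{\mu_{\mathbb{S}}}(A_{a,b}^c)\le\exp\{-(b-a)\nu_S((\varepsilon,\infty))\},\qquad\forall\,\varepsilon>0.
\end{equation*}
Letting $\varepsilon\downarrow0$ and using the monotone convergence $\nu_S((\varepsilon,\infty))\uparrow\nu_S((0,\infty))=\infty$ forces $\mathbb{P}^{\mu_{\mathbb{S}}}(A_{a,b}^c)=0$, i.e. $\mathbb{P}^{\mu_{\mathbb{S}}}(A_{a,b})=1$. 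I expect no substantial obstacle here; the only point deserving care is this truncation-and-limit step, since with $\nu_S$ an infinite measure one cannot treat $N_S((a,b]\times(0,\infty))$ directly as a finite Poisson count, and the passage through the level-$\varepsilon$ truncation is precisely what converts the infinite total mass into the vanishing of $\mathbb{P}^{\mu_{\mathbb{S}}}(A_{a,b}^c)$.

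Taking the countable intersection over all rational $0\le a<b$ then yields $\mathbb{P}^{\mu_{\mathbb{S}}}(\tilde{\mathbb{S}}_0)=1$ via the density observation of the first paragraph. Finally, since $\mathbb{P}=\mathbb{P}^{\mu_{\mathbb{W}}}\times\mathbb{P}^{\mu_{\mathbb{S}}}$ on $\Omega=\mathbb{W}\times\mathbb{S}$ and $\tilde{\Omega}_0=\mathbb{W}\times\tilde{\mathbb{S}}_0$, the product-measure formula gives
\begin{equation*}
\mathbb{P}(\tilde{\Omega}_0)=\mathbb{P}^{\mu_{\mathbb{W}}}(\mathbb{W})\,\mathbb{P}^{\mu_{\mathbb{S}}}(\tilde{\mathbb{S}}_0)=1\cdot1=1,
\end{equation*}
which completes the argument. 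The whole proof rests solely on the infinite total mass of $\nu_S$, which is exactly what distinguishes a subordinator with densely jumping paths from a compound-Poisson one.
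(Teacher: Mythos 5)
Your argument is correct and complete. Note, however, that the paper itself does not prove this lemma at all: it simply invokes \cite[Lemma 3.1]{Peng-2024}, so there is no internal proof to compare against, and what you have written is precisely the standard self-contained argument that such a citation encapsulates. Your route is the natural one: reduce density to the countable family of rational intervals $(a,b]$, use that $N_S((a,b]\times(\varepsilon,\infty))$ is Poisson with finite parameter $(b-a)\nu_S((\varepsilon,\infty))$ (finiteness of $\nu_S((\varepsilon,\infty))$ being guaranteed by $\int_0^\infty(1\wedge u)\nu_S(\mathrm{d}u)<\infty$, as you correctly observe), and let $\varepsilon\downarrow 0$ so that $\nu_S((0,\infty))=\infty$ forces $\mathbb{P}^{\mu_{\mathbb{S}}}(A_{a,b}^c)=0$. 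You also handle the two points where a careless write-up could stumble: you do not treat $N_S((a,b]\times(0,\infty))$ as a finite Poisson count directly (which would be illegitimate for an infinite L\'{e}vy measure), and you include $a=0$ among the rational endpoints so that jumps accumulate at the origin, giving density in all of $[0,\infty)$ rather than merely in $(0,\infty)$. The final step, $\mathbb{P}(\tilde{\Omega}_0)=\mathbb{P}^{\mu_{\mathbb{W}}}(\mathbb{W})\cdot\mathbb{P}^{\mu_{\mathbb{S}}}(\tilde{\mathbb{S}}_0)=1$, is immediate from the product structure, exactly as you say. In short: where the paper outsources the proof, your proposal supplies it, and correctly identifies $\nu_S((0,\infty))=\infty$ as the sole essential hypothesis separating densely jumping subordinators from compound Poisson ones.
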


We introduce an operation $[E_1,E_2]$, which is called as the Lie bracket of the two `vector fields' $E_1$, $E_2$, specifically, for any Fr\'{e}chet differentiable $E_1, E_2:H\rightarrow H$,
\begin{equation*}
[E_1,E_2](U):=\nabla E_2(U)E_1(U)-\nabla E_1(U)E_2(U).
\end{equation*}

Next, we perform some Lie bracket computations to prepare for the subsequent analysis. Interested readers can refer to \cite[Section 5]{Foldes-2015} for a detailed explanation.
For any $j\in \mathbb{Z}^2$ we define $j^{\bot}:=(-j_2,j_1)$, for $m,m'\in\{0,1\}$, we introduce
\begin{equation}
\begin{aligned}\label{K.01}
Y_{j}^{m}(U):=[F(U),\sigma_j^m]=A\sigma_j^m+B(\sigma_j^m,U)+B(U,\sigma_j^m)-G\sigma_j^m,
\end{aligned}
\end{equation}
and
\begin{equation}
\begin{aligned}\label{K.02}
Z_{j}^{m}(U)&:=[F(U),Y_j^m(U)]
=\nabla Y_j^m(U)F(U)-\nabla F(U)Y_j^m(U)\\
&=B(F(U),\sigma_j^m)+\nu_2^2|j|^4\sigma_j^m+(-1)^m(\nu_1+\nu_2)gj_1|j|^2\psi_j^{m+1}+A(B(U,\sigma_j^m)) \\
&~~+(-1)^mgj_1B(\psi_j^{m+1},U)-B(U,-\nu_2|j|^2\sigma_j^m+(-1)^{m+1}gj_1\psi_j^{m+1})
\\
&~~+B(U,B(U,\sigma_j^m))-GB(U,\sigma_j^m),  \end{aligned}\end{equation}
and for all $k\in\mathbb{Z}_+^2$, we have
\begin{equation}
\begin{aligned}\label{K.03}
[Z_{j}^{m}(U),\sigma_k^{m'}]&=
g(-1)^{(m+1)(m'+1)}\frac{j^{\bot}\cdot k}{2}
\left[(-1)^{m'}b(j,k)\sigma_{j-k}^{m+m'+1}
-a(j,k)\sigma_{j+k}^{m+m'+1}\right],
\end{aligned}\end{equation}
and
\begin{equation}\label{K.04}
\begin{aligned}
[Z_{j}^{m}(U),Y_k^{m'}(U)]&=[[F(U),Y_j^{m}(U)],Y_k^{m'}(U)]\\
&=[[Z_{j}^{m}(U),F(U)],\sigma_k^{m'}]
-[[Z_{j}^{m}(U),\sigma_k^{m'}],F(U)] \\
&=(-1)^{m+m'+1}g^2j_1k_1\left(B(\psi_j^{m+1},\psi_k^{m'+1})
+B(\psi_k^{m'+1},\psi_j^{m+1})\right) \\
&~~~+(-1)^{m'}gk_1 GB(\psi_k^{m'+1},\sigma_j^m)
+H_{j,k}^{m,m'}(U),
\end{aligned}\end{equation}
where
\begin{equation*}
a(j,k):=\frac{j_1}{|j|^2}+\frac{k_1}{|k|^2},~~
b(j,k):=\frac{j_1}{|j|^2}-\frac{k_1}{|k|^2},
\end{equation*}
and $U\mapsto H_{j,k}^{m,m'}(U)$ is affine and it is completely concentrated in the $\theta$ component.

From these relations, the following proposition follows easily.
\begin{proposition}\label{proper5.2}
Let $j,k\in\mathbb{Z}_+^2$, $a(j,k),b(j,k)$ be as in the above. Then, there are
\begin{equation}
\begin{aligned}
g(j^\perp\cdot k)a(j,k)\sigma_{j+k}^0 &=-[Z_j^0(U),\sigma_k^1]-[Z_j^1(U),\sigma_k^0], \\
g(j^\perp\cdot k)a(j,k)\sigma_{j+k}^1 &=[Z_j^0(U),\sigma_k^0]-[Z_j^1(U),\sigma_k^1], \\
g(j^\perp\cdot k)b(j,k)\sigma_{j-k}^0 &=[Z_j^1(U),\sigma_k^0]-[Z_j^0(U),\sigma_k^1], \\
g(j^\perp\cdot k)b(j,k)\sigma_{j-k}^1 &=-[Z_j^1(U),\sigma_k^1]-[Z_j^0(U),\sigma_k^0].
\end{aligned}
\end{equation}
\end{proposition}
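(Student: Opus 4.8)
The plan is to read all four identities straight off the single master formula \eqref{K.03}, which already expresses $[Z_j^m(U),\sigma_k^{m'}]$ as an explicit linear combination of the two modes $\sigma_{j-k}^{m+m'+1}$ and $\sigma_{j+k}^{m+m'+1}$, with coefficients built from $a(j,k)$ and $b(j,k)$. First I would specialize \eqref{K.03} to each of the four choices $(m,m')\in\{0,1\}^2$. The only data that vary between cases are the overall sign $(-1)^{(m+1)(m'+1)}$, the internal sign $(-1)^{m'}$ multiplying the $b(j,k)$–term, and the output superscript $m+m'+1$ read modulo $2$ (since $\sigma_k^0,\sigma_k^1$ are indexed by $\mathbb{Z}/2$). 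The structural observation that makes everything work is that $m+m'+1$ has the same parity within each of the pairs $\{(0,0),(1,1)\}$ and $\{(0,1),(1,0)\}$: the first pair produces modes carrying superscript $1$, the second pair modes carrying superscript $0$.

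Next I would exploit this parity grouping to isolate one mode at a time. To obtain the first identity, for instance, I add $-[Z_j^0(U),\sigma_k^1]$ and $-[Z_j^1(U),\sigma_k^0]$, both of which live in the superscript-$0$ sector; in this sum the $b(j,k)\sigma_{j-k}^0$ contributions carry opposite signs and cancel, whereas the $a(j,k)\sigma_{j+k}^0$ contributions reinforce, leaving exactly $g(j^{\perp}\cdot k)a(j,k)\sigma_{j+k}^0$. The remaining three identities arise by the same mechanism: the second is the superscript-$1$ combination $[Z_j^0(U),\sigma_k^0]-[Z_j^1(U),\sigma_k^1]$, which again cancels the $b$–term and isolates $\sigma_{j+k}^1$; the third and fourth are the complementary pairings in which instead the $a$–terms cancel and the $b$–terms reinforce, isolating $\sigma_{j-k}^0$ and $\sigma_{j-k}^1$ respectively. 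I would present all four together in a single short display rather than treat each case in isolation.

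The argument is therefore pure linear algebra over the two–element index set once \eqref{K.03} is available, with no analysis involved; this is exactly why the proposition follows ``easily'' from the preceding bracket computations. Consequently the only point demanding genuine care — and the step I would double-check most — is the sign bookkeeping: confirming that $(-1)^{(m+1)(m'+1)}$ equals $-1$ precisely for $(m,m')=(0,0)$ and $+1$ in the other three cases, and that $(-1)^{m'}$ flips exactly the $b$–term, so that each of the four chosen linear combinations cancels the \emph{unwanted} mode and not the desired one. The genuine content lies entirely in \eqref{K.03} itself; the proposition is merely its reorganization into a form where each $\sigma_{j\pm k}^{m}$ appears as an explicit Lie bracket of $Z_j^{m}$ with a forced direction $\sigma_k^{m'}$.
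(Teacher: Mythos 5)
Your proposal is correct and is exactly the argument the paper intends: the paper offers no more than the remark that the proposition ``follows easily'' from \eqref{K.03}, and your specialization to the four pairs $(m,m')\in\{0,1\}^2$ followed by the stated linear combinations (with the $b$-terms cancelling in the first two identities and the $a$-terms cancelling in the last two) is the fleshed-out version of that remark. I checked the sign bookkeeping — $(-1)^{(m+1)(m'+1)}=-1$ only at $(0,0)$, $(-1)^{m'}$ flipping only the $b$-term, and the factor $2$ from $\tfrac{j^{\perp}\cdot k}{2}$ being absorbed by the reinforcing pair — and all four identities come out as stated.
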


We denote that $\bar{U}=U-\sigma_\theta W_{S_t}$, then
\begin{equation}\label{K.2}
 \partial_t \bar{U}=F(U)=F(\bar{U}+\sigma_\theta W_{S_t}),
~~~\bar{U}(0)=U_0.
\end{equation}

\begin{lemma}\label{5.25}
We define the error term in the $\psi$-directions as follows
 \begin{equation}\label{5.26}
\begin{aligned}
&J_{j,m}(U)=
\begin{cases} (-1)^m\frac{\nu_2|j|^2}{gj_1}\sigma_j^{m+1}+(-1)^m\frac{1}{gj_1}B(U,\sigma_j^{m+1})
&\text{if }j_1\neq0,m\in\{0,1\},\\ \frac{1+|j|^2}{g^2|j|^3}(-H_{j+e_1,e_1}^{0,0}(U)-H_{j+e_1,e_1}^{1,1}(U))
&\text{if }j_1=0, m=0,\\ \frac{1+|j|^2}{g^2|j|^3}(-H_{j+e_1,e_1}^{0,1}(U)+H_{j+e_1,e_1}^{1,0}(U))
&\text{if }j_1=0, m=1,
\end{cases}
\end{aligned}
\end{equation}
then we have for each $j\in\mathbb{Z}_{+}^{2},m\in\{0,1\}$,
\begin{equation*}\begin{aligned}
&~\psi_j^m+J_{j,m}(U)\\
&=\begin{cases}\frac{(-1)^m}{gj_1}Y_j^{m+1}(U)&\text{if }j_1\neq0,m\in\{0,1\},
\\
\frac{1+|j|^2}{g^2|j|^3}(-[Z_{j+e_1}^0(U),Y_{e_1}^0(U)]-[Z_{j+e_1}^1(U),Y_{e_1}^1(U)])
&\text{if }j_1=0, m=0,
\\
\frac{1+|j|^2}{g^2|j|^3}([Z_{j+e_1}^1(U),Y_{e_1}^0(U)]-[Z_{j+e_1}^0(U),Y_{e_1}^1(U)])
&\text{if }j_1=0, m=1.\end{cases}\end{aligned}\end{equation*}
\end{lemma}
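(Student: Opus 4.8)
The identity is a purely algebraic consequence of the definitions \eqref{K.01}--\eqref{K.04} of the vector fields $Y_j^m$, $Z_j^m$ and their brackets, together with Proposition \ref{proper5.2}. The plan is to treat the two regimes $j_1\neq 0$ and $j_1=0$ separately, since the mechanism producing the vorticity direction $\psi_j^m$ differs in each.

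For $j_1\neq 0$ I would start from the definition \eqref{K.01} applied with the index $m+1$,
$$Y_j^{m+1}(U)=A\sigma_j^{m+1}+B(\sigma_j^{m+1},U)+B(U,\sigma_j^{m+1})-G\sigma_j^{m+1}.$$
Three observations reduce this to the claimed form. First, $A\sigma_j^{m+1}=\nu_2|j|^2\sigma_j^{m+1}$, since $\sigma_j^{m+1}$ lives in the temperature component. Second, $B(\sigma_j^{m+1},U)=0$, because the vorticity component of $\sigma_j^{m+1}$ vanishes and hence the advecting velocity $\mathcal{K}*0=0$. Third, the buoyancy operator sends the temperature mode $\sigma_j^{m+1}$ to a pure vorticity mode, $G\sigma_j^{m+1}=\pm gj_1\,\psi_j^m$, because $G$ acts as $g\partial_{x_1}$, and $\partial_{x_1}$ produces the factor $\pm j_1$ while exchanging $\cos\leftrightarrow\sin$. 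Since $j_1\neq 0$ this term is nonzero; dividing by $gj_1$ isolates $\psi_j^m$, while the surviving terms $\tfrac{(-1)^m}{gj_1}\nu_2|j|^2\sigma_j^{m+1}$ and $\tfrac{(-1)^m}{gj_1}B(U,\sigma_j^{m+1})$ are exactly the error term $J_{j,m}(U)$ read off from the first line of \eqref{5.26}, the sign bookkeeping being carried by the parity factor $(-1)^m$.

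For $j_1=0$ the coefficient $gj_1$ vanishes, so $\psi_j^m$ can no longer be recovered from the single bracket $Y_j^{m+1}$; instead I would use the iterated bracket $[Z_{j+e_1}^\bullet(U),Y_{e_1}^\bullet(U)]$, the index being shifted by $e_1$ precisely so that the relevant first coordinate becomes $(j+e_1)_1=1\neq 0$. Expanding via \eqref{K.04} with $k=e_1$, the right-hand side splits into three kinds of contributions: the symmetric nonlinear terms $B(\psi_{j+e_1}^{\cdot},\psi_{e_1}^{\cdot})+B(\psi_{e_1}^{\cdot},\psi_{j+e_1}^{\cdot})$, the buoyancy terms $GB(\psi_{e_1}^{\cdot},\sigma_{j+e_1}^{\cdot})$, and the affine, $\theta$-concentrated remainders $H_{j+e_1,e_1}^{\cdot,\cdot}(U)$. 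Taking the prescribed combinations (namely $-[Z^0,Y^0]-[Z^1,Y^1]$ for $m=0$ and $[Z^1,Y^0]-[Z^0,Y^1]$ for $m=1$) and using the trigonometric identities $\cos\cos+\sin\sin=\cos(\cdot-\cdot)$ and $\sin\cos-\cos\sin=\sin(\cdot-\cdot)$, the symmetric nonlinear terms collapse onto the single difference frequency $(j+e_1)-e_1=j$, producing a vorticity mode proportional to $\psi_j^m$; the unwanted sum-frequency modes at $j+2e_1$ cancel between the two summands, as do the buoyancy terms, which in addition contribute nothing at frequency $j$ because $G$ acts as $g\partial_{x_1}$ and annihilates modes with vanishing first coordinate. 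The Biot--Savart denominator $|j+e_1|^2=1+|j|^2$ together with the interaction geometry $(j+e_1)^\perp\cdot e_1$ fixes the coefficient of $\psi_j^m$, and the prefactor $\tfrac{1+|j|^2}{g^2|j|^3}$ in \eqref{5.26} is chosen exactly to normalize it to $1$. What is left over is the scaled combination of the $H$-terms, which is precisely the $j_1=0$ branch of $J_{j,m}(U)$.

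The routine part is the $j_1\neq 0$ case. The main obstacle is the $j_1=0$ case: one must verify that, in the prescribed linear combination, the sum-frequency ($j+2e_1$) vorticity modes coming from the nonlinear and the buoyancy terms cancel exactly, and that the surviving coefficient of $\psi_j^m$ equals $g^2|j|^3/(1+|j|^2)$, so that the normalization is correct. Both require careful tracking of the trigonometric and Biot--Savart factors, with the overall sign pattern controlled by the parity factors $(-1)^m$; this is where Proposition \ref{proper5.2} and the explicit bracket formulas \eqref{K.03}--\eqref{K.04} do the real work.
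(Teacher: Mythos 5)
The paper never actually proves this lemma: it is stated bare, with the underlying Lie--bracket computations deferred to \cite[Section 5]{Foldes-2015}, so your proposal supplies a verification that the paper omits rather than competing with one. Your strategy is the intended one. For $j_1\neq 0$ the identity is read off from \eqref{K.01} applied to $\sigma_j^{m+1}$, using exactly the three facts you list ($A\sigma_j^{m+1}=\nu_2|j|^2\sigma_j^{m+1}$; $B(\sigma_j^{m+1},\cdot)=0$ since the advecting field comes only from the vorticity component; $G\sigma_j^{m+1}=(-1)^m gj_1\psi_j^m$). For $j_1=0$ the shift to $j+e_1$, the expansion of $[Z_{j+e_1}^{m}(U),Y_{e_1}^{m'}(U)]$ via \eqref{K.04}, and the product-to-sum identities that make the prescribed $\pm$ combinations kill the sum-frequency $j+2e_1$ contributions while the difference-frequency temperature contributions are annihilated by $G=g\partial_{x_1}$ (because $j_1=0$) is precisely the mechanism; your normalization check also comes out right, since $(j+e_1)^\perp\cdot e_1=-j_2$ and $\tfrac{1}{|e_1|^2}-\tfrac{1}{|j+e_1|^2}=\tfrac{|j|^2}{1+|j|^2}$ combine to give the coefficient $g^2|j|^3/(1+|j|^2)$ of $\psi_j^m$.

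One point you should not dismiss as ``sign bookkeeping carried by $(-1)^m$'': with the paper's conventions $Y_j^{m+1}(U)=[F(U),\sigma_j^{m+1}]=-\nabla F(U)\sigma_j^{m+1}$ contains $-G\sigma_j^{m+1}=-(-1)^m gj_1\psi_j^m$, so multiplying by $(-1)^m/(gj_1)$ yields $-\psi_j^m+J_{j,m}(U)$ rather than $+\psi_j^m+J_{j,m}(U)$; i.e.\ the identity as printed appears to hold only up to a global sign (equivalently, the prefactor should be $(-1)^{m+1}/(gj_1)$ with a matching sign change in $J_{j,m}$). A careful write-up must either fix this sign or observe explicitly that it is immaterial, which it is: downstream (Lemma \ref{lemmaF}) the identity is only used to convert $\sup_t|\langle K_{t,\eta}\phi,Y_j^{m+1}(U)\rangle|=0$ into $\sup_t|\langle K_{t,\eta}\phi,\psi_j^m+J_{j,m}(U)\rangle|=0$, and an overall sign does not affect the vanishing. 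Apart from this, your outline is correct and, once the trigonometric cancellations in the $j_1=0$ case are written out, complete.
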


\begin{lemma}\label{lemma0}
For any  $\omega=\mathrm{w}\times \ell\in $ $\tilde{\Omega}_0$,
\begin{equation}\label{K.1}
\langle\mathcal{M}_{0,\eta}\phi,\phi\rangle=0~~
\Rightarrow~~\sup_{t\in[\eta/2,\eta]}|\langle
K_{t,\eta}\phi,\sigma_k^m\rangle|(\omega)=0,
\end{equation}
for all $\phi\in \mathcal{S}_{\alpha,N}$, $k\in \mathcal{Z}$ and $m\in\{0,1\}$.
\end{lemma}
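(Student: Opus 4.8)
The plan is to exploit the defining identity of the Malliavin matrix, namely the representation
\begin{equation*}
\langle\mathcal{M}_{0,\eta}\phi,\phi\rangle
=\sum_{k\in\mathcal{Z},m\in\{0,1\}}(\alpha_k^m)^{2}
\int_{0}^{\eta}\langle K_{r,\eta}\phi,\sigma_k^m\rangle^{2}\,\mathrm{d}\ell_{r},
\end{equation*}
obtained from \eqref{Malliavin}. Since each $(\alpha_k^m)^2>0$ and the integrand is nonnegative, the hypothesis $\langle\mathcal{M}_{0,\eta}\phi,\phi\rangle=0$ forces $\langle K_{r,\eta}\phi,\sigma_k^m\rangle^2=0$ for $\mathrm{d}\ell_r$-almost every $r\in[0,\eta]$, for each fixed $k\in\mathcal{Z}$, $m\in\{0,1\}$. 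The key is then to upgrade this ``almost everywhere with respect to the jump measure $\mathrm{d}\ell_r$'' statement to a genuine \emph{pointwise on the whole interval} statement, uniformly in $t\in[\eta/2,\eta]$. This is exactly where the structure of the subordinator enters.

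The main step is to use Lemma \ref{lemma3.3}: for $\omega=\mathrm{w}\times\ell\in\tilde{\Omega}_0$ the jump times of $\ell$ are dense in $[0,\infty)$, so $\mathrm{d}\ell_r$ charges every subinterval. Because $t\mapsto\langle K_{t,\eta}\phi,\sigma_k^m\rangle$ is continuous in $t$ (the backward equation \eqref{backward} has continuous-in-time solutions in $H$, and pairing against the smooth fixed vector $\sigma_k^m$ preserves continuity), a continuous function that vanishes $\mathrm{d}\ell_r$-a.e. on $[0,\eta]$ must vanish on the closure of the support of $\mathrm{d}\ell_r$; by density of the jump times this closure is all of $[0,\eta]$. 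First I would make this rigorous: fix $\omega\in\tilde{\Omega}_0$, let $f(r):=\langle K_{r,\eta}\phi,\sigma_k^m\rangle$, note $f$ is continuous, and observe that the set $\{r:f(r)\neq0\}$ is open; if it were nonempty it would contain an open subinterval $I\subset[0,\eta]$ on which $f$ is bounded away from zero, but $I$ contains a jump time of $\ell$, so $\int_I f^2\,\mathrm{d}\ell_r>0$, contradicting $\int_0^\eta f^2\,\mathrm{d}\ell_r=0$. Hence $f\equiv0$ on $[0,\eta]$, in particular on $[\eta/2,\eta]$, which is the assertion \eqref{K.1}.

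The hard part will be handling the dependence on $\phi$ and the care needed at the boundary/endpoint. The statement is for \emph{all} $\phi\in\mathcal{S}_{\alpha,N}$ and all $k\in\mathcal{Z},m\in\{0,1\}$, but since we argue under the hypothesis $\langle\mathcal{M}_{0,\eta}\phi,\phi\rangle=0$ for that particular $\phi$, no uniformity over $\phi$ is actually required inside the implication—each $\phi$ is treated separately, and the conclusion is simply the contrapositive-friendly pointwise vanishing. I would also verify that $\eta=\eta(\ell)<\infty$ for a.e.\ $\ell$ (which follows from Lemma \ref{lemma A2}, ensuring $[\eta/2,\eta]$ is a genuine compact interval) so that the continuity argument applies on a bounded domain. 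Finally I would remark that the exceptional null set is absorbed into $\tilde{\Omega}_0$ via $\mathbb{P}(\tilde{\Omega}_0)=1$ from Lemma \ref{lemma3.3}, so the implication \eqref{K.1} indeed holds for every $\omega\in\tilde{\Omega}_0$. The whole argument is short; its only subtlety is the interplay between the pathwise density of jumps and the continuity of the backward adjoint flow, and I would isolate that as the single decisive observation.
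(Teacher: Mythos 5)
Your proposal is correct and follows essentially the same route as the paper: pass from $\langle\mathcal{M}_{0,\eta}\phi,\phi\rangle=0$ to the vanishing of each integral $\int_0^\eta\langle K_{r,\eta}\phi,\sigma_k^m\rangle^2\,\mathrm{d}\ell_r$ via the representation \eqref{Malliavin}, then combine the density of the jump times of $\ell$ on $\tilde{\Omega}_0$ (Lemma \ref{lemma3.3}) with the continuity of $r\mapsto\langle K_{r,\eta}\phi,\sigma_k^m\rangle$ to upgrade to pointwise vanishing on $[0,\eta]$. Your open-interval/bounded-away-from-zero argument is in fact a cleaner write-up of the paper's one-line continuity claim (and correctly works with $f^2$ rather than asserting nonnegativity of $f$ itself).
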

\begin{proof}
Recall that
\begin{equation*}
\langle\mathcal{M}_{0,\eta}\phi,\phi \rangle
=\sum_{k\in\mathcal{Z},m=\{0,1\}}
(\alpha_{k}^{m})^2 \int_{0}^{\eta} \langle K_{r,\eta} \phi, \sigma_{k}^m \rangle^{2}\mathrm{d}\ell_{r},
\end{equation*}
when $\langle\mathcal{M}_{0,\eta}\phi,\phi \rangle=0$, we have $\int_{0}^{\eta} \langle K_{r,\eta} \phi, \sigma_{k}^m \rangle^{2}\mathrm{d}\ell_{r}=0$. By Lemma \ref{lemma3.3}, for any $\ell\in\tilde{\Omega}_0$, $\{r:\Delta\ell_r:=\ell_r-\ell_{r-}>0\}$ is dense in $[0,\infty)$, since $\langle K_{r,\eta} \phi, \sigma_{k}^m \rangle$ is a nonnegative continuous function on $[0,\eta]$ and $\int_{0}^{\eta} \langle K_{r,\eta} \phi, \sigma_{k}^m \rangle^{2}\mathrm{d}\ell_{r}=0$, then $\langle K_{r,\eta} \phi, \sigma_{k}^m \rangle=0, r\in[0,\eta]$ holds on the set $\tilde{\Omega}_0$, which implies \eqref{K.1}.
\end{proof}

\begin{lemma}\label{lemmaB}
Recall that  $\Omega_{0}$ is   given by Lemma \ref{lemma3.2}.
For $\omega\in\Omega_{0}$, the following
\begin{equation}\label{K.3}
\sup_{t\in[\eta/2,\eta]}|\langle
K_{t,\eta}\phi,\sigma_k^m\rangle|(\omega)=0~~
\Rightarrow~~\sup_{t\in[\eta/2,\eta]}|\langle
K_{t,\eta}\phi,Y_k^m(U)\rangle|(\omega)=0
\end{equation}
holds
for all $\phi\in \mathcal{S}_{\alpha,N}$, $k\in \mathcal{Z}$ and $m\in\{0,1\}$.
\end{lemma}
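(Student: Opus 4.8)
The statement to prove (Lemma \ref{lemmaB}) is a bracket-type implication: on the full-measure set $\Omega_0$ from Lemma \ref{lemma3.2}, if the inner product of the backward adjoint $K_{t,\eta}\phi$ against the forced direction $\sigma_k^m$ vanishes identically on $[\eta/2,\eta]$, then the inner product against $Y_k^m(U)$ (the first Lie bracket $[F(U),\sigma_k^m]$) must also vanish there. The natural idea is to differentiate the known-zero function $t\mapsto\langle K_{t,\eta}\phi,\sigma_k^m\rangle$ in time, use the backward equation \eqref{backward} for $\varrho_t=K_{t,\eta}\phi$, and read off that the bracket direction appears in the derivative.

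**The plan.** First I would fix $\omega\in\Omega_0$, $\phi\in\mathcal{S}_{\alpha,N}$, $k\in\mathcal{Z}$, $m\in\{0,1\}$, and assume the hypothesis $\langle K_{t,\eta}\phi,\sigma_k^m\rangle=0$ for all $t\in[\eta/2,\eta]$. Writing $\varrho_t=K_{t,\eta}\phi$, the backward system \eqref{backward} gives $\partial_t\varrho_t=-(\nabla F(U))^*\varrho_t$, i.e. $\partial_t\varrho_t=A\varrho_t+(\nabla B(U(t)))^*\varrho_t-G^*\varrho_t$. Then I would compute
\begin{equation*}
\frac{\mathrm d}{\mathrm dt}\langle\varrho_t,\sigma_k^m\rangle
=\langle\partial_t\varrho_t,\sigma_k^m\rangle
=\langle-(\nabla F(U))^*\varrho_t,\sigma_k^m\rangle
=-\langle\varrho_t,\nabla F(U)\sigma_k^m\rangle,
\end{equation*}
where the last step moves the adjoint back onto the fixed test vector. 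Recalling from \eqref{K.01} that $Y_k^m(U)=[F(U),\sigma_k^m]=A\sigma_k^m+B(\sigma_k^m,U)+B(U,\sigma_k^m)-G\sigma_k^m$, and noting that $\sigma_k^m$ is a fixed (time-independent) basis vector so that $\nabla F(U)\sigma_k^m$ is exactly $-Y_k^m(U)$ up to the sign conventions in the definition of the bracket, I would identify $\frac{\mathrm d}{\mathrm dt}\langle\varrho_t,\sigma_k^m\rangle=\langle\varrho_t,Y_k^m(U)\rangle$ (modulo the sign fixed by \eqref{K.01}). Since the left side is the derivative of a function that is identically zero on the open interval $(\eta/2,\eta)$, the derivative vanishes there; by continuity of $t\mapsto\langle\varrho_t,Y_k^m(U(t))\rangle$ it vanishes on the closed interval $[\eta/2,\eta]$, which is exactly \eqref{K.3}.

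**The main obstacle.** The delicate point is that $Y_k^m(U)=Y_k^m(U_t)$ depends on $U_t$, which has jumps (since $U$ is driven by the pure-jump Lévy noise), so the map $t\mapsto\varrho_t$ and the pairing $\langle\varrho_t,\sigma_k^m\rangle$ are genuinely only the solution of a backward evolution equation with càdlàg coefficients. I need to check carefully that the identity $\langle\varrho_t,\sigma_k^m\rangle=0$ on an interval really forces its (right or left) time-derivative to vanish, and that the term generated is precisely $\langle\varrho_t,Y_k^m(U_t)\rangle$ rather than something involving the jump part of $U$. The key structural fact rescuing this is that $\bar U=U-\sigma_\theta W_{S_t}$ satisfies the \emph{deterministic} (in $t$, for fixed $\omega$) equation \eqref{K.2} $\partial_t\bar U=F(U)$, so between the noise jumps the dynamics is smooth and the chain rule applies classically; meanwhile $\sigma_k^m$ is deterministic, so the $W_{S_t}$-dependence enters $Y_k^m(U)$ only through $U$ in a controlled, affine-in-noise manner. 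I expect that the cleanest route is to work on the set $\tilde\Omega_0\cap\Omega_0$ and argue that since $\langle\varrho_\cdot,\sigma_k^m\rangle$ is continuous and identically zero on $[\eta/2,\eta]$, its distributional derivative is zero, and matching this against the explicit right-hand side $\langle\varrho_t,Y_k^m(U_t)\rangle$ (which is continuous in $t$) forces the latter to vanish pointwise. Care with the exact sign and with verifying that no additional boundary or jump contributions appear is where the real work lies; everything else is a direct computation using \eqref{backward} and \eqref{K.01}.
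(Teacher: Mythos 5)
Your proposal is correct and follows essentially the same route as the paper: define $g_\phi(t)=\langle K_{t,\eta}\phi,\sigma_k^m\rangle$, use the backward equation \eqref{backward} to compute $g_\phi'(t)=\langle K_{t,\eta}\phi,[F(U),\sigma_k^m]\rangle=\langle K_{t,\eta}\phi,Y_k^m(U)\rangle$ (your sign bookkeeping via $Y_k^m(U)=-\nabla F(U)\sigma_k^m$ is right), and conclude from $g_\phi\equiv0$ on the interval. The continuity concern you raise about jumps is in fact harmless here because $Y_k^m(U)$ depends on $U$ only through the (continuous) vorticity component, i.e.\ $Y_k^m(U)=Y_k^m(\bar U)$ — the same observation the paper uses in the subsequent lemma.
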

\begin{proof} Define $g_{\phi}(t):=\langle
K_{t,\eta}\phi,\sigma_k^m\rangle, \forall t\in[0,\eta]$.
Observe from \eqref{backward} that, for $\phi\in \mathcal{S}_{\alpha,N}$
\begin{equation*}
\begin{aligned}
g_{\phi}'(t)=\langle K_{t,\eta}\phi,[F(U),\sigma_k^m]\rangle=\langle K_{t,\eta}\phi,Y_k^m(U)\rangle,
\end{aligned}
\end{equation*}
since $g_{\phi}(t):=\langle K_{t,\eta}\phi,\sigma_k^m\rangle$ is a nonnegative continuous function on $[0,\eta]$, we can get that there exists a set $\Omega_0$, such that $g_{\phi}'(t)=\langle K_{t,\eta}\phi,Y_j^m(U)\rangle=0$ holds on the set $\Omega_0$.
\end{proof}

\begin{lemma}\label{lemmaC}  For $\omega\in\Omega_0$,
\begin{equation}
\begin{aligned}
\sup_{t\in[\eta/2,\eta]}
|\langle K_{t,\eta}\phi,Y_{j}^{m}(U)\rangle|(\omega)
=0
\Rightarrow
\begin{cases}
\sup\limits_{t\in[\eta/2,\eta]}
|\langle K_{t,\eta}\phi,Z_{j}^{m}(\bar{U})\rangle|(\omega)
=0,\\
\sup\limits_{k\in\mathcal{Z},l\in\{0,1\}}
\sup\limits_{t\in[\eta/2,\eta]}|
\langle K_{t,\eta}\phi,[Z_{j}^{m}(U),\sigma_{k}^{l}]\rangle|(\omega)
=0,
\end{cases}
\end{aligned}\end{equation}
for all $\phi\in \mathcal{S}_{\alpha,N},m\in\{0,1\}$ and $j\in\mathbb{Z}_+^2$.
\end{lemma}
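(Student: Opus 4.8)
The plan is to reduce the hypothesis to a statement about the noise‑subtracted process $\bar U = U - \sigma_\theta W_{S_t}$, differentiate once in time, and then read off \emph{both} conclusions from a single application of the pure‑jump independence Lemma \ref{lemma3.2}. The whole argument runs on the set $\Omega_0$ furnished by that lemma and for a fixed path, so all time derivatives below are understood pathwise (as right derivatives). First I would exploit that the forcing acts only on the temperature component: $\sigma_\theta W_{S_t}$ has vanishing velocity/vorticity part, whence $B(\sigma_\theta W_{S_t},\cdot)=0$. Reading \eqref{K.01} with $B(\sigma_j^m,U)=0$ and $B(\sigma_\theta W_{S_t},\sigma_j^m)=0$ gives $Y_j^m(U_t)=Y_j^m(\bar U_t)$, so the hypothesis $\langle K_{t,\eta}\phi,Y_j^m(U)\rangle=0$ becomes $\langle\varrho_t,Y_j^m(\bar U_t)\rangle\equiv0$ on $[\eta/2,\eta]$, where $\varrho_t:=K_{t,\eta}\phi$ is continuous in $t$.

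Next I would differentiate this identity. Both $\bar U_t$ (by \eqref{K.2}) and $\varrho_t$ (by \eqref{backward}) are continuous and right‑differentiable for fixed $\omega$, with $\partial_t^+\bar U_t=F(U_t)$ and $\partial_t^+\varrho_t=-(\nabla F(U_t))^*\varrho_t$. Since $\nabla Y_j^m$ is the constant map $V\mapsto B(V,\sigma_j^m)$, the product rule yields
\[
\frac{d^+}{dt}\langle\varrho_t,Y_j^m(\bar U_t)\rangle
=\langle\varrho_t,\,B(F(U_t),\sigma_j^m)-\nabla F(U_t)Y_j^m(\bar U_t)\rangle
=\langle\varrho_t,Z_j^m(U_t)\rangle,
\]
the last step using $Y_j^m(\bar U_t)=Y_j^m(U_t)$ and the definition $Z_j^m=[F,Y_j^m]$ in \eqref{K.02}. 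Because the left‑hand side is the derivative of a function identically equal to zero, I conclude $\langle\varrho_t,Z_j^m(U_t)\rangle=0$ on $[\eta/2,\eta]$.

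The decisive step is then to expand $Z_j^m(U_t)$ in the noise. Writing $U_t=\bar U_t+\sigma_\theta W_{S_t}$ and using once more that $B(\sigma_\theta W_{S_t},\cdot)=0$, every term of $Z_j^m$ that is quadratic in the noise drops out, so that $Z_j^m(U_t)=Z_j^m(\bar U_t)+\nabla Z_j^m(\bar U_t)\sigma_\theta W_{S_t}$ holds exactly. Moreover $\nabla Z_j^m(\bar U_t)\sigma_k^l=-[Z_j^m(\bar U_t),\sigma_k^l]$, which by \eqref{K.03} does not depend on the base point and hence equals the constant $-[Z_j^m(U),\sigma_k^l]$. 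Therefore
\[
0=\langle\varrho_t,Z_j^m(\bar U_t)\rangle-\sum_{k\in\mathcal Z,\,l\in\{0,1\}}\alpha_k^l\,\langle\varrho_t,[Z_j^m(U),\sigma_k^l]\rangle\,W_{S_t}^{k,l},\qquad t\in[\eta/2,\eta],
\]
with all coefficient functions continuous in $t$. Applying Lemma \ref{lemma3.2} on $[\eta/2,\eta]$ forces the constant term and each coefficient of $W_{S_t}^{k,l}$ to vanish separately; since $\alpha_k^l\neq0$, this gives simultaneously $\langle\varrho_t,Z_j^m(\bar U_t)\rangle=0$ and $\langle\varrho_t,[Z_j^m(U),\sigma_k^l]\rangle=0$ for all $t\in[\eta/2,\eta]$, $k\in\mathcal Z$, $l\in\{0,1\}$, which are exactly the two claimed implications.

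I expect the main obstacle to be the bookkeeping that makes Lemma \ref{lemma3.2} applicable, rather than any single estimate: one must check that the noise enters $Z_j^m(U_t)$ only affinely, so that no products $W_{S_t}^{k,l}W_{S_t}^{k'',l''}$ appear (the lemma handles only linear combinations), and that the coefficient of $W_{S_t}^{k,l}$ is precisely the $U$‑independent bracket $[Z_j^m(U),\sigma_k^l]$. Both facts rest on the cancellations $B(\sigma_\theta W_{S_t},\cdot)=0$ produced by the noise living solely in the temperature equation, together with the explicit constancy recorded in \eqref{K.03}; it is exactly here that the temperature‑only degeneracy streamlines the bracket analysis. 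The only remaining point is the routine justification that $\bar U_t$ and $\varrho_t$ may be differentiated pathwise in $t$, which follows from their defining integral equations \eqref{K.2} and \eqref{backward}.
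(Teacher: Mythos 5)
Your proposal is correct and follows essentially the same route as the paper: identify $Y_j^m(U)=Y_j^m(\bar U)$ because the noise lives only in the temperature component, differentiate $\langle K_{t,\eta}\phi,Y_j^m(\bar U)\rangle\equiv 0$ along the backward adjoint flow to obtain $\langle K_{t,\eta}\phi,Z_j^m(U)\rangle\equiv 0$, expand $Z_j^m(U)$ affinely in $\sigma_\theta W_{S_t}$ using the constancy of $[Z_j^m(U),\sigma_k^l]$ from \eqref{K.03}, and invoke Lemma \ref{lemma3.2} to separate the constant term from the coefficients of $W_{S_t}^{k,l}$. Your write-up is in fact more explicit than the paper's about the pathwise differentiation and about why no quadratic noise terms survive, but the substance is identical.
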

\begin{proof}
Due to $B(U,\bar{U})=0$ if $U=(0,\theta)$, then $Y_{j}^m(U)=Y_{j}^m(\bar{U})$. Therefore,
for $t\in[\frac{\eta}{2},\eta]$, and for fixed $m\in\{0,1\}$, $\phi\in \mathcal{S}_{\alpha,N}$, let
\begin{equation*}
\begin{aligned}
g_{\phi}(t)&:=\langle K_{t,\eta}\phi,Y_j^m(U)\rangle
=\langle K_{t,\eta}\phi,Y_j^m(\bar{U})\rangle,\\
g_{\phi}'(t)&:=\langle K_{t,\eta}\phi,[F(U),Y_j^m(\bar{U})]\rangle
=\langle K_{t,\eta}\phi,Z_j^m(U)\rangle.
\end{aligned}
\end{equation*}
From the above formulas, since $\sup_{t\in[\eta/2,\eta]}g_{\phi}(t)=0$, it is not difficult to derive that on $\Omega_0$, one has
\begin{equation}\label{K.4}
\sup\limits_{t\in[\eta/2,\eta]}\langle K_{t,\eta}\phi,Z_j^m(U)\rangle=0.
\end{equation}
Next, by expanding $U=\bar{U}+\sigma W$, we can get that
\begin{equation}
Z_j^m(U)=Z_j^m(\bar{U})-\sum_{k\in\mathcal{Z},l\in\{0,1\}}
\alpha_k^l[Z_j^m(U),\sigma_k^l]W_{S_t}^{k,l}.
\end{equation}
Substituting the above formula into \eqref{K.4}, we obtain
\begin{equation}\label{K.41}
\sup_{t\in[\eta/2,\eta]}\left\{\langle K_{t,\eta}\phi,Z_j^m(\bar{U})\rangle
-\sum_{k\in\mathcal{Z},l\in\{0,1\}}
W_{S_t}^{k,l}\alpha_k^l\langle K_{t,\eta}\phi,[Z_j^m(U),\sigma_k^l]\rangle\right\}=0,
\end{equation}
since $\langle K_{t,\eta}\phi,Z_j^m(\bar{U})\rangle$ and $\langle K_{t,\eta}\phi,[Z_j^m(U),\sigma_k^l]\rangle$ are continuous, by Lemma \ref{lemma3.2}, we can derive $\sup\limits_{k\in\mathcal{Z},l\in\{0,1\}}
\sup\limits_{t\in[\eta/2,\eta]}|
\langle K_{t,\eta}\phi,[Z_{j}^{m}(U),\sigma_{k}^{l}]\rangle|
=0$ holds on $\Omega_0$, for all $\phi\in \mathcal{S}_{\alpha,N},m\in\{0,1\}$ and $j\in\mathbb{Z}_+^2$. This completes the proof.
\end{proof}

The next lemma corresponds to the brackets of the form $Y\rightarrow Z\rightarrow[Z,Y]$. For fixed $j\in \mathbb{Z}_+^2$, define $\mathcal{Z}_j$ as the union of $j$ with the set of points in $\mathbb{Z}_+^2$ adjacent to $j$, that is,
\begin{equation*}
\mathcal{Z}_j:=\{k\in \mathbb{Z}_+^2:k=j\pm m~\text{for some}~ m\in\{0\}\cup\mathcal{Z}\}.
\end{equation*}

\begin{lemma}\label{lemmaD}  Fix $j\in\mathbb{Z}_{+}^{2}$. For $\omega\in\Omega_0$,
\begin{equation}\label{K.05}
\begin{aligned}
&\sum_{i\in\mathcal{Z}_j,m\in\{0,1\}}\sup_{t\in[\eta/2,\eta]}
|\langle K_{t,\eta}\phi,Y_{i}^{m}(U)\rangle|(\omega)
=0\\
&\Rightarrow\sum_{\substack{k\in\mathcal{Z} \\ m,l\in\{0,1\}}}
\sup\limits_{t\in[\eta/2,\eta]}
|\langle K_{t,\eta}\phi,[Z_{j}^{m}(U),Y_k^l(U)]\rangle|(\omega)
=0,
\end{aligned}
\end{equation}
for every $\phi\in \mathcal{S}_{\alpha,N}$.
\end{lemma}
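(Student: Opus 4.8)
The plan is to follow the mechanism already exploited in Lemmas \ref{lemmaB} and \ref{lemmaC}: brackets with $F$ are produced by differentiating $t\mapsto\langle K_{t,\eta}\phi,\,\cdot\,(\bar U)\rangle$ in time (using the backward equation \eqref{backward} together with $\partial_t\bar U=F(U)$ from \eqref{K.2}), while brackets with $\sigma_k^l$ are produced by expanding $U=\bar U+\sigma_\theta W_{S_t}$ and invoking the generalized Hörmander condition Lemma \ref{lemma3.2}. The decomposition \eqref{K.04}, which rewrites $[Z_j^m,Y_k^l]$ as $[[Z_j^m,F],\sigma_k^l]-[[Z_j^m,\sigma_k^l],F]$, tells us precisely which elementary brackets must be assembled. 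Throughout we fix $\omega\in\Omega_0\subseteq\tilde\Omega_0$ and $\phi\in\mathcal S_{\alpha,N}$.

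First I would invoke Lemma \ref{lemmaC} at \emph{every} $i\in\mathcal Z_j$: the hypothesis $\sum_{i\in\mathcal Z_j,m}\sup_{t}|\langle K_{t,\eta}\phi,Y_i^m(U)\rangle|=0$ converts, for each such $i$, into $\sup_{t\in[\eta/2,\eta]}|\langle K_{t,\eta}\phi,Z_i^m(\bar U)\rangle|=0$ and $\sup_{k\in\mathcal Z,l}\sup_{t}|\langle K_{t,\eta}\phi,[Z_i^m(U),\sigma_k^l]\rangle|=0$. Taking $i=j$ in particular gives $\langle K_{t,\eta}\phi,Z_j^m(\bar U)\rangle\equiv0$ on $[\eta/2,\eta]$. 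I then differentiate this identically-zero function in $t$. Expanding $F(U)$ and $\nabla F(U)$ about $\bar U$ along $U=\bar U+\sigma_\theta W_{S_t}$, and using the crucial structural fact that $\sigma_\theta W_{S_t}$ lies in the zero-vorticity (temperature) directions so that $B(\sigma_\theta W_{S_t},\cdot)=0$, every contribution of order $\ge 2$ in $W_{S_t}$ drops out and the expansion terminates at first order; recalling $\nabla Y_k^l(U)\xi=B(\xi,\sigma_k^l)$ and $[Z_j^m,Y_k^l]=B(Z_j^m,\sigma_k^l)-\nabla Z_j^m\,Y_k^l$, one is led to
\begin{align*}
0=\frac{\mathrm d}{\mathrm dt}\langle K_{t,\eta}\phi,Z_j^m(\bar U)\rangle
&=\langle K_{t,\eta}\phi,[F,Z_j^m](\bar U)\rangle\\
&\quad+\sum_{k\in\mathcal Z,\,l\in\{0,1\}}\alpha_k^l\,W_{S_t}^{k,l}\,
\langle K_{t,\eta}\phi,[Z_j^m,Y_k^l](\bar U)\rangle ,
\end{align*}
an identity whose coefficients are continuous functions of $t$ on $[\eta/2,\eta]$.

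Next I would apply Lemma \ref{lemma3.2} to this relation: since it is exactly of the form $g_0(t)+\sum_{k,l}g_{k,l}(t)W_{S_t}^{k,l}=0$ with continuous $g_0,g_{k,l}$, each coefficient must vanish on $\Omega_0$, giving in particular $\langle K_{t,\eta}\phi,[Z_j^m,Y_k^l](\bar U)\rangle=0$ for all $k\in\mathcal Z$, $l\in\{0,1\}$ and $t\in[\eta/2,\eta]$. It then remains to pass from $\bar U$ to $U$. Re-expanding $[Z_j^m,Y_k^l](U)$ about $\bar U$ and using that $[Z_j^m,\sigma_{k'}^{l'}]$ is itself a zero-vorticity field (cf.\ \eqref{K.03}), the terms $B([Z_j^m,\sigma_{k'}^{l'}],\sigma_k^l)$ vanish and the Hessian of $Z_j^m$ contracted against a forcing direction vanishes as well; the residual differences are controlled exactly by the neighbour quantities $\langle K_{t,\eta}\phi,Z_i^m(\bar U)\rangle$ and $\langle K_{t,\eta}\phi,[Z_i^m(U),\sigma_k^l]\rangle$ with $i\in\mathcal Z_j$ that were made to vanish in the first step. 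This yields $\sup_{t\in[\eta/2,\eta]}|\langle K_{t,\eta}\phi,[Z_j^m(U),Y_k^l(U)]\rangle|(\omega)=0$ for all $k\in\mathcal Z$, $m,l\in\{0,1\}$, which is \eqref{K.05}.

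The main obstacle is the algebra of the differentiation step: one must expand $F(U)$ and $\nabla F(U)$ around $\bar U$ and verify carefully that \emph{all} terms of order $\ge 2$ in the driving process $W_{S_t}$ cancel (otherwise Lemma \ref{lemma3.2}, which separates only linear combinations of the $W_{S_t}^{k,m}$, would not apply), while simultaneously matching the surviving first-order coefficient with the bracket $[Z_j^m,Y_k^l]$ predicted by the Lie-bracket identity \eqref{K.04}. The zero-vorticity (temperature-only) structure of the forcing $\sigma_\theta$ is what makes these cancellations systematic, and tracking it precisely through the quadratic nonlinearity $Z_j^m$ is the delicate part of the argument.
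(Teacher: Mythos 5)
Your proof is correct, and while it rests on the same engine as the paper's (differentiate $t\mapsto\langle K_{t,\eta}\phi,Z_j^m(\bar U)\rangle$, expand around $\bar U$, invoke Lemma \ref{lemma3.2}), it organizes the algebra in a genuinely different and more economical way. The paper splits $[Z_j^m(U),Y_k^l(U)]$ via the Jacobi-type identity \eqref{K.04} into the two double brackets $[[Z_j^m(U),\sigma_k^l],F(U)]$ and $[[Z_j^m(U),F(U)],\sigma_k^l]$ and treats them separately: the first is killed purely algebraically, since by \eqref{K.03} it is a constant combination of $Y_{j\pm k}^{m+l+1}(U)$ and the neighbour hypothesis $i\in\mathcal Z_j$ applies; the second requires the differentiation, a substitution of $U$ for $\bar U$ using the already-proved \eqref{K.5}, a second expansion, and then Lemma \ref{lemma3.2}. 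You instead perform a single differentiation of $\langle K_{t,\eta}\phi,Z_j^m(\bar U)\rangle\equiv0$ and expand $F(U)$ and $\nabla F(U)$ around $\bar U$ simultaneously; your identification of the first-order coefficient as $[Z_j^m,Y_k^l](\bar U)=B(Z_j^m(\bar U),\sigma_k^l)-\nabla Z_j^m(\bar U)Y_k^l(\bar U)$ checks out (indeed $B(\sigma_\theta W_{S_t},\cdot)=0$ forces $F(U)=F(\bar U)-\sum_{k,l}\alpha_k^lW_{S_t}^{k,l}Y_k^l(\bar U)$ and $\nabla F(U)\xi=\nabla F(\bar U)\xi-\sum_{k,l}\alpha_k^lW_{S_t}^{k,l}B(\xi,\sigma_k^l)$ with no higher-order terms), so a single application of Lemma \ref{lemma3.2} yields $\langle K_{t,\eta}\phi,[Z_j^m,Y_k^l](\bar U)\rangle=0$ at once. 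What this buys is that Lemma \ref{lemma3.2} annihilates the combined coefficient without your needing the two Jacobi pieces to vanish individually; in fact your core computation only uses the hypothesis at $i=j$ (through Lemma \ref{lemmaC}), whereas the paper's route makes essential use of the neighbours to dispose of $[[Z_j^m(U),\sigma_k^l],F(U)]$ first.

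Two small points. Your final step, passing from $\bar U$ to $U$, is the only vague spot, and it can be made exact rather than "controlled by neighbour quantities": combining \eqref{K.04} and \eqref{K.03} with the identities $Y_i^m(U)=Y_i^m(\bar U)$ and $[[Z_j^m(U),F(U)],\sigma_k^l]=[[Z_j^m(\bar U),F(\bar U)],\sigma_k^l]$ (the latter is exactly what the paper establishes at the end of its proof of \eqref{K.6}) shows that $[Z_j^m(U),Y_k^l(U)]=[Z_j^m(\bar U),Y_k^l(\bar U)]$ identically, so there is no residual at all. Also, as you implicitly rely on, the coefficients fed to Lemma \ref{lemma3.2} must be continuous in $t$; this holds precisely because they involve only $\bar U_t$ and $K_{t,\eta}\phi$, which is why the expansion has to be taken around $\bar U$ rather than $U$.
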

\begin{proof}
If \eqref{K.05} is established, then for $j\in\mathbb{Z}_{+}^{2}$, we just need to prove that
\begin{equation}\label{K.5}
\sum_{\substack{k\in\mathcal{Z} \\ m,l\in\{0,1\}}}
\sup\limits_{t\in[\eta/2,\eta]}\left|\langle
K_{t,\eta}\phi,
[[Z_{j}^{m}(U),\sigma_k^l],F(U)]\rangle\right|=0,
\end{equation}
and
\begin{equation}\label{K.6}
\sum_{\substack{k\in\mathcal{Z} \\ m,l\in\{0,1\}}}
\sup\limits_{t\in[\eta/2,\eta]}\left|\langle
K_{t,\eta}\phi,
[[Z_{j}^{m}(U),F(U)],\sigma_k^l]\rangle\right|=0.
\end{equation}
In order to obtain \eqref{K.5}, by \eqref{K.03}, for $j\pm k\in\mathcal{Z}_j, k\in\mathcal{Z}$, one arrives at
\begin{equation}
\begin{aligned}
&\langle K_{t,\eta}\phi,
[[Z_{j}^{m}(U),\sigma_k^l],F(U)]\rangle\\
&\leq\left|g(-1)^{(m+1)(l+1)}\frac{j^{\bot}\cdot k}{2}\right|\left|\left\langle K_{t,\eta}\phi,
\left[-a(j,k)\sigma_{j+k}^{m+l+1}
+(-1)^{l}b(j,k)\sigma_{j-k}^{m+l+1},F(U)\right]\right\rangle\right|\\
&\leq C|j|\left(|\langle K_{t,\eta}\phi,Y_{j+k}^{m+l+1}(U)\rangle|
+|\langle K_{t,\eta}\phi,Y_{j-k}^{m+l+1}(U)\rangle|\right),
\end{aligned}
\end{equation}
thus \eqref{K.5} follows by the given known conditions in Lemma \ref{lemmaD}.

To prove \eqref{K.6}, for fixed $m\in\{0,1\}$, $j\in\mathbb{Z}_{+}^{2}$ and $\phi\in\mathcal{S}_{\alpha,N}$, let
\begin{equation*}
\begin{aligned}
g_{\phi}(t)
=\langle K_{t,\eta}\phi,Z_j^m(\bar{U})\rangle,~~~
g_{\phi}'(t)
=\langle K_{t,\eta}\phi,[F(U),Z_j^m(\bar{U})]\rangle.
\end{aligned}
\end{equation*}
By Lemma \ref{lemmaC}, we know that on $\Omega_0$, for each $m\in\{0,1\}$, and $\phi\in\mathcal{S}_{\alpha,N}$, one has
\begin{equation*}
\begin{aligned}
&\sup_{t\in[\eta/2,\eta]}
|\langle K_{t,\eta}\phi,Y_{j}^{m}(U)\rangle|
=0
\Rightarrow
\sup\limits_{t\in[\eta/2,\eta]}
|\langle K_{t,\eta}\phi,Z_{j}^{m}(\bar{U})\rangle|
=0.
\end{aligned}
\end{equation*}
Thus, we get on the set $\Omega_0$
\begin{equation*}
\begin{aligned}
\sup\limits_{t\in[\eta/2,\eta]}
|\langle K_{t,\eta}\phi,[Z_{j}^{m}(\bar{U}),F(U)]\rangle|
=0.
\end{aligned}
\end{equation*}
Using $U$ to replace $\bar{U}$ for the above equation yields
\begin{equation*}
\begin{aligned}
\sup\limits_{t\in[\eta/2,\eta]}
\left|\langle K_{t,\eta}\phi,[Z_{j}^{m}(U),F(U)]
\rangle
+\sum_{k\in\mathcal{Z},l\in\{0,1\}}
\alpha_k^lW_{S_t}^{k,l}
\langle K_{t,\eta}\phi,
[[Z_j^m(U),\sigma_k^l],F(U)]
\rangle\right|
=0.
\end{aligned}
\end{equation*}
Since \eqref{K.5} has already been verified, and combining the above formula, thus for each $m\in\{0,1\}$, $j\in\mathbb{Z}_{+}^{2}$ and each $\phi\in\mathcal{S}_{\alpha,N}$, there holds
\begin{equation*}
\begin{aligned}
\sup\limits_{t\in[\eta/2,\eta]}
|\langle K_{t,\eta}\phi,[Z_{j}^{m}(U),F(U)]
\rangle|
=0.
\end{aligned}
\end{equation*}
Then, similarly as in Lemma \ref{lemmaC}, we expand $[Z_j^m(U),F(U)]$ with respect to $U=\bar{U}+\sigma_{\theta} W_{S_t}$ and again we use Lemma \ref{lemma3.2} to get
\begin{equation*}
\begin{aligned}
\sup\limits_{l\in\{0,1\},k\in\mathcal{Z}}\sup\limits_{t\in[\eta/2,\eta]}
|\langle K_{t,\eta}\phi,
[[Z_j^m(\bar{U}),F(\bar{U})],\sigma_k^l]
\rangle|=0.
\end{aligned}
\end{equation*}
We know the fact that $B(U,\tilde{U})=0$ if $U=(0,\theta)$, then combining with \eqref{K.01}, we can get that for any $k,k'\in\mathcal{Z}\subset\mathbb{Z}_+^2$, $m,m'\in\{0,1\}$, there holds
$$
[[F(U),\sigma_k^m],\sigma_{k'}^{m'}]
=B(\sigma_k^m,\sigma_{k'}^{m'})
+B(\sigma_{k'}^{m'},\sigma_k^m)=0,
$$
combining the above formula with \eqref{K.01} and \eqref{K.03}, we note that
\begin{equation*}
[[Z_j^m(\bar{U}),F(\bar{U})],\sigma_{k}^{l}]
=[[Z_j^m(U),F(U)],\sigma_{k}^{l}],
\end{equation*}
then we derive that \eqref{K.6} holds on the set $\Omega_0$. This completes the proof.
\end{proof}

\begin{lemma}\label{lemmaE}
For $\phi\in \mathcal{S}_{\alpha,N}$, $j\in \mathbb{Z}_{+}^2$, $m\in\{0,1\}$ and $\omega\in\Omega_0 \cap \tilde{\Omega}_0$, it holds that
\begin{equation}\label{K.7}
\begin{aligned}
&
\langle\mathcal{M}_{0,\eta}\phi,\phi\rangle
=0
&\Rightarrow
\begin{cases}
\sup\limits_{t\in[\eta/2,\eta]}
|\langle K_{t,\eta}\phi,\sigma_{j}^{m}\rangle|(\omega)
=0,\\
\sup\limits_{t\in[\eta/2,\eta]}
|\langle K_{t,\eta}\phi,Y_{j}^{m}(U)\rangle|(\omega)
=0.
\end{cases}
\end{aligned}\end{equation}
\end{lemma}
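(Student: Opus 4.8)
The plan is to establish \eqref{K.7} for \emph{all} $j\in\mathbb{Z}_+^2$ by an induction over the Fourier lattice that propagates the two vanishing conditions outward from the forced set $\mathcal{Z}$. Fix $\phi\in\mathcal{S}_{\alpha,N}$, work on the full-measure set $\Omega_0\cap\tilde{\Omega}_0$, and assume $\langle\mathcal{M}_{0,\eta}\phi,\phi\rangle=0$. I would call a mode $j\in\mathbb{Z}_+^2$ \emph{reachable} if, for $m\in\{0,1\}$, both $\sup_{t\in[\eta/2,\eta]}|\langle K_{t,\eta}\phi,\sigma_j^m\rangle|=0$ and $\sup_{t\in[\eta/2,\eta]}|\langle K_{t,\eta}\phi,Y_j^m(U)\rangle|=0$; the goal is to show that every mode is reachable. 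The base case is immediate: Lemma \ref{lemma0} turns the hypothesis into $\sup_t|\langle K_{t,\eta}\phi,\sigma_k^m\rangle|=0$ for every $k\in\mathcal{Z}$, and Lemma \ref{lemmaB} upgrades this to $\sup_t|\langle K_{t,\eta}\phi,Y_k^m(U)\rangle|=0$, so each $k\in\mathcal{Z}$ is reachable.

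For the propagation step I would show that if $j$ is reachable and $k\in\mathcal{Z}$ is not parallel to $j$, then at least one of $j\pm k$ is reachable. Starting from the $Y_j^m$ condition, Lemma \ref{lemmaC} gives $\sup_t|\langle K_{t,\eta}\phi,[Z_j^m(U),\sigma_k^l]\rangle|=0$ for all $m,l\in\{0,1\}$. Pairing $K_{t,\eta}\phi$ against the four identities of Proposition \ref{proper5.2} then yields $g(j^\perp\cdot k)a(j,k)\langle K_{t,\eta}\phi,\sigma_{j+k}^{m'}\rangle=0$ and $g(j^\perp\cdot k)b(j,k)\langle K_{t,\eta}\phi,\sigma_{j-k}^{m'}\rangle=0$. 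Since $a(j,k)=b(j,k)=0$ forces $j_1=k_1=0$, i.e. $j\parallel k$, non-parallelism guarantees that at least one prefactor is nonzero, whence $\sup_t|\langle K_{t,\eta}\phi,\sigma_{j\pm k}^{m'}\rangle|=0$ for the corresponding sign. Finally, since $\sigma_{j\pm k}^{m'}$ is a constant vector field and the scalar function $t\mapsto\langle K_{t,\eta}\phi,\sigma_{j\pm k}^{m'}\rangle$ vanishes identically on $[\eta/2,\eta]$, differentiating through the backward equation \eqref{backward} exactly as in Lemma \ref{lemmaB} (using $Y_{j\pm k}^{m'}(U)=-\nabla F(U)\sigma_{j\pm k}^{m'}$, cf. \eqref{K.01}) produces $\sup_t|\langle K_{t,\eta}\phi,Y_{j\pm k}^{m'}(U)\rangle|=0$. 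Thus the reachable set is closed under adding a non-parallel element of $\mathcal{Z}$, and by the symmetry $-\mathcal{Z}=\mathcal{Z}$ also under subtracting one.

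The main obstacle is the purely combinatorial task of verifying that, beginning with $\mathcal{Z}$ and iterating this step, one eventually exhausts all of $\mathbb{Z}_+^2$ without ever being blocked by the degenerate loci $j^\perp\cdot k=0$, $a(j,k)=0$, or $b(j,k)=0$. This is precisely where Condition 2.1 enters: because $\mathcal{Z}$ is a symmetric generator containing two non-parallel vectors of distinct lengths, one can arrange an explicit sequence of admissible additions reaching any prescribed target mode while keeping the relevant prefactor nonzero at each stage, and I would follow the lattice-reachability scheme of \cite{Foldes-2015,MH-2006}. Once every mode is shown reachable, \eqref{K.7} follows for all $j\in\mathbb{Z}_+^2$ and $m\in\{0,1\}$ on $\Omega_0\cap\tilde{\Omega}_0$, completing the proof; the careful bookkeeping of these non-degeneracy conditions along the spanning path is the delicate heart of the argument.
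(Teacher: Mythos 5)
Your proposal is correct and follows essentially the same route as the paper: the base case on $\mathcal{Z}$ via Lemmas \ref{lemma0} and \ref{lemmaB}, propagation through Lemma \ref{lemmaC} and Proposition \ref{proper5.2}, and an induction over the Fourier lattice. The only difference is organizational: where you conclude ``at least one of $j\pm k$'' is reached and defer the covering combinatorics to the cited references, the paper's induction over the diamonds $\mathcal{I}_N$ always steps from an off-axis $j'\in\mathcal{B}_{N-1}$ by an axis vector $k\in\mathcal{Z}\cup(-\mathcal{Z})$, which forces $k^{\perp}\cdot j'\neq0$, $a(j',k)\neq0$ and $b(j',k)\neq0$ simultaneously, so both neighbours $j'\pm k$ are captured at once and the axis modes are simply postponed to a later stage of the induction.
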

\begin{proof}
Denote
\begin{equation*}
\mathcal{I}_N:=\{j\in\mathbb{Z}_+^2:|j_1|+|j_2|\leq N+1\}\setminus\{(0,N+1),(0,N),(N+1,0),(N,0)\}.
\end{equation*}
We proceed by induction in $N\geq 1$.

For the first step, $N=1$, we show that the result holds on the set $\mathcal{I}_1=\{(1,1),(-1,1)\}$.
By Lemma \ref{lemma0} and Lemma \ref{lemmaB}, we know that \eqref{K.7} holds on the set $\Omega_0 \cap \tilde{\Omega}_0$, for each $j\in\mathcal{Z}=\{(0,1),(1,0)\},m\in\{0,1\}$.

We then establish \eqref{K.7} for $j\in\mathcal{I}_1$. By Lemma \ref{lemmaC}, one has
\begin{equation*}
\sup_{\substack{m,l\in\{0,1\} \\ j',k\in\mathcal{Z}}
}
\sup_{t\in[\eta/2,\eta]}|\langle K_{t,\eta}\phi,[Z_{j'}^m(U),\sigma_k^l]\rangle|
=0.
\end{equation*}
Using Proposition \ref{proper5.2} with $j'=(0,1),k=(1,0)$, and all combinations of $m,l\in\{0,1\}$, since $(j'^{\bot}\cdot k),a(j',k),b(j',k)\neq0$ and $j'\pm k\in\mathcal{I}_1$, we obtain that for each $j\in\mathcal{I}_1,m\in\{0,1\}$,
\begin{equation*}
\sup\limits_{t\in[\eta/2,\eta]}
|\langle K_{t,\eta}\phi,\sigma_{j}^{m}\rangle|
=0,
\end{equation*}
by Lemma \ref{lemmaB} we can complete the proof of the case $N=1$.

Next we use induction to obtain the desired result. That is, we assume that \eqref{K.7} holds for each $j\in\mathcal{I}_{N-1}$ ($N-1\geq 1$), our aim is to show that \eqref{K.7} holds true for $j\in\mathcal{I}_{N}$ on the set $\Omega_0 \cap \tilde{\Omega}_0$. We introduce the set
\begin{equation*}
\mathcal{B}_{N-1}:=\{j^{\prime}
=(j_1^{\prime},j_2^{\prime})\in
\mathcal{I}_{N-1}:|j_1^{\prime}|+|j_2^{\prime}|=N\},
\end{equation*}
which is the  `boundary of $\mathcal{I}_{N-1}$ excluding the $x$ and $y$ axes'.
Based on the assumption we know that on the set $\Omega_0 \cap \tilde{\Omega}_0$, there holds
\begin{equation*}
\sup\limits_{t\in[\eta/2,\eta]}
|\langle K_{t,\eta}\phi,\sigma_{j}^{m}\rangle|
=0,~~\forall j\in \mathcal{I}_{N-1}, m\in\{0,1\}.
\end{equation*}
Therefore, by Lemma \ref{lemmaB} and Lemma \ref{lemmaC},  we get on the set $\Omega_0 \cap \tilde{\Omega}_0$,
\begin{equation}\label{K.8}
\begin{aligned}
\langle\mathcal{M}_{0,T}\phi,\phi\rangle
=0
&\Rightarrow
\sup\limits_{j'\in \mathcal{B}_{N-1},m\in\{0,1\}}\langle K_{t,T}\phi,Y_{j'}^m(U)\rangle
=0\\
&\Rightarrow
\sup_{\substack{m,l\in\{0,1\} \\ j'\in\mathcal{B}_{N-1},k\in\mathcal{Z}}}
\sup\limits_{t\in[T/2,T]}|\langle
K_{t,T}\phi,[Z_{j'}^m(U),\sigma_k^l]\rangle|
=0.
\end{aligned}
\end{equation}

Since we have known that the result holds for $j\in \mathcal{I}_{N-1}$, therefore it is enough to establish \eqref{K.7} for any fixed $j\in \mathcal{I}_{N}\backslash\mathcal{I}_{N-1}$
and $m\in\{0,1\}$. We know that for each $j\in \mathcal{I}_{N}\backslash\mathcal{I}_{N-1}$, there exists $j'\in\mathcal{B}_{N-1}$ such that $k:=j-j'\in \mathcal{Z}\cup(-\mathcal{Z})$. Since $k$ is parallel to one of the axes and $j'$ is not, thus $k^{\bot}\cdot j'\neq0$ and $a(j',k)\neq0$, $b(j',k)\neq0$. Using \eqref{K.8} and Proposition \ref{proper5.2}, we infer that for each fixed $j\in \mathcal{I}_{N}\backslash\mathcal{I}_{N-1}$,
$m\in\{0,1\}$
\begin{equation}\label{K.14}
\begin{aligned}
&\langle\mathcal{M}_{0,\eta}\phi,\phi\rangle
=0
\Rightarrow\sup_{t\in[\eta/2,\eta]}|\langle
K_{t,\eta}\phi,\sigma_j^m\rangle|
=0,
\end{aligned}
\end{equation}
by combining the initial assumption we also can deduce that the above formula holds true for any fixed $j\in\mathcal{I}_{N}, m\in\{0,1\}$.
By Lemma \ref{lemmaB} and \eqref{K.14}, we deduce that the rest of \eqref{K.7} holds on the set $\Omega_0 \cap \tilde{\Omega}_0$, for any fixed $j\in\mathcal{I}_{N},m\in\{0,1\}$.
The proof is completed.
\end{proof}

\begin{lemma}\label{lemmaF} 
For all $\phi\in \mathcal{S}_{\alpha,N}$, $j\in \mathbb{Z}_+^2$, $m\in\{0,1\}$, and $\omega\in\Omega_0\cap\tilde{\Omega}_0$,
\begin{equation}\label{K.9}
\begin{aligned}
\langle\mathcal{M}_{0,\eta}\phi,\phi\rangle
=0
\Rightarrow
\begin{cases}
\sup\limits_{t\in[\eta/2,\eta]}
|\langle K_{t,\eta}\phi,\sigma_j^m\rangle|(\omega)
=0,
&\quad(\mathrm{a})\\
\sup\limits_{t\in[\eta/2,\eta]}|\langle K_{t,\eta}\phi,\psi_j^m\rangle|(\omega)
=0.
&\quad(\mathrm{b})
\end{cases}
\end{aligned}
\end{equation}
\end{lemma}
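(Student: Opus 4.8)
The statement (a) is exactly the first conclusion of Lemma \ref{lemmaE}, so the entire task is to deduce (b) from the information already extracted there. The plan is to combine the two conclusions of Lemma \ref{lemmaE} — that on $\Omega_0\cap\tilde\Omega_0$ the hypothesis $\langle\mathcal{M}_{0,\eta}\phi,\phi\rangle=0$ forces
\[
\sup_{t\in[\eta/2,\eta]}|\langle K_{t,\eta}\phi,\sigma_j^m\rangle|=0 \qquad\text{and}\qquad \sup_{t\in[\eta/2,\eta]}|\langle K_{t,\eta}\phi,Y_j^m(U)\rangle|=0
\]
for every $j\in\mathbb{Z}_+^2,\ m\in\{0,1\}$ — with the algebraic identities of Lemma \ref{5.25}, which express each velocity mode $\psi_j^m$ through the already-controlled directions $Y_j^{m+1}(U)$ and $[Z_{j+e_1}^\cdot(U),Y_{e_1}^\cdot(U)]$ plus an explicit remainder $J_{j,m}(U)$.

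The first step I would record is that (a) says more than it seems. Since $\{\sigma_j^m:j\in\mathbb{Z}_+^2,m\in\{0,1\}\}$ is an orthogonal basis of the temperature subspace $\mathcal{H}^\theta:=\{(0,\theta)^T\in H\}$, the vanishing of all $\langle K_{t,\eta}\phi,\sigma_j^m\rangle$ on $[\eta/2,\eta]$ is equivalent to the $\theta$-component of $K_{t,\eta}\phi$ being identically zero there; hence $\langle K_{t,\eta}\phi,V\rangle=0$ for every $V\in\mathcal{H}^\theta$. This is precisely what the remainders satisfy: for $j_1\neq0$ one has $J_{j,m}(U)=(-1)^m\frac{\nu_2|j|^2}{gj_1}\sigma_j^{m+1}+(-1)^m\frac1{gj_1}B(U,\sigma_j^{m+1})$, where $B(U,\sigma_j^{m+1})=(0,(\mathcal{K}*w)\cdot\nabla\sigma_j^{m+1,(2)})^T\in\mathcal{H}^\theta$ because $\sigma_j^{m+1}$ carries no velocity component; for $j_1=0$ the remainder is built from the terms $H^{\cdot,\cdot}_{j+e_1,e_1}(U)$, which by \eqref{K.04} are completely concentrated in the $\theta$-component. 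Therefore, in all cases, $\sup_{t\in[\eta/2,\eta]}|\langle K_{t,\eta}\phi,J_{j,m}(U)\rangle|=0$.

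With the remainders neutralised, I would pair the identities of Lemma \ref{5.25} with $K_{t,\eta}\phi$ and split into two cases. When $j_1\neq0$, the identity reads $\psi_j^m+J_{j,m}(U)=\frac{(-1)^m}{gj_1}Y_j^{m+1}(U)$, so
\[
\langle K_{t,\eta}\phi,\psi_j^m\rangle=\tfrac{(-1)^m}{gj_1}\langle K_{t,\eta}\phi,Y_j^{m+1}(U)\rangle-\langle K_{t,\eta}\phi,J_{j,m}(U)\rangle=0
\]
by Lemma \ref{lemmaE} and the previous step. When $j_1=0$, the right-hand side of Lemma \ref{5.25} is a fixed multiple of brackets $[Z_{j+e_1}^\cdot(U),Y_{e_1}^\cdot(U)]$, which are annihilated by Lemma \ref{lemmaD} applied with index $j+e_1$, whose hypothesis $\sum_{i\in\mathcal{Z}_{j+e_1},m}\sup_t|\langle K_{t,\eta}\phi,Y_i^m(U)\rangle|=0$ is guaranteed by Lemma \ref{lemmaE}; again the $J_{j,m}$-term drops out. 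Either way $\sup_{t\in[\eta/2,\eta]}|\langle K_{t,\eta}\phi,\psi_j^m\rangle|=0$, which is (b), and together with (a) this completes the proof.

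The step I expect to carry the real weight is the reduction of the remainder: recognising that every nonlinear or lower-order correction produced while reaching the velocity modes $\psi_j^m$ lands in the temperature subspace $\mathcal{H}^\theta$ that part (a) has already killed. This is what prevents the naive attempt — solving $Y_j^{m+1}(U)=\nu_2|j|^2\sigma_j^{m+1}+B(U,\sigma_j^{m+1})-(-1)^mgj_1\psi_j^m$ directly for $\psi_j^m$ — from degenerating into the tautology $\langle K_{t,\eta}\phi,\psi_j^m\rangle=\langle K_{t,\eta}\phi,\psi_j^m\rangle$; the content is not the algebraic rearrangement but the structural fact that $B(U,\sigma_j^{m+1})$ and the $H$-terms are temperature-valued. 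The case $j_1=0$ is the secondary difficulty, since there the buoyancy coupling $G\sigma_j^m$ vanishes and $Y_j^m(U)$ no longer exposes a $\psi$-direction, forcing a detour through the higher bracket $[Z_{j+e_1},Y_{e_1}]$ of Lemma \ref{lemmaD}; here one must also use that $e_1\in\mathcal{Z}$ so that this bracket is among those already controlled.
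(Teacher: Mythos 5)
Your proposal is correct and follows essentially the same route as the paper: part (a) is read off from Lemma \ref{lemmaE}, and part (b) is obtained by pairing $K_{t,\eta}\phi$ with the identities of Lemma \ref{5.25}, invoking Lemma \ref{lemmaE} for $j_1\neq 0$ and Lemma \ref{lemmaD} (via $j'=j+e_1$) for $j_1=0$, and discarding the remainder $J_{j,m}(U)$ because it is concentrated in the temperature subspace already annihilated by (a). Your explicit formulation of that last structural point — that (a) kills the entire $\theta$-component of $K_{t,\eta}\phi$, hence every temperature-valued correction — is exactly the (more tersely stated) closing step of the paper's proof.
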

\begin{proof}

By Lemma \ref{lemmaE}, for each $j$ such that $j\in \mathbb{Z}_+^2$, \eqref{K.9}(a) follows on the set $\Omega_0 \cap \tilde{\Omega}_0$.

For $(b)$, we fix $|j|\leq N$, $j_1\neq0,m\in\{0,1\}$, from $\psi_j^m+J_{j,m}(U)=\frac{(-1)^m}{gj_1}Y_{j}^{m+1}(U)
,j_1\neq0$, it follows that
\begin{equation}\label{K.12}
\begin{aligned}
|\langle K_{t,\eta}\phi,Y_j^{m+1}(U)\rangle|
&=g|j_1||\langle K_{t,\eta}\phi,\psi_j^m+J_{j,m}(U)\rangle|.
\end{aligned}
\end{equation}
From Lemma \ref{5.25} and \eqref{K.01}, we can get that $\sup\limits_{t\in[\eta/2,\eta]}|\langle K_{t,\eta}\phi,\psi_j^m\rangle|=0$ holds on the set $\Omega_0$.

Next, fix $|j|\leq N$ with $j_1=0$ ($j$ on the y-axis) and set $j':=\vec{e}_1+j$ ($j'$ not on the y-axis). It is easy to check that $j',j'\pm\vec{e}_2,j'\pm\vec{e}_1$ belong to $\mathcal{I}_{2N}$ whenever $N\geq 2$, therefore by Lemma \ref{lemmaD}, we get that on the set $\Omega_0$
\begin{equation}\label{K.10}
\sum_{ \substack{k\in\mathcal{Z} \\ m,l\in\{0,1\}}}
\sup_{t\in[\eta/2,\eta]}
|\langle K_{t,\eta}\phi,[Z_{j^{\prime}}^m(U),Y_k^l(U)]\rangle|
=0.
\end{equation}
By Lemma \ref{5.25} and \eqref{K.10}, for $j\in\mathbb{Z}_+^2$, $m\in\{0,1\}$, one has
\begin{equation}\label{K.11}
\sup_{t\in[\eta/2,\eta]}
|\langle K_{t,\eta}\phi,
\psi_j^m+J_{j,m}(U)\rangle|
=0.
\end{equation}
Notice that \eqref{5.26} in Lemma \ref{5.25}, for $j_1=0$,
we already know that $U\mapsto H_{j,k}^{m,m'}(U)$ is affine and it is concentrated entirely in the $\theta$ component, and recall that the definition of $\psi_j^m$ in \eqref{psi}, they are not related to the $\theta$ component, we can deduce that \eqref{K.9}(b) holds on the set $\Omega_0$. In summary, \eqref{K.9} holds on the set $\Omega_0\cap\tilde{\Omega}_0$. This proof is finished.
\end{proof}

Next, we will prove the following stronger result than \eqref{3.4} for later use:
\begin{equation}\label{3.5}
\mathbb{P}\Big(\omega=(\mathrm w,\ell):\inf\limits_{\phi\in\mathcal{S}_{\alpha,N}}
\sum\limits_{j\in \mathcal{Z}_0,m\in\{0,1\}}\int_{\eta/2}^{\eta}\left|\langle K_{r,\eta}\phi,\sigma_j^m\rangle\right|^2
\mathrm d\ell_r=0\Big)=0.
\end{equation}

\begin{proof}[Proof of Proposition \ref{propo3.4}] We will take use of Lemma \ref{lemmaF} to prove \eqref{3.5}.
Set
\begin{equation}\label{3.6}
\mathcal{L}:=
\Big\{\omega:\inf\limits_{\phi\in\mathcal{S}_{\alpha,N}}
\sum\limits_{j\in \mathcal{Z}_0,m\in\{0,1\}}\int_{\eta/2}^{\eta}
\langle K_{r,\eta}\phi,\sigma_j^m\rangle^2
\mathrm d\ell_r=0\Big\}
\cap\Omega_{0}\cap\tilde{\Omega}_{0}.
\end{equation}
Assume that $\mathcal{L}\neq\emptyset$ and $\omega=(\mathrm w,\ell)$ belongs to the event $\mathcal{L}$. Then for some $\phi$ with
\begin{equation}\label{3.7}
\|P_{N}\phi\|\geq\alpha,
\end{equation}
on the set $\Omega_{0}\cap\tilde{\Omega}_{0}$, there hold
\begin{equation*}
\begin{aligned}
\int_{\eta/2}^{\eta}\langle K_{r,\eta}\phi,\sigma_j^m\rangle^2
\mathrm{d}\ell_{r}=0
~\text{and}~
\int_{\eta/2}^{\eta}\langle K_{r,\eta}\phi,\psi_j^m
\rangle^2 \mathrm{d}\ell_{r}=0,\forall~j\in \mathcal{Z},~m\in\{0,1\}.
\end{aligned}
\end{equation*}
In the following, by the property of $\ell\in\Omega_0\cap\tilde{\Omega}_0$ stated above and the continuity of $\langle K_{t,\eta}\phi,\sigma_{j}^{m}\rangle$ and $\langle K_{t,\eta}\phi,\psi_j^m\rangle$ with respect to $t$, it follows that
\begin{equation*}
\begin{aligned}
\begin{cases}
&\sup\limits_{t\in[\eta/2,\eta]}|\langle K_{t,\eta}\phi,\sigma_{j}^{m}\rangle|=0,\\
&\sup\limits_{t\in[\eta/2,\eta]}
|\langle K_{t,\eta}\phi,\psi_j^m
\rangle|=0,~\forall j\in\mathcal{Z},~m\in\{0,1\}.
\end{cases}
\end{aligned}
\end{equation*}

By Lemma \ref{lemmaF}, we know that there exists a $\phi\in \mathcal{S}_{\alpha,N}$, for all $j\in \mathbb{Z}_+^2$ and $m\in\{0,1\}$, one has
\begin{equation*}
\begin{cases}
\sup\limits_{t\in[\eta/2,\eta]}|\langle K_{t,\eta}\phi,\sigma_j^m\rangle|
=0,\\
\sup\limits_{t\in[\eta/2,\eta]}|\langle K_{t,\eta}\phi,\psi_j^m\rangle|
=0.
\end{cases}
\end{equation*}
Let $t\rightarrow \eta$, we can get $\phi=0$, which contradicts \eqref{3.7}. Therefore, $\mathcal{L}=\emptyset$.
\end{proof}

Note that Proposition \ref{propo3.4} is not sufficient for the proof of Proposition \ref{propo1.4}(e-property), we need to make a necessary supplement to the proposition. For $\alpha\in(0,1]$, $U_0\in H$, $N\in\mathbb{N}$, $\mathfrak{R}>0$ and $\varepsilon>0$, let
\begin{equation*}
X^{U_0,\alpha,N}=\inf_{\phi\in\mathcal{S}_{\alpha,N}}
\langle\mathcal{M}_{0,\eta}\phi,\phi\rangle.
\end{equation*}
We denote
\begin{equation*}
r(\varepsilon,\alpha,\mathfrak{R},N)
=\sup_{\|U_0\|\leq\Re}\mathbb{P}(X^{U_0,\alpha,N}<\varepsilon).
\end{equation*}
Recall that $\mathcal{M}_{0,t}$ is the Malliavin matrix of $U_t$, where $U_t$ is the solution of equation \eqref{2.1} at time $t$ with initial value $U_0$. Therefore, $\mathcal{M}_{0,\eta}$ also depends on $U_0$.
Based on \eqref{3.5} and the dissipative property of Boussinesq system, our aim is to verify that
\begin{equation}\label{r}
r(\varepsilon,\alpha,\mathfrak{R},N)\rightarrow0
~~\text{as}~~ \varepsilon\rightarrow 0.
\end{equation}

We use the method of contradiction. We suppose that \eqref{r} is not true, then there exist the sequences $\{U_0^{(k)}\}\subseteq B_H(\mathfrak{R})$, $\{\varepsilon_k\}\subseteq(0,1)$ and a positive number $\delta_0$ such that
\begin{equation}\label{B.1}
\lim\limits_{k\to\infty}\mathbb{P}(X^{U_0^{(k)},\alpha,N}
<\varepsilon_k)\ge\delta_0>0
~~\text{and}~\lim\limits_{k\to\infty}\varepsilon_k=0.
\end{equation}
Thus we need to search out something to contradict \eqref{B.1}, then \eqref{r} can be proved.

Since $H$ is a Hilbert space, there exists a subsequence $\{U_0^{(n_k)},k\geq 1\}$ of $\{U_0^{(k)},k\geq 1\}$ such that $U_0^{(n_k)}$ converges weakly to some $U_0^{(0)}\in H$. For convenience, we denote this subsequence by $\{U_0^{(k)},k\geq 1\}$. Let  $U_t^{(k)}$ denote the solution of equation \eqref{2.1} with $U_t|_{t=0}=U_0^{(k)}(k\geq 0)$
\begin{equation}\label{B.2}
\partial_t J_{s,t}\xi+AJ_{s,t}\xi+\nabla B(U_t)J_{s,t}\xi=G(J_{s,t}\xi),\quad J_{s,s}\xi=\xi,
\end{equation}
when $U_t$ is replaced by $U_t^{(k)}$, where $\nabla B(U_t)J_{s,t}\xi:=B(U_t,J_{s,t}\xi)+B(J_{s,t}\xi,U_t)$. With regard to $U_t^{(k)}$, we use $J_{s,t}^{(k)}\xi$ to denote the solution of \eqref{B.2}, and $K_{s,t}^{(k)}\xi$ is the adjoint of $J_{s,t}^{(k)}\xi$.

Recall that for any $M\in\mathbb{N},~H_M=\{\sigma_k^m,\psi_k^m:|k|\leq M,m\in\{0,1\}\}$, $P_M$ denotes the orthogonal projections from $H$ onto $H_M$ and $Q_M u:=u-P_M u,~\forall u\in H$.

Next, we will give some estimates for $\|Q_{M}U_{t}^{(k)}\|$, $\|P_{M}U_{t}^{(k)}-P_{M}U_{t}^{(0)}\|$ and $\|J_{s,t}^{(k)}\xi-J_{s,t}^{(0)}\xi\|,s,t\in(0,T]$. Finally, we combine the arguments in \cite[Appendix B]{Peng-2024} to achieve \eqref{r}.

\begin{lemma}\label{lemma 4.4} For any $t\geq0,k\in\mathbb{N}$ and $M>\max\{|k|:k\in\mathcal{Z}\}$, there exist two positive constants $C$ and $C_{\mathfrak{R}}$, they depend on $\nu=\min\{\nu_1,\nu_2\},\nu_S,d,g$ and  $\mathfrak{R},\nu,\nu_S,d,g$, respectively, such that
 \begin{equation}\label{4.5}
\begin{aligned}
\|Q_{M}U_{t}^{(k)}\|^{2}\leq& e^{-\nu M^2 t}\|Q_{M}U_{0}^{(k)}\|^{2}
+C\|\xi\|^2e^{\int_s^te^{-\nu M^2r}[C_0(\|U_r^{(k)}\|_1^{4/3}+1)-\nu M^2]\mathrm{d}r} \cdot \sup\limits_{s\in[0,t]}\|U_{s}^{(k)}\|^2\\
&+C\|\xi\|^2e^{\int_s^te^{-\nu M^2r}[C_0(\|U_r^{(k)}\|_1^{4/3}+1)-\nu M^2]\mathrm{d}r},
\end{aligned}
\end{equation}
and
\begin{equation}\label{4.6}
\begin{aligned}
&\|P_{M}U_{t}^{(k)}-P_{M}U_{t}^{(0)}\|^{2}
\leq\|P_{M}U_{0}^{(k)}-P_{M}U_{0}^{(0)}\|^{2}
e^{C\int_{0}^{t}\|U_{s}^{(k)}\|_{1}^{2}\mathrm{d}s} \\
&~~~+C_{\mathfrak{R}}e^{C\int_{0}^{t}\|U_{s}^{(k)}\|_{1}^{2}\mathrm{d}s}
\times\left(\int_0^t\|U_{s}^{(k)}\|_1^{2}\mathrm{d}s
+\sup_{s\in[0,t]}(\|U_s^{(0)}\|^2+1)\right)\\
&~~~\times \left[e^{-\nu M^2 t}\|Q_{M}U_{0}^{(k)}\|^{2}
+C\|\xi\|^2e^{\int_s^te^{-\nu M^2r}[C_0(\|U_r^{(k)}\|_1^{4/3}+1)-\nu M^2]\mathrm{d}r} \cdot \sup\limits_{s\in[0,t]}\|U_{s}^{(k)}\|^2\right.\\
&~~~~~~\left.+C\|\xi\|^2e^{\int_s^te^{-\nu M^2r}[C_0(\|U_r^{(k)}\|_1^{4/3}+1)-\nu M^2]\mathrm{d}r}\right].
\end{aligned}\end{equation}
\end{lemma}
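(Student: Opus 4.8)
The plan is to exploit one structural fact throughout: since $M>\max\{|k|:k\in\mathcal{Z}\}$ and $\sigma_{\theta}$ takes values in $\mathrm{span}\{\sigma_k^m:k\in\mathcal{Z},m\in\{0,1\}\}\subset H_M$, the projection $Q_M$ annihilates the driving noise, i.e. $Q_M\sigma_{\theta}=0$. Consequently both $Q_MU_t^{(k)}$ and the difference $D_t:=U_t^{(k)}-U_t^{(0)}$ (whose L\'{e}vy forcings are identical, hence cancel) have pathwise absolutely continuous trajectories, so I may run ordinary chain-rule energy estimates rather than the It\^{o}/jump calculus, exactly in the spirit of the proof of Lemma \ref{lemma2.6}.

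To prove \eqref{4.5}, I would apply $Q_M$ to \eqref{2.14}. As $A$ and $G$ are Fourier multipliers they commute with $Q_M$, giving $\partial_tQ_MU=-AQ_MU-Q_MB(U)+GQ_MU$. Testing against $Q_MU$ in the weighted inner product inducing $\|\cdot\|$, I use the dissipation $\langle AQ_MU,Q_MU\rangle\ge\nu\|Q_MU\|_1^2$ together with the spectral gap $\|Q_MU\|_1^2\ge M^2\|Q_MU\|^2$. Writing $U=P_MU+Q_MU$ in the nonlinear term, the contribution $\langle B(U,Q_MU),Q_MU\rangle=0$ vanishes by the skew-symmetry in \eqref{2.2}, so only $\langle B(U,P_MU),Q_MU\rangle$ survives; this is controlled as in \eqref{2.20} and the buoyancy term as in \eqref{2.21}. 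Absorbing the gradients into the dissipation yields a differential inequality whose homogeneous coefficient is $C_0(\|U\|_1^{4/3}+1)-\nu M^2$, and the weighted Gronwall argument already carried out for \eqref{4.12}, now with $m=\nu M^2$, combined with the moment bound of Lemma \ref{lemma2.1} for $\sup_s\|U_s^{(k)}\|^2$, produces a bound of the stated form.

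For \eqref{4.6}, using $B(U^{(k)})-B(U^{(0)})=B(D,U^{(k)})+B(U^{(0)},D)$, the difference solves $\partial_tD=-AD-B(D,U^{(k)})-B(U^{(0)},D)+GD$ with no noise. I project by $P_M$ (again commuting through $A$ and $G$) and test with $P_MD$. Decomposing $D=P_MD+Q_MD$ in each nonlinear factor, the term $\langle B(U^{(0)},P_MD),P_MD\rangle$ vanishes by skew-symmetry; the remaining low--low interaction produces a self-interaction of size $C(\|U^{(k)}\|_1^{2}+1)\|P_MD\|^2$ destined for Gronwall, while the low--high cross terms $\langle B(Q_MD,U^{(k)}),P_MD\rangle$ and $\langle B(U^{(0)},Q_MD),P_MD\rangle$ are bounded, after Young, by $\tfrac{\nu}{3}\|P_MD\|_1^2$ plus source terms controlled by $\|U^{(k)}\|_1^2\|Q_MD\|^2$ and $\|U^{(0)}\|^2\|Q_MD\|^2$. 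Applying the Gronwall lemma yields the factor $e^{C\int_0^t\|U_s^{(k)}\|_1^2\mathrm{d}s}$ multiplying $\|P_MD_0\|^2$ plus the time-integral of the source, which factorizes as $\big(\int_0^t\|U_s^{(k)}\|_1^2\mathrm{d}s+\sup_s(\|U_s^{(0)}\|^2+1)\big)\sup_s\|Q_MD_s\|^2$; finally $\|Q_MD\|^2\le 2\|Q_MU^{(k)}\|^2+2\|Q_MU^{(0)}\|^2$ is estimated by \eqref{4.5}, giving the stated right-hand side.

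The main obstacle is the treatment of the low--high cross terms in \eqref{4.6}: I must choose the Sobolev exponents in the trilinear estimate \eqref{2.2} so that the full derivative falls on the low-frequency factor $P_MD$, where it is absorbable by the dissipation $\tfrac{\nu}{6}\|P_MD\|_1^2$, while the high-frequency factor $Q_MD$ enters at a level where the decay supplied by \eqref{4.5} is available, and so that the accompanying weights are precisely $\|U^{(k)}\|_1^2$ (integrable by Lemma \ref{lemma2.1}) and $\|U^{(0)}\|^2$ (controlled by its supremum). Keeping the weighted-norm constant $\zeta^*$ consistent through these splittings, and verifying the admissibility $\sum_i s_i\ge 1$ in \eqref{2.2} for each choice, is the delicate bookkeeping; the rest is the same dissipative Gronwall mechanism already used for \eqref{2.16}--\eqref{4.12} and Lemma \ref{lemma2.6}.
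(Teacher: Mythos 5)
Your proposal is correct and follows essentially the same route as the paper: for \eqref{4.5} an energy estimate on $Q_MU_t^{(k)}$ (the noise being annihilated since $Q_M\sigma_\theta=0$) with the spectral gap $\|Q_MU\|_1^2\ge M^2\|Q_MU\|^2$ and the weighted Gronwall bound \eqref{4.12} applied with $m=\nu M^2$; for \eqref{4.6} a Gronwall estimate on $P_M(U^{(k)}-U^{(0)})$ in which the $Q_M$-parts of the difference appear as source terms and are controlled by \eqref{4.5}. The only cosmetic differences are your explicit $P_M/Q_M$ splitting of the trilinear term in \eqref{4.5} (the paper estimates $\langle B(U,U),Q_MU\rangle$ directly via \eqref{2.2}) and the unnecessary appeal to Lemma \ref{lemma2.1}, since the stated bound retains $\sup_{s}\|U_s^{(k)}\|^2$ pathwise.
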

\begin{proof}
First, by \eqref{2.1} we can obtain that
\begin{equation*}
\begin{aligned}
\zeta^*\langle B_1(w_{t}^{(k)},w_{t}^{(k)}),Q_{M}w_{t}^{(k)}\rangle & \leq C\zeta^*\|Q_{M}w_{t}^{(k)}\|_{1}\|w_{t}^{(k)}\|_{1/2}\|w_{t}^{(k)}\| \\
&\leq\frac{\nu_1\zeta^*}{2}\|Q_{M}w_{t}^{(k)}\|_{1}^{2}
+C\|w_{t}^{(k)}\|_{1}\|w_{t}^{(k)}\|^{3},
\end{aligned}
\end{equation*}
and
\begin{equation*}
\begin{aligned}
\langle B_2(w_{t}^{(k)},\theta_{t}^{(k)}),Q_{M}\theta_{t}^{(k)}\rangle & \leq C\|Q_{M}\theta_{t}^{(k)}\|_{1}\|w_{t}^{(k)}\|\|\theta_{t}^{(k)}\|_1 \\
&\leq\nu_2\|Q_{M}\theta_{t}^{(k)}\|_{1}^{2}
+C\|w_{t}^{(k)}\|^2\|\theta_{t}^{(k)}\|_1^{2}.
\end{aligned}
\end{equation*}
Moreover, we have
\begin{equation*}
\begin{aligned}
\zeta^*\langle g\partial_x \theta_{t}^{(k)},Q_{M}w_{t}^{(k)}\rangle &
\leq \zeta^*g\|\theta_{t}^{(k)}\|_{1}\|Q_{M}w_{t}^{(k)}\| \\
&\leq\frac{\nu_1\zeta^*}{2}\|Q_{M}w_{t}^{(k)}\|_1^{2}
+\frac{\nu_2}{2}\|\theta_{t}^{(k)}\|_1^{2}.
\end{aligned}
\end{equation*}
We take the inner product of \eqref{2.1} with $Q_{M}w_{t}^{(k)}$ and $Q_{M}\theta_{t}^{(k)}$, respectively, there holds
\begin{equation}\label{4.10}
\begin{aligned}
&~~\mathrm{d}(\zeta^*\|Q_{M}w_{t}^{(k)}\|^{2}+\|Q_{M}\theta_{t}^{(k)}\|^{2})\\
&=-2\nu_1\zeta^*\|Q_{M}w_{t}^{(k)}\|_{1}^{2}\mathrm{d}t
-2\nu_2\|Q_{M}\theta_{t}^{(k)}\|_{1}^{2}\mathrm{d}t
+2\zeta^*\langle B_1(w_{t}^{(k)},w_{t}^{(k)}),Q_{M}w_{t}^{(k)}\rangle\mathrm{d}t
\\
&~~~+2\langle B_2(w_{t}^{(k)},\theta_{t}^{(k)}),Q_{M}\theta_{t}^{(k)}\rangle\mathrm{d}t +2\zeta^*\langle g\partial_x \theta_{t}^{(k)},Q_{M}w_{t}^{(k)}\rangle\mathrm{d}t\\
&\leq(-\nu_1\zeta^* M^2\|Q_{M}w_{t}^{(k)}\|^{2}-\nu_2 M^2 \|Q_{M}\theta_{t}^{(k)}\|^{2})\mathrm{d}t
\\
&~~~+\left(C\|w_{t}^{(k)}\|_{1}^2\|w_{t}^{(k)}\|^{2}
+C\|w_{t}^{(k)}\|^2\|\theta_{t}^{(k)}\|_1^{2}
+C\|\theta_{t}^{(k)}\|_1^{2}\right)\mathrm{d}t.
\end{aligned}
\end{equation}

Applying Gronwall formula to \eqref{4.10} yields
\begin{align*}
\|Q_{M}U_{t}^{(k)}\|^{2}&
\leq e^{-\nu M^2 t}\|Q_{M}U_{0}^{(k)}\|^{2}\\
&~~~+C\int_0^t e^{-\nu M^2 (t-s)} \left((\zeta^*\|w_{s}^{(k)}\|_{1}^2
+\|\theta_{s}^{(k)}\|_1^{2})\zeta^*\|w_{s}^{(k)}\|^2
+\|\theta_{s}^{(k)}\|_1^{2}\right)\mathrm{d}s\\
&\leq e^{-\nu M^2 t}\|Q_{M}U_{0}^{(k)}\|^{2}
+C\int_0^t e^{-\nu M^2 (t-s)} \|U_{s}^{(k)}\|_{1}^2(\zeta^*\|w_{s}^{(k)}\|^2+1)\mathrm{d}s\\
&\leq e^{-\nu M^2 t}\|Q_{M}U_{0}^{(k)}\|^{2}
+C\int_0^t e^{-\nu M^2(t-s)}\|U_{s}^{(k)}\|_{1}^2\mathrm{d}s \cdot \sup\limits_{s\in[0,t]}\|U_{s}^{(k)}\|^2\\
&~~~+C\int_0^t e^{-\nu M^2 (t-s)} \|U_{s}^{(k)}\|_{1}^2\mathrm{d}s\\
&\leq e^{-\nu M^2 t}\|Q_{M}U_{0}^{(k)}\|^{2}
+C\|\xi\|^2e^{\int_s^te^{-\nu M^2r}[C_0(\|U_r^{(k)}\|_1^{4/3}+1)-\nu M^2]\mathrm{d}r} \cdot \sup\limits_{s\in[0,t]}\|U_{s}^{(k)}\|^2\\
&~~~+C\|\xi\|^2e^{\int_s^te^{-\nu M^2r}[C_0(\|U_r^{(k)}\|_1^{4/3}+1)-\nu M^2]\mathrm{d}r},
\end{align*}
where $C_0$ is the positive constant from Lemma \ref{lemma2.4}, and we have used \eqref{4.12} with $m=\nu M^2$ in the fourth inequality.

Next, we turn to prove \eqref{4.6}. One easily gets that
\begin{align}\label{4.11}
&\frac{\mathrm{d}}{\mathrm{d}t}(\zeta^*\|P_{M}w_{t}^{(k)}-P_{M}w_{t}^{(0)}\|^{2}
+\|P_{M}\theta_{t}^{(k)}-P_{M}\theta_{t}^{(0)}\|^{2}) \notag\\
=&(-2\zeta^*\nu_1\|P_{M}w_{t}^{(k)}-P_{M}w_{t}^{(0)}\|_{1}^{2}
-2\nu_2\|P_{M}\theta_{t}^{(k)}-P_{M}\theta_{t}^{(0)}\|_{1}^{2}) \notag\\
&-2\langle P_{M}\theta_{t}^{(k)}-P_{M}\theta_{t}^{(0)},B_2( w_{t}^{(k)},\theta_{t}^{(k)})-B_2(w_{t}^{(0)},\theta_{t}^{(0)})\rangle
\\
&-2\zeta^*\langle P_{M}w_{t}^{(k)}-P_{M}w_{t}^{(0)},B_1(w_{t}^{(k)},w_{t}^{(k)})
-B_1(w_{t}^{(0)},w_{t}^{(0)})\rangle
\notag\\
&+2\zeta^*\langle P_{M}w_{t}^{(k)}-P_{M}w_{t}^{(0)},
g\partial_x(\theta_{t}^{(k)}-\theta_{t}^{(0)})\rangle \notag\\
:=& -2\zeta^*\nu_1\|P_{M}w_{t}^{(k)}-P_{M}w_{t}^{(0)}\|_{1}^{2}
-2\nu_2\|P_{M}\theta_{t}^{(k)}-P_{M}\theta_{t}^{(0)}\|_{1}^{2}+I_1+I_2+I_3.  \notag
\end{align}
For $I_{1}$, we have
\begin{equation*}
\begin{aligned}
I_1=&-2\langle P_{M}\theta_{t}^{(k)}-P_{M}\theta_{t}^{(0)},B( w_{t}^{(k)}-w_{t}^{(0)},\theta_{t}^{(k)})\rangle \\
&-2\langle P_{M}\theta_{t}^{(k)}-P_{M}\theta_{t}^{(0)},B( w_{t}^{(0)},\theta_{t}^{(k)}-\theta_{t}^{(0)})\rangle
:= I_{11}+I_{12}.
\end{aligned}
\end{equation*}
By \eqref{2.2}, we get
\begin{equation*}
\begin{aligned}
I_{11}\leq& C\|P_{M}\theta_{t}^{(k)}-P_{M}\theta_{t}^{(0)}\|_1
\|P_{M}w_{t}^{(k)}-P_{M}w_{t}^{(0)}\|\|\theta_{t}^{(k)}\|_1\\
&+C\|P_{M}\theta_{t}^{(k)}-P_{M}\theta_{t}^{(0)}\|_1
\|Q_{M}w_{t}^{(k)}-Q_{M}w_{t}^{(0)}\|\|\theta_{t}^{(k)}\|_1
\\
\leq& \frac{\nu_2}{3}\|P_{M}\theta_{t}^{(k)}-P_{M}\theta_{t}^{(0)}\|_1^2\\
&+C(\|P_{M}w_{t}^{(k)}-P_{M}w_{t}^{(0)}\|^2
+\|Q_{M}w_{t}^{(k)}-Q_{M}w_{t}^{(0)}\|^2)\|\theta_{t}^{(k)}\|_1^2,
\end{aligned}
\end{equation*}
similarly, we have
\begin{equation*}
\begin{aligned}
I_{12}\leq& \frac{\nu_2}{3}\|P_{M}\theta_{t}^{(k)}-P_{M}\theta_{t}^{(0)}\|_1^2
+C\zeta^*\|Q_{M}\theta_{t}^{(k)}-Q_{M}\theta_{t}^{(0)}\|_1^2\|w_t^{(0)}\|^2.
\end{aligned}
\end{equation*}
Similar to $I_1$, we have
\begin{equation*}
\begin{aligned}
I_2\leq&\nu_1\zeta^*\|P_{M}w_{t}^{(k)}-P_{M}w_{t}^{(0)}\|_1^2
+C\zeta^*(\|P_{M}w_{t}^{(k)}-P_{M}w_{t}^{(0)}\|^2
+\|Q_{M}w_{t}^{(k)}-Q_{M}w_{t}^{(0)}\|^2)\|w_{t}^{(k)}\|_1^2\\
&+C\zeta^*\|Q_{M}w_{t}^{(k)}-Q_{M}w_{t}^{(0)}\|_1^2\|w_t^{(0)}\|^2,
\end{aligned}
\end{equation*}
and
\begin{equation*}
\begin{aligned}
I_3\leq& C\| P_{M}w_{t}^{(k)}-P_{M}w_{t}^{(0)}\|\|\theta_{t}^{(k)}-\theta_{t}^{(0)}\|_1\\
\leq& C\| P_{M}w_{t}^{(k)}-P_{M}w_{t}^{(0)}\|^2+\frac{\nu_2}{3}
\|P_M(\theta_{t}^{(k)}-\theta_{t}^{(0)})\|_1^2
+\frac{\nu_2}{3}
\|Q_M(\theta_{t}^{(k)}-\theta_{t}^{(0)})\|_1^2.
\end{aligned}
\end{equation*}
Combining the estimates of $I_{j},(j=1,2,3)$ with \eqref{4.11}, \eqref{4.5}, and taking into account the fact that $\|Q_Mw_0^{(k)}\|+\|Q_M \theta_0^{(k)}\|\leq\mathfrak{R}$, we arrive at
\begin{align*}
&\zeta^*\|P_{M}w_{t}^{(k)}-P_{M}w_{t}^{(0)}\|^{2}+\|P_{M}\theta_{t}^{(k)}-P_{M}\theta_{t}^{(0)}\|^{2}\\
&\leq\left(\zeta^*\|P_{M}w_{0}^{(k)}-P_{M}w_{0}^{(0)}\|^{2}
+\|P_{M}\theta_{0}^{(k)}-P_{M}\theta_{0}^{(0)}\|^{2}\right)
e^{C\int_{0}^{t}(\zeta^*\|w_{s}^{(k)}\|_{1}^{2}+\|\theta_{s}^{(k)}\|_{1}^{2})\mathrm{d}s} \\
&~~~+Ce^{C\int_{0}^{t}(\zeta^*\|w_{s}^{(k)}\|_{1}^{2}+\|\theta_{s}^{(k)}\|_{1}^{2})\mathrm{d}s}
\left(\int_{0}^{t}\|Q_{M}w_{s}^{(k)}-Q_{M}w_{s}^{(0)}\|^{2}
(\zeta^*\|w_{s}^{(0)}\|^{2}+\zeta^*\|w_{s}^{(k)}\|_1^{2}+\|\theta_{s}^{(k)}\|_1^{2})\mathrm{d}s\right.\\
&~~~~~~\left.+\int_{0}^{t}\|Q_{M}\theta_{s}^{(k)}-Q_{M}\theta_{s}^{(0)}\|^{2}
(\zeta^*\|w_{s}^{(0)}\|^{2}+\nu_2)\mathrm{d}s\right) \\
&\leq\|P_{M}U_{0}^{(k)}-P_{M}U_{0}^{(0)}\|^{2}
e^{C\int_{0}^{t}\|U_{s}^{(k)}\|_{1}^{2}\mathrm{d}s} \\
&~~~+Ce^{C\int_{0}^{t}\|U_{s}^{(k)}\|_{1}^{2}\mathrm{d}s}
\left[\left(\int_0^t\|Q_Mw_s^{(k)}-Q_Mw_s^{(0)}\|^2\mathrm{d}s\right)
\left(\sup_{s\in[0,t]}\|U_{s}^{(0)}\|^{2}+\int_0^t\|U_{s}^{(k)}\|_1^{2}\mathrm{d}s\right) \right. \\
&~~~~~~\left.+\left(\int_0^t\|Q_M\theta_s^{(k)}-Q_M\theta_s^{(0)}\|^2\mathrm{d}s\right)
\sup_{s\in[0,t]}(\|U_s^{(0)}\|^2+1)\right]
 \\
&\leq\|P_{M}U_{0}^{(k)}-P_{M}U_{0}^{(0)}\|^{2}
e^{C\int_{0}^{t}\|U_{s}^{(k)}\|_{1}^{2}\mathrm{d}s}
+C_{\mathfrak{R}}e^{C\int_{0}^{t}\|U_{s}^{(k)}\|_{1}^{2}\mathrm{d}s}
\times\left(\int_0^t\|U_{s}^{(k)}\|_1^{2}\mathrm{d}s
+\sup_{s\in[0,t]}(\|U_s^{(0)}\|^2+1)\right)\\
&~~~\times \left[e^{-\nu M^2 t}\|Q_{M}U_{0}^{(k)}\|^{2}
+C\|\xi\|^2e^{\int_s^te^{-\nu M^2r}[C_0(\|U_r^{(k)}\|_1^{4/3}+1)-\nu M^2]\mathrm{d}r} \cdot \sup\limits_{s\in[0,t]}\|U_{s}^{(k)}\|^2\right.\\
&~~~~~~\left.+C\|\xi\|^2e^{\int_s^te^{-\nu M^2r}[C_0(\|U_r^{(k)}\|_1^{4/3}+1)-\nu M^2]\mathrm{d}r}\right],
\end{align*}
which implies \eqref{4.6}. This completes the proof.
\end{proof}

\begin{lemma}\label{lemma4.4} For any $0\leq s\leq t, k\in\mathbb{N}$ and $\xi\in H$ with $\|\xi\|=1$, one has
\begin{equation*}
\begin{aligned}
\|J_{s,t}^{(k)}\xi-J_{s,t}^{(0)}\xi\|^2
\leq C&\sup_{r\in[s,t]}\|U_r^{(k)}-U_r^{(0)}\|^2\cdot e^{C\int_s^t(\|U_r^{(k)}\|_1^{4/3}+1)
\mathrm{d}r},
\end{aligned}
\end{equation*}
where $C$ is a positive constant depending on $\nu=\min\{\nu_1,\nu_2\},\nu_S,d,g$.
\end{lemma}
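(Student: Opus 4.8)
The plan is to study the evolution equation for the difference $\Theta_t := J_{s,t}^{(k)}\xi - J_{s,t}^{(0)}\xi$, which satisfies $\Theta_s = 0$ since both Jacobians emanate from $\xi$. Writing $D_t := U_t^{(k)} - U_t^{(0)}$ and subtracting the two copies of \eqref{B.2}, bilinearity of $B$ gives, after grouping terms so that the transport field in the energy-conserving part is $U^{(k)}$,
\[
\partial_t\Theta_t = -A\Theta_t - B(U_t^{(k)},\Theta_t) - B(\Theta_t,U_t^{(k)}) + G\Theta_t - B(D_t, J_{s,t}^{(0)}\xi) - B(J_{s,t}^{(0)}\xi, D_t),
\]
with $\Theta_s = 0$. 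Thus $\Theta_t$ solves the same linearized system that defines $J_{s,t}^{(k)}$, driven by the inhomogeneous forcing $-\nabla B(D_t)J_{s,t}^{(0)}\xi$ that is linear in the solution gap $D_t$ and hence vanishes when $U^{(k)} \equiv U^{(0)}$.

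First I would run the energy estimate on $\|\Theta_t\|^2$ in the weighted norm \eqref{norm} via the chain rule. The divergence-free structure recorded in the first line of \eqref{2.2} annihilates $\langle B(U_t^{(k)},\Theta_t),\Theta_t\rangle = 0$, while the genuinely linear contributions $-\langle B(\Theta_t,U_t^{(k)}),\Theta_t\rangle$ and $\langle G\Theta_t,\Theta_t\rangle$ are controlled exactly as in \eqref{2.20}--\eqref{2.21}, producing $\tfrac{\nu}{2}\|\Theta_t\|_1^2 + C(\|U_t^{(k)}\|_1^{4/3}+1)\|\Theta_t\|^2$; the power $4/3$ arising here is precisely what generates the exponent in the claimed bound. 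The two forcing terms are estimated through the product inequalities in \eqref{2.2}: using the integration-by-parts identity $\langle B(a,b),c\rangle = -\langle B(a,c),b\rangle$ to move the single derivative in $B$ off the transported field and onto $\Theta_t$, and then absorbing a further $\tfrac{\nu}{4}\|\Theta_t\|_1^2$ by Young's inequality, both $\langle B(D_t,J_{s,t}^{(0)}\xi),\Theta_t\rangle$ and $\langle B(J_{s,t}^{(0)}\xi,D_t),\Theta_t\rangle$ are dominated by $\tfrac{\nu}{4}\|\Theta_t\|_1^2 + C\|D_t\|^2\,\Xi_t$, where $\Xi_t$ is built only from $\|J_{s,t}^{(0)}\xi\|$ and $\|J_{s,t}^{(0)}\xi\|_1$ (the $J^{(0)}$-factor carrying at most one spatial derivative). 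This yields a differential inequality $\tfrac{d}{dt}\|\Theta_t\|^2 \le C(\|U_t^{(k)}\|_1^{4/3}+1)\|\Theta_t\|^2 + C\sup_{r\in[s,t]}\|D_r\|^2\,\Xi_t$.

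Finally, since $\Theta_s = 0$, Gronwall's lemma gives $\|\Theta_t\|^2 \le C\sup_{r\in[s,t]}\|D_r\|^2\, e^{C\int_s^t(\|U_r^{(k)}\|_1^{4/3}+1)\mathrm{d}r}\int_s^t \Xi_r\,\mathrm{d}r$, and the remaining integral of the $J^{(0)}$-factors is absorbed into the exponential by the pathwise bounds $\sup_r\|J_{s,r}^{(0)}\xi\|^2$ and $\int_s^t\|J_{s,r}^{(0)}\xi\|_1^2\,\mathrm{d}r$ supplied by \eqref{2.16}--\eqref{2.17} of Lemma \ref{lemma2.4}, which convert the $J^{(0)}$-quantities into a factor of exponential type and deliver the stated estimate. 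The main obstacle is the treatment of the inhomogeneous bilinear terms: because $D_t$ may enter the conclusion only through its $L^2$-supremum $\sup_r\|U_r^{(k)}-U_r^{(0)}\|^2$ and never through a gradient, the derivative in $B$ cannot be allowed to fall on $D_t$. This is resolved by the integration-by-parts identity (valid since $\mathcal{K}*a$ is divergence free) combined with the two-dimensional embeddings underlying \eqref{2.2}, namely $H^{1/2}\hookrightarrow L^4$ and $\|\mathcal{K}\ast w\|_\alpha = \|w\|_{\alpha-1}$, which force the derivative onto $\Theta_t$ so that it is consumed by the dissipation while $D_t$ is retained at the $L^2$ level.
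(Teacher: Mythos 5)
Your proposal is correct and follows essentially the same route as the paper: subtract the two linearized equations, run the weighted energy estimate using the cancellation $\langle B(U,\Theta_t),\Theta_t\rangle=0$, bound the remaining bilinear and buoyancy terms via \eqref{2.2} and Young's inequality so that the dissipation absorbs the $\|\Theta_t\|_1^2$ contributions, apply Gronwall from $\Theta_s=0$, and control the residual integral of the $\|J^{(0)}_{s,r}\xi\|_{1/2}^2$-type factor through $\|u\|_{1/2}^2\le\|u\|\,\|u\|_1$ together with Lemma \ref{lemma2.4}. The only cosmetic difference is the grouping of the cross terms (the paper writes the forcing as $B(D_t,J^{(k)}_{s,t}\xi)$ with conservative transport by $U^{(0)}$, while you do the reverse), which changes nothing substantive.
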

\begin{proof}
From the system \eqref{B.2}, we can deduce that
\begin{equation}\label{B.7}
\begin{aligned}
&\frac{\mathrm{d}}{\mathrm{d}t}\|J_{s,t}^{(k)}\xi-J_{s,t}^{(0)}\xi\|^{2}
\leq-2\nu\|J_{s,t}^{(k)}\xi-J_{s,t}^{(0)}\xi\|_{1}^{2} \\
&~~~-2\langle J_{s,t}^{(k)}\xi-J_{s,t}^{(0)}\xi,
B(U_{t}^{(k)},J_{s,t}^{(k)}\xi)
-B(U_{t}^{(0)},J_{s,t}^{(0)}\xi)\rangle  \\
&~~~-2\langle J_{s,t}^{(k)}\xi-J_{s,t}^{(0)}\xi,
B(J_{s,t}^{(k)}\xi,U_{t}^{(k)})
-B(J_{s,t}^{(0)}\xi,U_{t}^{(0)})\rangle \\
&~~~+2\langle G(J_{s,t}^{(k)}\xi-J_{s,t}^{(0)}\xi),
J_{s,t}^{(k)}\xi-J_{s,t}^{(0)}\xi\rangle
\\
&:=-2\nu\|J_{s,t}^{(k)}\xi-J_{s,t}^{(0)}\xi\|_{1}^{2} +2(P_1+P_2+P_3).
\end{aligned}
\end{equation}
By using \eqref{2.1}, we can get
\begin{align*}
P_{1}& \leq|\langle J_{s,t}^{(k)}\xi-J_{s,t}^{(0)}\xi,
B(U_{t}^{(k)}-U_{t}^{(0)},J_{s,t}^{(k)}\xi)\rangle|  \\
&\leq C\|U_{t}^{(k)}-U_{t}^{(0)}\|
\|J_{s,t}^{(k)}\xi-J_{s,t}^{(0)}\xi\|_{1}
\|J_{s,t}^{(k)}\xi\|_{1/2} \\
&\leq C\|U_{t}^{(k)}-U_{t}^{(0)}\|^{2}
\|J_{s,t}^{(k)}\xi\|_{1/2}^{2}
+\frac{\nu}{6}\|J_{s,t}^{(k)}\xi-J_{s,t}^{(0)}\xi\|_{1}^{2},
\end{align*}
and
\begin{align*}
 P_{2} &\leq|\langle J_{s,t}^{(k)}\xi-J_{s,t}^{(0)}\xi,
B(J_{s,t}^{(k)}\xi-J_{s,t}^{(0)}\xi,
U_{t}^{(k)})+B(J_{s,t}^{(0)}\xi,U_{t}^{(k)}-U_{t}^{(0)})
\rangle|  \\
&\leq C\|U_{t}^{(k)}\|_{1}
\|J_{s,t}^{(k)}\xi-J_{s,t}^{(0)}\xi\|^{3/2}
\|J_{s,t}^{(k)}\xi-J_{s,t}^{(0)}\xi\|_{1}^{1/2} \\
&+C\|U_t^{(k)}-U_t^{(0)}\|
\|J_{s,t}^{(k)}\xi-J_{s,t}^{(0)}\xi\|_1
\|J_{s,t}^{(0)}\xi\|_{1/2} \\
&\leq\frac{\nu}{6}\|J_{s,t}^{(k)}\xi-J_{s,t}^{(0)}\xi\|_{1}^{2}
+C\|J_{s,t}^{(k)}\xi-J_{s,t}^{(0)}\xi\|^{2}\|U_{t}^{(k)}\|_{1}^{4/3} \\
&+C\|U_t^{(k)}-U_t^{(0)}\|^2\|J_{s,t}^{(0)}\xi\|_{1/2}^2.
\end{align*}
By Poincaré inequality, one has
\begin{equation*}
\begin{aligned}
P_3\leq g\|J_{s,t}^{(k)}\xi-J_{s,t}^{(0)}\xi\|_1
\|J_{s,t}^{(k)}\xi-J_{s,t}^{(0)}\xi\|
\leq \frac{\nu}{6}\|J_{s,t}^{(k)}\xi-J_{s,t}^{(0)}\xi\|_1^2
+C\|J_{s,t}^{(k)}\xi-J_{s,t}^{(0)}\xi\|^2.
\end{aligned}
\end{equation*}
Combining \eqref{B.7} with the estimates of $P_1,P_2,P_3$, we obtain
\begin{equation*}
\begin{aligned}
\|J_{s,t}^{(k)}\xi-J_{s,t}^{(0)}\xi\|^{2}
\leq C \sup_{r\in[s,t]}\|U_{r}^{(k)}-U_{r}^{(0)}\|^{2}\cdot e^{C\int_{s}^{t}(\|U_{r}^{(k)}\|_{1}^{4/3}+1)\mathrm{d}r} \int_{s}^{t}\left[\|J_{s,r}^{(k)}\xi\|_{1/2}^{2}
+\|J_{s,r}^{(0)}\xi\|_{1/2}^{2}\right]\mathrm{d}r.
\end{aligned}
\end{equation*}
Note that the fact that $\|u\|^2_{1/2}\leq\|u\|\|u\|_1$, then using Lemma \ref{lemma2.4} and H\"{o}lder inequality, the desired estimate now
follows.
\end{proof}

With the help of Lemma \ref{lemma 4.4} and Lemma \ref{lemma4.4}, then combining with the arguments in \cite[Appendix B]{Peng-2024}, we can get the following result.

\begin{proposition}\label{proper3.5}
For $\alpha\in(0,1],\mathfrak{R}>0$ and $N\in \mathbb{N}$, we have
\begin{equation*}
\lim_{\varepsilon\rightarrow 0}r(\varepsilon,\alpha,\mathfrak{R},N)
=0.
\end{equation*}
\end{proposition}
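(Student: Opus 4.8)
The plan is to complete the contradiction argument already initiated. Assuming \eqref{B.1}, we have a sequence $\{U_0^{(k)}\}\subseteq B_H(\mathfrak{R})$ with $U_0^{(k)}\rightharpoonup U_0^{(0)}$ weakly in $H$, a sequence $\varepsilon_k\downarrow 0$, and $\delta_0>0$ such that $\liminf_k\mathbb{P}(X^{U_0^{(k)},\alpha,N}<\varepsilon_k)\ge\delta_0$. The strategy is to show that, along a subsequence (not relabeled), $X^{U_0^{(k)},\alpha,N}\to X^{U_0^{(0)},\alpha,N}$ almost surely, and then to combine a reverse Fatou argument with Proposition \ref{propo3.4} applied to the single initial datum $U_0^{(0)}$ (for which $\mathbb{P}(X^{U_0^{(0)},\alpha,N}=0)=0$) to reach a contradiction. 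A convenient feature is that the stopping time $\eta=\eta(\ell)$ depends only on the subordinator path, so the random integration domain $[0,\eta]$ and the measure $\mathrm{d}\ell_r$ are common to all $k$.

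First I would establish pathwise convergence of the solutions on $[0,\eta]$. Since $P_M$ has finite rank, the weak convergence $U_0^{(k)}\rightharpoonup U_0^{(0)}$ upgrades to $\|P_MU_0^{(k)}-P_MU_0^{(0)}\|\to 0$ for every fixed $M$. Given a tolerance, I would first fix $M$ large using the high-frequency bound \eqref{4.5} of Lemma \ref{lemma 4.4} so that $\sup_{t\in[0,\eta]}\|Q_MU_t^{(k)}\|$ is uniformly small; the exponential and supremum factors appearing there are controlled uniformly in $k$ by the moment estimates \eqref{2.3}, \eqref{2.4} and \eqref{moment.5} together with $\|U_0^{(k)}\|\le\mathfrak{R}$, after restricting to a high-probability event on which $\int_0^\eta(\|U_s^{(k)}\|_1^{4/3}+1)\,\mathrm{d}s$ stays bounded. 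With $M$ now fixed, the low-frequency bound \eqref{4.6} forces $\sup_{t\in[0,\eta]}\|P_MU_t^{(k)}-P_MU_t^{(0)}\|\to 0$. Combining the two yields $\sup_{t\in[0,\eta]}\|U_t^{(k)}-U_t^{(0)}\|\to 0$ in probability, hence almost surely along a further subsequence.

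Next I would transfer this convergence to the Jacobians and their adjoints. Lemma \ref{lemma4.4} gives $\sup_{\|\xi\|=1}\|J_{s,t}^{(k)}\xi-J_{s,t}^{(0)}\xi\|\to0$ uniformly on $[0,\eta]$ on the same high-probability events, so $K_{r,\eta}^{(k)}\to K_{r,\eta}^{(0)}$ in operator norm. Using the representation \eqref{Malliavin},
\[
\langle\mathcal{M}_{0,\eta}^{(k)}\phi,\phi\rangle
=\sum_{j\in\mathcal{Z},m\in\{0,1\}}(\alpha_j^m)^2
\int_0^\eta\langle K_{r,\eta}^{(k)}\phi,\sigma_j^m\rangle^2\,\mathrm{d}\ell_r,
\]
the integrands are bounded uniformly in $r,\phi,k$ (by the $J$-estimate \eqref{2.16} of Lemma \ref{lemma2.4} and the finiteness of $\eta$), so dominated convergence with respect to $\mathrm{d}\ell_r$ gives $\sup_{\phi\in\mathcal{S}_{\alpha,N}}|\langle\mathcal{M}_{0,\eta}^{(k)}\phi,\phi\rangle-\langle\mathcal{M}_{0,\eta}^{(0)}\phi,\phi\rangle|\to0$. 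Since the infimum functional is $1$-Lipschitz with respect to the supremum norm over $\phi\in\mathcal{S}_{\alpha,N}$, this yields $X^{U_0^{(k)},\alpha,N}\to X^{U_0^{(0)},\alpha,N}$ almost surely.

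Finally, set $A_k:=\{X^{U_0^{(k)},\alpha,N}<\varepsilon_k\}$. On $\limsup_kA_k$ we have $X^{U_0^{(k)},\alpha,N}<\varepsilon_k\to0$ infinitely often, and since $X^{U_0^{(k)},\alpha,N}\to X^{U_0^{(0)},\alpha,N}$ this forces $X^{U_0^{(0)},\alpha,N}=0$; thus $\limsup_kA_k\subseteq\{X^{U_0^{(0)},\alpha,N}=0\}$ up to a null set. Reverse Fatou then gives $\mathbb{P}(X^{U_0^{(0)},\alpha,N}=0)\ge\mathbb{P}(\limsup_kA_k)\ge\limsup_k\mathbb{P}(A_k)\ge\delta_0>0$, contradicting Proposition \ref{propo3.4}; hence \eqref{r} must hold. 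The main obstacle is the uniform-in-$k$ control of the random exponential factors $e^{C\int_0^\eta(\|U_s^{(k)}\|_1^{4/3}+1)\,\mathrm{d}s}$ and of the pathwise suprema $\sup_{s\in[0,\eta]}\|U_s^{(k)}\|$ occurring in Lemmas \ref{lemma 4.4}--\ref{lemma4.4}; this is handled by passing to a family of events of probability arbitrarily close to $1$, uniformly in $k$ via the moment bounds of Lemma \ref{moment}, on which these quantities remain bounded, exactly as carried out in \cite[Appendix B]{Peng-2024}.
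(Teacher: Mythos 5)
Your proposal follows essentially the same route as the paper: the paper sets up exactly this contradiction argument around \eqref{B.1}, proves Lemmas \ref{lemma 4.4} and \ref{lemma4.4} as the key convergence inputs, and then defers the remaining steps (convergence of the Malliavin quadratic forms, the reverse-Fatou comparison with Proposition \ref{propo3.4}) to the arguments of \cite[Appendix B]{Peng-2024}, which you have reconstructed faithfully. One small imprecision: since $U_0^{(k)}$ converges to $U_0^{(0)}$ only weakly, $\sup_{t\in[0,\eta]}\|U_t^{(k)}-U_t^{(0)}\|$ cannot tend to $0$ (the $e^{-\nu M^2 t}\|Q_MU_0^{(k)}\|^2$ term in \eqref{4.5} is not small at $t=0$); the convergence holds only on $[t_0,\eta]$ for $t_0>0$, which suffices because the contradiction is run against the stronger statement \eqref{3.5}, whose integrals involve $K_{r,\eta}\phi$ only for $r\in[\eta/2,\eta]$ and hence only the solution on a time interval bounded away from the origin.
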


\section{Weak irreducibility and ergodicity}

The ergodicity of invariant measures can usually be obtained by proving the irreducibility and e-property of Markov process, or strong Feller property, or asymptotic strong Feller property, refer to \cite{prato-1996,MH-2006,Kapica-2012,Komorowski-2010}. In this section, we will establish e-property and irreducibility to obtain the ergodicity for the system \eqref{2.1}.

Let us first do some preparatory work. Assume that $\kappa_0=\kappa_0(\nu,\{\alpha_k^m\}_{k\in\mathcal{Z},m\in\{0,1\}},\nu_{S},d)$ is the constant adapted to Lemma \ref{moment}. Recall that for any $\kappa\in(0,\kappa_0]$, the stopping times $\eta_k$ are defined in \eqref{2.7} and \eqref{2.8}. For any $\kappa\in(0,\kappa_0]$ and $n\in \mathbb{N}$, we define the following random variables on $\mathbb{S}$:
\begin{equation}
X_{n}=\int_{\eta_{n}}^{\eta_{n+1}}e^{2\nu(\eta_{n+1}-s)-16\mathfrak{B}_{0}
\kappa(\ell_{\eta_{n+1}}-\ell_{s})}\mathrm{d}s,\quad Y_{n}=\ell_{\eta_{n+1}}-\ell_{\eta_{n}}.
\end{equation}
By the strong law of large numbers, Lemma \ref{moment} and the definitions of $\eta_k$, we have
\begin{equation}
\lim_{n\to\infty}\frac{\sum_{i=0}^{n-1}X_{i}}{n}<\infty,
\quad\mathrm{a.s.},
\end{equation}
and
\begin{equation}
\lim_{n\to\infty}\frac{\sum_{i=0}^{n-1}Y_{i}}{n}
\leq\frac{\nu}{8\mathfrak{B}_{0}\kappa}\lim_{n\to\infty}
\frac{\sum_{i=0}^{n-1}(\eta_{i+1}-\eta_{i})}{n}<\infty,\quad\mathrm{a.s.}.
\end{equation}
Therefore, with probability one, we have
\begin{equation}\label{4.3}
\Theta:=\sup_{n\geq1}\frac{\sum_{i=0}^{n-1}X_i}n+\sup_{n\geq1}
\frac{\sum_{i=0}^{n-1}Y_i}n<\infty.
\end{equation}

For any $\Upsilon,M>0$, $\Phi\in C_b^1(H)$ and $U_0,U_0'\in B_H(\Upsilon):=\{U\in H,\|U\|\leq \Upsilon\}$, it holds
\begin{equation}\label{4.1}
\begin{aligned}
&|P_{t}\Phi(U_{0})-P_{t}\Phi(U_{0}^{\prime})|
=|\mathbb{E}[\Phi(U_{t}(U_{0}))]-\mathbb{E}[\Phi(U_{t}(U_{0}'))]| \\
&\leq|\mathbb{E}[\Phi(U_{t}(U_{0}))I_{\{\Theta\leq M\}}]-\mathbb{E}[\Phi(U_{t}(U_{0}^{\prime}))I_{\{\Theta\leq M\}}]|+2\|\Phi\|_{\infty}\mathbb{P}(\Theta\geq M) \\
&:=\mathcal{Q}_1+\mathcal{Q}_2.
\end{aligned}
\end{equation}

Denote $P_t^M \Phi(U_0)=\mathbb{E}\Phi(U_{t}(U_{0}))I_{\{\Theta\leq M\}}$. We infer that for any process $v\in L^{2}(\Omega\times[0,\infty);\mathbb{R}^{d})
=L^{2}(\mathbb{W}\times\mathbb{S}\times[0,\infty);\mathbb{R}^{d})$,
\begin{equation}\label{5.1}
\begin{aligned}
&\left|\nabla_{\xi}P_{t}^{M}\Phi(U_{0})\right|
=\left|\mathbb{E}[\nabla\Phi(U_{t})J_{0,t}\xi I_{\{\Theta\leq M\}}]\right| \\
&=\left|\mathbb{E}[\nabla\Phi(U_{t})(\mathcal{A}_{0,t}v+J_{0,t}\xi -\mathcal{A}_{0,t}v)I_{\{\Theta\leq M\}}]\right| \\
&=\left|\mathbb{E}[\nabla\Phi(U_{t})\mathcal{D}^v U_t I_{\{\Theta\leq M\}}]
+\mathbb{E}[\nabla\Phi(U_{t})\rho_t I_{\{\Theta\leq M\}} ]\right|\\
&=\mathbb{E}\left[\Phi(U_{t})\int_{0}^{\ell_t}v(s)\cdot \mathrm{d}W(s) I_{\{\Theta\leq M\}}\right]
+\mathbb{E}\left[\nabla\Phi(U_{t})\rho_t I_{\{\Theta\leq M\}}\right] \:=\mathcal{Q}_{11}+\mathcal{Q}_{12}.
\end{aligned}
\end{equation}
From the above result, we note that,
for any fixed $\ell\in \mathbb{S}$, the process $v=v^{\ell}$ in the above will be chosen such that $v^\ell\in L^{2}(\mathbb{W}\times[0,\ell_t];\mathbb{R}^d)$ and that $v=v^{\ell}$ is Skorokhod integrable with respect to the
Brownian motion $W$. Moreover, for any fixed $\ell\in \mathbb{S}$, the integral $\int_{0}^{\ell_t}v(s) \mathrm{d}W(s) $ is interpreted as the Skohorod integral. Altogether, through some gradient estimates of $\nabla_{\xi}P_{t}^{M}\Phi(U_{0})$, we can obtain the estimate of $\mathcal{Q}_1$.

We need three steps to achieve this goal. Firstly, we need to choose suitable direction $v$, that is, \textbf{the choice of $v$}.

We now build the control of $v$ and derive the associated $\rho$ in \eqref{2.12}, in this section, we always assume that $\|\xi\|=1$. In \cite{Foldes-2015}, the authors chose the control $v$ restricted to the time interval $[s,t]$ and let $\rho:=\rho(n)$. Unlike them, we utilize the stopping time $\eta_k$ defined in \eqref{2.7}-\eqref{2.8}. For any $\ell\in \mathbb{S}$ and $\kappa>0$, we define the  perturbation $v$ to be 0 on all intervals of the type $[\ell_{\eta_{n+1}},\ell_{\eta_{n+2}}],n\in 2\mathbb{N}$, and by several $v_{\eta_{n},\eta_{n+1}}\in L^2([\ell_{\eta_{n}},\ell_{\eta_{n+1}}],H),n\in 2\mathbb{N}$ on the remaining intervals. For fixed $\ell\in \mathbb{S}$ and non-negative integer $n\in 2\mathbb{N}$, define the infinitesimal variation:
\begin{equation}\label{4.7}
\begin{aligned}
&v_{\eta_{n},\eta_{n+1}}(r)=\mathcal{A}_{\eta_{n},\eta_{n+1}}^{*}
(\mathcal{M}_{\eta_{n},\eta_{n+1}}
+\beta\mathbb{I})^{-1}J_{\eta_{n},\eta_{n+1}}\rho_{\eta_{n}},~~ r\in[\ell_{\eta_{n}},\ell_{\eta_{n+1}}],\\
&v_{\eta_{n+1},\eta_{n+2}}(r)=0,~~r\in[\ell_{\eta_{n+1}},\ell_{\eta_{n+2}}].
\end{aligned}
\end{equation}
where $\rho_{\eta_{n}}$ is the residual of the infinitesimal displacement at time $\eta_n$. And set
\begin{equation}\label{4.8}
v(r)=\begin{cases}
v_{\eta_n,\eta_{n+1}}(r),
&r\in[\ell_{\eta_n},\ell_{\eta_{n+1}}]~~~~\text{and}~~n\in2\mathbb{N},
\\
v_{\eta_{n+1},\eta_{n+2}}(r),
&r\in[\ell_{\eta_{n+1}},\ell_{\eta_{n+2}}]~~\text{and}~~n\in2\mathbb{N}.
\end{cases}\end{equation}
According to \cite[Lemma 4.1]{Peng-2024}, if we define the direction $v$ as in \eqref{4.8}, then for any $\beta>0$, we have
\begin{equation}\label{4.9}
\rho_{\eta_{n+2}}= J_{\eta_{n+1},\eta_{n+2}}\beta(\mathcal{M}_{\eta_{n},\eta_{n+1}}
+\beta\mathbb{I})^{-1}J_{\eta_{n},\eta_{n+1}}\rho_{\eta_{n}},
\quad\forall~n\in2\mathbb{N}.
\end{equation}

Secondly, we will perform some moment estimates for $\rho_t$, that is,
\textbf{the control of $\rho_{\eta_n}$}.

Having defined the control $v$ and the associated error $\rho$, in \eqref{4.8} and \eqref{4.9} respectively, we can, by employing similar arguments as those in \cite{Peng-2024}, derive the following key decay estimate for $\|\rho\|_{\eta_n}$, this requires the result from Lemma \ref{lemma 2.7}.
\begin{lemma}{\cite[Lemma 4.5]{Peng-2024}}\label{lemma4.5}
  For any $\kappa\in(0,\kappa_0]$, $M>0$, $\gamma_0>0$, there exists a positive constant $\beta=\beta(\kappa,M,\gamma_{0},\nu,\{\alpha_k^m\}_{k\in\mathcal{Z},m\in\{0,1\}},\nu_{S},d)$ such that if we define the direction $v$ according to \eqref{4.7}, then the following estimate holds
\begin{equation}
\begin{aligned}
&\mathbb{E}_{U_{0}}\left[(1+\eta_{2n+2}-\eta_{2n})^{8}
\exp\left\{8 C_{0}\int_{\eta_{2n}}^{\eta_{2n+2}}
\|U_{s}\|_{1}^{4/3}\mathrm{d}s\right\}\|\rho_{\eta_{2n}}\|^{4}
I_{\{\Theta\leq M\}}\right]\\
&\leq C_{\kappa,M,\gamma_{0}}\exp\{4\kappa a\|U_{0}\|^{2}-n\gamma_{0}\},
\end{aligned}
\end{equation}
for every $n\in \mathbb{N}$ and $U_0\in H$, where $a=\frac{1}{1-e^{-1}}$, $C_{\kappa,M,\gamma_0}$ is a positive constant depending
on $\kappa,M,\gamma_{0}$ and $\nu,\{\alpha_{k}^m\}_{k\in\mathcal{Z},m\in\{0,1\}},\nu_{S},d$, $C_0$ is the constant introduced in Lemma \ref{lemma2.4}.
\end{lemma}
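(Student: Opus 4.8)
The plan is to prove the decay estimate for $\|\rho_{\eta_{2n}}\|$ by exploiting the recursive structure \eqref{4.9}, combined with the spectral bounds from Lemma \ref{lemma2.9}, the smoothing estimate for $J_{0,t}Q_N\xi$ from Lemma \ref{lemma 2.7}, the invertibility result in Proposition \ref{proper3.5}, and the exponential moment controls in Lemma \ref{moment}. First I would unwind the recursion \eqref{4.9}: writing $\rho_{\eta_{2(n+1)}} = J_{\eta_{2n+1},\eta_{2n+2}}\,\beta(\mathcal{M}_{\eta_{2n},\eta_{2n+1}}+\beta\mathbb{I})^{-1}J_{\eta_{2n},\eta_{2n+1}}\rho_{\eta_{2n}}$, the task reduces to showing that each one-step map contracts $\|\rho\|$ in an $L^4$-with-weights sense, with a contraction factor that can be made small by choosing $\beta$ small, except on an event of small probability.

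The core one-step estimate is to bound $\|\beta(\mathcal{M}_{\eta_{2n},\eta_{2n+1}}+\beta\mathbb{I})^{-1}J_{\eta_{2n},\eta_{2n+1}}\rho_{\eta_{2n}}\|$. The standard mechanism (following \cite{MH-2006,Peng-2024}) is to split $J\rho$ into its low-frequency part $P_N J\rho$ and high-frequency part $Q_N J\rho$. On the high-frequency part, the operator norm bound $\|\beta(\mathcal{M}+\beta\mathbb{I})^{-1}\|\leq 1$ from Lemma \ref{lemma2.9} combined with the smoothing estimate of Lemma \ref{lemma 2.7} (which makes $\|J_{0,t}Q_N\xi\|$ small for large $N$) gives smallness. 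On the low-frequency part, one uses that $P_N J\rho$ lies essentially in $\mathcal{S}_{\alpha,N}$ unless $\|P_N J\rho\|$ is itself small; there the Tikhonov bound gives $\|\beta(\mathcal{M}+\beta\mathbb{I})^{-1}\phi\|\leq \beta^{1/2}\langle \mathcal{M}\phi,\phi\rangle^{-1/2}\|\phi\|$-type control, so invertibility of the Malliavin matrix on $\mathcal{S}_{\alpha,N}$—quantified by Proposition \ref{proper3.5} via $r(\varepsilon,\alpha,\mathfrak{R},N)\to 0$—forces the image to be small once $\beta$ is taken below a threshold depending on $N,\alpha,M$. The parameters must be tuned in the order: first $N$ large (to kill the high-frequency tail and the $\mathcal{S}_{\alpha,N}$ complement), then $\varepsilon$ small and correspondingly $\beta$ small using Proposition \ref{proper3.5}.

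To promote the one-step contraction into the stated $L^4$ estimate with the exponential weight $\exp\{8C_0\int_{\eta_{2n}}^{\eta_{2n+2}}\|U_s\|_1^{4/3}\mathrm{d}s\}$ and the polynomial factor $(1+\eta_{2n+2}-\eta_{2n})^8$, I would iterate conditionally on $\mathcal{F}_{\eta_{2n}}$, absorbing the Jacobian growth factors from Lemma \ref{lemma2.4} (which are precisely of the form $\exp\{C_0\int \|U_s\|_1^{4/3}\}$) into the weight, and controlling the accumulated stopping-time increments using the exponential moment bounds \eqref{2.9}–\eqref{moment.4} of Lemma \ref{moment}. The indicator $I_{\{\Theta\le M\}}$ is essential here: it caps the quantities $\sum X_i$ and $\sum Y_i$ appearing in \eqref{4.3}, ensuring the regularizing parameter $\beta$ chosen via Proposition \ref{proper3.5} (which depends on the radius $\mathfrak{R}$ through $r(\varepsilon,\alpha,\mathfrak{R},N)$) remains uniform along the iteration. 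Combining the per-step contraction factor raised to the $n$-th power with the Hölder separation of the weight factors and the moment bound $\mathbb{E}_{U_0}[\exp\{\kappa\sum_{i=1}^n\|U_{\eta_i}\|^2-C_\kappa n\}]\leq e^{a\kappa\|U_0\|^2}$ from \eqref{moment.4} yields the geometric decay $\exp\{-n\gamma_0\}$ together with the prefactor $\exp\{4\kappa a\|U_0\|^2\}$.

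The main obstacle I anticipate is the simultaneous control of three competing sources of growth across many iterations: the exponential Jacobian factors from Lemma \ref{lemma2.4}, the random lengths $\eta_{2n+2}-\eta_{2n}$ of the driving intervals, and the cumulative norms $\|U_{\eta_i}\|^2$. Each is only controlled in an exponential-moment sense, so a naive step-by-step application of Hölder's inequality would lose an uncontrolled constant per step and destroy the geometric decay. The resolution is to choose the per-step contraction factor small enough (after fixing $N$ and $\beta$) that it dominates the expected per-step growth of all three factors, which is exactly why one needs the sharp moment structure of Lemma \ref{moment}, the finite quantity $\Theta$ of \eqref{4.3}, and the truncation by $\{\Theta\le M\}$; making the bookkeeping of these exponents consistent—so that the weight $\exp\{8C_0\int\|U_s\|_1^{4/3}\}$ and the factor $(1+\eta_{2n+2}-\eta_{2n})^8$ are absorbed without spoiling summability—is the delicate part, and I would follow the exponent-tracking scheme of \cite[Lemma 4.5]{Peng-2024} closely.
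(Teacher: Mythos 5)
The paper does not write out a proof of this lemma: it defers entirely to \cite[Lemma 4.5]{Peng-2024}, remarking only that the argument uses the control $v$ of \eqref{4.7}--\eqref{4.9} together with Lemma \ref{lemma 2.7}, Lemma \ref{lemma2.9}, Proposition \ref{proper3.5} and the moment bounds of Lemma \ref{moment}. Your outline reconstructs exactly that intended argument --- unwinding the recursion \eqref{4.9}, the $P_N$/$Q_N$ splitting with the Tikhonov bounds and the Malliavin-matrix invertibility on $\mathcal{S}_{\alpha,N}$, and the absorption of the Jacobian and stopping-time weights via $\{\Theta\le M\}$ and \eqref{moment.4} --- so it is essentially the same approach, presented at the same level of detail as the source it follows.
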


Thirdly, we determine
\textbf{the control of $\int_0^{\ell_t}v(s)\mathrm{d}W(s)$}.

For any $M, t > 0$ and $n \in\mathbb{N}$, the aim of this step is to give an estimate for the moment of the stochastic integral:
\begin{equation*}
\mathbb{E}\left[\Big|\int_{\ell_{\eta_{2n}}}^{(\ell_{\eta_{2n+1}}
	\wedge\ell_{t})\vee\ell_{\eta_{2n}}} v(s)\mathrm{d}W(s)\Big|^{2}I_{\{\Theta\leq M\}}\right].
\end{equation*}

We start with an estimate on the moments of $\rho_t$. Similar to the arguments in \cite{Peng-2024}, by Lemma \ref{lemma2.1}, Lemma \ref{lemma4.5} and Lemma \ref{lemma2.10}, we get the following result.

\begin{lemma}\label{lemma4.6}
For any $\kappa \in ( 0, \kappa _{0}] , M> 0, \gamma _{0}> 0$, let $\beta$ and $C_{\kappa,M,\gamma_{0}}$ be the constants chosen according to Lemma \ref{lemma4.5}.  Then,  for any $U_0\in H$, $n\in \mathbb{N}$ and $t\geq 0$,  one has
$$
\mathbb{E}_{U_0}\left[\|\rho_t\|^4I_{\{\Theta\leq M\}}I_{\{t\in[\eta_{2n},\eta_{2n+2})\}}\right]\leq C_{\kappa,M,\gamma_0}\exp\left\{4\kappa a\|U_0\|^2-n\gamma_{0}\right\}.
$$
\end{lemma}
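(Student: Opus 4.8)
The plan is to reduce the estimate at an arbitrary time $t$ to the bound at the stopping time $\eta_{2n}$ already furnished by Lemma~\ref{lemma4.5}, by controlling the growth of the residual $\rho$ across the random two-block window $[\eta_{2n},\eta_{2n+2})$. Fix $t\ge 0$; since the indicator $I_{\{t\in[\eta_{2n},\eta_{2n+2})\}}$ isolates a single window, it suffices to produce a pathwise bound of the form $\|\rho_t\|^4\le \mathcal{W}_n\,\|\rho_{\eta_{2n}}\|^4$ in which the random weight $\mathcal{W}_n$ is dominated, up to a multiplicative constant, by $(1+\eta_{2n+2}-\eta_{2n})^8\exp\{8C_0\int_{\eta_{2n}}^{\eta_{2n+2}}\|U_s\|_1^{4/3}\mathrm{d}s\}$, and then to take $\mathbb{E}_{U_0}[\,\cdot\,I_{\{\Theta\le M\}}]$ and invoke Lemma~\ref{lemma4.5}. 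The starting point is that $\rho$ solves the linear equation \eqref{2.12}, the control $v$ is given by \eqref{4.7}--\eqref{4.8}, and the recursion \eqref{4.9}, together with the spectral inequality $\|\beta(\mathcal{M}_{\eta_{2n},\eta_{2n+1}}+\beta\mathbb{I})^{-1}\|\le 1$, propagates $\rho$ from $\eta_{2n}$ through the next two stopping times.

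First I would split the window as $[\eta_{2n},\eta_{2n+2})=[\eta_{2n},\eta_{2n+1})\cup[\eta_{2n+1},\eta_{2n+2})$ according to whether the control is active. On $[\eta_{2n+1},\eta_{2n+2})$ the control vanishes by \eqref{4.8}, so $\rho_t=J_{\eta_{2n+1},t}\rho_{\eta_{2n+1}}$; moreover, as in the derivation of \eqref{4.9}, one has $\rho_{\eta_{2n+1}}=\beta(\mathcal{M}_{\eta_{2n},\eta_{2n+1}}+\beta\mathbb{I})^{-1}J_{\eta_{2n},\eta_{2n+1}}\rho_{\eta_{2n}}$, whence $\|\rho_{\eta_{2n+1}}\|\le\|J_{\eta_{2n},\eta_{2n+1}}\|\,\|\rho_{\eta_{2n}}\|$ and $\|\rho_t\|\le\|J_{\eta_{2n+1},t}\|\,\|J_{\eta_{2n},\eta_{2n+1}}\|\,\|\rho_{\eta_{2n}}\|$. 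On $[\eta_{2n},\eta_{2n+1})$ I would use $\rho_t=J_{\eta_{2n},t}\rho_{\eta_{2n}}-\mathcal{A}_{\eta_{2n},t}v$, estimating the control term with the operator-norm bound for $\mathcal{A}$ in Lemma~\ref{lemma2.9} together with the $L^2$-bound $\|v\|_{L^2}\le\beta^{-1/2}\|J_{\eta_{2n},\eta_{2n+1}}\|\,\|\rho_{\eta_{2n}}\|$, which follows from the three spectral inequalities of Lemma~\ref{lemma2.9}. In either regime this yields $\|\rho_t\|\le C_\beta\,\mathcal{J}_n\,\|\rho_{\eta_{2n}}\|$, where $\mathcal{J}_n$ is a product of Jacobian operator norms $\|J_{\cdot,\cdot}\|$ taken over subintervals of the window, and $C_\beta$ depends only on the fixed $\beta$ from Lemma~\ref{lemma4.5}.

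It then remains to bound $\mathcal{J}_n^4$ and to check it is dominated by the weight appearing in Lemma~\ref{lemma4.5}; this is the crux. Here the $\int\|U_r\|_1^{4/3}$-contribution is harmless, since the crude estimate \eqref{2.16} produces $\exp\{2C_0\int\|U_s\|_1^{4/3}\mathrm{d}s\}$ with $2C_0\le 8C_0$. The genuine difficulty is the growth coming from the elapsed (random) time over the window: a naive Gronwall bound would give an exponential in $\eta_{2n+2}-\eta_{2n}$, which the polynomial factor $(1+\eta_{2n+2}-\eta_{2n})^8$ cannot absorb. The resolution is to use the refined estimate \eqref{2.19} (and \eqref{4.12}) in place of \eqref{2.16}, whose weighted exponentials $e^{-\nu(\eta_{2n+2}-r)+8\mathfrak{B}_0\kappa(\ell_{\eta_{2n+2}}-\ell_r)}$ and $e^{2\nu(\eta_{2n+2}-r)-16\mathfrak{B}_0\kappa(\ell_{\eta_{2n+2}}-\ell_r)}$ are tailored to the stopping-time construction \eqref{2.7}--\eqref{2.8}: across two consecutive blocks one has $\nu(\eta_{2n+2}-r)-8\mathfrak{B}_0\kappa(\ell_{\eta_{2n+2}}-\ell_r)\le 2$, so the time-integrals in \eqref{2.19} are precisely of the $X_n$- and $Y_n$-type entering $\Theta$ in \eqref{4.3}. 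Consequently, on $\{\Theta\le M\}$ these integrals are controlled and $\mathcal{J}_n^4$ is dominated by $C(1+\eta_{2n+2}-\eta_{2n})^8\exp\{8C_0\int_{\eta_{2n}}^{\eta_{2n+2}}\|U_s\|_1^{4/3}\mathrm{d}s\}$; the auxiliary moment bounds of Lemma~\ref{lemma2.1} and the Malliavin-derivative bounds of Lemma~\ref{lemma2.10} are invoked for the control term. Taking $\mathbb{E}_{U_0}[\,\cdot\,I_{\{\Theta\le M\}}]$ and applying Lemma~\ref{lemma4.5} then gives $C_{\kappa,M,\gamma_0}\exp\{4\kappa a\|U_0\|^2-n\gamma_0\}$, as claimed. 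The main obstacle is exactly this last domination: one must match the random Jacobian growth over the window to the precise weight built into Lemma~\ref{lemma4.5}, and this forces the use of the balance between the dissipative drift $\nu$ and the subordinator increments encoded in \eqref{2.7}--\eqref{2.8} and in $\Theta$, rather than any elementary Gronwall argument; the delicate bookkeeping follows the scheme of \cite[Appendix]{Peng-2024}.
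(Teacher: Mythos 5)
The paper offers no written proof of Lemma \ref{lemma4.6}: it defers entirely to the scheme of \cite{Peng-2024} together with Lemmas \ref{lemma2.1}, \ref{lemma4.5} and \ref{lemma2.10}. Your overall strategy is the intended one: split the window at $\eta_{2n+1}$, write $\rho_t=J_{\eta_{2n},t}\rho_{\eta_{2n}}-\mathcal{A}_{\eta_{2n},t}v$ on the active half and $\rho_t=J_{\eta_{2n+1},t}\rho_{\eta_{2n+1}}$ with $\rho_{\eta_{2n+1}}=\beta(\mathcal{M}_{\eta_{2n},\eta_{2n+1}}+\beta\mathbb{I})^{-1}J_{\eta_{2n},\eta_{2n+1}}\rho_{\eta_{2n}}$ on the inactive half, bound the control by the spectral inequalities of Lemma \ref{lemma2.9}, and feed the resulting random weight into Lemma \ref{lemma4.5}. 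Those reductions are all correct.

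The step that does not hold as you state it is the claimed domination of $\mathcal{J}_n^4$ by $(1+\eta_{2n+2}-\eta_{2n})^8\exp\{8C_0\int_{\eta_{2n}}^{\eta_{2n+2}}\|U_s\|_1^{4/3}\mathrm{d}s\}$ on $\{\Theta\le M\}$ via \eqref{2.19}. The estimate \eqref{2.19} carries the factor $\exp\{\frac{\nu\kappa}{120}\int(\|U_r\|_1^{2}+1)e^{-\nu(T-r)+8\mathfrak{B}_0\kappa(\ell_T-\ell_r)}\mathrm{d}r\}$, which involves $\|U\|_1^{2}$ and is \emph{not} pathwise bounded by $\exp\{8C_0\int\|U_s\|_1^{4/3}\mathrm{d}s\}$ (it is only controlled in expectation, through Lemma \ref{moment}); and its second factor is bounded on $\{\Theta\le M\}$ only by $\exp\{C_\kappa'(X_{2n}+X_{2n+1})\}\le e^{C_\kappa'(2n+2)M}$, which is exponential in $n$, not polynomial. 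The same phenomenon occurs with \eqref{2.16}: the ``$+1$'' in the exponent gives $e^{2C_0(\eta_{2n+2}-\eta_{2n})}$, and since $\nu(\eta_{k}-\eta_{k-1})=1+8\mathfrak{B}_0\kappa(\ell_{\eta_k}-\ell_{\eta_{k-1}})$ by the construction \eqref{2.7}--\eqref{2.8}, on $\{\Theta\le M\}$ one only gets $\eta_{2n+2}-\eta_{2n}\le\nu^{-1}(2+8\mathfrak{B}_0\kappa(2n+2)M)$, i.e.\ again an $e^{c(\kappa,M)n}$ factor. So the polynomial weight $(1+\eta_{2n+2}-\eta_{2n})^8$ never absorbs the Jacobian growth by itself, and the ``resolution via \eqref{2.19}'' you describe does not close the estimate. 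What actually closes it is that these parasitic factors grow at an explicit exponential rate $c(\kappa,M)n$, while Lemma \ref{lemma4.5} is available for \emph{every} decay rate $\gamma_0$ (with $\beta$ chosen accordingly); one therefore applies it with a rate strictly exceeding $\gamma_0+c(\kappa,M)$ (and, for the $\|U\|_1^2$-exponential, inserts one more H\"older step against the exponential moment bounds of Lemma \ref{moment}). You should make this absorption explicit rather than asserting a pathwise domination that fails.
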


\begin{lemma}\label{lemma4.7}
For any $M>0$, $\gamma_0>0$, $\kappa\in(0,\kappa_0]$, let $\beta$ and $C_{\kappa,M,\gamma_0}$ be the constants chosen according to Lemma \ref{lemma4.5}. Then, we have
\begin{equation}
\mathbb{E}_{U_{0}}\left[\left|\int_{\ell_{\eta_{2n}}}
^{\ell_{\eta_{2n+1}}}v(s)\mathrm{d}W(s)\right|^{2}I_{\{\Theta\leq M\}}\right]
\leq C_{\kappa,M,\gamma_{0}}\exp\{2\kappa a\|U_{0}\|^{2}-\gamma_{0}n/2\}, \end{equation}
and
\begin{equation}
\mathbb{E}_{U_0}\left[\left|\int_{\ell_{\eta_{2n}}}
^{(\ell_{\eta_{2n+1}}\wedge\ell_t)\vee\ell_{\eta_{2n}}}
v(s)\mathrm{d}W(s)\right|^2I_{\{\Theta\leq M\}}\right]
\leq C_{\kappa,M,\gamma_{0}}\exp\{2\kappa a\|U_{0}\|^{2}-\gamma_{0}n/2\}
\end{equation}
for $n\geq0,t\geq0$ and $U_0\in H$.
\end{lemma}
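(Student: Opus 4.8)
The plan is to treat $\int v\,\mathrm dW$ as a Skorohod (anticipating) integral and apply the associated isometry. Since $\Theta$ defined in \eqref{4.3} depends only on $\ell$, for each fixed $\ell$ the indicator $I_{\{\Theta\le M\}}$ is deterministic under $\mathbb P^{\mu_{\mathbb W}}$, so it factors out of the $W$-integral and, on $[\ell_{\eta_{2n}},\ell_{\eta_{2n+1}}]$ where $v=v_{\eta_{2n},\eta_{2n+1}}$, the Skorohod isometry gives
\begin{equation*}
\mathbb E^{\mu_{\mathbb W}}\Big|\int_{\ell_{\eta_{2n}}}^{\ell_{\eta_{2n+1}}}v(s)\,\mathrm dW(s)\Big|^{2}
=\mathbb E^{\mu_{\mathbb W}}\!\int_{\ell_{\eta_{2n}}}^{\ell_{\eta_{2n+1}}}|v(s)|^{2}\,\mathrm ds
+\mathbb E^{\mu_{\mathbb W}}\!\int\!\!\int \mathcal D_{s}v(r)\,\mathcal D_{r}v(s)\,\mathrm dr\,\mathrm ds=:\mathcal I_{1}+\mathcal I_{2}.
\end{equation*}
Reinserting $I_{\{\Theta\le M\}}$ and taking $\mathbb E^{\mu_{\mathbb S}}$, it suffices to show that each of $\mathcal I_1$ and $\mathcal I_2$, on the event $\{\Theta\le M\}$, is bounded by a multiple of $\exp\{2\kappa a\|U_0\|^{2}-\gamma_0 n/2\}$.

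For the diagonal term $\mathcal I_1$, I would use the spectral bounds of Lemma \ref{lemma2.9}. Splitting $(\mathcal M_{\eta_{2n},\eta_{2n+1}}+\beta\mathbb I)^{-1}$ into two halves and using $\|\mathcal A^{*}(\mathcal M+\beta\mathbb I)^{-1/2}\|\le1$ together with $\|(\mathcal M+\beta\mathbb I)^{-1/2}\|\le\beta^{-1/2}$ yields
\begin{equation*}
\int_{\ell_{\eta_{2n}}}^{\ell_{\eta_{2n+1}}}|v(s)|^{2}\,\mathrm ds
=\|v_{\eta_{2n},\eta_{2n+1}}\|_{L^{2}}^{2}\le \beta^{-1}\,\|J_{\eta_{2n},\eta_{2n+1}}\rho_{\eta_{2n}}\|^{2}.
\end{equation*}
I would then bound the operator norm of $J_{\eta_{2n},\eta_{2n+1}}$ through Lemma \ref{lemma2.4}, whose exponential growth factor $\exp\{C_{0}\int\|U\|_{1}^{4/3}\}$ is exactly the weight carried in Lemma \ref{lemma4.5}, and apply Cauchy--Schwarz in $\mathbb E^{\mu_{\mathbb W}}$ to separate this growth from $\|\rho_{\eta_{2n}}\|^{2}$; the exponential integrability of the stopping-time increments (Lemma \ref{moment}(1)) absorbs the $\exp\{C_0(\eta_{2n+1}-\eta_{2n})\}$ contribution. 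The geometric decay of $\|\rho_{\eta_{2n}}\|^{4}$ from Lemma \ref{lemma4.5}/Lemma \ref{lemma4.6} then produces the required bound, the passage from the fourth power to the second halving both the coefficient of $\|U_0\|^2$ and the decay rate, hence the $\gamma_0 n/2$.

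The trace term $\mathcal I_2$ is the main obstacle. I would differentiate $v=\mathcal A^{*}(\mathcal M+\beta\mathbb I)^{-1}J\rho_{\eta_{2n}}$ by the product rule, using $\mathcal D_u^{j}(\mathcal M+\beta\mathbb I)^{-1}=-(\mathcal M+\beta\mathbb I)^{-1}(\mathcal D_u^{j}\mathcal M)(\mathcal M+\beta\mathbb I)^{-1}$ with $\mathcal D_u^{j}\mathcal M=(\mathcal D_u^{j}\mathcal A)\mathcal A^{*}+\mathcal A(\mathcal D_u^{j}\mathcal A^{*})$. Each factor is then controlled by the spectral bound $\|(\mathcal M+\beta\mathbb I)^{-1}\|\le\beta^{-1}$ of Lemma \ref{lemma2.9} and the Malliavin-derivative moment bounds of Lemma \ref{lemma2.10}, while the term containing $\mathcal D_u^{j}\rho_{\eta_{2n}}$ is handled via the linearised equation for $\rho$ in \eqref{2.12}. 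The delicate point is that several powers of $\beta^{-1}$ together with the combined growth of the Jacobian and of the Malliavin derivatives must be dominated, after H\"older's inequality, by the decay of $\rho_{\eta_{2n}}$; this is precisely why the weight $(1+\eta_{2n+2}-\eta_{2n})^{8}\exp\{8C_{0}\int_{\eta_{2n}}^{\eta_{2n+2}}\|U_s\|_{1}^{4/3}\mathrm ds\}$ is engineered into Lemma \ref{lemma4.5}, and where the exponential integrability of Lemma \ref{moment} is again invoked.

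Finally, the second estimate follows by the identical route: restricting the domain to $[\ell_{\eta_{2n}},(\ell_{\eta_{2n+1}}\wedge\ell_t)\vee\ell_{\eta_{2n}}]$ amounts to multiplying $v$ by the indicator of this $\mathbb S$-measurable sub-interval, which preserves Skorohod integrability and only shrinks the region of integration in both $\mathcal I_1$ and $\mathcal I_2$, so the same bounds apply verbatim. Carrying out these computations as in the proof of the corresponding result in \cite{Peng-2024} completes the proof.
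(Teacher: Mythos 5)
Your proposal is correct and follows essentially the same route the paper intends: the paper itself gives no proof of Lemma \ref{lemma4.7}, deferring to the corresponding argument in \cite{Peng-2024}, and that argument is exactly your decomposition via the Skorohod isometry into a diagonal term (controlled by the spectral bounds of Lemma \ref{lemma2.9}, the Jacobian bound of Lemma \ref{lemma2.4}, and the weighted decay of Lemma \ref{lemma4.5}) and a trace term (controlled by Lemma \ref{lemma2.10} and the resolvent identity for $(\mathcal M+\beta\mathbb I)^{-1}$). The only cosmetic caveat is that for the truncated interval the trace term does not literally ``shrink'' since its integrand is signed; rather, the absolute-value bounds used for $\mathcal I_2$ are monotone in the domain of integration, which is what makes the second estimate follow verbatim.
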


Through the above preparation work, we provide the following e-property.

\begin{proposition}[e-property]\label{propo1.4}  Under Condition \ref{Condition-2.1} and Condition \ref{Condition-2.2}, the Markov semigroup $\left \{ P_{t}\right \} _{t\geq 0}$ has the e-property, i.e., for any bounded and Lipschitz continuous function $\Phi$, $U_0\in H$ and $\varepsilon > 0$, there exists $\delta > 0$ such that
$|P_t\Phi( U_0^{\prime })-P_t\Phi( U_0)|<\varepsilon$, $\forall t\geq 0$ and $U_0^{\prime }$ with $\| U_0^{\prime }- U_0\| < \delta$.
\end{proposition}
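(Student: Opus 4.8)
The plan is to estimate the two pieces $\mathcal{Q}_1$ and $\mathcal{Q}_2$ in the decomposition \eqref{4.1} separately, and to show that by choosing $\Upsilon$ (the radius of the ball containing $U_0,U_0'$), $M$ (the truncation level for $\Theta$), and $\delta$ (the closeness of $U_0'$ to $U_0$) appropriately, each piece can be made smaller than $\varepsilon/2$ uniformly in $t\geq 0$. The term $\mathcal{Q}_2=2\|\Phi\|_\infty\mathbb{P}(\Theta\geq M)$ is handled first and is the easy part: since $\Theta<\infty$ almost surely by \eqref{4.3}, we may fix $M=M(\varepsilon,\Upsilon)$ large enough that $2\|\Phi\|_\infty\mathbb{P}(\Theta\geq M)<\varepsilon/4$, uniformly over $U_0,U_0'\in B_H(\Upsilon)$. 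The real work lies in bounding $\mathcal{Q}_1=|P_t^M\Phi(U_0)-P_t^M\Phi(U_0')|$.

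For $\mathcal{Q}_1$, I would interpolate along the segment $U_0^s:=U_0+s(U_0'-U_0)$, $s\in[0,1]$, so that
\begin{equation*}
\mathcal{Q}_1\leq \|U_0'-U_0\|\sup_{s\in[0,1]}\big|\nabla_\xi P_t^M\Phi(U_0^s)\big|,\qquad \xi=\frac{U_0'-U_0}{\|U_0'-U_0\|},
\end{equation*}
and then estimate the gradient $\nabla_\xi P_t^M\Phi(U_0^s)$ via the Malliavin integration-by-parts decomposition \eqref{5.1}, namely $\nabla_\xi P_t^M\Phi = \mathcal{Q}_{11}+\mathcal{Q}_{12}$ with the control $v$ chosen as in \eqref{4.7}--\eqref{4.8}. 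The residual term $\mathcal{Q}_{12}=\mathbb{E}[\nabla\Phi(U_t)\rho_t I_{\{\Theta\leq M\}}]$ is controlled by $\|\nabla\Phi\|_\infty$ together with the decay estimate of Lemma \ref{lemma4.6}: summing over the dyadic blocks $t\in[\eta_{2n},\eta_{2n+2})$ and using $\mathbb{E}_{U_0}[\|\rho_t\|^4 I_{\{\Theta\leq M\}}I_{\{t\in[\eta_{2n},\eta_{2n+2})\}}]\leq C_{\kappa,M,\gamma_0}e^{4\kappa a\|U_0\|^2-n\gamma_0}$ yields a bound of the form $C_{\kappa,M,\gamma_0}e^{\kappa a\|U_0\|^2}$ that is \emph{uniform in $t$}. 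The Skorokhod-integral term $\mathcal{Q}_{11}=\mathbb{E}[\Phi(U_t)\int_0^{\ell_t}v\,dW\,I_{\{\Theta\leq M\}}]$ is bounded by $\|\Phi\|_\infty$ times the $L^2$-norm of the stochastic integral; decomposing the integral over the active blocks $[\ell_{\eta_{2n}},\ell_{\eta_{2n+1}}]$ and invoking Lemma \ref{lemma4.7}, whose block bounds $C_{\kappa,M,\gamma_0}e^{2\kappa a\|U_0\|^2-\gamma_0 n/2}$ are summable in $n$, again gives a $t$-uniform bound $C_{\kappa,M,\gamma_0}e^{\kappa a\|U_0\|^2}$.

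Combining these, $\sup_{s\in[0,1]}|\nabla_\xi P_t^M\Phi(U_0^s)|\leq C_{\kappa,M,\gamma_0}(\|\Phi\|_\infty+\|\nabla\Phi\|_\infty)\sup_{s\in[0,1]}e^{\kappa a\|U_0^s\|^2}$, and since $U_0^s$ stays in $B_H(\Upsilon)$ the right-hand side is a finite constant $C(\varepsilon,\Upsilon,\Phi)$ independent of $t$. Hence $\mathcal{Q}_1\leq C(\varepsilon,\Upsilon,\Phi)\,\|U_0'-U_0\|$, and choosing $\delta<\varepsilon/(2C)$ forces $\mathcal{Q}_1<\varepsilon/2$ whenever $\|U_0'-U_0\|<\delta$. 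Together with the bound on $\mathcal{Q}_2$ this proves the $e$-property.

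**The main obstacle** I anticipate is the uniformity in $t$: because the forcing is highly degenerate and pure-jump, one cannot rely on the Gaussian exponential-martingale machinery of \cite{MH-2006,MH-2011}, and the decay needed to absorb the contributions from all time-blocks must come entirely from the geometric-in-$n$ estimates of Lemmas \ref{lemma4.5}--\ref{lemma4.7}. Verifying that the per-block bounds are genuinely summable and that the resulting constant does not secretly depend on $t$ (in particular, that $\rho_t$ and the Skorokhod integral are controlled at an \emph{arbitrary} time $t$, not merely at the stopping times $\eta_k$, which is exactly why Lemma \ref{lemma4.6} and the second estimate of Lemma \ref{lemma4.7} treat the truncated endpoint $(\ell_{\eta_{2n+1}}\wedge\ell_t)\vee\ell_{\eta_{2n}}$) is the delicate point; once the geometric decay rate $\gamma_0$ is fixed large enough relative to the growth constants, the sums converge and the $t$-uniformity follows.
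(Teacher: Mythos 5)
Your proposal is correct and follows essentially the same route as the paper's proof: the same $\mathcal{Q}_1/\mathcal{Q}_2$ split from \eqref{4.1}, the same Malliavin decomposition $\nabla_\xi P_t^M\Phi=\mathcal{Q}_{11}+\mathcal{Q}_{12}$ controlled by the geometric-in-$n$ bounds of Lemmas \ref{lemma4.6} and \ref{lemma4.7}, and the same integration along the segment joining $U_0$ and $U_0'$. The only detail the paper adds is an approximation of the bounded Lipschitz $\Phi$ by a sequence $\Phi_n\in C_b^1(H)$ with $\|\Phi_n\|_\infty\le\|\Phi\|_\infty$ and $\|\nabla\Phi_n\|_\infty\le \mathrm{Lip}\,\Phi$, which justifies applying the gradient bound to functions that are merely Lipschitz rather than $C^1$.
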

\begin{proof}
For the term $\mathcal{Q}_{11}$ in \eqref{5.1}, by Lemma \ref{lemma4.7}, we have
\begin{equation*}
\begin{aligned}
\mathcal{Q}_{11}&\leq\|\Phi\|_{\infty}\sum_{n=0}^{\infty}
\mathbb{E}\left[\Big|\int_{\ell_{\eta_{2n}}}^{(\ell_{\eta_{2n+1}}\wedge\ell_{t})\vee\ell_{\eta_{2n}}}
v(s)\mathrm{d}W_{s}I_{\{\Theta\leq M\}}\Big|\right]
\\
&\leq\|\Phi\|_{\infty}\sum_{n=0}^{\infty}\left(\mathbb{E}
\left[\left|\int_{\ell_{\eta_{2n}}}^{(\ell_{\eta_{2n+1}}
\wedge\ell_{t})\vee\ell_{\eta_{2n}}} v(s)\mathrm{d}W_{s}\right|^{2}I_{\{\Theta\leq M\}}\right]\right)^{1/2}
\\
&\leq\|\Phi\|_{\infty}\sum_{n=0}^{\infty}C_{\kappa,M,\gamma_{0}}\exp\{\kappa a\|U_{0}\|^{2}-\gamma_{0}n/4\}.
\end{aligned}
\end{equation*}
For the term $\mathcal{Q}_{12}$, by Lemma \ref{lemma4.6}, we have
\begin{equation*}
\begin{aligned}
\mathcal{Q}_{12}&\leq\sum_{n=0}^{\infty}\|\nabla \Phi\|_{\infty}\mathbb{E}\big[\|\rho_{t}\|I_{\{\Theta\leq M\}}I_{\{t\in[\eta_{2n},\eta_{2n+2})\}}\big]
\\
&\leq\sum_{n=0}^{\infty}\|\nabla \Phi\|_{\infty}C_{\kappa,M,\gamma_{0}}\exp
\left\{a\kappa\|U_{0}\|^{2}-\gamma_{0}n/4\right\}.
\end{aligned}
\end{equation*}
According to the estimates of $\mathcal{Q}_{11}$ and $\mathcal{Q}_{12}$, for any $\xi$ with $\|\xi\|=1$, one sees that
\begin{equation*}
|\nabla_{\xi}P_{t}^{M}\Phi(U_{0})|\leq C_{\kappa,M,\gamma_{0}}\big(\|\Phi\|_{\infty}+\|\nabla \Phi\|_{\infty}\exp\big\{a\kappa\|U_{0}\|^{2}\big\}\big).
\end{equation*}
Let $\gamma(s)=sU_0+(1-s)U_0'$, by the above inequality, we have
\begin{equation}\label{5.3}
\begin{aligned}
&\mathbb{E}\left[\Phi(U_{t}(U_{0}))I_{\{\Theta\leq M\}}\right]-\mathbb{E}[\Phi(U_{t}(U_{0}^{\prime}))I_{\{\Theta\leq M\}}]\\
&=\int_{0}^{1}\langle\nabla P_{t}^{M}\Phi(\gamma(s)),U_{0}-U_{0}^{\prime}\rangle\mathrm{d}s
\\
&\leq C_{\kappa,M,\gamma_{0}}\|U_{0}-U_{0}^{\prime}\|\big(\|\Phi\|_{\infty}+\|\nabla \Phi\|_{\infty}\sup_{s\in[0,1]}\exp\{a\kappa\|\gamma(s)\|^{2}\}\big).
\end{aligned}
\end{equation}

By the arguments in \cite{Komorowski-2010}, for any bounded and Lipschitz continuous function $\Phi$ on $H$, there exists a sequence $(\Phi_n)$ satisfying $(\Phi_n)\subseteq C_b^1(H)$ and $\lim\limits_{n\rightarrow\infty}\Phi_n(x)=\Phi(x)$ pointwise. Note that $\|\Phi_n\|_{\infty}\leq\|\Phi\|_{\infty}$ and $\|\nabla\Phi_n\|_{\infty}\leq \text{Lip}_\Phi$, where $\text{Lip}_\Phi=\sup\limits_{x\neq y}\frac{|\Phi(x)-\Phi(y)|}{\|x-y\|}$. Therefore, for any $\kappa\in(0,\kappa_0]$, $U_0,U_0' \in B_H(\Upsilon), \Phi\in C_b^1(H)$ and $M\geq 1, t>0$, one arrives that
\begin{equation}\label{5.4}
\begin{aligned}
&|P_{t}\Phi (U_{0})-P_{t}\Phi(U_{0}')|=\lim\limits_{n\to\infty}
|P_{t}\Phi_{n}(U_{0})-P_{t}\Phi_{n}(U_{0}')| \\
\leq&\lim_{n\to\infty}\left[C_{\kappa,M,\gamma_{0}}
\|U_{0}-U_{0}^{\prime}\|\big(\|\Phi_{n}\|_{\infty}+\|\nabla \Phi_{n}\|_{\infty}\exp\{a\kappa\Upsilon^{2}\}\big)+2\|\Phi_{n}\|_{\infty}
\mathbb{P}(\Theta\geq M)\right] \\
\leq& C_{\kappa,M,\gamma_{0}}\|U_{0}-U_{0}^{\prime}\|
(\|\Phi\|_{\infty}+\text{Lip}_\Phi\exp
\left\{a\kappa\Upsilon^{2}\right\})
+2\|\Phi\|_{\infty}\mathbb{P}(\Theta\geq M) \\
:=&J_{1}+J_{2}.
\end{aligned}
\end{equation}
For any $\varepsilon>0$, by \eqref{4.3}, we can find a $M>0$ such that $J_2<\frac{\varepsilon}{2}$. Besides, there exists a $\delta>0$ such that for any $U_0,U_0'\in B_H(\Upsilon)$ and $\|U_0-U_0'\|<\delta$, $J_1<\frac{\varepsilon}{2}$ holds. Then we can conclude that for any positive constants $\varepsilon,\Upsilon$, there exists a $\delta>0$, such that
\begin{equation}
|P_{t}\Phi(U_{0})-P_{t}\Phi(U_{0}^{\prime})|<\varepsilon, ~~\forall t\geq0\mathrm{~and~}U_{0},U_{0}^{\prime}\in B_{H}(\Upsilon)\mathrm{~with~}\|U_{0}-U_{0}^{\prime}\|<\delta.
\end{equation}
We complete the proof of Proposition \ref{propo1.4}.
\end{proof}

\begin{proposition}[Weak  Irreducibility]\label{irre}  For any $\mathcal{C}, \gamma > 0$,  there exists a $T=T(\mathcal{C},\gamma)>0$ such that
\begin{equation*}
\inf\limits_{\|U_0\|\leq\mathcal{C}}P_T(U_0,\mathcal{B}_\gamma)>0,
\end{equation*}
where $\mathcal{B} _\gamma = \{ U\in H:\| U\| \leq \gamma \}$.
\end{proposition}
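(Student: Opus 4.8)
The plan is to exploit that \eqref{2.14} is dissipative, together with the observation that, since we only need to hit a ball centred at the origin (this is \emph{weak} irreducibility), it suffices to show that on an event of positive probability the driving noise stays small long enough for the viscous dissipation to pull the solution into $\mathcal{B}_\gamma$. Concretely, I would use the pathwise decomposition $U_t=V_t+b_t$ with $b_t=\sigma_\theta W_{S_t}\in\mathcal{H}_0$ and $V_t=\Psi(U_0,b)(t)$ solving the random PDE \eqref{A.2}, so that $U$ stays within distance $\|b_t\|$ of the continuous process $V$. For a parameter $\varepsilon>0$ to be fixed later I would work on the event
\begin{equation*}
\mathcal{E}_{\varepsilon,T}:=\Big\{\sup_{t\in[0,T]}\|b_t\|\leq\varepsilon\Big\}.
\end{equation*}

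First I would establish that $\mathbb{P}(\mathcal{E}_{\varepsilon,T})>0$ and, crucially, that this lower bound is independent of $U_0$. Since $\|\sigma_\theta z\|\leq C|z|$ and $S$ is increasing with $S_0=0$, one has $\sup_{t\in[0,T]}\|b_t\|\leq C\sup_{s\in[0,S_T]}|W_s|$, whence $\{\sup_{s\in[0,S_T]}|W_s|\leq\varepsilon/C\}\subseteq\mathcal{E}_{\varepsilon,T}$. Using the independence of $W$ and $S$ on the product space and conditioning on the subordinator,
\begin{equation*}
\mathbb{P}\Big(\sup_{s\in[0,S_T]}|W_s|\leq\tfrac{\varepsilon}{C}\Big)
=\int_{\mathbb{S}}\mathbb{P}^{\mu_{\mathbb{W}}}\Big(\sup_{s\in[0,\ell_T]}|W_s|\leq\tfrac{\varepsilon}{C}\Big)\,\mathbb{P}^{\mu_{\mathbb{S}}}(\mathrm{d}\ell).
\end{equation*}
For $\mathbb{P}^{\mu_{\mathbb{S}}}$-a.e.\ $\ell$ the value $\ell_T$ is finite, and the Brownian small-ball probability is then strictly positive, so the integral is positive; this quantity depends only on $\varepsilon$ and $T$, not on $U_0$.

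Second, I would upgrade the \emph{a priori} bound \eqref{A.10} into a genuinely dissipative estimate valid pathwise on $\mathcal{E}_{\varepsilon,T}$. Writing $V=(\tilde w,\tilde\theta)$ and repeating the energy computation behind Lemma \ref{lemma2.1} — where the weight $\zeta^*=\nu_1\nu_2/g^2$ is tuned so that the buoyancy coupling $\langle\partial_x\tilde\theta,\tilde w\rangle$ is absorbed by dissipation — the only new contributions are the forcing terms $g\partial_x f$, $\nu_2\Delta f$ and $B_2(\tilde w,f)$ carrying the factor $f=b$. Since $f_t\in\mathcal{H}_0$ is smooth and finite-dimensional, the norm equivalence $\|h\|_\alpha\leq C_\alpha\|h\|$ on $\mathcal{H}_0$ lets me bound each of these by a small multiple of $\varepsilon$, giving
\begin{equation*}
\frac{\mathrm{d}}{\mathrm{d}t}\|V_t\|^2\leq-\tfrac{\nu}{2}\|V_t\|_1^2+C\varepsilon\|V_t\|^2+C\varepsilon^2\leq\big(-\tfrac{\nu}{2}+C\varepsilon\big)\|V_t\|^2+C\varepsilon^2,
\end{equation*}
using the Poincar\'e inequality $\|V\|_1\geq\|V\|$. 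Fixing $\varepsilon$ small enough that $-\tfrac{\nu}{2}+C\varepsilon\leq-\tfrac{\nu}{4}$, that $4C\varepsilon^2/\nu\leq(\gamma/4)^2$ and that $\varepsilon\leq\gamma/4$, Gronwall's inequality and $V_0=U_0$ yield $\|V_t\|^2\leq e^{-\nu t/4}\|U_0\|^2+(\gamma/4)^2$.

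Finally, choosing $T=T(\mathcal{C},\gamma)$ so large that $e^{-\nu T/4}\mathcal{C}^2\leq(\gamma/4)^2$, I obtain on $\mathcal{E}_{\varepsilon,T}$, for every $\|U_0\|\leq\mathcal{C}$,
\begin{equation*}
\|U_T\|\leq\|V_T\|+\|b_T\|\leq\tfrac{\gamma}{2}+\varepsilon\leq\gamma,
\end{equation*}
so that $\mathcal{E}_{\varepsilon,T}\subseteq\{U_T\in\mathcal{B}_\gamma\}$ and therefore $\inf_{\|U_0\|\leq\mathcal{C}}P_T(U_0,\mathcal{B}_\gamma)\geq\mathbb{P}(\mathcal{E}_{\varepsilon,T})>0$. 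The main obstacle is the dissipative energy estimate: one must verify that the jump-noise-dependent terms can genuinely be made subordinate to the viscous dissipation, which hinges on the forcing living in the finite-dimensional smooth space $\mathcal{H}_0$ and on the precise tuning of $\zeta^*$. The positivity of the small-noise event, by contrast, is a routine consequence of the Brownian small-ball estimate and the almost-sure finiteness of $S_T$.
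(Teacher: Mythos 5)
Your proposal is correct and follows essentially the same route as the paper: decompose $U_t=V_t+b_t$ with $b_t=\sigma_\theta W_{S_t}$, restrict to a positive-probability event on which the noise stays uniformly small (using that all norms are equivalent on the finite-dimensional space $\mathcal{H}_0$), and run a dissipative energy estimate for $V$ with the weight $\zeta^*$ absorbing the buoyancy coupling, so that $\|V_T\|\leq\gamma/2$ for $T$ large uniformly over $\|U_0\|\leq\mathcal{C}$. The only difference is that you spell out the positivity of the small-noise event by conditioning on the subordinator and invoking the Brownian small-ball estimate, a step the paper simply asserts.
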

\begin{proof}
Define $V_t=U_t-\zeta_t$, where $\zeta_t=\sum_{k\in \mathcal{Z},m\in\{0,1\}}\alpha_k^m \sigma_k^m W^{k,m}_{S_t} $, then $V_t$ satisfies
\begin{equation*}
\frac{\partial V_t}{\partial t}
=- A(V_t+\zeta_t)-B(U_t,U_t)+G(U_t)
=- A(V_t+\zeta_t)-B(U_t,V_t+\zeta_t)+G(V_t+\zeta_t).
\end{equation*}
Taking the inner product of this equation with $V_t$ to derive
\begin{equation*}
\begin{aligned}
&\frac {\mathrm{d}}{\mathrm{d}t}\|V_t\|^2 =-\nu\| V_{t}\|_1^{2}+\langle A\zeta_{t},V_{t}\rangle
+\langle B(U_{t},\zeta_{t}),V_{t}\rangle+\langle G(V_t+\zeta_t),V_t\rangle  \\
&\leq -\nu\| V_t\|_1^2+C_1\|V_t\|\| A\zeta_t\|+C_1\|V_t\|\|\zeta_t\|_2\|U_t\|_1+C_2(\| V_t\|_1+\|\zeta_t\|_1)\|V_t\| \\
&=-\nu\| V_t\|_1^2+C_1\|V_t\|\|\zeta_t\|_2+C_1\|V_t\|\|\zeta_t\|_2\|V_t+\zeta_t\|
+C_2(\| V_t\|_1+\|\zeta_t\|_1)\|V_t\|\\
&\leq-\nu\| V_t\|_1^2+C_1\|V_t\|\|\zeta_t\|_2+C_1\|V_t\|\|\zeta_t\|_2\|V_t\|
+C_1\|V_t\|\|\zeta_t\|_2\|\zeta_t\|
+C_2\| V_t\|_1\|V_t\|+C_2\|\zeta_t\|_1\|V_t\|\\
&\leq-\frac{\nu}{2}\| V_{t}\|_1^{2}+\frac{8C_{1}^{2}}{\nu}\|\zeta_{t}\|_2^{2}
+C_{1}\|V_{t}\|^{2}\|\zeta_{t}\|_2+\frac{8C_{1}^{2}}{\nu}\|\zeta_{t}\|_2^{2}
\|\zeta_{t}\|^{2}+\frac{8C_{2}^{2}}{\nu}\|V_t\|^2
+\frac{8C_{2}^{2}}{\nu}\|\zeta_t\|_1^2,
\end{aligned}
\end{equation*}
where $C_1, C_2>0$ are constants that depending on $\nu$ and $g$, respectively.

For any $T,\delta>0$, we define
\begin{equation*}
\begin{array}{rcl}
\Omega'(\delta,T)&=&\left\{f=(f_{s})_{s\in[0,T]}\in D([0,T];H):\sup\limits_{s\in[0,T]}\|\nabla f_{s}\|\leq\min\{\delta,\frac{\nu}{4C_{1}}-\frac{8C_2^2}{ \nu C_1}\}\right\}.
\end{array}
\end{equation*}
If $\zeta\in \Omega'$, then there exists a constant $C_3$ depending only on the domain so that
\begin{equation*}
\|V(t)\|^2\leq\|V(0)\|^2e^{-\frac{\nu}{2}t}
+\frac{C_3}{\nu}\left[\min\left(\delta,\frac{\nu}{4C_{1}}-\frac{8C_2^2}{ \nu C_1}\right)^4+\min\left(\delta,\frac{\nu}{4C_{1}}-\frac{8C_2^2}{ \nu C_1}\right)^2\right].
\end{equation*}
Let $\mathcal{C}$ and $\gamma$ be given as in the statement of Proposition \ref{irre}. As $\|U_0\|\leq \mathcal{C}$, there exists a $T>0$ and a $\delta>0$ such that
$$
\|V_T\|\leq\frac{\gamma}{2}~\text{and}~\delta\leq\frac{\gamma}{2}.
$$
To summarize, one concludes that
\begin{equation}\label{5.5}
\|U_0\|\leq \mathcal{C}\mathrm{~and~}\zeta\in\Omega^{\prime}(\delta,T)
\Rightarrow\|U_T\|\leq\|V_T\|+\|\zeta_T\|\leq\gamma.
\end{equation}
Notice that for any $T>0$, $\varepsilon>0$ and non-zero real numbers $\alpha_k^m,k\in\mathcal{Z},m\in\{0,1\}$, there holds the following result
\begin{equation*}
\mathbb{P}(\sup_{t\in[0,T]}\|\sum_{k\in\mathcal{Z},m\in\{0,1\}}\alpha_k^m \sigma_k^m W_{S_t}^{k,m} \|<\varepsilon)
\geq p_0>0,
\end{equation*}
where $p_0=p_0(T,\varepsilon,\{\alpha_k^m\}_{k\in\mathcal{Z},m\in\{0,1\}})$.
Combining the above inequality with \eqref{5.5}, we complete the proof.
\end{proof}

We finally give the proof of our main result, which concludes that statistically invariant states of \eqref{2.1} are unique.

\begin{proof}[Proof of Theorem \ref{ergodicity}]
We first prove the existence. By Lemma \ref{lemma2.1}, it holds that
\begin{equation*}
\nu \mathbb{E}\left[\int_{0}^{t}\|U_{s}\|_{1}^{2}\mathrm{d}s\right]
\leq\|U_{0}\|^{2}+C_{2}t,\quad\forall t\geq0.
\end{equation*}
Building upon the arguments presented in the proof by \cite{Zheng-2013} and utilizing the Krylov-Bogoliubov criterion, we can obtain the existence of an invariant measure.

Now we prove the uniqueness. Assume that there are two distinct invariant
probability measures $\mu_1$ and $\mu_2$ for $\{P_t\}_{t\geq0}$. By Proposition \ref{propo1.4} and \cite{Kapica-2012}, we can get
\begin{equation}\label{supp}
\mathrm{Supp~}\mu_1\cap\mathrm{Supp~}\mu_2=\emptyset.
\end{equation}

On the other hand, by Lemma \ref{lemma2.1}, for every invariant measure $\mu$, there holds
\begin{equation*}
\int_H\|U\|^2\mu(\mathrm{d}U)\leq C.
\end{equation*}
Following the arguments in the proof of \cite[Corollary 4.2]{MH-2006} and using Proposition \ref{irre}, for every invariant measure $\mu$, we have
$0\in \mathrm{Supp~}\mu$. This contradicts \eqref{supp}. We complete the proof of uniqueness, which implies the ergodicity.
\end{proof}


\subsection*{Acknowledgments}
The authors thank the editor and the anonymous reviewers for their helpful comments and suggestions. 

\subsection*{Funding}
Jianhua Huang is supported in part
by NNSFC (No.12371198, 12031020), Scientific Research Program Foundation of National University of Defense Technology (No.22-ZZCX-016), Hunan Provincial Natural Science Foundation of China (No.2024JJ5400).
Xuhui Peng is supported in part by NNSFC (No. 12471128, 12071123), China Scholarship Council (No.202406720025), the science and technology innovation Program of 
Hunan Province (No.2022RC1189).
%
%

\newcounter{cankan}


\begin{thebibliography}{100}
\setlength{\itemsep}{-2mm}

\bibitem{Abidi-2007} H. Abidi, T. Hmidi, On the global well-posedness for Boussinesq system, J. Differ. Equ. 233 (2007) 199--220.

\bibitem{Brzezniak-2022} Z. Brze\'{z}niak, X. Peng, J. Zhai, Well-posedness and large deviations for 2-D Stochastic Navier-Stokes equations with jumps, J. Eur. Math. Soc. 25 (8) (2023) 3093--3176. 

\bibitem{Cao-2013} C. Cao, J. Wu, Global regularity for the two-dimensional anisotropic Boussinesq equations with vertical dissipation, Arch. Ration. Mech. Anal. 208 (3) (2013) 985--1004.

\bibitem{Constantin} P. Constantin, C.R. Doering, Infinite Prandtl number convection, J. Stat. Phys. 94 (1) (1999) 159--172.

\bibitem{prato-1996} G. Da Prato, J. Zabczyk, Ergodicity for infinite dimensional systems, London Math. Soc. Lecture Note Ser., vol. 229, Cambridge University Press, Cambridge, 1996.

\bibitem{Dong-2011} Z. Dong, Y. Xie, Ergodicity of stochastic 2D Navier-Stokes equation with L\'{e}vy noise, J. Differ. Equ. 251 (2011) 196--222.

\bibitem{Duan-2009} J. Duan, A. Millet, Large deviations for the Boussinesq equations under random influences, Stoch. Process. Appl. 119 (2009) 2052--2081.

\bibitem{Foias-1967} C. Foias, G. Prodi, Sur le comportement global des solutions non-stationnaires des équations de Navier–Stokes en dimension 2, Rend. Semin. Mat. Univ. Padova 39 (1967) 1--34.

\bibitem{Foias-2001} C. Foias, O. Manley, R. Rosa, R. Temam, Navier–Stokes Equations and Turbulence, Encyclopedia Math. Appl., vol. 83, Cambridge University Press, Cambridge, 2001.

\bibitem{Foldes-2015} J. F\"{o}ldes, N. Glatt-Holtz, G. Richards, E. Thomann, Ergodic and mixing properties of the Boussinesq equations with a degenerate random forcing, J. Funct. Anal. 269 (8) (2015) 2427--2504.

\bibitem{MH-2006} M. Hairer, J.C. Mattingly, Ergodicity of the 2D Navier-Stokes equations with degenerate stochastic forcing, Ann. of Math. 164 (3) (2006) 993--1032.

\bibitem{MH-2008} M. Hairer, J.C. Mattingly, Spectral gaps in Wasserstein distances and the 2D stochastic Navier–Stokes equations, Ann. Probab. 36 (6) (2008) 2050--2091.

\bibitem{MH-2011} M. Hairer, J.C. Mattingly, M. Scheutzow, Asymptotic coupling and a general form of Harris' theorem with applications to stochastic delay equations, Probab. Theory Relat. Fields 149 (1) (2011) 223--259.

\bibitem{Huang-2022} J. Huang, Y. Zheng, T. Shen, C. Guo, Asymptotic properties of the 2D stochastic fractional Boussinesq equations driven by degenerate noise, J. Differ. Equ. 310 (2022) 362--403.

\bibitem{Imkeller-2006} P. Imkeller, I. Pavlyukevich, First exit times of SDEs driven by stable L\'{e}vy processes, Stoch. Process. Appl. 116 (4) (2006) 611--642.

\bibitem{Kapica-2012} R. Kapica, T. Szarek, M. \'{S}leczka, On a unique ergodicity of some Markov processes, Potential Anal. 36 (4) (2012) 589--606.

\bibitem{Komorowski-2010} T. Komorowski, S. Peszat, T. Szarek, On ergodicity of some Markov processes, Ann. Probab. 38 (4) (2010) 1401--1443.

\bibitem{Kuchler-2013} U. K\"{u}chler, S. Tappe, Tempered stable distributions and processes, Stoch. Process. Appl. 123 (12) (2013) 4256--4293.

\bibitem{Larios-2013} A. Larios, E. Lunasin, E.S. Titi, Global well-posedness for the 2D Boussinesq system with anisotropic viscosity and without heat diffusion, J. Differ. Equ. 255 (2013) 2636--2654.

\bibitem{Lin-2024} Q. Lin, R. Liu, W. Wang, Global existence for the stochastic Boussinesq equations with transport noise and small rough data, SIAM J. Math. Anal. 56 (1) (2024) 501--528.

\bibitem{Luo-2021} D. Luo, Convergence of stochastic 2D inviscid Boussinesq equations with transport noise to a deterministic viscous system, Nonlinearity 34 (12) (2021) 8311–8330.

\bibitem{Novikov-1965} E.A. Novikov, Functionals and the random-force method in turbulence theory, Soviet Phys. JETP 20 (1965) 1290--1294.

\bibitem{Odasso-2007} C. Odasso, Exponential mixing for the 3D stochastic Navier-Stokes equations, Commun. Math. Phys. 270 (2007) 109--139.

\bibitem{Odasso-2008} C. Odasso, Exponential mixing for stochastic PDEs: the non-additive case, Probab. Theory Relat. Fields 140 (2008) 41--82.

\bibitem{Peng-2020} X. Peng, J. Huang, R. Zhang, Ergodicity and exponential mixing of the real Ginzburg-Landau equation with a degenerate noise, J. Differ. Equ. 269 (4) (2020) 3686--3720.

\bibitem{Peng-Huang-2020} X. Peng, J. Huang, Y. Zheng, Exponential mixing for the fractional magneto-hydrodynamic equations with degenerate stochastic forcing, Commun. Pure Appl. Anal. 19 (9) (2020) 4479--4506.

\bibitem{Peng-2024} X. Peng, J. Zhai, T. Zhang, Ergodicity for 2D Navier-Stokes equations with a degenerate pure jump noise, arXiv:2405.00414.

\bibitem{Pu-2011} X. Pu, B. Guo, Global well-posedness of the stochastic 2D Boussinesq equations with partial viscosity, Acta Math. Sci. Ser. B (Engl. Ed.) 31 (2011) 1968--1984.

\bibitem{Shen-2014} T. Shen, J. Huang, J. Li, Stochastic modified Boussinesq approximate equation driven by fractional Brownian motion, Adv. Differ. Equ. 2014 (1) (2014) 1--21.

\bibitem{Shen-2015} T. Shen, J. Huang, Well-posedness of the stochastic fractional Boussinesq equation with L\'{e}vy noise, Stoch. Anal. Appl. 33 (6) (2015) 1092--1114.

\bibitem{Sun-2019} X. Sun, Y. Xie, L. Xu, Exponential mixing for SPDEs driven by highly degenerate L\'{e}vy noises, Illinois J. Math. 63 (1) (2019) 75--102.

\bibitem{Wu-2024} S. Wu, J. Huang, Well-posedness and limit behavior of stochastic fractional Boussinesq equation driven by nonlinear noise, Phys. D Nonlinear Phenom. 461 (2024) 134104.

\bibitem{Zhang-2024} J. Zhang, J. Huang, Convergence rates and central limit theorem for 3-D stochastic fractional Boussinesq equations with transport noise, Phys. D Nonlinear Phenom. 470 (2024) 134406.

\bibitem{Zhang-2014} X. Zhang, Densities for SDEs driven by degenerate $\alpha$-stable processes, Ann. Probab. 42 (5) (2014) 1885--1910.

\bibitem{Zheng-2013} Y. Zheng, J. Huang, Ergodicity of stochastic Boussinesq equations driven by L\'{e}vy processes, Sci. China Math. 56 (6) (2013) 1195--1212.




\end{thebibliography}
\end{document}